\newcommand{\sheaf}[1]{\mathscr{#1}}
\newcommand{\cartalg}[1]{\mathscr{#1}}
\newcommand{\ssheaf}{\mathscr O}
\newcommand{\multideal}{\mathscr J}
\newcommand{\fracsheaf}[1]{\mathscr K \left( #1 \right)}
\providecommand{\underscore}{\rule{1em}{0.5pt}}
\newcommand{\tauOfa}[1]{\underline{\mathfrak a}^{\lceil t(p^{#1}-1) \rceil}}
\newcommand{\tauOfaq}{\underline{\mathfrak a}^{\lceil t(q-1) \rceil}}
\newcommand{\idealsProd}{\underline{\mathfrak a}^t}
\newcommand{\compSubAlg}{\, \scalebox{0.75}{\rotatebox[origin=c]{90}{$\circlearrowleft$} } }
\renewcommand{\tilde}{\widetilde}
\renewcommand{\hat}{\widehat}
\renewcommand{\bar}{\overline}
\newcommand{\plainConR}{\mathscr{D}^{(2)}}
\newcommand{\ConR}{\plainConR (R)}
\newcommand{\ConProd}{\fullCA{R\otimes_k R,\, I_\Delta \compSubAlg}}
\newcommand{\fullCA}[1]{\cartalg C^{#1}}
\newcommand{\acptope}{P_{-K_X}}
\newcommand{\exclocus}{  \operatorname{exc} }
\newcommand{\cf}{\emph{Cf.}}
\title{A new subadditivity formula for test ideals}
\author{Daniel Smolkin}
\address{Department of Mathematics\\ University of Utah\\ Salt Lake City\\ UT 84112\\USA} 
\thanks{The author was partially supported by NSF grants DMS \#1246989, DMS \#1252860, and DMS \#1265261.}
\email{\href{mailto:smolkin@math.utah.edu}{smolkin@math.utah.edu}}
\begin{document}

\begin{abstract}
  We exhibit a new subadditivity formula for test ideals on singular varieties using an argument similar to \cite{DELsubadd} and \cite{HaraYoshidaSubadd}. Any subadditivity formula for singular varieties must have a correction term that measures the singularities of that variety. Whereas earlier subadditivity formulas accomplished this by multiplying by the Jacobian ideal, our approach is to use the formalism of Cartier algebras \cite{BlickleCartier}. We also show that our subadditivity containment is sharper than ones shown previously in \cite{TakagiSingMultIdeals} and \cite{EisensteinRestrictionThm}. The first of these results follows from a Noether normalization technique due to Hochster and Huneke. The second of these results is obtained using ideas from \cite{TakagiCharPadjoint} and \cite{EisensteinRestrictionThm} to show that the adjoint ideal $\multideal_X(A, Z) $ reduces mod $p$ to Takagi's adjoint test ideal, even when the ambient space is singular, provided that $A$ is regular at the generic point of $X$. One difficulty of using this new subadditivity formula in practice is the computational complexity of computing its correction term. Thus, we discuss a combinatorial construction of the relevant Cartier algebra in the toric setting. 
\end{abstract}

\maketitle

\section{Introduction}
Test ideals are an important measure of singularity in characteristic-$p$ commutative algebra. The test ideals in a regular ambient ring $R$ enjoy a property called \emph{subadditivity}. Namely, if $\mf a$ and $\mf b$ are ideals of $R$ and $s,t \geq 0$ are real numbers, then 
  \[
    \tau(R, \mf a^s \mf b^t) \subseteq \tau(R, \mf a^s) \tau(R, \mf b^t).
  \]
  Here, and throughout this paper, we refer only to the big/non-finitistic test ideal. The analogous formula was originally proven for multiplier ideals in characteristic 0 in \cite{DELsubadd}. Later, Hara and Yoshida defined the test ideal of a pair $(R, \mathfrak a^s)$ and showed the above relationship of test ideals holds in characteristic $p$ \cite{HaraYoshidaSubadd}. Together, these two formulas have numerous applications. Perhaps the most striking one is Ein, Lazarsfeld, and Smith's proof that smooth varieties satisfy the so-called ``Uniform Symbolic Topologies Property'' \cite{ELSsymbolicPowers}. 

One can ask to what extent the subadditivity property holds for non-regular rings. The first result in this direction came from \cite[Theorem 2.7]{TakagiSingMultIdeals}, where Takagi showed 
\[
  \jac(R) \tau(R, \mf a^s \mf b^t) \subseteq \tau(R, \mf a^s) \tau(R, \mf b^t)
\]
for any equidimensional reduced affine algebra over a perfect field of positive characteristic. Since multiplier ideals are test ideals mod $p \gg 0$ \cite{SmithMultIdealIsTestIdeal, HaraMultIdealIsTestIdeal, HaraYoshidaSubadd} and inclusion mod $p \gg 0$ implies inclusion in characteristic $0$, the same formula holds for multiplier ideals.

Our proof of subadditivity is similar to the original ones in \cite{DELsubadd} and \cite{HaraYoshidaSubadd}. There, the idea is to notice that, if $k$ is a field of positive characteristic and $R$ is a $k$-algebra essentially of finite type, then  $\tau(R\otimes_k R, (\mathfrak a\otimes_k R)^s (R \otimes_k \mathfrak b)^t) \subseteq \tau(R, \mathfrak a^s) \otimes_k \tau(R, \mathfrak b^t)$. Then it's enough to check that 
\[
  \tau\left( R, \mathfrak a^s \mathfrak b^t \right) \subseteq \mu \left( \tau(R\otimes_k R, (\mathfrak a\otimes R)^s (R \otimes_k \mathfrak b)^t) \right)
\]
where $\mu\colon R \otimes_k R \to R$ is the multiplication map. This is accomplished by the following restriction theorem: 
\begin{theorem*}[{\cf \cite[Theorem 4.1]{HaraYoshidaSubadd}}]
  Let $R$ be a normal $\bQ$-Gorenstein ring, and that suppose $S = R/(x)$ is normal and $\bQ$-Gorenstein as well, for some regular element $x\in R$. Then $\tau(S, (\mathfrak aS)^s ) \subseteq \tau(R, \mathfrak a^s) S$. 
\end{theorem*}
\noindent
If $R$ is smooth\footnote{For instance, if $R$ is regular and $k$ is perfect.}, then the kernel of $\mu$ is generated by a regular sequence, so repeated application of the restriction theorem yields the desired containment. 

%consider the isomorphism $R\otimes_k R/I_{\Delta} \cong R$ where $I_\Delta = \langle 1 \otimes x - x \otimes 1 | x \in R \rangle$. If $x\in R$ is a regular element, then \ldots

Eisenstein \cite{EisensteinRestrictionThm} uses a similar argument to obtain his subadditivity theorem for multiplier ideals. Namely, he proves a more general restriction theorem for multiplier ideals and carefully studies its implications in the case where one restricts to the diagonal subscheme of $\spec(R\otimes_k R)$. 

Our approach is to follow this diagonal argument. To get around the restriction theorem, we will use the formalism of \emph{Cartier algebras} \cite{SchwedeNonQGor, BlickleCartier}. In particular, for all rings $R$ and ideals $I\subseteq R$, we construct a Cartier algebra $\cartalg C$ on $R/I$ such that $\tau(R, \mathfrak a^s)  R/I \supseteq \tau(R/I, \cartalg C, (\mathfrak aR/I)^s)$. Applying this construction to the quotient $R \cong (R\otimes_k R)/I_\Delta$, we call this Cartier algebra $\ConR$ and we get
\begin{proposition*}[\autoref{thm:MySubadd}]
  Let $R$ be a $k$-algebra of positive characteristic. Then for all ideals $\mathfrak a, \mathfrak b \subseteq R$ and all real numbers $s,t \geq 0$, we have
  $\tau(R, \ConR, \mathfrak a^s \mathfrak b^t) \subseteq \tau(R, \mathfrak a^s ) \tau(R, \mathfrak b^t)$.
\end{proposition*}
We show in \autoref{thm:jacThm} and \autoref{cor:mainresult} that this containment is sharper than the previously-known subadditivity results for test ideals:
\begin{theoremA*}[\autoref{thm:jacThm}]
Let $k$ be a perfect field and $R$ a $k$-algebra essentially of finite type. Suppose also that $R$ is equidimensional and reduced. Then 
  \[
    \jac(R) \tau\left(R, \prod_i \mathfrak a_i^{t_i} \right) \subseteq \tau\left(R, \plainConR, \prod_i \mathfrak a_i^{t_i} \right)
  \]
  for all formal products of ideals $\prod \mathfrak a_i^{t_i}$ such that each $\mathfrak a_i$ contains a regular element.
\end{theoremA*}
\begin{theoremB*}[\autoref{cor:mainresult}]
Let $R$ be a $\bQ$-Gorenstein ring of finite type over a field of characteristic $0$ and let $\mathfrak a, \mathfrak b \subseteq R$ be ideals. Let $R_p$ denote the mod-$p$ reduction of $R$, and similarly for $\mathfrak a_p$ and $\mathfrak b_p$. Then
\[
  \overline{\jac(R_p)} \tau(R_p, (\mathfrak a_p)^s (\mathfrak b_p)^t) \subseteq \tau(R, \ConR, (\mathfrak a_p)^s (\mathfrak b_p)^t)
\]
for all $p \gg 0$. 
\end{theoremB*}

% this uses really simple stuff. we don't really make estimates. Just measuring the difference from our Cartier algebra and all the maps on the product. 

The main barrier to applying this new subadditivity formula in practice is the computation of $\ConR$. We give a method for doing so in the affine toric case in section 6. Recall that, for any toric ring $R$ over a field $k$, one can construct the so-called anti-canonical polytope $\acptope$, where $X = \spec R$. Then the set $\homgp_R(F^e_*R, R)$ is generated as a $k$-vector space by maps $\pi_a$ corresponding to fractional lattice points $a \in \frac{1}{p^e}\bZ^n \cap \operatorname{int}(\acptope)$. We determine which of these generators belong to $\ConR$ in \autoref{prop:resCriterion}:
\begin{theoremC*}[\autoref{prop:resCriterion}]
  Let $R$ be a toric ring with anticanonical polytope $\acptope$. Then $\plainConR_e(R)$ is generated as a $k$-vectorspace by the maps $\pi_a$ where $a\in \frac{1}{p^e}\bZ^n \cap \operatorname{int}(\acptope)$ and the interior of $\acptope \cap\left( a - \acptope \right)$ contains a representative of each equivalence class in $\frac{1}{p^e}\bZ^n / \bZ^n$. 
\end{theoremC*}

In Section 2, we review some background information on test ideals and Cartier algebras. In Section 3 we introduce our new subadditivity formula and prove Theorem A. In Section 4, we discuss adjoint ideals in characteristic 0 and introduce a characteristic-$p$ analog which generalizes earlier constructions found in  \cite{TakagiAdjointHighCodim}, \cite{SchwedeFadj}, and \cite{BSTZ_discreteness_and_rationality}. We use these constructions in Section 5 to prove Theorem B. This is accomplished by showing that adjoint ideals in characteristic 0 reduce modulo $p\gg 0$ to our analogous test ideal (\autoref{thm:tauEqualsJ}). Finally, in Section 6, we give a combinatorial criterion for computing $\ConR$ in the toric setting. 

\textbf{Acknowledgements:} I'd like to thank my advisor, Karl Schwede, for his numerous insights and generous support. I'd also like to thank Adam Boocher, Javier Carvajal-Rojas, Rankeya Datta, James Farre, Elo\' isa Grifo, Jack Jeffries, Linquan Ma, Takumi Murayama, Thomas Polstra, and Derrick Wigglesworth for their willingness to answer questions and play math together. Thanks in particular to Srikanth Iyengar for pointing out a slick proof of \autoref{cor:homAndTensor}.
Many thanks to the referee for their numerous helpful comments, and especially for pointing out an error in an earlier proof of \autoref{thm:hardContainment}.

\section{Background on Cartier algebras and  test ideals}
In this section, we provide the basic definitions and results from the theory of test ideals and Cartier algebras. A more complete account of these theories may be found in the surveys, \cite{SchwedeTuckerSurvey, BlickleSchwedeSurvey}. 

Let $R$ be a Noetherian ring of characteristic $p>0$. We let $F\colon R \to R$ denote the Frobenius map and let $F^e$ denote the $e^{\textrm{th}}$ iterate of $F$. In particular, $F^e(x) = x^{p^e}$ for all $x \in R$. We define $F^e_*R$ to be the $R$-module given by restriction of scalars via $F^e$. In other words, $F^e_*R \coloneqq \left\{ F^e_* r \suchthat r \in R \right\}$ as a set, and the $R$-module structure on $F^e_* R$ is given by  $s F^e_* r = F^e_* s^{p^e}r$ for all $r,s \in R$.  We give $\homgp_R(F^e_* R, R)$ an $F^e_* R$ module structure defined by pre-multiplication. That is, we define:
  \[
    (F^e_* r \cdot \vp)(F^e_* x) \coloneqq \vp(F^e_*(rx))
  \]
  Similarly, we define a right $R$-module structure on $\homgp_R(F^e_*R, R)$ by setting $\vp \cdot r\coloneqq F^e_* r\vp$. The left $R$-module structure on $\homgp_R(F^e_*R, R)$ is given by post-multiplication, namely $r\cdot \vp(F^e_* x) \coloneqq r\vp(F^e_*x)$.  Note that $r\cdot \vp = \vp \cdot r^{p^e}$ for all $r$ and $\vp$. 
  Note also that the Frobenius map gives us a map of left $R$-modules, $F^e\colon R \to F^e_* R$, defined as $x \mapsto F^e_* x^{p^e}$. 
  \begin{remark}
    In the literature, people often use the notation $R^{1/p^e}$ instead of $F^e_* R$. If $R$ is a domain, we can identify $R^{1/p^e}$ with the set of $(p^e)^{\textrm{th}}$ roots of elements of $R$ in a fixed algebraic closure of $\operatorname{frac}(R)$. Even if $R$ is just reduced, the Frobenius map is injective and we have $R\subseteq R^{1/p^e}$. 
  \end{remark}

  \begin{definition}
    A ring $R$ is said to be $F$-finite if $F^e_*R $ is a finitely-generated $R$-module for some (equivalently, all) $e>0$. 
  \end{definition}
  For instance, perfect fields are $F$-finite. Further, any algebra essentially of finite type over an $F$-finite ring is $F$-finite.  
 \begin{globalsetting} 
  We will assume in this paper that all of our rings are $F$-finite and Noetherian.  % highlight. Maybe also assume reduced?
 \end{globalsetting}

  Perhaps the central idea in the study of so-called ``$F$-singularities'' is that the non-regularity of a ring $R$ in positive characteristic can be understood by studying the modules $F^e_*R$. This theory was initiated by the following result of Kunz. 
  \begin{theorem}[\cite{Kunz69}]
    Let $R$ be Noetherian a ring of positive characteristic. Then $R$ is regular if and only if $F^e_*R$ is a flat $R$-module for some (equivalently, all) $e > 0$.  
    \label{thm:kunz}
  \end{theorem}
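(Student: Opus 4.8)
The plan is to reduce to the local case and prove the two implications separately. Regularity is a local condition, and flatness of $F^e_* R$ may be checked after localizing at each maximal ideal $\mathfrak m$, using the identification $(F^e_* R)_{\mathfrak m} \cong F^e_*(R_{\mathfrak m})$. So assume $(R, \mathfrak m, k)$ is Noetherian local, put $q = p^e$, and write $\mathfrak m^{[q]}$ for the ideal generated by $\{a^{q} : a \in \mathfrak m\}$, and similarly $I^{[q]}$ for an ideal $I$.

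\emph{Regular $\Rightarrow$ flat.} Suppose $R$ is regular of dimension $d$, so $\mathfrak m = (x_1, \dots, x_d)$ for a regular sequence $x_1, \dots, x_d$, and the Koszul complex $K_\bullet(x_1, \dots, x_d; R)$ is a minimal free resolution of $k$. I would compute $\mathrm{Tor}_i^R(k, F^e_* R)$ from this resolution: tensoring $K_\bullet(x_1, \dots, x_d; R)$ with $F^e_* R$ gives the Koszul complex of $x_1, \dots, x_d$ acting on $F^e_* R$, and since $x_j$ acts on $F^e_* R$ as multiplication by $x_j^{q}$, this is $F^e_*$ of $K_\bullet(x_1^{q}, \dots, x_d^{q}; R)$. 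A sequence of powers of a regular sequence in a Noetherian local ring is again a regular sequence, so $K_\bullet(x_1^{q}, \dots, x_d^{q}; R)$ is acyclic in positive degrees; hence $\mathrm{Tor}_i^R(k, F^e_* R) = 0$ for $i \geq 1$, and the local criterion for flatness — in its finitely generated form, available since $R$ is $F$-finite, so $F^e_* R$ is a finitely generated $R$-module — shows $F^e_* R$ is flat. (For Noetherian local rings of positive characteristic that are not $F$-finite one first passes to the completion, where $F^e_* R$ is $\mathfrak m$-adically complete so the general form of the local criterion applies, and then descends flatness along the faithfully flat completion map.)

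\emph{Flat $\Rightarrow$ regular.} Suppose $F^e_* R$ is flat; equivalently the Frobenius base-change functor $\mathcal F := (-)\otimes_{R, F^e} R$ (extension of scalars along $F^e$, which carries $R/I$ to $R/I^{[q]}$) is exact, and it is faithfully flat since $\mathcal F(k) = R/\mathfrak m^{[q]} \neq 0$. Induct on $d = \dim R$. If $d = 0$, then $\mathfrak m$ is nilpotent, so $\mathfrak m^{[q]} = 0$ for $q \gg 0$ and hence $\mathfrak m \cdot F^e_* R = F^e_*(\mathfrak m^{[q]}) = 0$; since $R$ is $F$-finite, $F^e_* R$ is a finitely generated flat, hence free, $R$-module, and a nonzero free module annihilated by $\mathfrak m$ forces $\mathfrak m = 0$, so $R$ is a field. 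If $d \geq 1$, I would first show $\mathrm{depth}\, R \geq 1$: were $\mathfrak m = \mathrm{Ann}_R(y)$ with $y \neq 0$, applying $\mathcal F$ to the injection $R/\mathfrak m \xrightarrow{\,\cdot y\,} R$ would yield an injection $R/\mathfrak m^{[q]} \xrightarrow{\,\cdot y^{q}\,} R$, forcing $\mathrm{Ann}_R(y^{q}) \subseteq \mathfrak m^{[q]}$; but $\mathfrak m \subseteq \mathrm{Ann}_R(y^{q})$, so $\mathfrak m \subseteq \mathfrak m^{[q]} \subseteq \mathfrak m^{2} \subsetneq \mathfrak m$, a contradiction. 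Thus $\mathfrak m$ contains a non-zerodivisor $x$; then $\dim(R/xR) \leq d - 1$, and — the crucial point — Frobenius remains flat on $R/xR$, so by induction $R/xR$ is regular, which (since $x$ is a non-zerodivisor) forces $x \notin \mathfrak m^{2}$ and $R$ regular, the dimension and embedding dimension of $R/xR$ both being one less than those of $R$.

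The main obstacle is that descent step: if $F^e_* R$ is flat over $R$ and $x$ is a non-zerodivisor, why is $F^e_*(R/xR)$ flat over $R/xR$? The naive identification $F^e_*(R/xR) = F^e_* R / x F^e_* R$ fails — the right-hand side is $F^e_*(R/x^{q}R)$, which is strictly larger — so one cannot simply cite ``a flat module stays flat modulo a regular element''. I would instead exploit the short exact sequence of $R/xR$-modules
\[
  0 \longrightarrow F^e_*(R/x^{q-1}R) \longrightarrow F^e_* R / x F^e_* R \longrightarrow F^e_*(R/xR) \longrightarrow 0 ,
\]
whose middle term \emph{is} flat over $R/xR$ (there ``flat modulo a regular element'' does apply, $x$ being a non-zerodivisor on $R$ and on $F^e_* R$), and then extract vanishing of the relevant $\mathrm{Tor}_1^{R/xR}$ from the long exact sequence. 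An alternative endgame avoiding hyperplane sections altogether is to prove directly that a minimal set of generators $x_1, \dots, x_n$ of $\mathfrak m$ is a regular sequence (which makes $R$ regular): exactness of $\mathcal F$ gives $H_p\big(K_\bullet(x_1^{q}, \dots, x_n^{q}; R)\big) = \mathcal F\big(H_p(K_\bullet(x_1, \dots, x_n; R))\big)$ for all $p$, and additivity of $\ell(\mathcal F(-))$ on finite-length modules together with Serre's Koszul–multiplicity formula forces $\ell(R/\mathfrak m^{[q]}) = q^{\dim R}$ for every $e$, from which regularity follows. Either way, the heart of the matter is turning exactness and faithful flatness of the Frobenius functor into regularity of $R$.
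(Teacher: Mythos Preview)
The paper does not prove this theorem; it is stated as background and attributed to Kunz \cite{Kunz69} without proof. So there is no ``paper's own proof'' to compare against --- this is a classical result being quoted, not established.

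On the merits of your sketch: the \emph{regular $\Rightarrow$ flat} direction is fine and is essentially the standard argument. The \emph{flat $\Rightarrow$ regular} direction, however, has a genuine gap that you yourself flag but do not close. Your hyperplane-section induction needs $F^e_*(R/xR)$ flat over $R/xR$, and the short exact sequence
\[
  0 \to F^e_*(R/x^{q-1}R) \to F^e_*(R/x^{q}R) \to F^e_*(R/xR) \to 0
\]
does not yield this: flatness of the middle term only gives that $\mathrm{Tor}_1^{R/xR}(F^e_*(R/xR),N)$ injects into $F^e_*(R/x^{q-1}R)\otimes N$, which is not zero. One can salvage the idea by filtering $F^e_*(R/x^{q}R)$ with $q$ successive quotients all isomorphic to $F^e_*(R/xR)$ and arguing more carefully, but that is not what you wrote. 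Your alternative endgame via Koszul homology and the length identity $\ell(R/\mathfrak m^{[q]}) = q^{\dim R}$ is in fact Kunz's original route and does work, but you have only gestured at it; the step ``from which regularity follows'' is precisely the substance of the theorem (it uses the associated-graded ring and a Hilbert--Samuel comparison), and you would need to supply it.
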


  In particular, if $R$ is an $F$-finite local ring, then $R$ is regular if and only if $F^e_* R$ is a \emph{free} $R$-module. One way to detect the freeness of  $F^e_* R$ is by counting the number of distinct splittings of the inclusion $R \subseteq F^e_* R$---there should be many such splittings if $F^e_* R$ is free. This in turn can be detected by studying the \emph{test ideal} of $R$, though the reasons why may be unclear to the uninitiated. 

  \begin{definition}
    Let $R$ be a reduced ring of positive characteristic. The \emph{test ideal} of $R$, denoted $\tau(R)$,  is the unique, smallest ideal $J$ containing a regular element such that $\vp(F^e_* J) \subseteq J$ for all $e$ and all $\vp \in \homgp_R(F^e_* R, R)$. 
  \end{definition}
  Proving this ideal exists is non-trivial. Whenever $R$ is regular, we have $\tau(R) = R$. The converse is not true, however. Thus, we say that a ring $R$ is \emph{$F$-regular} if $\tau(R) = R$. One philosophy in the study of $F$-singularities is that rings with milder singularities have larger test ideals. 

  We can obtain variants of the test ideal by restricting the set of maps $\vp$ under consideration. One of the first instances of this idea was the construction of test ideals of pairs $(R, \mathfrak a^t)$. Here, $\mathfrak a$ is an ideal of $R$ and $t \geq 0$ is a real number. We don't actually define the power $\mathfrak a^t$ for arbitrary real numbers $t$. Howevever, this formal notation for the test ideal of a pair turns out to be quite useful. Indeed, it is unambiguous when $t$ is an integer. 

  \begin{definition}[\cite{HaraYoshidaSubadd}]
    Let $R$ be reduced,  $\mathfrak a\subseteq R$ an ideal containing a regular element, and $t$ a positive real number. Then we define $\tau(R, \mathfrak a^t)$ to be the unique, smallest ideal $J$ containing a regular element such that $\vp(F^e_* J )\subseteq J$ for all $e$ and all $\vp \in F^e_* \left( \mathfrak a^{\ceil{t(p^e-1)}} \right) \homgp_R \left( F^e_* R, R \right)$. 
  \end{definition}

  % triples? many ideals at once?
  People have also considered test ideals of triples $(R, \mathfrak a^t, \Delta)$, where one further restricts the maps $\vp$ under consideration using a divisor $\Delta$ on $\spec R$. This leads one to ask, more generally, for which sets of maps $F^e_* R \to R$ can one define the test ideal of that set? The answer is that this set of maps must form a \emph{Cartier Algebra}. 
  \begin{definition}[\cite{BlickleCartier, SchwedeNonQGor}]
    A \emph{Cartier algebra} on $R$ is an additive abelian group $\cartalg C = \bigoplus_e \cartalg C_e$, with $\cartalg C_e \subseteq \homgp_R (F^e_* R, R)$ for all $e$, that is closed under multiplication on the left and right by elements in $R$ and closed under composition.  In other words, given $\vp_1, \vp_2 \in \cartalg C_e$,  $\psi \in \cartalg C_d$, and $r \in R$, we have: $\vp_1+ \vp_2 \in \cartalg C_e$,  $r\cdot \vp \in \cartalg C_e$, $\vp \cdot r \in \cartalg C_e$, and $ \vp \cdot \psi \coloneqq \vp\circ (F^e_* \psi) \in \cartalg C_{d+e}$. By convention, we also assume that $\cartalg C_0 = R$. The \emph{full Cartier algebra} on $R$ is the Cartier algebra $\fullCA{R} \coloneqq \bigoplus_{e \geq 0} \homgp_R(F^e_*R, R).$
    \end{definition}
  Here, $F^e_* \psi$ denotes the map $ F^{e+d}_* R \to F^e_* R$ given by 
  \[
    (F^e_* \psi)\left( F_*^{e+d} x \right) \coloneqq F^e_* \psi(F^d_* x)
  \]
  We note that Cartier algebras are typically not commutative rings. Further, they're not necessarily $R$-algebras, in the sense that $R$ is typically not in the center of a given Cartier algebra. 
%  \begin{remark}
%    A map $F^e_* R \to R$ is the same information as an additive map $f\colon R\to R$ satisfying  $f(r^{p^e}x) = r f(x)$ for all $r, x \in R$. Such maps are called $p^{-e}$-linear. Thus, we could define a Cartier algebra as a collection $\cartalg C$ of $p^{-e}$-linear maps for various $e$ that's closed under addition, composition of maps, and multiplication by elements of $R$. Note that the composition of a $p^{-e}$-linear map with a $p^{-d}$-linear map is always a $p^{-e-d}$-linear map. 
%  \end{remark}

  Let $\Psi = \sum_{i} \psi_i \in \cartalg C$, where the sum is finite, each $\psi_i$ is nonzero, and $\psi_i \in \cartalg C_{e_i}$ for each $i$. Then $R$ has a natural left $\cartalg C$-module structure given by 
    \[
      \Psi \cdot r \coloneqq \sum_i \psi_i\left( F^{e_i}_* r \right)
    \]
  for each $r\in R$. Further, we say $\Psi$ has \emph{minimal degree $e_0$} if $e_0 = \min_i\left\{ e_i \right\}$. 

  If $\cartalg C$ is a Cartier algebra on a reduced ring $R$, then the test ideal $\tau(R, \cartalg C)$, if it exists, is defined to be the smallest ideal $J \subseteq R$ containing a regular element such that $\vp(F^e_* J) \subseteq J$ for all $e$ and for all $\vp \in \cartalg C_e$. Schwede showed in \cite{SchwedeNonQGor} that these test ideals exist whenever $\cartalg C$ is \emph{nondegenerate}\footnote{A map $\vp\colon F^e_*R \to R$ is called nondegenerate if $\vp(F^e_* R) R_\eta \neq 0$ for all minimal primes $\eta \in \spec R$. A Cartier algebra $\cartalg C$ is called nondegenerate if $\cartalg C_e$ contains a nondegenerate map for some $e > 0$. }. This is a very weak condition. For instance, if $R$ is a domain, then a Cartier algebra $\cartalg C$ on $R$ is nondegenerate whenever $\cartalg C_e \neq 0$ for some $e > 0$. 

  We can also define $\tau(R, \cartalg C, \prod_i \mathfrak a_i^{t_i} )$ for any nonzero ideals $\mathfrak a_i \subseteq R$ and real numbers $t_i \geq 0$: this is the smallest nonzero ideal $J \subseteq R$ such that $\vp(F^e_* J) \subseteq J$ for all $e$ and for all $\vp \in F^e_*\left( \prod_i \mathfrak a_i^{\ceil{t_i(p^e-1)}} \right) \cartalg C_e$. As before, this product $\prod_i \mathfrak a_i^{t_i} $ is just formal (though certainly useful) notation. 

  \begin{notation}
    Given a Cartier algebra $\cartalg C$ on $R$, a collection of nonzero ideals $\mathfrak a_i \subseteq R$, and rational numbers $t_i \geq 0$, we define 
    \[
      \cartalg C^{R, \prod_i \mathfrak a_i^{t_i}} \coloneqq 
      \bigoplus_{e \geq 0} F^e_* \left( \prod_i \mathfrak a_i^{\ceil{t_i(p^e-1)}} \right) \cartalg C_e.
    \]
    Further,  we define $\tau(R, \prod_i \mathfrak a_i^{t_i}) \coloneqq \tau(R, \fullCA{R}, \prod_i \mathfrak a_i^{t_i})$. In other words, when we omit $\cartalg C$ from the test ideal notation, $\cartalg C$ is understood to be the full Cartier algebra $\fullCA{R}$.
  Note that $\tau(R, \cartalg C, \prod_i \mathfrak a_i^{t_i} ) = \tau\left(R,  \cartalg C^{R, \prod_i \mathfrak a_i^{t_i}}  \right)$. We will stick to the first notation, mainly for historical reasons. 
  \end{notation}

  \begin{theorem}[\cf~\cite{SchwedeNonQGor}]
    Let $R$ be reduced, $\cartalg C$ a non-degenerate Cartier algebra on $R$, and $c \in \tau(R, \cartalg C)$ a regular element. Then 
    \[
      \tau(R, \cartalg C) = \sum_{e \geq 0} \sum_{\vp \in \cartalg C_e} \vp\left( F^e_* c \right).
    \]
    \label{thm:regTauIsSum}
  \end{theorem}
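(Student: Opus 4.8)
The plan is to sandwich the right-hand side, which I will call $J \coloneqq \sum_{e\geq 0}\sum_{\vp\in\cartalg C_e}\vp(F^e_* c)$, between $\tau(R,\cartalg C)$ and itself: I will check that $J$ is an ideal containing a regular element that is compatible with every map in $\cartalg C$, observe that $J\subseteq\tau(R,\cartalg C)$ is immediate, and then conclude $\tau(R,\cartalg C)\subseteq J$ from the minimality in the definition of the test ideal. Existence of $\tau(R,\cartalg C)$, and hence of a regular element $c$ in it, is guaranteed by Schwede's theorem since $\cartalg C$ is nondegenerate, so the hypothesis is not vacuous.

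First I would verify that $J$ really is an ideal. For fixed $e$ the set $J_e\coloneqq\{\vp(F^e_* c)\suchthat\vp\in\cartalg C_e\}$ is closed under addition, since $\cartalg C_e$ is an abelian group and evaluation at $F^e_* c$ is additive, and it is closed under multiplication by $R$, since $r\cdot\vp\in\cartalg C_e$ with $(r\cdot\vp)(F^e_* c)=r\vp(F^e_* c)$; so each $J_e$ is an ideal and therefore so is $J=\sum_e J_e$. Taking $e=0$, where $\cartalg C_0=R$ contains $\mathrm{id}$, shows $c\in J$, so $J$ contains a regular element. Moreover $J\subseteq\tau(R,\cartalg C)$, because $c\in\tau(R,\cartalg C)$ and $\tau(R,\cartalg C)$ is by definition stable under every $\vp\in\cartalg C_e$, whence $\vp(F^e_* c)\in\vp\bigl(F^e_*\tau(R,\cartalg C)\bigr)\subseteq\tau(R,\cartalg C)$.

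The one step with any content is showing $J$ is itself $\cartalg C$-compatible, i.e.\ $\psi(F^d_* J)\subseteq J$ for all $d\geq 0$ and $\psi\in\cartalg C_d$; granting this, minimality of $\tau(R,\cartalg C)$ forces $\tau(R,\cartalg C)\subseteq J$, and combined with the previous paragraph the two containments give equality. By additivity of $\psi$ and $F^d_*$ it is enough to treat a single generator $\vp(F^e_* c)$ with $\vp\in\cartalg C_e$, and here the key identity is
\[
  \psi\bigl(F^d_*\,\vp(F^e_* c)\bigr)=\bigl(\psi\circ(F^d_*\vp)\bigr)(F^{d+e}_* c)=(\psi\cdot\vp)(F^{d+e}_* c),
\]
which is just the definition of $F^d_*\vp$ unwound. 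Since $\psi\cdot\vp\in\cartalg C_{d+e}$ by the composition axiom for Cartier algebras, the right-hand side lies in $J_{d+e}\subseteq J$, as wanted.

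I expect no serious obstacle: the whole argument is a two-sided minimality squeeze resting on the elementary identity above. The only thing to be careful about is not to argue in a vacuum — one must invoke nondegeneracy (via Schwede's existence result cited in the excerpt) to know that $\tau(R,\cartalg C)$ exists and genuinely contains a regular element, so that the choice of $c$ is meaningful and the minimality property can be applied.
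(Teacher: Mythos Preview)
Your argument is correct and is exactly the standard one: the paper does not actually supply a proof of this background result (it is cited from \cite{SchwedeNonQGor}), but the analogous statement for $\tau_I$ is proved in Appendix~A (\autoref{thm:TauIsSum}) by precisely the same minimality squeeze you carry out here. Nothing to add.
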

  %If $R$ is Gorenstein 
    %\item what happens in the Gorenstein case
    %\item What happens in the $\bQ$-gorenstein case
  % maybe compare this with matlis duality definition? talk about history of this construction ?

  Given any map $\vp\colon F^e_* R \to R$ and any multiplicative set $W\subseteq R$, we get an induced map $W\invrs \vp\colon W\invrs F^e_* R \to W\invrs R$. Note that $W\invrs F^e_* R = F^e_*\left( W\invrs R \right)$. Thus, any Cartier algebra $\cartalg C$ on $R$ induces a cartier algebra $W\invrs \cartalg C$ on $W\invrs R$. 

  \begin{notation}
    Let $\cartalg C$ be a Cartier algebra on $R$ and $\gamma \in R$ a regular element. Then $\cartalg C$ induces a Cartier algebra on the localization $R_\gamma$. We denote this induced Cartier algebra by $\cartalg C_\gamma$. 
  \end{notation}
  An important notion in the study of test ideals is that of  \emph{compatibility} of ideals and Cartier algebras. 
\begin{definition}
   Let $S$ be a ring, $J$ an ideal of $S$, and let $\vp\colon F^e_* S \to S$. We say  that $J$ is \emph{compatible} with $\vp$, or \emph{$\vp$-compatible}, or that $\vp$ is compatible with $J$, if $\vp(F^e_* J) \subseteq J$. Let $\cartalg D$ a Cartier algebra on $S$. Similarly, we say that $J$ is compatible with $\cartalg D$, or $\cartalg D$-compatible, or that $\cartalg D$ is compatible with $J$, if $J$ is compatible with each map in $\cartalg D$. 
 \end{definition}

  Finally, we collect some well-known and useful properties of test ideals. 

  \begin{lemma}[\cf~\cite{HHbasechange, HaraYoshidaSubadd, SchwedeNonQGor}]
    \label{lemma:wellKnown}
    Let $R$ be a reduced $F$-finite ring. 
    \begin{enumerate}
      \item Let $\cartalg C \subseteq \cartalg D$ be two non-degenerate Cartier algebras on $R$. Then $\tau(R, \cartalg C) \subseteq \tau(R, \cartalg D)$. 
      \item Let $\mathfrak a_i$ be a collection of ideals of $R$ and $t_i \geq 0$ a collection of rational numbers. For each $i$, let $\mathfrak b_i  = \overline {\mathfrak a}_i$ be the integral closure of $\mathfrak a_i$. Then $\tau(R, \prod_i \mathfrak a_i^{t_i}) = \tau(R, \prod_i \mathfrak b_i^{t_i})$. 
      \item Let $W\subseteq R$ be a multiplicative set consisting of regular elements. Then $W\invrs \tau(R, \cartalg C) = \tau(W\invrs R, W\invrs \cartalg C)$. 
      \item  The Cartier algebra $\fullCA R$ is non-degenerate.  \label{part:nondegen}
    \end{enumerate}
  \end{lemma}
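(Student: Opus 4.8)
The plan is to treat the four parts in the order (1), (4), (3), (2); only the last requires a genuinely new idea, and it is where I expect the real work to be. Part (1) is immediate: $\tau(R,\cartalg D)$ contains a regular element and is compatible with every map of $\cartalg D$, hence \emph{a fortiori} with every map of the subalgebra $\cartalg C$, so the minimality defining $\tau(R,\cartalg C)$ forces $\tau(R,\cartalg C)\subseteq\tau(R,\cartalg D)$ (non-degeneracy is used only so that both test ideals exist).

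For (4) and (3) the common tool is that, $R$ being Noetherian and $F$-finite, $F^e_* R$ is finitely presented, so $\homgp_R(F^e_* R,-)$ commutes with localization. For (4) I would localize at the total quotient ring $K$; as $R$ is reduced and $F$-finite, $K\cong\prod_j\kappa(\eta_j)$ is a finite product of $F$-finite fields indexed by the minimal primes $\eta_j$ of $R$, and each $F^e_*\kappa(\eta_j)$ is a nonzero finite-dimensional $\kappa(\eta_j)$-vector space, so choosing a nonzero functional on each factor produces $\Psi\in\homgp_K(F^e_* K,K)=W^{-1}\homgp_R(F^e_* R,R)$ ($W$ the set of nonzerodivisors) that is nonzero at every $\eta_j$. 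Writing $\Psi=\vp/w$ with $\vp\in\homgp_R(F^e_* R,R)$, the localization of $\vp$ at $\eta_j$ is $\bar w\,\Psi_j$ with $\bar w$ a unit of the field $\kappa(\eta_j)$, hence nonzero; so $\vp$ is a nondegenerate element of $\homgp_R(F^e_* R,R)$ and $\fullCA{R}$ is non-degenerate. For (3), write $S=W^{-1}R$ (so $R\hookrightarrow S$ since $W$ consists of nonzerodivisors): the inclusion $\tau(S,W^{-1}\cartalg C)\subseteq W^{-1}\tau(R,\cartalg C)$ holds because $W^{-1}\tau(R,\cartalg C)$ is an $S$-ideal containing a regular element and compatible with $W^{-1}\cartalg C$ (a map of the form $\vp/w$, $\vp\in\cartalg C_e$, carries elements of $W^{-1}\tau(R,\cartalg C)$ into $W^{-1}\tau(R,\cartalg C)$), so minimality applies; conversely, if $J=\tau(S,W^{-1}\cartalg C)$ then the contraction $J^{\mathrm c}=\{r\in R:r/1\in J\}$ is $\cartalg C$-compatible — for $\vp\in\cartalg C_e$ and $r\in J^{\mathrm c}$, the localized map sends $F^e_*(r/1)$ to $\vp(F^e_* r)/1\in J$, so $\vp(F^e_* r)\in J^{\mathrm c}$ — and contains a regular element of $R$ (if $d=a/w\in J$ is regular in $S$, then $a=w d\in J^{\mathrm c}$ is regular in $R$), so minimality gives $\tau(R,\cartalg C)\subseteq J^{\mathrm c}$ and hence $W^{-1}\tau(R,\cartalg C)\subseteq W^{-1}J^{\mathrm c}=J$.

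For (2), the inclusion $\tau(R,\prod_i\mathfrak a_i^{t_i})\subseteq\tau(R,\prod_i\mathfrak b_i^{t_i})$ follows from (1) since $\mathfrak a_i\subseteq\mathfrak b_i$ gives $\cartalg C^{R,\,\prod_i\mathfrak a_i^{t_i}}\subseteq\cartalg C^{R,\,\prod_i\mathfrak b_i^{t_i}}$ (both non-degenerate by (4), using that each $\mathfrak a_i$ contains a regular element). For the reverse, put $J=\tau(R,\prod_i\mathfrak a_i^{t_i})$, $L_e=\prod_i\mathfrak a_i^{\ceil{t_i(p^e-1)}}$, $M_e=\prod_i\mathfrak b_i^{\ceil{t_i(p^e-1)}}$, and note $L_e\subseteq M_e\subseteq\overline{L_e}$. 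The one external input I would invoke is a uniform Briançon–Skoda statement: since $R$ is reduced and excellent (being $F$-finite), the normalization of the multi-Rees algebra of $\mathfrak a_1,\dots,\mathfrak a_n$ is module-finite over it, so there is a regular $d_0\in R$ with $d_0\,\overline{\prod_i\mathfrak a_i^{m_i}}\subseteq\prod_i\mathfrak a_i^{m_i}$ for all exponent vectors $(m_i)$; replacing $d_0$ by $d_0c_0$ for a regular $c_0\in J$ gives a regular $c\in J$ with $c\,\overline{L_e}\subseteq L_e$ for all $e$. Now $c^2$ is a regular element of $\tau(R,\prod_i\mathfrak b_i^{t_i})$, so by \autoref{thm:regTauIsSum}
\[
  \tau\Bigl(R,\ \textstyle\prod_i\mathfrak b_i^{t_i}\Bigr)=\sum_{e\ge 0}\ \sum_{\vp\in F^e_*(M_e)\,\homgp_R(F^e_* R,\,R)}\vp\bigl(F^e_* c^2\bigr),
\]
and each generator of the right-hand side has the form $\psi\bigl(F^e_*(w\,c^2)\bigr)$ with $\psi\in\homgp_R(F^e_* R,R)$ and $w\in M_e$. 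Since $cw\in c\,\overline{L_e}\subseteq L_e$, we may factor $w\,c^2=(cw)\cdot c$ with $cw\in L_e$ and $c\in J$, and compatibility of $J$ with $\cartalg C^{R,\,\prod_i\mathfrak a_i^{t_i}}$ — which says precisely that $\psi\bigl(F^e_*(v\,j)\bigr)\in J$ for all $v\in L_e$, $j\in J$, $\psi\in\homgp_R(F^e_* R,R)$ — shows $\psi(F^e_*(w\,c^2))\in J$. Hence $\tau(R,\prod_i\mathfrak b_i^{t_i})\subseteq J$, finishing (2).

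I expect (2) to be the main obstacle. Its two delicate points are: (a) producing a \emph{single} regular element $c$ that lies in the test ideal $J$ and is simultaneously a Briançon–Skoda multiplier for \emph{all} the ideals $L_e$ — this is exactly what module-finiteness of the normalized multi-Rees algebra provides, together with the observation that in a reduced ring an ideal missing every minimal prime contains a nonzerodivisor; and (b) the bookkeeping of running \autoref{thm:regTauIsSum} with $c^2$ rather than $c$, so that in the factorization $w\,c^2=(cw)\cdot c$ one copy of $c$ moves ``inside $L_e$'' while the other stays inside $J$. Parts (1), (3), and (4) are then routine consequences of the minimality property of the test ideal and of the compatibility of $\homgp_R(F^e_*-,R)$ with localization.
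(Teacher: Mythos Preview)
The paper does not supply its own proof of this lemma: it is stated with a \emph{Cf.} citation to the three references and no proof environment follows. So there is nothing in the paper to compare your argument against.

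That said, your proposal is correct and follows the standard route. Parts (1), (3), and (4) are routine exactly as you describe. For (2), producing a uniform Brian\c con--Skoda multiplier via module-finiteness of the normalized multi-Rees algebra is the right idea; the one point worth making explicit is why the conductor of $\overline{A}/A$ (with $A=R[\mathfrak a_1 T_1,\dots,\mathfrak a_N T_N]$) contains a nonzerodivisor \emph{in degree zero}. Localizing at any minimal prime $\eta$ of $R$ makes each $\mathfrak a_i R_\eta=R_\eta$ (here you use that each $\mathfrak a_i$ contains a nonzerodivisor, which you correctly note is needed anyway for the Cartier algebras in question to be nondegenerate), so $A_\eta$ is a polynomial ring over a field and $(\overline{A}/A)_\eta=0$; since $\overline{A}/A$ is finitely generated over $A$, its $R$-annihilator avoids every $\eta$ and hence contains a nonzerodivisor by prime avoidance. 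The $c^2$ bookkeeping is exactly right. An alternative one sometimes sees---and which the paper itself uses later in the adjoint setting (\autoref{lemma:integralClosureLemmaForJInTau})---is to clear denominators of the $t_i$ and reduce to powers of the single ideal $\prod_i\mathfrak a_i^{a_i'}$, then invoke the one-ideal statement $\overline{\mathfrak a^n}=\mathfrak a^{n-k}\overline{\mathfrak a^k}$; but your multi-Rees argument handles all $e$ at once and is cleaner for the present purpose.
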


%A couple more definitions that we'll need:
%\begin{definition}
%  A Cartier algebra $\cartalg C$ on a ring $R$ is said to be \emph{strongly $F$-regular} if $\tau(R, \cartalg C) = R$. Equivalently, if for all $c\in R$ there exists $e \in \bN$ and $\vp \in \cartalg C_e$ such that $\vp(F^e_* c) = 1$. 
%\end{definition}
%
% define ``center'' of a cartier algebra? Maybe I should just say ``compatible with the maps in C''
%$F$-regular cartier algebras?

\section{Diagonal Cartier Algebras and Subadditivity}
Let $R$ be a noetherian ring in characteristic $p$ and $I \subseteq R$ an ideal. Then we have  $F^e_* \left( R/I \right) = F^e_* R / F^e_* I$. Thus, each map $\vp\colon F^e_* R \to R$ satisfying $\vp(F^e_* I) \subseteq I$ induces a map $\overline \vp\colon F^e_* (R/I) \to R/I$. We can do something similar for Cartier algebras:
\begin{definition}
  Let $\cartalg D$ be a Cartier algebra on $R$ compatible with an ideal $I \subseteq R$. We define the \emph{restriction of $\cartalg D$ to $R/I$}, denoted $\cartalg D|_{R/I}$, to be the set of maps $\bigoplus_{e\geq 0} \cartalg D_e |_{R/I}$, where 
  \[
    \cartalg D_e |_{R/I} \coloneqq \left\{ \overline \vp\colon F^e_*(R/I) \to R/I \suchthat \vp\in \cartalg D_e \right\}.
  \]
\end{definition}
\begin{proposition}
  Let $\cartalg D$ be a Cartier algebra on $R$ compatible with an ideal $I \subseteq R$. Then $\cartalg D |_{R/I}$ is a Cartier algebra on $R/I$. 
\end{proposition}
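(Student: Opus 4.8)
The plan is to verify directly that $\cartalg C \coloneqq \cartalg D|_{R/I}$ satisfies the axioms of a Cartier algebra on $S \coloneqq R/I$. The key organizing observation is that, because every $\vp \in \cartalg D_e$ is compatible with $I$ by hypothesis, the assignment $\rho_e\colon \cartalg D_e \to \homgp_S(F^e_* S, S)$, $\vp \mapsto \overline\vp$, is well-defined (this is the only place the compatibility hypothesis is used), and it is additive since reduction modulo $F^e_* I$ is additive and $\overline{\vp_1 + \vp_2} = \overline{\vp_1} + \overline{\vp_2}$. Hence $\cartalg C_e = \rho_e(\cartalg D_e)$ is an additive subgroup of $\homgp_S(F^e_* S, S)$, and $\cartalg C = \bigoplus_{e\geq 0} \cartalg C_e$ is an additive abelian group of the required form.

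Next I would check the degree-zero condition. By convention $\cartalg D_0 = R$, where $r\in R$ is identified with multiplication by $r$ in $\homgp_R(R,R)$; its reduction is multiplication by the image $\overline r$ on $S$. Thus $\rho_0$ is the canonical surjection $R \to S = \homgp_S(S,S)$, and $\cartalg C_0 = S$ as required.

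For closure under the $S$-module structure and under composition, it suffices to establish, for all $r\in R$, $\vp \in \cartalg D_e$, and $\psi \in \cartalg D_d$, the identities
\[
  \overline{r\cdot\vp} = \overline r \cdot \overline\vp, \qquad \overline{\vp\cdot r} = \overline\vp \cdot \overline r, \qquad \overline{\vp\cdot\psi} = \overline\vp \cdot \overline\psi,
\]
where on the right the operations are the ones on the relevant $\homgp_S$-modules. Each is checked by evaluating both sides on an arbitrary element $F^e_* \overline x$ (respectively $F^{e+d}_* \overline x$) and using the identification $F^e_*(R/I) = F^e_* R / F^e_* I$ together with the definitions of the operations and of $F^e_*\overline\psi$; for instance $(\overline\vp\cdot\overline\psi)(F^{e+d}_* \overline x) = \overline\vp(F^e_* \overline{\psi(F^d_* x)}) = \overline{\vp(F^e_* \psi(F^d_* x))} = \overline{(\vp\cdot\psi)(F^{e+d}_* x)}$. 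Since $\cartalg D$ is a Cartier algebra we have $r\cdot\vp \in \cartalg D_e$, $\vp\cdot r \in \cartalg D_e$, and $\vp\cdot\psi \in \cartalg D_{d+e}$, and all of these are compatible with $I$ by hypothesis, so their reductions lie in $\cartalg C_e$, $\cartalg C_e$, and $\cartalg C_{d+e}$ respectively. As $R \to S$ is surjective, every element of $S$ has the form $\overline r$, so this shows $\cartalg C$ is closed under left and right multiplication by $S$ and under composition; combined with the previous two paragraphs, $\cartalg C = \cartalg D|_{R/I}$ is a Cartier algebra on $R/I$.

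I do not anticipate a substantive obstacle here: the only step requiring care is the bookkeeping in the three displayed identities, in particular keeping straight the identification $F^e_*(R/I) = F^e_* R / F^e_* I$ and the meaning of $F^e_*\overline\psi$, but these are entirely routine element-wise verifications.
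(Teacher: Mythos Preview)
Your proposal is correct and follows essentially the same approach as the paper's proof: both arguments verify the Cartier algebra axioms for $\cartalg D|_{R/I}$ by lifting to $\cartalg D$ and using that $\cartalg D$ is itself a Cartier algebra. Your version is a bit more explicit in organizing the argument via the restriction maps $\rho_e$ and in writing out the bookkeeping identities, but the underlying logic is identical.
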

\begin{proof}
  Let $\vp_1, \vp_2 \in \cartalg D_d |_{R/I}$,  $\psi \in \cartalg D_e |_{R/I}$, and $r\in R/I$. Then there exist some $\vp_1', \vp_2' \in \cartalg D_d$ with $\overline{\vp_1'} = \vp_1$ and $\overline{\vp_2'} = \vp_2$. As $\cartalg D$ is a Cartier algebra, we have $\vp_1' + \vp_2' \in \cartalg D_d$, and we see that
  \[
    \vp_1 + \vp_2 = \overline{\vp_1' + \vp_2'} \in \cartalg D_d |_{R/I}.
  \]
  A similar argument shows that $r \psi, \psi r \in \cartalg D_e |_{R/I}$ and $\vp_1 \circ F^d_* \psi \in \cartalg D_{d+ e} |_{R/I}$. It follows from the definitions that $\cartalg D_0 |_{R/I} = R/I$ provided that $\cartalg D_0 = R$. 
\end{proof}

We define another useful operation on Cartier algebras. 
\begin{definition}
  Let $\cartalg C$ be a Cartier algebra on $R$ and let $I\subseteq R$ be an ideal. We define the \emph{subalgebra of maps compatible with $I$}, denoted $\cartalg C^{I \compSubAlg}$, to be the set of maps $\bigoplus_{e \geq 0} \cartalg C^{I \compSubAlg}_e$, where
  \[
    \cartalg C^{I \compSubAlg}_e \coloneqq \left\{\vp \suchthat \vp \in \cartalg C_e, \; \vp(F^e_* I) \subseteq I \right\}.
  \]
\end{definition}
\begin{proposition}
   Let $\cartalg C$ be a Cartier algebra on $R$ and $I \subseteq R$ an ideal. Then $\cartalg C^{I \compSubAlg}$ is a Cartier algebra. 
\end{proposition}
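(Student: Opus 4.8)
The plan is to verify, directly from the definitions and in parallel with the proof of the previous proposition, that $\cartalg C^{I\compSubAlg}$ satisfies the Cartier algebra axioms: that each $\cartalg C^{I\compSubAlg}_e$ is closed under addition, that $\bigoplus_e \cartalg C^{I\compSubAlg}_e$ is closed under left and right multiplication by elements of $R$ and under composition, and that $\cartalg C^{I\compSubAlg}_0 = R$. In each case the map under consideration already lies in the appropriate graded piece of $\cartalg C$ because $\cartalg C$ is a Cartier algebra, so the only thing to check is that the relevant operation preserves $I$-compatibility.

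First I would dispatch the easy axioms. If $\vp_1, \vp_2 \in \cartalg C^{I\compSubAlg}_e$, then $(\vp_1 + \vp_2)(F^e_* I) \subseteq \vp_1(F^e_* I) + \vp_2(F^e_* I) \subseteq I$. If $r \in R$, then $(r\cdot\vp)(F^e_* I) = r\,\vp(F^e_* I) \subseteq rI \subseteq I$; and since $rI \subseteq I$ we have $F^e_*(rI) \subseteq F^e_* I$, so from $(\vp\cdot r)(F^e_* x) = \vp(F^e_*(rx))$ we get $(\vp\cdot r)(F^e_* I) = \vp(F^e_*(rI)) \subseteq \vp(F^e_* I) \subseteq I$. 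For the normalization, $\cartalg C_0 = R$ acts by multiplication, and every such map carries $I$ into $I$ precisely because $I$ is an ideal, so $\cartalg C^{I\compSubAlg}_0 = R$.

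The only step that requires unwinding a definition is closure under composition. Given $\vp \in \cartalg C^{I\compSubAlg}_e$ and $\psi \in \cartalg C^{I\compSubAlg}_d$, I would use the formula $(F^e_*\psi)(F^{e+d}_* x) = F^e_*\psi(F^d_* x)$ to observe that $(F^e_*\psi)(F^{e+d}_* I) = F^e_*\bigl(\psi(F^d_* I)\bigr) \subseteq F^e_* I$ by $\psi$-compatibility, and then apply $\vp$-compatibility to conclude $(\vp\cdot\psi)(F^{e+d}_* I) = \vp\bigl((F^e_*\psi)(F^{e+d}_* I)\bigr) \subseteq \vp(F^e_* I) \subseteq I$. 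Together with $\vp\cdot\psi \in \cartalg C_{d+e}$, this shows $\vp\cdot\psi \in \cartalg C^{I\compSubAlg}_{d+e}$.

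I do not anticipate a genuine obstacle here: the argument is a routine diagram chase. The only points that need care are keeping the grading convention $\cartalg C_e \cdot \cartalg C_d \subseteq \cartalg C_{d+e}$ straight in the composition law, and noting that the hypothesis that $I$ is an ideal (rather than merely an $R$-submodule preserved by the maps in $\cartalg C$) is used only in the right-multiplication axiom and in identifying $\cartalg C^{I\compSubAlg}_0$ with $R$.
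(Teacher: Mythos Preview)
Your proof is correct and follows essentially the same approach as the paper: both verify directly that $I$-compatibility is preserved under addition, left and right $R$-multiplication, and composition, and that $\cartalg C^{I\compSubAlg}_0 = R$. Your write-up is slightly more detailed (particularly in unwinding the composition and the right-multiplication step), but the argument is the same.
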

\begin{proof}
  Suppose $\vp \in \cartalg C_e $ and $\psi\in \cartalg C_d$ are two maps satisfying $\vp\left( F^e_* I  \right) \subseteq I$ and $\psi\left( F^d_* I \right) \subseteq I$. Clearly, for all $x\in R$, $x \vp(F^e_*I )\subseteq I$ and $\vp(F^e_* x I)\subseteq I$. It's also clear that $\vp(F^e_* I) + \psi(F^d_* I) \subseteq I$. Further, 
  \[
    \vp \circ F^e_* \psi\left( F^{e+d}_* I \right) \subseteq \vp\left( F^e_* I  \right)\subseteq I.
  \]
  Finally, note that $\cartalg C_0^{I \compSubAlg} = R$ whenever $\cartalg C_0 = R$. 
\end{proof}

The next lemma is the  key ingredient in proving our new subadditivity formula. 
\begin{lemma}
  For any reduced ring $R$, Cartier algebra $\cartalg C$ on $R$, and radical ideal $I \subseteq R$ we have 
  \[
    \tau\left( R/I, \cartalg C^{I \compSubAlg} |_{R/I} \right) \subseteq \tau\left( R, \cartalg C \right) R/I,
  \]
  provided that the right-hand side contains a regular element of $R/I$ and the test ideal on the left, $\tau\left( R/I, \cartalg C^{I \compSubAlg} |_{R/I} \right)$, exists. 
  %\label{prop:tauUnderRestriction}
  \label{almostRestriction}
\end{lemma}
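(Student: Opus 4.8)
The plan is to realize $J := \tau(R,\cartalg C)\cdot R/I$ as an admissible competitor in the characterization of $\tau(R/I, \cartalg C^{I\compSubAlg}|_{R/I})$ as a \emph{smallest} ideal, and then to invoke minimality. Here $R$ reduced makes $\tau(R,\cartalg C)$ meaningful (and, together with the hypothesis that the right-hand side contains a regular element of $R/I$, it ensures that $\tau(R,\cartalg C)$ is indeed a well-defined ideal), while $I$ radical makes $R/I$ reduced so that test ideals on $R/I$ are defined. Two things need to be checked: (i) $J$ contains a regular element of $R/I$; and (ii) $J$ is compatible with every map in $\cartalg C^{I\compSubAlg}|_{R/I}$. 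Item (i) is exactly the hypothesis on the right-hand side. Granting (i) and (ii), and using that $\tau(R/I, \cartalg C^{I\compSubAlg}|_{R/I})$ exists by hypothesis and is by definition the smallest ideal of $R/I$ that contains a regular element and is compatible with $\cartalg C^{I\compSubAlg}|_{R/I}$, we obtain $\tau(R/I, \cartalg C^{I\compSubAlg}|_{R/I})\subseteq J$, which is the assertion.

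The substance is in (ii). Fix $e$ and $\vp \in \cartalg C^{I\compSubAlg}_e$, so that $\vp \in \cartalg C_e$ and $\vp(F^e_* I)\subseteq I$; the second condition is precisely what lets $\vp$ descend to $\overline\vp\colon F^e_*(R/I)\to R/I$, characterized by $\overline\vp(F^e_*\overline w) = \overline{\vp(F^e_* w)}$ for any lift $w \in R$. Because $\tau(R,\cartalg C)$ is, by definition, compatible with all of $\cartalg C$, we have $\vp(F^e_*\tau(R,\cartalg C))\subseteq\tau(R,\cartalg C)$. Now $J$, regarded inside $R/I$, has full preimage $\tau(R,\cartalg C) + I$ in $R$. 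Take $z\in J$ and lift it to $w = a + b$ with $a\in\tau(R,\cartalg C)$ and $b\in I$; using that $F^e_*\colon R\to F^e_* R$ is an additive bijection, $F^e_* w = F^e_* a + F^e_* b$, whence $\vp(F^e_* w) = \vp(F^e_* a) + \vp(F^e_* b)\in\tau(R,\cartalg C)+I$. Therefore $\overline\vp(F^e_* z) = \overline{\vp(F^e_* w)}$ lies in $(\tau(R,\cartalg C)+I)/I = J$. As $z\in J$, $e$, and $\vp$ were arbitrary, $\overline\vp(F^e_* J)\subseteq J$ for every map $\overline\vp$ in $\cartalg C^{I\compSubAlg}|_{R/I}$, which is (ii).

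I do not anticipate a real obstacle; the argument is essentially bookkeeping with the identification $F^e_*(R/I) = F^e_* R/F^e_* I$ and the ``smallest compatible ideal'' characterization of test ideals. The delicate points are: the precise meaning of $F^e_*$ applied to an ideal of a quotient, together with its interaction with lifting elements and with the additive structure of $F^e_* R$ (handled above by passing to preimages in $R$); and making sure the hypotheses are used where needed, since the existence hypothesis on the left-hand test ideal is what makes the minimality step legitimate, and the reducedness of $R/I$ (from $I$ radical) and of $R$ are needed merely so that the two test ideals appearing in the statement are defined at all.
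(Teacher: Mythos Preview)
Your proof is correct and follows essentially the same approach as the paper: show that $\tau(R,\cartalg C)\cdot R/I$ is compatible with every map in $\cartalg C^{I\compSubAlg}|_{R/I}$ (by lifting to $R$ and using compatibility of $\tau(R,\cartalg C)$ with $\cartalg C$), then invoke minimality of the left-hand test ideal. The only difference is presentational---you chase elements through the preimage $\tau(R,\cartalg C)+I$, while the paper phrases the same computation as a commutative square.
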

\begin{proof}
  Let  $\vp \in  \cartalg C_e |_{R/I}$. By definition there exists a lifting $\hat \vp \in \cartalg C_e$, so that the diagram commutes:
  \[
    \xymatrix{
      F^e_*R  \ar[r]^{\widehat \vp} \ar[d]_{F^e_* \pi} &  R \ar[d]^{\pi} \\
      F^e_*(R/I) \ar[r]^{\vp} & R/I
    }
  \]
  This means that 
  \[
    \vp\left( F^e_* \tau\left( R, \cartalg C \right) R/I\right) = \hat \vp\left(F^e_* \tau\left( R, \cartalg C \right) \right) R/I
  \]
  By definition of $\tau\left( R, \cartalg C \right)$, we see the right hand side is contained in $\tau\left( R, \cartalg C \right) R/I$. Then we are done by the minimality of $\tau\left( R/I, \cartalg C |_{R/I} \right)$. 
\end{proof}
Note that the above lemma does not apply when $\cartalg C$ is compatible with $I$, for then we would have $\tau(R, \cartalg C) \subseteq I$. This motivates  \autoref{def:tauAlongI}, \cf~\autoref{prop:restriction}. 

\begin{proposition}
  For all reduced rings $R$, Cartier algebras $\cartalg C$ on $R$, formal products $\prod_i \mathfrak a_i^{t_i}$ of ideals on $R$, and radical ideals $I \subseteq R$, we have 
  \[
    \tau\left( R/I, \cartalg C^{I \compSubAlg} |_{R/I}, \prod_i (\mathfrak a_iR/I)^{t_i} \right) \subseteq \tau\left( R, \cartalg C, \prod_i \mathfrak a_i^{t_i} \right) R/I,
  \]
  provided that the right-hand side contains a regular element of $R/I$ and the test ideal on the left-hand side exists. 
  \label{prop:tauUnderRestriction}
\end{proposition}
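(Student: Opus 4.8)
The plan is to repeat the argument of \autoref{almostRestriction} with the formal product $\prod_i \mathfrak a_i^{t_i}$ carried along, after rewriting both twisted test ideals as ordinary test ideals of auxiliary Cartier algebras. Set $\cartalg D \coloneqq \cartalg C^{R,\,\prod_i \mathfrak a_i^{t_i}}$, so that, recalling $\tau(R,\cartalg C,\prod_i\mathfrak a_i^{t_i}) = \tau(R,\cartalg C^{R,\,\prod_i\mathfrak a_i^{t_i}})$, the right-hand side is $J \coloneqq \tau(R,\cartalg D)\,R/I$. Similarly, putting $\cartalg E \coloneqq \bigl(\cartalg C^{I\compSubAlg}|_{R/I}\bigr)^{R/I,\,\prod_i(\mathfrak a_iR/I)^{t_i}}$, the left-hand side is $\tau(R/I,\cartalg E)$. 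By hypothesis this latter test ideal exists and $J$ contains a regular element of $R/I$ (and $R/I$ is reduced since $I$ is radical), so by the minimality characterization of $\tau(R/I,\cartalg E)$ it suffices to check that $J$ is compatible with every map in $\cartalg E$.

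First I would describe $\cartalg E_e$ concretely. Because $R\to R/I$ is surjective, $\prod_i(\mathfrak a_iR/I)^{\ceil{t_i(p^e-1)}} = \bigl(\prod_i\mathfrak a_i^{\ceil{t_i(p^e-1)}}\bigr)R/I$, and $\bigl(\cartalg C^{I\compSubAlg}|_{R/I}\bigr)_e$ is the set of reductions $\overline\vp$ of maps $\vp\in\cartalg C_e$ with $\vp(F^e_*I)\subseteq I$. Hence a typical element of $\cartalg E_e$ is a finite sum $\sum_j F^e_*\overline{b_j}\cdot\overline{\vp_j}$ with $b_j\in\prod_i\mathfrak a_i^{\ceil{t_i(p^e-1)}}$ and $\vp_j\in\cartalg C_e$ compatible with $I$. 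Setting $\psi\coloneqq\sum_j F^e_*b_j\cdot\vp_j$, we have $\psi\in F^e_*\bigl(\prod_i\mathfrak a_i^{\ceil{t_i(p^e-1)}}\bigr)\cartalg C_e = \cartalg D_e$, and $\psi(F^e_*I) = \sum_j\vp_j(F^e_*b_jI)\subseteq\sum_j\vp_j(F^e_*I)\subseteq I$, and the reduction of $\psi$ mod $I$ is exactly the given element of $\cartalg E_e$. In other words every map in $\cartalg E_e$ is the reduction of some $\psi\in\cartalg D_e$ with $\psi(F^e_*I)\subseteq I$, and such a $\psi$ fits into the same commutative square used in the proof of \autoref{almostRestriction}.

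The computation is then literally the one in \autoref{almostRestriction}: for such a $\psi$, compatibility of $\tau(R,\cartalg D)$ with $\cartalg D$ gives $\psi\bigl(F^e_*\tau(R,\cartalg D)\bigr)\subseteq\tau(R,\cartalg D)$, and applying the square yields
\[
  \overline\psi\left(F^e_*J\right) = \overline\psi\left(F^e_*\bigl(\tau(R,\cartalg D)\,R/I\bigr)\right) = \psi\bigl(F^e_*\tau(R,\cartalg D)\bigr)\,R/I \subseteq \tau(R,\cartalg D)\,R/I = J.
\]
Since this holds for every $e$ and every map in $\cartalg E_e$, the ideal $J$ is $\cartalg E$-compatible, and minimality of $\tau(R/I,\cartalg E) = \tau\bigl(R/I,\cartalg C^{I\compSubAlg}|_{R/I},\prod_i(\mathfrak a_iR/I)^{t_i}\bigr)$ gives the desired containment.

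I expect the only real obstacle to be the bookkeeping in the second paragraph: the interaction of the two operations ``restrict along $R\to R/I$'' and ``twist by $\prod_i\mathfrak a_i^{t_i}$'', i.e. the identification of $\cartalg E_e$ with the mod-$I$ reductions of the $I$-compatible maps in $\cartalg D_e$, which comes down to the surjectivity remark above. One could alternatively phrase this as an inclusion $\cartalg E\subseteq\cartalg D^{I\compSubAlg}|_{R/I}$ of Cartier algebras on $R/I$ and then quote \autoref{almostRestriction} together with \autoref{lemma:wellKnown}(1), but that route additionally requires knowing that $\tau\bigl(R/I,\cartalg D^{I\compSubAlg}|_{R/I}\bigr)$ exists, which the direct argument sidesteps.
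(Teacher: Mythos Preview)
Your proof is correct. The paper takes precisely the alternative route you sketch in your final paragraph: it records the two inclusions
\[
  \bigl(\cartalg C^{I\compSubAlg}|_{R/I}\bigr)^{\prod_i(\mathfrak a_iR/I)^{t_i}} \subseteq \bigl((\cartalg C^{I\compSubAlg})^{\prod_i\mathfrak a_i^{t_i}}\bigr)\big|_{R/I}
  \quad\text{and}\quad
  (\cartalg C^{I\compSubAlg})^{\prod_i\mathfrak a_i^{t_i}} \subseteq (\cartalg C^{\prod_i\mathfrak a_i^{t_i}})^{I\compSubAlg},
\]
applies monotonicity of test ideals (\autoref{lemma:wellKnown}) twice, and finishes with \autoref{almostRestriction}. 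Your second paragraph proves exactly these containments in one breath by exhibiting the lift $\psi$, and then you run the compatibility check of \autoref{almostRestriction} by hand rather than quoting it. The mathematical content is identical; your packaging has the minor advantage you point out, that it never needs the intermediate test ideals to exist, while the paper's version isolates the two commutation steps (twist versus restrict, twist versus passage to $I$-compatible maps) as standalone observations.
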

\begin{proof}
  Let $\cartalg D$ be a Cartier algebra on $R$ compatible with $I$. Then we have
\[
  \left( \cartalg D |_{R/I} \right)^{\prod_i (\mathfrak a_i R/I)^{t_i}} \subseteq \left( \cartalg D^{ \prod_i \mathfrak a_i^{t_i}} \right)\Big |_{R/I}.
\]
It follows that
\[
  \tau\left( R/I, \cartalg C^{I \compSubAlg} |_{R/I}, \prod_i (\mathfrak a_iR/I)^{t_i} \right) \subseteq 
  \tau\left( R/I, \left(\cartalg C^{I \compSubAlg}\right)^{\prod_i \mathfrak a_i^{t_i}} \Big |_{R/I} \right),
\]
by \autoref{lemma:wellKnown}. Similarly, we note that
\[
  \left(\cartalg C^{I \compSubAlg}\right)^{\prod_i \mathfrak a_i^{t_i}}  \subseteq \left(\cartalg C^{\prod_i \mathfrak a_i^{t_i}}\right)^{I \compSubAlg}.
\]
Then we get
\[
  \tau\left( R/I, \left(\cartalg C^{I \compSubAlg}\right)^{\prod_i \mathfrak a_i^{t_i}} \Big |_{R/I} \right) \subseteq
  \tau\left( R/I, \left(\cartalg C^{\prod_i \mathfrak a_i^{t_i}}\right)^{I \compSubAlg} \Big |_{R/I} \right) \subseteq
  \tau\left( R, \cartalg C, \prod_i \mathfrak a_i^{t_i} \right) R/I,
\]
where the second containment follows from \autoref{almostRestriction}.
\end{proof}

We obtain our subadditivity formula by applying \autoref{prop:tauUnderRestriction} to the case where we consider the ideal $I_\Delta \coloneqq \ker(R \otimes_k R \xrightarrow{\mu} R)$. First, we introduce some notation which will be used throughout the rest of this paper:
\begin{notation} Let $R$ be a $k$-algebra essentially of finite type, where $k$ is a perfect field of positive characteristic.
  \begin{itemize}
    \item $\mu\colon R\otimes_k R \to R$ is the map given by $x \otimes y \mapsto xy$.
    \item $I_\Delta \subseteq R\otimes_k R$ denotes the kernel of $\mu$. Geometrically, $I_\Delta$ cuts out the diagonal embedding $\spec R \subseteq \spec R\times_{\spec k} \spec R$. Note that we can generate $I_\Delta$ as an ideal in the following way: $I_\Delta = \braket{x \otimes 1 - 1 \otimes x  \suchthat x \in R}$.
    \item We let $\fullCA{R\otimes_k R,\, I_\Delta \compSubAlg}\coloneqq ( \fullCA{R\otimes_k R})^{I_\Delta \compSubAlg}$ denote the Cartier algebra on $R\otimes_k R$ of all maps compatible with $I_\Delta$. We say that such maps are \emph{compatible with the diagonal}.
    \item We define the \emph{second diagonal Cartier algebra on $R$} to be 
      \[
        \ConR\coloneqq \fullCA{R\otimes_k R, I_\Delta \compSubAlg}\big|_{(R\otimes_k R)/I_\Delta}.
      \]
      If the ring $R$ is understood from context, we will denote this Cartier algebra simply as $\plainConR$.
  \end{itemize}
\end{notation}
\begin{remark}
  In particular, $\plainConR_e(R)$ is the set of maps $\vp\colon F^e_* R \to R$ that admit a lifting to the tensor product $R\otimes_k R$:
  \[
    \xymatrix{F^e_*\left( R\otimes_k R \right) \ar@{-->}[r] \ar[d]_{F^e_* \mu} & R\otimes_k R \ar[d]^\mu \\
             F^e_* R \ar[r]^\vp & R \\
    }
  \]
  The notation $\mathscr D^{(2)}$ is meant to suggest that one can define $\mathscr D^{(n)}$ as the Cartier algebra of maps on $R$ that lift to the $n$-fold tensor product $R^{\otimes n}$. This theory
  will be developed, along with applications to symbolic powers, in future work joint with Javier Carvajal-Rojas \cite{USTPDiagFReg}. 
\end{remark}
Before we may proceed, we need to recall a general fact about modules:
\begin{lemma}
  Let $R$ and $S$ be commutative algebras over a field $k$. Let $M$ and $N$ be $R$-modules and let $U$ and $V$ be $S$-modules. Suppose also that $M$ and $U$ are finitely presented over their respective rings. Then the canonical map:
  \[
    \Theta\colon \homgp_R(M, N) \otimes_k \homgp_S(U, V) \to \homgp_{R\otimes_k S}\left( M\otimes_k U, N\otimes_k V \right)
  \]
  is an isomorphism. 
  \label{thm:homAndTensor}
\end{lemma}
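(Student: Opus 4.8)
## Proof proposal for Lemma (the canonical map $\Theta$ is an isomorphism)

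The plan is to reduce the statement to the well-known tensor–hom compatibilities over a single ring by a two-step dévissage: first handle the case $M = R$, $U = S$ by hand, then bootstrap to finitely presented $M$ and $U$ using that $\homgp$ and $\otimes_k$ are both left exact (resp. right exact in the appropriate variable) together with the five lemma. Throughout, the key point is that $k$ is a \emph{field}, so $-\otimes_k-$ is exact; this is what makes all the relevant sequences stay exact after tensoring and lets the comparison maps be built functorially.

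First I would record the base case. When $M = R^{m}$ and $U = S^{n}$ are finite free, both sides of $\Theta$ are finite direct sums of copies of $N \otimes_k V$ (using $\homgp_R(R,N) = N$, $\homgp_S(S,V) = V$, $M \otimes_k U = (R\otimes_k S)^{mn}$ and $\homgp_{R\otimes_k S}(R\otimes_k S, N\otimes_k V) = N\otimes_k V$), and one checks directly from the definition of $\Theta$ that it is the identity on each summand; hence it is an isomorphism. This already covers $M = R$, $U = S$.

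Next I would do the dévissage in the variable $M$, keeping $U = S$ fixed for the moment (so that the pairing is just $\homgp_R(M,N)\otimes_k V \to \homgp_{R\otimes_k S}(M\otimes_k S, N\otimes_k V)$). Choose a finite presentation $R^{m_1} \to R^{m_0} \to M \to 0$. Applying $\homgp_R(-,N)$ gives a left-exact sequence $0 \to \homgp_R(M,N) \to \homgp_R(R^{m_0},N) \to \homgp_R(R^{m_1},N)$; tensoring over the field $k$ with $V$ preserves exactness, so the top row of the comparison diagram is exact. For the bottom row, apply $-\otimes_k S$ to the presentation (exact, since $k$ is a field — in fact $R^{m_i}\otimes_k S = (R\otimes_k S)^{m_i}$), obtaining a finite presentation of $M\otimes_k S$ over $R\otimes_k S$, and then apply $\homgp_{R\otimes_k S}(-, N\otimes_k V)$; this is again left exact, so the bottom row is exact. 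The comparison maps $\Theta$ assemble into a map of these two exact sequences (commutativity is a direct check on generators), the two right-hand vertical maps are isomorphisms by the finite-free base case, so by the five lemma (really just its left half) the left-hand vertical map $\Theta$ is an isomorphism. Now fix this $M$ (finitely presented) and run the identical argument in the variable $U$: pick a finite presentation of $U$ over $S$, use exactness of $-\otimes_k(\text{finitely presented }R\text{-module})$ and left-exactness of $\homgp$, and invoke the just-established case ``$M$ finitely presented, $U$ finite free'' for the two right-hand verticals. The five lemma then finishes the general case.

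The step I expect to be the main obstacle — or at least the one requiring the most care — is verifying that the comparison maps $\Theta$ really do commute with the connecting maps in both dévissage diagrams, i.e. that $\Theta$ is natural in $M$ (and in $U$) with respect to the chosen presentations; this is a diagram chase on generators of free modules, routine but easy to get turned around on because of the bookkeeping between the left/right module structures on $\homgp_R(F^e_*R,R)$-style objects and the tensor factors. A secondary point to be careful about: I must make sure I never need right-exactness of $\homgp$ or exactness of $-\otimes_k$ in a variable where it fails — but since $k$ is a field, $-\otimes_k-$ is exact in both variables, so the only one-sided exactness used is the (always available) left-exactness of $\homgp(-,N)$, and the argument goes through. (As the acknowledgements hint, Iyengar's ``slick proof'' presumably shortcuts this by observing $M\otimes_k U$ is finitely presented over $R\otimes_k S$ and directly identifying $\homgp_{R\otimes_k S}(M\otimes_k U, N\otimes_k V)$ with $\homgp_R(M,N)\otimes_k\homgp_S(U,V)$ via a base-change-of-$\homgp$ formula; the dévissage above is the hands-on version of the same idea.)
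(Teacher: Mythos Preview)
Your d\'evissage argument via finite presentations and the five lemma is correct, and the naturality checks you flag as the main bookkeeping burden are indeed routine. The paper, however, takes a different route (the ``slick proof'' you anticipate in your parenthetical, though not quite in the form you guess): it writes down a chain of natural isomorphisms
\[
\homgp_R(M,N)\otimes_k \homgp_S(U,V) \cong \homgp_R\bigl(M,\, N\otimes_k \homgp_S(U,V)\bigr) \cong \homgp_R\bigl(M,\, \homgp_S(U,\, N\otimes_k V)\bigr),
\]
each step using the standard fact that $\homgp_R(M,N)\otimes_A L \cong \homgp_R(M, N\otimes_A L)$ when $M$ is finitely presented and $L$ is $A$-flat (here $A=k$, so every $k$-module is flat). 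Two applications of Hom--tensor adjunction then collapse the right-hand side to $\homgp_{R\otimes_k S}(M\otimes_k U,\, N\otimes_k V)$. Your approach is more elementary and makes the role of finite presentation explicit through the five lemma, at the price of two parallel diagram chases; the paper's approach is shorter and more conceptual, but relies on recognizing the relevant base-change-of-Hom lemma. Both proofs ultimately rest on the same two hypotheses---$k$ a field (for flatness) and $M,U$ finitely presented---so neither is more general than the other.
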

\begin{proof}
  We have the following chain of natural isomorphisms: 
  \begin{align}
    \homgp_R(M, N) \otimes_k \homgp_S(U, V) &\cong \homgp_R\left( M, N \otimes_k \homgp_S(U, V) \right) \label{eqn1}\\
    & \cong \homgp_R\left( M, \homgp_S(U, N\otimes_k V) \right) \label{eqn2}\\
    & \cong \homgp_R\left( M, \homgp_S(U, \homgp_{R\otimes_k S}(R\otimes_k S, N\otimes_k V)) \right) \\
    & \cong \homgp_R\left( M, \homgp_{R\otimes_k S}\left( R\otimes_k U, N\otimes_k V \right) \right) \label{eqn3} \\
    & \cong \homgp_{R\otimes_k S}\left( M\otimes_R R\otimes_k U, N\otimes_k V \right)\label{eqn4} \\
    & \cong \homgp_{R\otimes_k S}\left( M\otimes_k U, N\otimes_k V \right) 
  \end{align}
  The isomorphism in \autoref{eqn1} follows from the facts that $M$ is finitely presented and $\homgp_S(U,V)$ is a flat $k$-module (\cf~\cite[Chapter XVI, Exercise 11]{LangAlgebra}). The isomorphism in \autoref{eqn2} follows by the same argument. The isomorphisms in \autoref{eqn3} and \autoref{eqn4} follow from Hom-Tensor adjunction.
\end{proof}
\begin{corollary}
  Let $k$ be a perfect field of characteristic $p$ and let $R$ and $S$ be $k$-algebras essentially of finite type. Then the canonical map
  \[
    \Theta\colon \homgp_R(F^e_* R, R) \otimes_k \homgp_S (F^e_*S, S) \to \homgp_{R\otimes_k S}(F^e_*(R \otimes_k S), R\otimes_k S)
  \]
  is an isomorphism.
  \label{cor:homAndTensor}
\end{corollary}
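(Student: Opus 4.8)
The plan is to obtain this as a direct consequence of \autoref{thm:homAndTensor}. First I would apply that lemma with $M = F^e_* R$, $N = R$ (as $R$-modules) and $U = F^e_* S$, $V = S$ (as $S$-modules). Since $k$ is perfect, $R$ and $S$ are $F$-finite, so $F^e_* R$ and $F^e_* S$ are finitely generated over $R$ and $S$ respectively; as $R$ and $S$ are Noetherian, these modules are finitely presented, and the hypotheses of \autoref{thm:homAndTensor} are met. This produces a natural isomorphism
\[
  \homgp_R(F^e_* R, R) \otimes_k \homgp_S(F^e_* S, S) \;\cong\; \homgp_{R \otimes_k S}\left( F^e_* R \otimes_k F^e_* S,\; R \otimes_k S \right).
\]

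It then remains to identify the source module $F^e_* R \otimes_k F^e_* S$ with $F^e_*(R \otimes_k S)$ as an $R \otimes_k S$-module. The natural candidate is the map $\beta$ sending $F^e_* a \otimes F^e_* b$ to $F^e_*(a \otimes b)$; keeping the module-structure conventions of Section 2 straight, one checks that $\beta$ is well defined, $R \otimes_k S$-linear, and visibly surjective. The real content is the injectivity of $\beta$, and this is the step that uses perfectness: the $k$-tensor relations defining $F^e_* R \otimes_k F^e_* S$ involve scalars $\lambda \in k$ acting on $F^e_* R$ through the twist $\lambda \mapsto \lambda^{p^e}$, and since $k = k^{p^e}$ every such $\lambda$ may be rewritten as $\mu^{p^e}$, so these relations coincide with the ordinary $k$-bilinear relations defining $R \otimes_k S$. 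Structurally, the point is that since $k$ is perfect its iterated Frobenius $k \to k$ is an isomorphism, along which restriction of scalars is monoidal, and this upgrades the compatibility of the Frobenius on $R \otimes_k S$ with those on $R$ and $S$ to the asserted isomorphism $F^e_*(R \otimes_k S) \cong F^e_* R \otimes_k F^e_* S$.

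Composing the two isomorphisms gives a natural isomorphism of the desired form, and the final step is the routine verification that the composite is the canonical map $\Theta$ of the statement, i.e. that it sends $\vp \otimes \psi$ to the homomorphism $F^e_*(a \otimes b) \mapsto \vp(F^e_* a) \otimes \psi(F^e_* b)$. I expect the only genuine obstacle to be the identification in the second paragraph: one must keep careful track of the competing $k$-module structures carried by $F^e_* R$ — the one restricted from its $R$-module structure versus the one coming from its ring structure — and it is precisely the mismatch of these over a non-perfect field that makes the hypothesis on $k$ indispensable.
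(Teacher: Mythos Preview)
Your proposal is correct and follows essentially the same route as the paper: apply \autoref{thm:homAndTensor} with $M=F^e_*R$, $N=R$, $U=F^e_*S$, $V=S$, noting that $F$-finiteness plus Noetherianity gives finite presentation, and then use perfectness of $k$ to identify $F^e_*R\otimes_k F^e_*S$ with $F^e_*(R\otimes_k S)$. In fact you spell out this last identification more carefully than the paper does.
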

\begin{proof}
  As $R$ is $F$-finite and Noetherian, we see that $F^e_*R$ is a finitely-presented $R$-module. The same goes for $S$. Then this result follows from the above lemma. Note that, since $k$ is perfect, we have $F^e_* R \otimes_k F^e_*R = F^e_*\left( R\otimes_k R \right)$ and similarly for $S$. 
\end{proof}

\begin{theorem}
  Let $k$ be a perfect field of positive characteristic and let $R$ be a reduced $k$-algebra essentially of finite type. Then 
  \[
    \tau(R, \plainConR, \mathfrak a^s \mathfrak b^t) \subseteq \tau(R, \mathfrak a^s) \tau(R, \mathfrak b^t)
  \]
  for all ideals $\mathfrak a, \mathfrak b \subseteq R$ and real numbers $s,t \geq 0$, provided that neither $\mathfrak a$ nor $\mathfrak b$ consists of zero-divisors.
  \label{thm:MySubadd}
\end{theorem}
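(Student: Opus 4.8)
The plan is to apply \autoref{prop:tauUnderRestriction} with the ambient ring $R \otimes_k R$, the radical ideal $I_\Delta = \ker \mu$, the full Cartier algebra $\fullCA{R \otimes_k R}$, and the two ideals $\mathfrak a \otimes_k R$ and $R \otimes_k \mathfrak b$ carrying exponents $s$ and $t$ respectively. A few ingredients must be lined up first: $R \otimes_k R$ is reduced because $k$ is perfect, so $R$ is geometrically reduced and a tensor product of geometrically reduced $k$-algebras is reduced; $I_\Delta$ is radical since $(R \otimes_k R)/I_\Delta \cong R$ is reduced; and by the definitions $\bigl( \fullCA{R \otimes_k R} \bigr)^{I_\Delta \compSubAlg}\big|_{(R \otimes_k R)/I_\Delta}$ is exactly $\plainConR$. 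Under the isomorphism $\bar\mu \colon (R \otimes_k R)/I_\Delta \xrightarrow{\sim} R$ the ideals $(\mathfrak a \otimes_k R)\cdot (R \otimes_k R)/I_\Delta$ and $(R \otimes_k \mathfrak b)\cdot (R \otimes_k R)/I_\Delta$ become $\mathfrak a$ and $\mathfrak b$, so \autoref{prop:tauUnderRestriction} will give
\[
  \tau\bigl( R, \plainConR, \mathfrak a^s \mathfrak b^t \bigr) \subseteq \mu\bigl( \tau\bigl( R \otimes_k R,\ (\mathfrak a \otimes_k R)^s (R \otimes_k \mathfrak b)^t \bigr) \bigr),
\]
once its two side hypotheses are verified; I treat those at the end.

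The substantive content is the containment $\tau\bigl( R \otimes_k R,\ (\mathfrak a \otimes_k R)^s (R \otimes_k \mathfrak b)^t \bigr) \subseteq \tau(R, \mathfrak a^s) \otimes_k \tau(R, \mathfrak b^t)$. I would establish it by showing that $\tau(R, \mathfrak a^s) \otimes_k \tau(R, \mathfrak b^t)$ is compatible with the Cartier algebra $\bigl( \fullCA{R \otimes_k R} \bigr)^{(\mathfrak a \otimes_k R)^s (R \otimes_k \mathfrak b)^t}$ and contains a nonzerodivisor, whence it contains the test ideal on the left by minimality. A nonzerodivisor is immediate: picking regular elements $d_1 \in \tau(R, \mathfrak a^s)$ and $d_2 \in \tau(R, \mathfrak b^t)$, the element $d_1 \otimes d_2$ is a nonzerodivisor of $R \otimes_k R$ because $k$ is a field, so multiplication by $d_i$ stays injective after applying $-\otimes_k R$. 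Compatibility is where \autoref{cor:homAndTensor} does the work: the isomorphism $\Theta$ identifies $\homgp_{R \otimes_k R}(F^e_*(R \otimes_k R), R \otimes_k R)$ with $\homgp_R(F^e_* R, R) \otimes_k \homgp_R(F^e_* R, R)$, and a direct calculation shows $F^e_*(r \otimes r') \cdot \Theta(\varphi_1 \otimes \varphi_2) = \Theta\bigl( (F^e_* r \cdot \varphi_1) \otimes (F^e_* r' \cdot \varphi_2) \bigr)$. Since $(\mathfrak a \otimes_k R)^m (R \otimes_k \mathfrak b)^n = \mathfrak a^m \otimes_k \mathfrak b^n$ as ideals of $R \otimes_k R$, it follows that $\Theta$ carries $F^e_*\bigl( \mathfrak a^{\ceil{s(p^e-1)}} \bigr)\homgp_R(F^e_* R, R) \otimes_k F^e_*\bigl( \mathfrak b^{\ceil{t(p^e-1)}} \bigr)\homgp_R(F^e_* R, R)$ onto the degree-$e$ part of $\bigl( \fullCA{R \otimes_k R} \bigr)^{(\mathfrak a \otimes_k R)^s (R \otimes_k \mathfrak b)^t}$. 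Then for such $\varphi_1, \varphi_2$ and for $p \in \tau(R, \mathfrak a^s)$, $q \in \tau(R, \mathfrak b^t)$ one computes $\Theta(\varphi_1 \otimes \varphi_2)\bigl( F^e_*(p \otimes q) \bigr) = \varphi_1(F^e_* p) \otimes \varphi_2(F^e_* q)$, which lies in $\tau(R, \mathfrak a^s) \otimes_k \tau(R, \mathfrak b^t)$ straight from the definitions of those two test ideals; by $k$-linearity and additivity every element of the Cartier algebra is then compatible with $\tau(R, \mathfrak a^s) \otimes_k \tau(R, \mathfrak b^t)$.

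Granting these two containments, the theorem follows on applying $\mu$: since $\mu(p \otimes q) = pq$ we have $\mu\bigl( \tau(R, \mathfrak a^s) \otimes_k \tau(R, \mathfrak b^t) \bigr) = \tau(R, \mathfrak a^s)\tau(R, \mathfrak b^t)$, and the two displayed inclusions compose to the desired one. It remains to verify the hypotheses of \autoref{prop:tauUnderRestriction}. Existence of the left-hand test ideal amounts to nondegeneracy of $\plainConR$, which holds because at every point of the regular locus of $\spec R$ the ideal $I_\Delta$ is locally generated by a regular sequence, so there the restriction theorem degenerates and $\plainConR$ localizes to the full Cartier algebra $\fullCA R$; since the minimal primes of the reduced ring $R$ lie in that locus, $\plainConR$ contains a nondegenerate map. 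For the regular-element condition on $\mu\bigl( \tau(R \otimes_k R, (\mathfrak a \otimes_k R)^s (R \otimes_k \mathfrak b)^t) \bigr)$, one notes that raising the exponents to their ceilings together with the standard inclusion $\mathfrak c^m \tau(S) \subseteq \tau(S, \mathfrak c^m)$ shows $\tau(R \otimes_k R, (\mathfrak a \otimes_k R)^s (R \otimes_k \mathfrak b)^t)$ contains $(\mathfrak a^{\ceil{s}} \otimes_k \mathfrak b^{\ceil{t}}) \cdot \tau(R \otimes_k R)$; combined with $\tau(R \otimes_k R) = \tau(R) \otimes_k \tau(R)$ (the standard fact that $\tau$ commutes with tensor products over a perfect field), its $\mu$-image contains $\mathfrak a^{\ceil{s}}\tau(R) \cdot \mathfrak b^{\ceil{t}}\tau(R)$, which meets the nonzerodivisors of $R$ since $\mathfrak a$, $\mathfrak b$, and $\tau(R)$ all contain one. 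The main obstacle is the $\Theta$-computation of the previous paragraph: one must check with care that the tensor product over $k$ of the two coefficient-twisted Cartier algebras genuinely reconstructs the coefficient-twisted diagonal Cartier algebra on $R \otimes_k R$, and it is exactly here that perfectness of $k$ — which gives $F^e_*(R \otimes_k R) = F^e_* R \otimes_k F^e_* R$ — and the finite-presentation hypothesis in \autoref{cor:homAndTensor} are used.
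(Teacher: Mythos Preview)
Your overall architecture is exactly the paper's: show $\tau(R\otimes_k R, (\mathfrak a\otimes_k R)^s (R\otimes_k \mathfrak b)^t) \subseteq \tau(R,\mathfrak a^s)\otimes_k \tau(R,\mathfrak b^t)$ via the $\Theta$-isomorphism of \autoref{cor:homAndTensor} and then apply \autoref{prop:tauUnderRestriction} with $I = I_\Delta$. The main step --- compatibility of $\tau(R,\mathfrak a^s)\otimes_k \tau(R,\mathfrak b^t)$ together with the presence of a nonzerodivisor $d_1\otimes d_2$ --- is what the paper means by ``easy to see from \autoref{cor:homAndTensor},'' and you have spelled it out correctly.

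Where you diverge is in the two side hypotheses, and both of your arguments have rough edges. For nondegeneracy of $\plainConR$: the observation that $I_\Delta$ is locally a regular sequence on the smooth locus does not by itself show that every $\varphi\in\fullCA{R}_e$ lifts there, and ``the restriction theorem degenerates'' does not name a mechanism. The actual mechanism is Kunz's theorem: for $f\in R$ regular with $R_f$ smooth, $R_f\otimes_k R_f$ is regular $F$-finite, so $F^e_*(R_f\otimes_k R_f)$ is projective and the diagonal surjection lifts any $\varphi\colon F^e_* R_f\to R_f$; one then multiplies the lift by $(f\otimes f)^N$ to clear denominators and obtain a genuine element of $\ConProd_e$ restricting to $f^{2N}\varphi$. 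For the regular-element hypothesis, the equality $\tau(R\otimes_k R) = \tau(R)\otimes_k\tau(R)$ is not a one-line standard fact; only the $\subseteq$ direction is immediate from compatibility, whereas you need $\supseteq$. The paper avoids this entirely by the same localization trick: choose regular $f\in\mathfrak a$, $g\in\mathfrak b$ with $R_f$, $R_g$ regular, observe $(R\otimes_k R)_{f\otimes g}=R_f\otimes_k R_g$ is regular so its pair test ideal is the unit ideal, whence $(f\otimes g)^n$ lies in $\tau(R\otimes_k R, (\mathfrak a\otimes_k R)^s (R\otimes_k \mathfrak b)^t)$ for some $n$, and $(fg)^n$ is the required regular element in the image modulo $I_\Delta$.
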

\begin{proof}
  From \autoref{cor:homAndTensor} it's easy to see that
  \[
    \tau(R\otimes_k R, (\mathfrak a\otimes_k R)^s \cdot (R \otimes_k \mathfrak b)^t) \subseteq \tau(R, \mathfrak a^s) \otimes_k \tau(R, \mathfrak b^t) 
  \]
  by the minimality of the test ideal on the left. If we mod out the above equation by $I_\Delta$, we get $\tau(R, \mathfrak a^s)  \tau(R, \mathfrak b^t)$ on the right-hand side. By \autoref{prop:tauUnderRestriction}, we're done if we can show that
  \[
    \tau(R\otimes_k R, (\mathfrak a\otimes_k R)^s \cdot (R \otimes_k \mathfrak b)^t) \cdot(R\otimes_kR)/ I_\Delta
  \]
  contains a regular element and that $\ConR$ is non-degenerate. To that end, let $f \in \mathfrak a$ and $g\in \mathfrak b$ be regular elements such that $R_f$ and $R_g$ are regular. As we know $(R\otimes R)_{f \otimes g} = R_f \otimes  R_g$, we see that
  \begin{align*}
    \tau(R\otimes_k R, (\mathfrak a\otimes_k R)^s \cdot (R \otimes_k \mathfrak b)^t) (R\otimes R)_{f\otimes g} &= \tau(R_f \otimes_k R_g, (\mathfrak aR_f \otimes_k R_g)^s \cdot (R_f \otimes_k \mathfrak bR_g)^t)\\
   &= \tau(R_f \otimes_k R_g) = R_f \otimes_k R_g,
  \end{align*}
  where the last equality follows from the regularity of $R_f \otimes_k R_g$. Thus, we have
  \[
    (f\otimes g)^n \in \tau(R\otimes_k R, (\mathfrak a\otimes_k R)^s \cdot (R \otimes_k \mathfrak b)^t),
  \]
  for some $n$, and so 
    $(fg)^n \in \tau(R\otimes_k R, (\mathfrak a\otimes_k R)^s \cdot (R \otimes_k \mathfrak b)^t)\cdot (R\otimes_k R)/I_\Delta$. 

    It remains to check that $\ConR$ is non-degenerate. We know that $\fullCA R_e$ contains a non-degenerate map $\vp$ for some $e>0$, by \autoref{lemma:wellKnown}. Further, since $R$ is reduced, we know that $R_\eta$ is regular for all minimal primes $\eta \in \spec R$ and that every zero-divisor of $R$ is contained in a minimal prime. As the singular locus of $\spec R$ is Zariski-closed, it follows by prime avoidance that there exists a regular element $f\in R$ such that $R_f$ is regular. As $R_f\otimes_k R_f$ is regular\footnote{Note that this relies on the fact that $k$ is perfect. Indeed, $R\otimes_k R$ need not be regular if $k$ is not perfect, see \cite{MOtensor}. On the other hand, since $k$ is perfect, we know that $R_f$ is in fact \emph{smooth}, 
    %meaning $\Omega_{R_f/k}$ is a free $R_f$-module. Thus  $\Omega_{R_f\otimes_k R_f/k} = (\Omega_{R_f/k}\otimes_k R_f) \oplus (R_f \otimes_k \Omega_{R_f/k})$ is a free $R_f\otimes_k R_f$-module  and $R_f\otimes_k R_f$ is also smooth, and therefore regular. }
  and the tensor product of two smooth $k$-algebras over $k$ is smooth.}
    and $F$-finite, we have $F^e_*(R_f \otimes_k R_f)$ is a  projective $R\otimes_k R$-module, by \autoref{thm:kunz}. It follows that there exists some $\hat \vp \in \homgp_{R\otimes_k R}(F^e_* (R\otimes_k R), R\otimes_k R)_{f\otimes f}$ such that the diagram,
  \[
    \xymatrix{F^e_*\left( R_f\otimes_k R_f \right) \ar@{-->}[r]^{\hat \vp} \ar[d]_{F^e_* \mu} & R_f\otimes_k R_f \ar[d]^\mu \\
             F^e_* R_f \ar[r]^\vp & R_f \\
    }
  \]
  commutes. Thus there exists some $N$ with $(f\otimes_k f)^N \hat \vp \in\homgp_{R\otimes_k R}(F^e_* (R\otimes_k R), R\otimes_k R)$. We see that $f^{2N} \vp$ is a non-degenerate element of $\plainConR_e(R)$, as desired. 
\end{proof}
%Since $R$ is reduced, we know that $R_\eta$ is regular for all minimal primes $\eta \in \spec R$. This implies that $\jac(R)$ is not contained in any minimal prime, so $\jac(R)$ contains a regular element. Call this element $\xi$. We also know that $\fullCA R$ contains a non-degenerate map $\vp$ by \autoref{lemma:wellKnown}. Then $\xi \vp$ is non-degenerate, and by \autoref{thm:jacThm}, we have $\xi \vp \in \ConR$. 

The next theorem shows that this subadditivity formula is sharper than the one found in \cite{TakagiSingMultIdeals}.

\begin{theorem}
  Let $k$ be a perfect field of positive characteristic and let $R$ be a $k$-algebra essentially of finite type. Suppose also that $R$ is equidimensional and reduced. Then we have $\jac(R) \fullCA R \subseteq \ConR$. In particular, 
  \[
    \jac(R) \tau\left(R, \prod_i \mathfrak a_i^{t_i} \right) \subseteq \tau\left(R, \plainConR, \prod_i \mathfrak a_i^{t_i} \right)
  \]
  for all formal products of ideals $\prod \mathfrak a_i^{t_i}$ such that each $\mathfrak a_i$ contains a regular element.
   \label{thm:jacThm}
 \end{theorem}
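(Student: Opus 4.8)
I would treat $\jac(R)\,\fullCA R\subseteq\ConR$ as the substantive claim and deduce the test-ideal inclusion from it by ``test ideal as a sum of images'' (\autoref{thm:regTauIsSum}). Since $R$ is reduced, $\ConR$ is non-degenerate (as in the proof of \autoref{thm:MySubadd}), and twisting by ideals each containing a regular element preserves non-degeneracy, so $\tau(R,\plainConR,\prod_i\mathfrak a_i^{t_i})$ exists and contains a regular element $c$; as $\ConR\subseteq\fullCA R$, this $c$ is also a regular element of $\tau(R,\prod_i\mathfrak a_i^{t_i})$. By \autoref{thm:regTauIsSum}, $\tau(R,\prod_i\mathfrak a_i^{t_i})$ is generated by the elements $\vp(F^e_* c)$ where $e\geq 0$ and $\vp$ runs over $F^e_*\big(\prod_i\mathfrak a_i^{\ceil{t_i(p^e-1)}}\big)\,\homgp_R(F^e_* R,R)$, while $\tau(R,\plainConR,\prod_i\mathfrak a_i^{t_i})$ is generated by the $\psi(F^e_* c)$ with $\psi$ running over $F^e_*\big(\prod_i\mathfrak a_i^{\ceil{t_i(p^e-1)}}\big)\,\plainConR_e(R)$. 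Writing such a $\vp$ as a finite sum $\sum_l F^e_* a_l\cdot\vp_l$ with $a_l\in\prod_i\mathfrak a_i^{\ceil{t_i(p^e-1)}}$ and $\vp_l\in\homgp_R(F^e_* R,R)$, the containment $\jac(R)\,\fullCA R_e\subseteq\plainConR_e(R)$ gives $j\vp=\sum_l F^e_* a_l\cdot(j\vp_l)$ with each $j\vp_l\in\plainConR_e(R)$ for $j\in\jac(R)$, hence $j\cdot\vp(F^e_* c)=(j\vp)(F^e_* c)\in\tau(R,\plainConR,\prod_i\mathfrak a_i^{t_i})$; as the elements $j\cdot\vp(F^e_* c)$ generate $\jac(R)\,\tau(R,\prod_i\mathfrak a_i^{t_i})$, the inclusion follows.

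So it remains to prove $\jac(R)\,\fullCA R_e\subseteq\plainConR_e(R)$ for every $e$. Set $S=R\otimes_k R$ and $I=I_\Delta$, so $R\cong S/I$; by the remark following the definition of $\plainConR$, $\plainConR_e(R)$ is precisely the set of $\vp\colon F^e_* R\to R$ that lift along $\mu$ to a map $F^e_* S\to S$ (such a lift is automatically $I$-compatible). Thus I must show that $j\vp$ lifts to $F^e_* S\to S$ for every $j\in\jac(R)$ and every $\vp\colon F^e_* R\to R$, and I would do this by a local-to-global argument anchored by a Noether normalization. Choose $A=k[z_1,\dots,z_d]\hookrightarrow R$ module-finite and generically smooth --- possible because $k$ is perfect and $R$ is reduced and equidimensional, and this makes $R$ a torsion-free $A$-module. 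Base changing over $k$ gives a module-finite inclusion $B\coloneqq A\otimes_k A=k[z_1,\dots,z_d,w_1,\dots,w_d]\hookrightarrow S$, compatible with $A\hookrightarrow R$, under which $\mu$ restricts to the surjection $B\to A$ sending $z_i,w_i\mapsto z_i$; here $B$ is a polynomial ring --- in particular regular and $F$-finite --- and $\ker(B\to A)=(z_1-w_1,\dots,z_d-w_d)$ is a regular sequence in $B$.

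Two ingredients are then routine. First, over the regular (equivalently, since $k$ is perfect, smooth) locus $U=\spec R\setminus V(\jac(R))$ --- a dense open set, since $V(\jac(R))$ is the singular locus and $R$ is generically smooth over the perfect field $k$ --- the ring $S_U=R_U\otimes_k R_U$ is regular, so the standard Fedder-type computation over a regular ring shows that the restriction from $I_U$-compatible maps $F^e_* S_U\to S_U$ to $\homgp_{R_U}(F^e_* R_U,R_U)$ is surjective; hence $\vp|_U$ lifts along $\mu|_U$. Second, the same computation over the polynomial ring $B$ --- regular, with $A=B/(z_1-w_1,\dots,z_d-w_d)$ --- shows that \emph{every} map $F^e_* A\to A$ lifts along $B\to A$. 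The remaining, and essential, step is to propagate the lift of $\vp$ from $U$ across $V(\jac(R))$ at the cost of a single factor $j\in\jac(R)$: this is where the Noether-normalization technique of Hochster and Huneke enters. Using the module-finite extensions $A\hookrightarrow R$ and $B\hookrightarrow S$ together with the Lipman--Sathaye-type behaviour of Jacobian ideals under such extensions --- in Frobenius language, the statement that Jacobian elements clear exactly the denominators arising when one passes from $A$-coefficients back to $R$ (resp.\ from $B$-coefficients back to $S$) --- one assembles, from the unconditional lifts over $B$, a genuine lift of $j\vp$ to $F^e_* S\to S$. That a single power of $j$, rather than $j^2$, suffices reflects the sharpness of the Jacobian ideal in this denominator-clearing; carrying out this transfer precisely is the main obstacle, while everything around it is formal.
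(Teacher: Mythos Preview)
Your reduction of the test-ideal inclusion to $\jac(R)\,\fullCA R_e\subseteq\plainConR_e(R)$ via \autoref{thm:regTauIsSum} is correct and matches the paper. The gap is in the main step, where you hand-wave exactly the part that constitutes the proof. You fix a single Noether normalization $A\hookrightarrow R$, introduce $B=A\otimes_k A$, observe lifts exist over the regular locus and over $B$, and then say one ``propagates'' the lift across the singular locus at the cost of $j$ via Lipman--Sathaye-type denominator clearing. But you give no mechanism for this propagation, and neither the regular-locus lift nor the ring $B$ plays any visible role in producing an actual lift of $j\vp$ to $F^e_*(R\otimes_k R)\to R\otimes_k R$; you have two pieces that don't glue.

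The paper's argument is structurally different and more direct. First, it reduces to $k$ infinite (a step you omit; the Hochster--Huneke input requires it). Then, for \emph{each} $x\in\jac(R)$, it invokes \cite[Theorem 3.4]{HHsymbolic} to produce a Noether normalization $A\subseteq R$ \emph{depending on $x$} with the key property $xR^{1/q}\subseteq A^{1/q}[R]\cong A^{1/q}\otimes_A R$. Since $A$ is polynomial, $A^{1/q}[R]\otimes_k R^{1/q}$ is \emph{free} over $R\otimes_k R^{1/q}$, so the multiplication map $A^{1/q}[R]\otimes_k R^{1/q}\to R^{1/q}$ admits an $R\otimes_k R^{1/q}$-linear splitting $\Psi$. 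The lift of $x\vp$ is then the composite
\[
R^{1/q}\otimes_k R^{1/q}\xrightarrow{\,x\otimes 1\,}A^{1/q}[R]\otimes_k R^{1/q}\xrightarrow{\ \Psi\ }R\otimes_k R^{1/q}\xrightarrow{\,1\otimes\vp\,}R\otimes_k R,
\]
each square over $\mu$ commuting tautologically. There is no $B=A\otimes_k A$, no patching from the regular locus, and no appeal to Lipman--Sathaye; the Jacobian enters solely through the containment $xR^{1/q}\subseteq A^{1/q}[R]$, and freeness does the rest.
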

 \begin{proof}
   For any multiplicative subset $W\subseteq R$, one checks that 
   \[
     W\invrs(\jac(R) \fullCA R) = \jac(W\invrs R)\fullCA{W\invrs R}
   \]
   and  $W\invrs \ConR \subseteq \plainConR(W\invrs R)$. Thus we may assume that $R$ is a finitely generated $k$-algebra.
   
   Next, we reduce to the case that $k$ is infinite. Suppose that  $k$ is finite, let $t$ be an indeterminate over $k$, and let $L = k(t^{1/p^{\infty}})$ be the perfection of $k(t)$. Set $R_L = R \otimes_k L$ and suppose $\jac(R_L/L) \fullCA{R_L} \subseteq \plainConR(R_L)$. Let $e \geq 0$ and set $q = p^e$. As $L$ is perfect, %Let $\iota \colon L^{1/q} \to L$ denote the inclusion map. Note that $\iota$ is a map of $L$-modules. In other words, $\iota$ is an element of $\fullCA{L}_e$. 
   any map $\vp\in \fullCA{R}_e$ induces a map,
   \[
     \vp_L \coloneqq \left( R\otimes_k L\right)^{1/q} = R^{1/q} \otimes_k L \to R\otimes_k L,
   \]
   in $\fullCA{R_L}_e$. Further, any $x\in \jac{R}$ gives an element $x\otimes_k 1 \in \jac(R_L/L)$. By assumption, we have a lifting,
   \begin{equation}
     \begin{aligned}
     \xymatrix{
       R_L^{1/q} \otimes_L R_L^{1/q} \ar[r]^{\hat \vp} \ar[d]_{\mu_L^{1/q}} & R_L\otimes_L R_L \ar[d]^{\mu_L}\\
       R_L^{1/q} \ar[r]^{(x\otimes 1)\cdot \vp_L} & R_L
     } 
     \label{eq:diagram}
     \end{aligned}
   \end{equation}

   Observe that the inclusion $k \subseteq L$ splits as a map of $k$-modules. Indeed, we can write $k\left( t^{1/p^\infty} \right)$ as the direct limit, 
   \[
     k\left( t^{1/p^\infty} \right) = \varinjlim\left( k(t) \xrightarrow{F} k(t) \xrightarrow{F} k(t) \xrightarrow{F} \cdots  \right)
   \] 
   where $F$ is the Frobenius map. Define the $k$-linear map $\sigma: k(t) \to k$ as follows: any element of $k(t)$ can be written as $t^i \frac{f}{g}$ for some $i \in \bZ$ and some $f,g \in k[t]$. Then we set  
   \begin{equation*}
     \sigma\left( t^i \frac{f}{g} \right) =  \begin{cases} 0, & i \neq 0\\
       \frac{f(0)}{g(0)}, & i = 0
         
       \end{cases}
   \end{equation*}
   As $\sigma = \sigma \circ F$, this induces a map $\widehat \sigma: k\left( t^{1/p^\infty} \right) \to k$.  Note that $\widehat \sigma\left( t^{i/p^\infty} \right) = 0$ for all $i \neq 0$ and that $\widehat \sigma$ acts as the identity on $k$. We consider $k$ to be an $L$-module via $\widehat \sigma$. Then  all we have to do is apply the functor $-\otimes_L k$ to the diagram in equation \autoref{eq:diagram};  this shows that $\widehat \vp \otimes_L k$ is a lifting of $x \vp$ to $\fullCA{R\otimes_k R}_e$, as desired. 

   Now assume that $k$ is infinite. Let $x\in \jac(R)$ and let $\vp \in \fullCA{R}_e$. As $k$ is infinite, we can use \cite[Theorem 3.4]{HHsymbolic} (\cf \cite[Corollary 1.5.4]{HHequalCharZero}) to say there exists a (generically separable) Noether normalization $A\subseteq R$ such that $xR^{1/q} \subseteq A^{1/q}[R]$ and $A^{1/q}[R] \cong A^{1/q} \otimes_A R$. As $A$ is a polynomial ring, we have $A^{1/q}$ is a free $A$-module, and so $A^{1/q}\otimes_A S$ is a free $S$-module for any $A$-algebra $S$. In particular, we have that $A^{1/q}[R] \otimes_k R^{1/q} \cong A^{1/q}\otimes_A R \otimes_k R^{1/q}$ is a free $R\otimes_k R^{1/q}$-module. 
   
   Further, the usual multiplication map
   \[
     \mu\colon R^{1/q} \otimes_k R^{1/q} \to R^{1/q}
   \]
   induces an $R^{1/q} \otimes_k R^{1/q}$-module structure on $R^{1/q}$. By definition we have that $\mu$ is $R^{1/q}\otimes_k R^{1/q}$-linear, and in particular $R\otimes_k R^{1/q}$-linear. As $A^{1/q}[R]\otimes_k R^{1/q}$ and $R\otimes_k R^{1/q}$ are contained in $R^{1/q} \otimes_k R^{1/q}$, the map $\mu$ restricts to $R\otimes_k R^{1/q}$-linear maps,
   \begin{align*}
     \mu\colon & A^{1/q}[R]\otimes_k R^{1/q} \to R^{1/q}\\
     \mu\colon & R\otimes_k R^{1/q} \to R^{1/q}.
   \end{align*}
   It follows that there exists an $R\otimes_k R^{1/q}$-linear (and, \emph{a fortiori}, $R\otimes_k R$-linear) map,
   \[
     \Psi\colon A^{1/q}[R] \otimes_k R^{1/q} \to R\otimes_k R^{1/q},
   \]
   making the following diagram commute:
   \[
     \xymatrix{
       A^{1/q}[R] \otimes_k R^{1/q} \ar@{-->}[r]^{\Psi} \ar[d]_{\mu}& R \otimes_k R^{1/q} \ar[d]^{\mu} \\
       R^{1/q} \ar[r]^{\operatorname{id}} & R^{1/q}
     }
   \]
   The fact that $\vp$ is $R$-linear means that the diagram
   \[
     \xymatrix@C=4em{
       R\otimes_k R^{1/q} \ar[r]^{1 \otimes \vp} \ar[d]_{\mu}& R \otimes_k R \ar[d]^{\mu}\\
       R^{1/q} \ar[r]^{\vp} & R
     }
   \]
   commutes. The map $1\otimes \vp$ is $R\otimes_k R$-linear as $\vp$ is $R$-linear. Finally, we have a commuting diagram 
   \[
     \xymatrix@C=4em{
       R^{1/q} \otimes_k R^{1/q} \ar[r]^{x \otimes 1 \cdot -} \ar[d]_\mu & A^{1/q}[R] \otimes_k R^{1/q} \ar[d]^{\mu} \\
       R^{1/q} \ar[r]^{x\cdot -}  & R^{1/q}
     }
   \]
   where the horizontal maps are given by multiplication. Putting these three diagrams together, we have a commutative diagram,
   \[
     \xymatrix@C=3.5em{
       R^{1/q} \otimes_k R^{1/q} \ar[r]^{x \otimes 1 \cdot -} \ar[d]_\mu & A^{1/q}[R] \otimes_k R^{1/q} \ar[r]^{\Psi} \ar[d]_{\mu}& R\otimes_k R^{1/q} \ar[r]^{1 \otimes \vp} \ar[d]_{\mu}& R \otimes_k R \ar[d]_{\mu}\\
       R^{1/q} \ar[r]^{x\cdot -} &  R^{1/q} \ar[r]^{\operatorname{id}} & R^{1/q} \ar[r]^{\vp} & R
     }
   \]
   where each of the maps in the top row is $R \otimes_k R$-linear. This proves the first assertion. The second assertion follows from \autoref{thm:regTauIsSum}. 
 \end{proof}

\section{Test ideals and multiplier ideals along a closed subscheme}
In this section we introduce two new definitions. The first is a notion of test ideals ``along a closed subscheme.'' This is a generalization of Takagi's generalized test ideal along $I$ \cite{TakagiAdjointHighCodim, BSTZ_discreteness_and_rationality}. We also define a similar generalization of Takagi's adjoint ideal. In the next section, we show that the expected reduction theorem holds: the adjoint ideal reduces to the test ideal mod $p \gg 0$. We use this result to show that our subadditivity formula for test ideals is sharper than the one obtained by reducing \cite[Theorem 6.5]{EisensteinRestrictionThm} mod $p \gg0$. 

    \begin{remark}
      In this section and the next, we will make heavy use of the floor and ceiling functions,  $\floor{\cdot}$ and $\ceil{\cdot}$. It will be helpful to keep in mind the following inequalities: for any  $a,b \in \bR$ we have $\floor{a} + \floor{b} \leq \floor{a+b}$ and $\floor{a-b} \leq \floor{a} - \floor{b}$. Similarly, $\ceil*{a + b} \leq \ceil{a} + \ceil{b}$ and $\ceil{a} - \ceil{b} \leq \ceil{a-b}$. 
    \end{remark}

  \subsection{Test Ideals along a Closed Subscheme}
  For the rest of this subsection, we will be working in the following setting.
  \begin{setting}
    $R$ is a Noetherian $F$-finite domain, $I\subseteq R$ is a prime ideal, and $\cartalg C$ a nonzero Cartier algebra on $R$ such that $I$ is compatible with $\cartalg C$. We assume there exists $e > 0$ and  $\psi \in \cartalg C_e$ with $\psi(F_*^e R) \not \subseteq I$.
    \label{setting:tauI}
  \end{setting}

  \begin{remark}
    We suspect that the constructions in this section can be done just as well in the setting where $I$ is an intersection of different prime ideals of a fixed height. However, for our current purposes it suffices to work in the setting where $I$ is prime. 
    \label{}
  \end{remark}

  \begin{definition}
    Let $R, I, \cartalg C$ be as in  \autoref{setting:tauI}. Then we define \emph{the test ideal of $\cartalg C$ along $I$}, denoted $\tau_I(R, \cartalg C)$, to be the unique smallest ideal of $R$ not contained in $I$ that is compatible with $\cartalg C$. We also call this the \emph{test ideal of $\cartalg C$ along the closed subscheme $\spec(R/I)$}.
 \label{def:tauAlongI}
    \end{definition}
  The proof that $\tau_I(R, \cartalg C)$ exists is a standard though technical argument. We relegate it to \autoref{appendix:tauI}. For now, we just note some examples of interest where the conditions of \autoref{setting:tauI} are satisfied. 

    \begin{example}
      Suppose $R$ is a domain essentially of finite type over a field $k$. Then $I = I_\Delta \subseteq R\otimes_k R$ and $\cartalg C = \ConProd$ satisfy \autoref{setting:tauI}. Indeed, we have that $\cartalg C$ is compatible with $I$ by construction, so we just need to check that there exists $e>0$ and $\vp \in \cartalg C_e$ with $\vp(R) \not \subseteq I$. This is equivalent to checking that $\plainConR_e(R) \neq 0$ for some $e>0$, which follows, for instance,  from \autoref{thm:jacThm}. 
    \end{example}

  \begin{notation}
    Let $\mathfrak a_i$ be a collection of ideals and $t_i$ a set of non-negative real numbers. Then we denote, for all $e$, 
    \[
      \tauOfa{e} \coloneqq \prod_i \mathfrak a_i^{\ceil{t_i (p^e-1)}}.
    \]
  \end{notation}
  \begin{notation}
    Work in \autoref{setting:tauI}. If $ \cartalg C = \cartalg C^{R, \prod_i \mathfrak a_i^{t_i}}$ for some ideals $\mathfrak a_i$ and non-negative numbers $t_i$, then we denote
    \[
      \tau_I\left( R, \prod_i \mathfrak a_i^{t_i} \right) \coloneqq \tau_I(R, \cartalg C)
    \]
  \end{notation}
  \begin{example} \label{ex:mainExampleForTau}
    Suppose $R_I$ is regular, and $\dim R_I = c$. Let $\{ \mathfrak b_i \}$ be a set of ideals, none of which is contained in $I$,  and let $\left\{ t_i \right\}$ be some collection of non-negative rational numbers. Then $I^c \prod_i \mathfrak b_i^{t_i}\cdot \cartalg C^{R}$ satisfies the conditions of \autoref{setting:tauI}. In this case, $\tau_I\left( R, I^c \prod_i \mathfrak b_i^{t_i} \right)$ is  what Takagi calls $\tilde\tau_I \left( R, \prod \mathfrak b_i^{t_i} \right) $. The Cartier algebras $\cartalg C^{R, I^{(c)} \prod_i b_i^{t_i}}$ and $\cartalg C^{R, \overline{I^{c}} \prod_i b_i^{t_i}}$ satisfy the conditions of  \autoref{setting:tauI} as well.  

    More generally, let $\left\{ \mathfrak a_i \right\}$ be any collection of ideals and $\left\{ t_i \right\}$ any collection of non-negative rational numbers, satisfying the following: for each $i$, suppose there exists some natural number $n_i$ with $\mathfrak a_i R_I = I^{n_i} R_I$, and suppose $\sum_i t_i n_i = c$. Further, for all $i$ such that $\mathfrak a_i \subseteq I$, we assume the denominator of $t_i$ is not divisible by $p$. Then $\cartalg C^{R, \prod_i \mathfrak a_i^{t_i}}$ satisfies the conditions of \autoref{setting:tauI}. To see this, we need to check that
    \begin{enumerate}
      \item $\cartalg C^{R,\prod_i \mathfrak a_i^{t_i}}$ is compatible with $I$, and 
      \item There exist $e>0$ and $\psi \in F^e_*\left( \tauOfa{e} \right) \cdot \homgp_R(F^e_* R, R)$ with $\psi(F^e_* R) \not \subseteq I$. 
    \end{enumerate}
    To show condition (a), let $x\in \tauOfa{e}$ and let  $\vp\in \homgp_R(F^e_* R, R)$. We wish to show that $\vp(F^e_* xI) \subseteq I$. Whether or not $\vp(F^e_* xI)$ is contained in  $I$ is not affected by localizing at $I$, 
    so we localize at $I$. Then $IR_I$ is generated by $c$ elements, since $R_I$ is regular,  and $x\in I^{cp^e-c}R_I$, since
    \[
      \tauOfa{e}R_I = \prod_i I^{n_i \ceil{t_i (p^e-1)}} \subseteq \prod_i I^{\ceil{n_i t_i (p^e-1)}} \subseteq I^{\ceil{\sum_i n_i t_i (p^e-1)}} = I^{cp^e-c}R_I.
    \]
    Thus $xIR_I \subseteq I^{cp^e-c+1}R_I \subseteq I^{[p^e]}R_I$. This shows that (a) is satisfied. To see condition (b) is satisfied, we notice again that this question can be checked locally at $I$. As $R_I$ is regular local and $F$-finite, it follows from \autoref{thm:kunz} that $F^e_* R_I$ is a free $R_I$ module for all $e$. Then it follows that $\vp(F^e_* x) \in I$ for all $\vp \in \homgp_{R_I}\left( F^e_* R_I, R_I \right)$ if and only if $x \in I^{[p^e]}R_I$. As the denominator of each $t_i$ is not divisible by $p$, we know there exists some $e$ so that $t_i(p^e-1)$ is an integer for all $i$. For this $q$, we have $\tauOfa{e} R_I = I^{cp^e-c} \not \subseteq  I^{[p^e]}$, so we're done. 
  \end{example}

  \subsection{Basic properties of \texorpdfstring{$\tau_I(R, \cartalg C)$}{tauI(R, C)}}
 Here we explore the basic theory of $\tau_I(R, \cartalg C)$. With the exception of the restriction theorem found in \autoref{prop:restriction}, the following are properties satisfied by all objects deserving of the name ``test ideal.'' The restriction theorem suggests that $\tau_I(R, \cartalg C)$ is a good candidate for a positive-characteristic analog to the adjoint ideals of birational geometry. 
\begin{lemma}
  Suppose that $\cartalg C \subseteq \cartalg D$ are two Cartier algebras on $R$ satisfying \autoref{setting:tauI}. Then $\tau_I(R, \cartalg C) \subseteq \tau_I(R, \cartalg D)$.
  \label{lemma:TauIsMonotonic}
\end{lemma}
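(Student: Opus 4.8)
The plan is to read off the containment directly from the defining minimality property of $\tau_I$, exactly as one does for ordinary test ideals in \autoref{lemma:wellKnown}(1). Recall from \autoref{def:tauAlongI} that $\tau_I(R, \cartalg C)$ is \emph{the} unique smallest ideal of $R$ not contained in $I$ that is compatible with $\cartalg C$; since both $\cartalg C$ and $\cartalg D$ are assumed to satisfy \autoref{setting:tauI}, both $\tau_I(R,\cartalg C)$ and $\tau_I(R,\cartalg D)$ exist (by the argument deferred to \autoref{appendix:tauI}), so these descriptions are legitimate. It therefore suffices to exhibit $\tau_I(R,\cartalg D)$ as a competitor in the minimization problem that defines $\tau_I(R,\cartalg C)$.

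First I would note that $\tau_I(R,\cartalg D)$ is, by its own definition, an ideal of $R$ that is not contained in $I$. Next I would check that it is compatible with $\cartalg C$: for every $e$ we have $\cartalg C_e \subseteq \cartalg D_e$, so any $\vp \in \cartalg C_e$ is in particular one of the maps in $\cartalg D_e$ with which $\tau_I(R,\cartalg D)$ is compatible, whence $\vp(F^e_* \tau_I(R,\cartalg D)) \subseteq \tau_I(R,\cartalg D)$. Thus $\tau_I(R,\cartalg D)$ is an ideal not contained in $I$ and compatible with $\cartalg C$, and the minimality clause in \autoref{def:tauAlongI} forces $\tau_I(R,\cartalg C) \subseteq \tau_I(R,\cartalg D)$, as desired.

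I do not expect any real obstacle: the only place the hypotheses enter is in guaranteeing that both test ideals are already known to exist, which is precisely what assuming \autoref{setting:tauI} for both $\cartalg C$ and $\cartalg D$ provides. Everything else is a one-line invocation of minimality, and the same template will recur for the other monotonicity-flavored statements in this subsection.
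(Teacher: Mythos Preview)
Your proof is correct and follows essentially the same approach as the paper: both argue that $\tau_I(R,\cartalg D)$, being $\cartalg D$-compatible and hence $\cartalg C$-compatible, is a competitor in the minimality defining $\tau_I(R,\cartalg C)$. The paper just phrases this set-theoretically (the collection of $\cartalg D$-compatible ideals not in $I$ is contained in the corresponding collection for $\cartalg C$, so its minimum is larger), but the content is identical.
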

\begin{proof}
  The ideal $\tau_I(R, \cartalg C)$ is the minimum element of the set
  \[
    S_{\cartalg C} \coloneqq \left\{ \mathfrak a \subseteq R \suchthat \mathfrak a \not \subseteq I \textrm{ and } \mathfrak a \textrm{ is } \cartalg C\textrm{-compatible}\right\}
  \]
  Similarly, $\tau_I(R, \cartalg D)$ is the minimum element of the set
  \[
    S_{\cartalg D} \coloneqq \left\{ \mathfrak a \subseteq R \suchthat \mathfrak a \not \subseteq I \textrm{ and } \mathfrak a \textrm{ is } \cartalg D\textrm{-compatible}\right\}
  \]
  We see that $S_{\cartalg C} \supseteq S_{\cartalg D}$, whence the minimum of $S_{\cartalg C}$ is smaller than the minimum of $S_{\cartalg D}$. 
\end{proof}
\begin{proposition}[Restriction theorem] Let $R$, $I$, and $\cartalg C$ be as in \autoref{setting:tauI}. Then $\tau_I(R, \cartalg C) R/I = \tau(R/I, \cartalg C|_{R/I})$.
  \label{prop:restriction}
\end{proposition}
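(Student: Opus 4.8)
The plan is to prove the two inclusions separately, each via the minimality property defining the relevant test ideal. Write $\pi\colon R\to R/I$ for the quotient map. Since $I$ is prime, $R/I$ is a domain, so a nonzero ideal of $R/I$ is the same as an ideal containing a regular element; moreover \autoref{setting:tauI} supplies $\psi\in\cartalg C_e$ with $\psi(F^e_*R)\not\subseteq I$, so $\bar\psi\neq 0$ in $\cartalg C_e|_{R/I}$, whence $\cartalg C|_{R/I}$ is nondegenerate and $\tau(R/I,\cartalg C|_{R/I})$ exists by Schwede's theorem. Existence of $\tau_I(R,\cartalg C)$ is granted by \autoref{appendix:tauI}. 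The only recurring bookkeeping is that, for $\vp\in\cartalg C_e$ with induced map $\bar\vp\in\cartalg C_e|_{R/I}$, the commuting square defining $\bar\vp$ gives $\bar\vp(F^e_*\pi(x))=\pi(\vp(F^e_*x))$ for all $x\in R$.

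For ``$\subseteq$'', I would set $J=\tau_I(R,\cartalg C)$ and consider the ideal $\pi(J)=J\,R/I$ of $R/I$. It is nonzero because $J\not\subseteq I$, hence contains a regular element of the domain $R/I$. It is $\cartalg C|_{R/I}$-compatible: given $\bar\vp\in\cartalg C_e|_{R/I}$ lifting to $\vp\in\cartalg C_e$ and $x\in J$, we have $\bar\vp(F^e_*\pi(x))=\pi(\vp(F^e_*x))\in\pi(J)$ since $\vp(F^e_*x)\in J$ by $\cartalg C$-compatibility of $J$. Minimality of $\tau(R/I,\cartalg C|_{R/I})$ then forces $\tau(R/I,\cartalg C|_{R/I})\subseteq\pi(J)=\tau_I(R,\cartalg C)\,R/I$.

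For ``$\supseteq$'', I would set $\bar J=\tau(R/I,\cartalg C|_{R/I})$ and let $J=\pi^{-1}(\bar J)\subseteq R$, so $I\subseteq J$ and $\pi(J)=\bar J$. Since $\bar J\neq 0$ we get $J\supsetneq I$, so in particular $J\not\subseteq I$. To check $J$ is $\cartalg C$-compatible, take $\vp\in\cartalg C_e$ and $x\in J$; then $\pi(\vp(F^e_*x))=\bar\vp(F^e_*\pi(x))\in\bar\vp(F^e_*\bar J)\subseteq\bar J$, hence $\vp(F^e_*x)\in\pi^{-1}(\bar J)=J$. Thus $J$ is a $\cartalg C$-compatible ideal not contained in $I$, so $\tau_I(R,\cartalg C)\subseteq J$ by minimality, and applying $\pi$ yields $\tau_I(R,\cartalg C)\,R/I\subseteq\pi(J)=\bar J=\tau(R/I,\cartalg C|_{R/I})$.

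I do not expect a genuine obstacle: this is a routine ``smallest compatible ideal'' comparison. The two points deserving care are the existence of $\tau(R/I,\cartalg C|_{R/I})$ — which is exactly why \autoref{setting:tauI} includes the hypothesis $\psi(F^e_*R)\not\subseteq I$ — and the fact that passing to the preimage $\pi^{-1}(\bar J)$ preserves $\cartalg C$-compatibility, which hinges on $I$ being $\cartalg C$-compatible (so that $I\subseteq\pi^{-1}(\bar J)=J$ and the computation above genuinely lands back in $J$ rather than merely in $\pi^{-1}(\bar J)$ modulo $I$).
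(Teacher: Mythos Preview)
Your proposal is correct and follows essentially the same approach as the paper: both directions are obtained by exhibiting the relevant ideal as a nonzero $\cartalg C|_{R/I}$-compatible (resp.\ $\cartalg C$-compatible, not-contained-in-$I$) ideal and invoking minimality, with the preimage $\pi^{-1}(\tau(R/I,\cartalg C|_{R/I}))$ playing the same role in both arguments. One cosmetic slip: your paragraphs labeled ``$\subseteq$'' and ``$\supseteq$'' actually prove the opposite inclusions relative to the displayed equality $\tau_I(R,\cartalg C)\,R/I = \tau(R/I,\cartalg C|_{R/I})$.
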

\begin{proof}
  The proof is very similar to that of \autoref{prop:tauUnderRestriction}. Let $\vp \in \cartalg C_e|_{R/I}$. By definition, there exists some $\hat \vp \in \cartalg C_e$ such that the following diagram commutes:
  \[
    \xymatrix{
      F^e_*R  \ar[r]^{\widehat \vp} \ar[d]_{F^e_* \pi} &  R \ar[d]^{\pi} \\
      F^e_*(R/I) \ar[r]^{\vp} & R/I
    }
  \]
  We see that 
  \[
    \vp\left(F^e_* \tau_I(R, \cartalg C) R/I \right) = \hat \vp(F^e_* \tau_I(R, \cartalg C)) R/I \subseteq \tau_I(R, \cartalg C) R/I.
  \]
  By the minimality of $\tau(R/I, \cartalg C|_{R/I})$, it follows that $\tau_I(R, \cartalg C) R/I \supseteq \tau(R/I, \cartalg C|_{R/I})$.

  To get the reverse inclusion, it suffices to show that $\tau_I(R, \cartalg C) \subseteq \pi\invrs\left( \tau(R/I, \cartalg C|_{R/I}) \right)$. By definition, $\tau(R/I, \cartalg C|_{R/I}) \neq 0$, which means that $\pi\invrs \left( \tau(R/I, \cartalg C|_{R/I})  \right) \not \subseteq I$. Thus it suffices to check that $\pi\invrs \left( \tau(R/I, \cartalg C|_{R/I})  \right)$ is compatible with $\cartalg C$. To that end, let $\psi \in \cartalg C_e$ be arbitrary. As $\cartalg C$ is compatible with $I$, there exists some $\bar\psi \in \cartalg C|_{R/I}$ such that the following diagram commutes:
  \[
    \xymatrix{
      F^e_*R  \ar[r]^{\psi} \ar[d]_{F^e_* \pi} &  R \ar[d]^{\pi} \\
      F^e_*(R/I) \ar[r]^{\bar \psi} & R/I
    }
  \]
  It follows from the above diagram and the $\cartalg C|_{R/I}$-compatibility of $\tau(R/I, \cartalg C|_{R/I})$  that:
  \begin{align*}
    \pi\circ \psi\left( F^e_* \pi\invrs \left( \tau(R/I, \cartalg C|_{R/I})  \right)  \right) &= \bar \psi \circ F^e_* \pi\left(  F^e_* \pi\invrs \left( \tau(R/I, \cartalg C|_{R/I}) \right) \right) \\
    &= \bar \psi\left( F^e_* \tau(R/I, \cartalg C|_{R/I}) \right)  \\
    &\subseteq  \tau(R/I, \cartalg C|_{R/I}). 
  \end{align*}
  In other words, 
  \[
    \psi\left( F^e_* \pi\invrs \left( \tau(R/I, \cartalg C|_{R/I})  \right)  \right) \subseteq \pi\invrs \left( \tau(R/I, \cartalg C|_{R/I})  \right), 
  \]
  as desired. 
\end{proof}

 \begin{definition}
   We say that an element $b$ in $\tau_I(R, \cartalg C) \setminus I$ is a \emph{$\cartalg C$-test element along $I$.} If $\cartalg C = \cartalg C^{R, \prod_i \mathfrak a_i^{t_i}}$, then we say that $b$ is an $\prod_i \mathfrak a_i^{t_i}$-test element along $I$.  
 \end{definition}

\begin{lemma}
   Let $c$ be a $\cartalg C$-test element along $I$. Then $\tau_I(R, \cartalg C) = \sum_{e\geq 0} \sum_{\vp \in \cartalg C_e} \vp(F^e_* c)$. 
   \label{cor:tauIsSum}
 \end{lemma}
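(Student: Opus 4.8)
The plan is a double-containment argument, parallel to the proof of \autoref{thm:regTauIsSum}, with minimality of $\tau_I(R,\cartalg C)$ giving one inclusion and $\cartalg C$-compatibility of $\tau_I(R,\cartalg C)$ giving the other. Write $J \coloneqq \sum_{e\geq 0}\sum_{\vp\in\cartalg C_e}\vp(F^e_* c)$.

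First I would dispatch $J\subseteq\tau_I(R,\cartalg C)$: by hypothesis $c\in\tau_I(R,\cartalg C)$, and $\tau_I(R,\cartalg C)$ is $\cartalg C$-compatible by \autoref{def:tauAlongI}, so for every $e$ and every $\vp\in\cartalg C_e$ we have $\vp(F^e_*c)\in\vp\bigl(F^e_*\tau_I(R,\cartalg C)\bigr)\subseteq\tau_I(R,\cartalg C)$; summing yields the inclusion. The reverse inclusion $\tau_I(R,\cartalg C)\subseteq J$ will follow from the minimality defining $\tau_I(R,\cartalg C)$ once I check that $J$ is an ideal, that $J\not\subseteq I$, and that $J$ is $\cartalg C$-compatible. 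That $J$ is an ideal uses closure of each $\cartalg C_e$ under left multiplication by $R$: $r\cdot\vp(F^e_*c)=(r\vp)(F^e_*c)$ with $r\vp\in\cartalg C_e$; moreover, using closure under right multiplication, namely $\vp\cdot r\in\cartalg C_e$ with $(\vp\cdot r)(F^e_*c)=\vp(F^e_*(rc))$, one obtains the more convenient description $J=\sum_{e\geq 0}\sum_{\vp\in\cartalg C_e}\vp(F^e_*(cR))$. That $J\not\subseteq I$ is immediate: taking $e=0$ and the element $1\in\cartalg C_0=R$, whose associated map $R\to R$ is the identity, we get $c\in J$, while $c\notin I$ because $c$ is a $\cartalg C$-test element along $I$.

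The one computation requiring care is $\cartalg C$-compatibility of $J$. Given $\psi\in\cartalg C_d$, additivity of $F^d_*$ and of $\psi$ reduces the claim $\psi(F^d_* J)\subseteq J$ to showing $\psi\bigl(F^d_*(\vp(F^e_*(cr)))\bigr)\in J$ for each $e$, each $\vp\in\cartalg C_e$, and each $r\in R$. Unwinding the definition of $F^d_*\vp$ and of composition in a Cartier algebra, this element equals $(\psi\cdot\vp)(F^{d+e}_*(cr))$, where $\psi\cdot\vp=\psi\circ F^d_*\vp\in\cartalg C_{d+e}$; hence it lies in $\sum_{\vp'\in\cartalg C_{d+e}}\vp'(F^{d+e}_*(cR))\subseteq J$. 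Thus $J$ is a $\cartalg C$-compatible ideal not contained in $I$, and minimality of $\tau_I(R,\cartalg C)$ finishes the proof. I do not expect any genuine obstacle here — the only pitfall is bookkeeping with the left and right $R$-module structures and the composition law on $\cartalg C$, which is precisely why rewriting $J$ as $\sum\vp(F^e_*(cR))$ before the compatibility check is worthwhile.
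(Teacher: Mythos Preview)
Your proof is correct and follows essentially the same double-containment strategy as the paper: one inclusion from $\cartalg C$-compatibility of $\tau_I(R,\cartalg C)$ together with $c\in\tau_I(R,\cartalg C)$, the other from minimality once $J$ is seen to be a $\cartalg C$-compatible ideal not contained in $I$. Your argument is in fact more self-contained than the paper's: you verify $\cartalg C$-compatibility of $J$ explicitly and obtain $c\in J$ directly from $\cartalg C_0=R$, whereas the paper leaves compatibility implicit and invokes \autoref{prop:testElement} to get $J\not\subseteq I$.
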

 \begin{proof}
   For ease of notation, set 
    \[
      \tau_I(R, \cartalg C; c) = \sum_{e \geq 0} \sum_{\vp \in \cartalg C_e} \vp(F^e_* c).
    \]
   As $c \not \in I$, have that $\tau_I(R, \cartalg C; c) \not \subseteq I$ by \autoref{prop:testElement}. Now let $J\not\subseteq I$ be an ideal compatible with $\cartalg C$. By definition of $\tau_I(R, \cartalg C)$, we know that $c\in J$. It follows that $\tau_I(R, \cartalg C; c) \subseteq J$, since  $J$ is compatible with $\cartalg C$. 
 \end{proof}

\begin{lemma}
  Let $c$ be a $\cartalg C$-test element along $I$ and let $e' \geq 0$. Then
  \[
    \tau_I(R, \cartalg C) = \sum_{e \geq e'} \sum_{\vp \in \cartalg C_e} \vp(F^e_* c).
  \]
  \label{lemma:tauIsAsymptotic}
\end{lemma}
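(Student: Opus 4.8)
The plan is to bootstrap from the previous lemma, \autoref{cor:tauIsSum}, which already gives
\[
  \tau_I(R, \cartalg C) = \sum_{e \geq 0} \sum_{\vp \in \cartalg C_e} \vp(F^e_* c),
\]
and to show that the finitely many terms with $e < e'$ are redundant. Write $J \coloneqq \sum_{e \geq e'} \sum_{\vp \in \cartalg C_e} \vp(F^e_* c)$; this is an ideal since each $\cartalg C_e$ is closed under left multiplication by $R$, and clearly $J \subseteq \tau_I(R, \cartalg C)$. For the reverse inclusion, by \autoref{cor:tauIsSum} it is enough to check that $J$ is compatible with $\cartalg C$ and that $J \not\subseteq I$: then minimality of $\tau_I(R, \cartalg C)$ forces $\tau_I(R,\cartalg C) \subseteq J$.

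First I would verify that $J$ is $\cartalg C$-compatible. Take $\psi \in \cartalg C_d$ and a generator $\vp(F^e_* c)$ of $J$ with $e \geq e'$. Then $\psi(F^d_* \vp(F^e_* c)) = (\psi \circ F^d_* \vp)(F^{d+e}_* c)$, and $\psi \circ F^d_* \vp = \psi \cdot \vp \in \cartalg C_{d+e}$ with $d + e \geq e'$; hence this lands in $J$. Since compatibility is additive and $\cartalg C_d$ is closed under the relevant module operations, $\psi(F^d_* J) \subseteq J$, so $J$ is compatible with $\cartalg C$.

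The main point — and the step I expect to be the real obstacle — is showing $J \not\subseteq I$, i.e. that one does not lose too much by discarding the low-degree terms. The idea is to use the hypothesis in \autoref{setting:tauI} that there is some $e_0 > 0$ and $\psi_0 \in \cartalg C_{e_0}$ with $\psi_0(F^{e_0}_* R) \not\subseteq I$, together with the fact that $c$ is a $\cartalg C$-test element along $I$, so $\tau_I(R,\cartalg C) \not\subseteq I$ and $c \notin I$. Concretely, I would argue that $\tau_I(R,\cartalg C)$ is stable under the operation $\mathfrak b \mapsto \sum_{e \geq 1}\sum_{\vp \in \cartalg C_e}\vp(F^e_*\mathfrak b)$ up to radical behavior near $I$: iterating the degree-$e_0$ map $\psi_0$ (or rather, using that $R_I$ is a domain localization and $\psi_0$ survives mod $I$) lets one replace any fixed finite collection of low-degree contributions by higher-degree ones without dropping into $I$. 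Precisely, starting from $c \in \tau_I(R,\cartalg C)\setminus I$, one shows $\tau_I(R,\cartalg C;c) = \tau_I(R,\cartalg C; c')$ for a suitable $c' = \vp(F^e_* c)$ with $e$ as large as we like and $c' \notin I$ — the latter because localizing at the prime $I$ and using that $\psi_0$ has image not in $I$ produces, after enough compositions, an element $\vp \in \cartalg C_e$ for arbitrarily large $e$ with $\vp(F^e_* c) \notin I$ (here one leans on \autoref{prop:testElement}, invoked in the proof of \autoref{cor:tauIsSum}, which guarantees a test element can be pushed forward without leaving the complement of $I$). Once such a $c'$ of degree $\geq e'$ is in hand, $c' \in J$, so $J \not\subseteq I$, and we conclude as above. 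The delicate bookkeeping is making sure the degree can be pushed past the arbitrary threshold $e'$ while keeping the image out of $I$; this is where the domain hypothesis and the existence of a map not killing $R$ mod $I$ do the essential work.
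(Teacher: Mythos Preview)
Your overall strategy is exactly the paper's: set $J = \sum_{e \geq e'} \sum_{\vp \in \cartalg C_e} \vp(F^e_* c)$, note $J \subseteq \tau_I(R,\cartalg C)$ by \autoref{cor:tauIsSum}, then show $J$ is $\cartalg C$-compatible and $J \not\subseteq I$ so that minimality gives the reverse inclusion. Your compatibility check is fine (the paper just calls this ``obvious'').

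Where your write-up is genuinely incomplete is the step $J \not\subseteq I$. You gesture at iterating a map $\psi_0$ with image not in $I$, at localizing at $I$, and at \autoref{prop:testElement}, but none of these are used in a way that actually produces an element of $J \setminus I$. In particular, \autoref{prop:testElement} gives a fixed test element, not a mechanism for pushing degree up; and merely knowing $\psi_0(F^{e_0}_* R) \not\subseteq I$ says nothing about $\psi_0$ applied to the specific element $c$. The paper's argument is short and clean: since $c \in \tau_I(R,\cartalg C) \setminus I$, \autoref{cor:tauIsSum} gives some $e'' > 0$ and $\vp \in \cartalg C_{e''}$ with $b \coloneqq \vp(F^{e''}_* c) \notin I$. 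Then \autoref{lemma:bSquared} yields $b^2 \in \vp^n(F^{ne''}_* cR)$ for every $n \geq 1$; because $I$ is prime, $b^2 \notin I$, so $\vp^n(F^{ne''}_* cR) \not\subseteq I$. Taking $n$ with $ne'' \geq e'$ shows $J \not\subseteq I$. The ingredient you are missing is precisely this appeal to \autoref{lemma:bSquared} together with primeness of $I$.
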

\begin{proof}
  Set $J  = \sum_{e \geq e'} \sum_{\vp \in \cartalg C_e} \vp(F^e_* c)$. By \autoref{cor:tauIsSum}, we see that $J \subseteq \tau_I(R, \cartalg C)$.  Thus it suffices to show that $J \not \subseteq I$ and that $J$ is compatible with $\cartalg C$. The latter is obvious. To see that $J \not \subseteq I$, note that there exists some $e''$ and $\vp  \in \cartalg C_{e''}$ with $\vp(F^{e''}_* c) \not \in I$. This follows, for instance, from \autoref{cor:tauIsSum}. Then by \autoref{lemma:bSquared}, we have $\vp^n(F^{ne''}_* cR) \not \subseteq I$ for all $n$. In particular, we get the desired result by taking $n > e''/e'$. 
\end{proof}

\begin{lemma}
  Work in \autoref{setting:tauI} and let $W \subseteq R \setminus I$ be a multiplicative set. Then
  \[
    W\invrs \tau_I(R, \cartalg C) = \tau_{IW\invrs R}\left( W\invrs R, W\invrs \cartalg C \right).
  \]
\end{lemma}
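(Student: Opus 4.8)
The plan is to mimic the proof of \autoref{lemma:wellKnown}(3), i.e., the localization compatibility for ordinary test ideals, using \autoref{cor:tauIsSum} in place of \autoref{thm:regTauIsSum}. First I would fix a $\cartalg C$-test element $c$ along $I$; since $W \subseteq R\setminus I$, the image of $c$ in $W\invrs R$ is not in $IW\invrs R$, and one checks it is still a test element along $IW\invrs R$ for the localized Cartier algebra $W\invrs\cartalg C$ (it is compatible with $W\invrs\cartalg C$ because compatibility localizes, and $\tau_{IW\invrs R}(W\invrs R, W\invrs \cartalg C)$ is nonzero since $W\invrs\cartalg C$ inherits a map $\psi$ with $\psi(F^e_* W\invrs R) \not\subseteq IW\invrs R$ from the hypothesis in \autoref{setting:tauI}). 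Then by \autoref{cor:tauIsSum} applied on $W\invrs R$,
\[
  \tau_{IW\invrs R}(W\invrs R, W\invrs\cartalg C) = \sum_{e\geq 0}\sum_{\bar\vp \in (W\invrs\cartalg C)_e} \bar\vp\bigl(F^e_* (c/1)\bigr).
\]

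Next I would compare the two sides. Applying $W\invrs(-)$ to the formula of \autoref{cor:tauIsSum} on $R$ gives
\[
  W\invrs\tau_I(R, \cartalg C) = \sum_{e\geq 0}\sum_{\vp \in \cartalg C_e} W\invrs\vp\bigl(F^e_*(c/1)\bigr),
\]
since localization commutes with (possibly infinite) sums of submodules. The containment $\subseteq$ is then immediate because every $W\invrs\vp$ with $\vp\in\cartalg C_e$ lies in $(W\invrs\cartalg C)_e$ by definition of the localized Cartier algebra. For the reverse containment, I would use that $(W\invrs\cartalg C)_e$ is generated over $W\invrs R$ (acting on both sides, in the way recorded in Section 2, $r\cdot\vp$ and $\vp\cdot r$) by the images of maps in $\cartalg C_e$: concretely, every element of $\homgp_{W\invrs R}(F^e_* W\invrs R, W\invrs R)$ arising in $W\invrs\cartalg C$ has the form $\frac{1}{w}W\invrs\vp$ for some $w \in W$ and $\vp \in \cartalg C_e$, and $\frac{1}{w}W\invrs\vp(F^e_*(c/1))$ already lies in $W\invrs\tau_I(R,\cartalg C)$ since $W\invrs\vp(F^e_*(c/1)) = W\invrs(\vp(F^e_* c)) \in W\invrs\tau_I(R,\cartalg C)$ and the latter is an ideal of $W\invrs R$.

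The one genuinely load-bearing point is the claim that $\tau_{IW\invrs R}(W\invrs R, W\invrs\cartalg C)$ exists and is nonzero, i.e.\ that the hypotheses of \autoref{setting:tauI} are preserved under localization at $W\subseteq R\setminus I$: one needs $W\invrs R$ still a domain (clear), $IW\invrs R$ still prime (clear, as $W\cap I=\emptyset$), $W\invrs\cartalg C$ compatible with $IW\invrs R$ (localize the compatibility $\vp(F^e_* I)\subseteq I$), and the existence of some $e>0$ and $\bar\psi \in (W\invrs\cartalg C)_e$ with $\bar\psi(F^e_* W\invrs R)\not\subseteq IW\invrs R$ — which follows by localizing the map $\psi$ guaranteed in \autoref{setting:tauI}, noting that $\psi(F^e_* R)\not\subseteq I$ implies $W\invrs\psi(F^e_* W\invrs R)\not\subseteq IW\invrs R$ because $I$ is prime and disjoint from $W$. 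Given this, the appeal to \autoref{cor:tauIsSum} on both rings is the crux and the rest is the routine bookkeeping of commuting localization past sums and past application of $R$-linear maps. I expect the main (mild) obstacle to be making the identification of $(W\invrs\cartalg C)_e$ with $W\invrs(\cartalg C_e)$ precise enough that the two sum formulas visibly match term by term.
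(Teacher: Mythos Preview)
Your approach is essentially the same as the paper's: verify that \autoref{setting:tauI} persists after localizing at $W$, then use the sum formula to compare both sides. The paper's proof is a two-line sketch citing \autoref{thm:TauIsSum} and the fact that $W\invrs\homgp_R(F^e_*R,R)=\homgp_{W\invrs R}(F^e_*W\invrs R,W\invrs R)$ when $R$ is $F$-finite Noetherian.

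There is one point you gloss over. You assert that the image $c/1$ of a $\cartalg C$-test element $c$ along $I$ is automatically a $W\invrs\cartalg C$-test element along $IW\invrs R$, and you justify this by saying ``compatibility localizes.'' But being a test element along $I$ means lying in $\tau_I(R,\cartalg C)\setminus I$, not merely generating a compatible ideal; so what you actually need is $c/1\in\tau_{IW\invrs R}(W\invrs R,W\invrs\cartalg C)$, and that is (half of) the very equality you are proving. The paper avoids this circularity by invoking \autoref{thm:TauIsSum} rather than \autoref{cor:tauIsSum}: the element $b$ produced by \autoref{prop:testElement} has the stronger property that $b\in\cartalg C\cdot d$ for \emph{every} $d\in R\setminus I$, and this property visibly localizes, so $b/1$ serves directly as the element in \autoref{thm:TauIsSum} for $W\invrs R$. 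Alternatively, you can patch your argument by first getting $\tau_{IW\invrs R}(W\invrs R,W\invrs\cartalg C)\subseteq W\invrs\tau_I(R,\cartalg C)$ from minimality (as you do implicitly), then for the reverse containment picking any $d'\in\tau_{IW\invrs R}(W\invrs R,W\invrs\cartalg C)\setminus IW\invrs R$, clearing denominators to write $d'=d/1$ with $d\in R\setminus I$, and observing $\tau_{IW\invrs R}(W\invrs R,W\invrs\cartalg C)=(W\invrs\cartalg C)\cdot d'=W\invrs(\cartalg C\cdot d)\supseteq W\invrs\tau_I(R,\cartalg C)$, the last step because $\cartalg C\cdot d$ is $\cartalg C$-compatible and not contained in $I$.
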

\begin{proof}
  As $R$ is a domain and $W\cap I = \emptyset$, the ideal $\tau_{IW\invrs}(W\invrs R, W\invrs \cartalg C)$ is well-defined. As $R$ is $F$-finite and Noetherian, we have $W\invrs \homgp_R(F^e_* R, R) = \homgp_{W\invrs R}\left( F^e_* W\invrs R, W\invrs R \right)$ for all $e$. Then this lemma follows quickly from \autoref{thm:TauIsSum}. 
\end{proof}

  \subsection{Multiplier ideals along a closed subscheme}

  \begin{notation}
    Let $\pi: X' \to X$ be a birational map. By the \emph{exceptional locus} of $\pi$, or $\exclocus(\pi)$, we mean the locus of points in $X'$ at which $\pi$ is not an isomorphism.  \label{notation:exc}
  \end{notation}

     \begin{definition}
       Let $A$ be a $\bQ$-Gorenstein scheme of finite type over a field of characteristic $0$ and let $X$ be a reduced subscheme of $A$ of pure codimension $c$.  Suppose also that $A$ is smooth at the generic points of $X$. Let $Z$ be a formal $\bQ$-sum of subschemes of $A$ such that $Z$ equals $cX$ at the generic points of $X$. We define $\multideal_X(A, Z)$ as follows: let $\pi: \bar A \to A$ be a factorizing resolution\footnote{See \cite[Section 2]{EisensteinRestrictionThm} for the definition of a factorizing resolution. Note that, by definition, $\exclocus(\pi)$ is a divisor on $\overline A$. } of $X\subseteq A$ such that $\pi\invrs Z \cup \supp(\exclocus (\pi))$ is a simple normal crossings variety and the components of $Z$ not vanishing along $X$ lift to divisors. This is possible by \cite[Corollary 3.2]{EisensteinRestrictionThm}. Let $\bar X \subseteq \bar A$ be the strict transform of $X$ in $\bar A$ and let $\psi\colon A' \to \bar A$ be the blow up of $\bar A$ along $\bar X$.  We get the following diagram:
       \[
         \xymatrix@C-2pc{
           X'  \ar[d] &  \subseteq& A' \ar[d]^\psi\\
           \overline X \ar[d] & \subseteq &  \overline A \ar[d]^\pi\\
           X & \subseteq& A \ .
         }
       \]
       Then $X'\coloneqq \exclocus(\psi)$ is a prime divisor dominating $\bar X$. Let $f = \pi \circ \psi$.  We define \emph{the multiplier ideal of $(A, Z)$ along $X$} to be the ideal: 
    \[
      \multideal_X(A, Z) \coloneqq f_* \ssheaf_{A'}\left(\ceil*{K_{A'/A} - f\invrs Z} + X'\right).
    \]
    \label{def:adjointIdeal}
         \end{definition}

     The following lemma shows that $\multideal_X(A, Z)$ is a generalization of Takagi's adjoint ideal. 
    \begin{lemma}[{\cite[Proof of proposition 3.5]{EisensteinRestrictionThm}}]
      If $X$ is a subscheme of $A$ with pure codimension $c$, and $A$ is smooth at the generic points of $X$,  and none of the $Z_i$ vanish at the generic points of $X$, then, using Takagi's definition for $\adj_X(A, Z)$, 
      \[
        \adj_X(A,Z) =  \multideal_X(A, Z + cX)
      \]
    \label{lem:EisensteinAdj}
    \end{lemma}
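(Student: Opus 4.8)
The plan is to compute both sides on the single birational model $f=\pi\circ\psi\colon A'\to A$ constructed in \autoref{def:adjointIdeal} and to match the two divisor formulas. First I would recall that Takagi's $\adj_X(A,Z)$ (as in \cite{TakagiAdjointHighCodim}) may be computed on any log resolution of the relevant data along which the prime divisor lying over $X$ is smooth and has simple normal crossings with the total transform of $Z$ together with the exceptional locus, and then check that $f$ is such a model: since $\pi$ is a factorizing resolution, $\bar A$ is smooth and $\bar X$ is smooth; since by hypothesis no $Z_i$ vanishes at the generic point of $X$ we have $\bar X\not\subseteq\supp(\pi\invrs Z)$ and $\bar X$ is in simple normal crossings position with $\pi\invrs Z\cup\supp(\exclocus(\pi))$; and $\psi$, being the blow-up of the smooth center $\bar X$ in the smooth $\bar A$, makes $A'$ smooth, makes $X'=\exclocus(\psi)$ a smooth prime divisor, and preserves the simple normal crossings condition. (Note also that $Z+cX$ equals $cX$ at the generic point of $X$ precisely because no $Z_i$ vanishes there, so $\multideal_X(A,Z+cX)$ is defined.) Independence of the chosen model is the usual argument.

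Next I would record the numerical data. The factorizing property gives $I_X\cdot\ssheaf_{\bar A}=I_{\bar X}\cdot\ssheaf_{\bar A}(-R_X)$ for an effective $\pi$-exceptional divisor $R_X$; pulling back along $\psi$ and using $I_{\bar X}\cdot\ssheaf_{A'}=\ssheaf_{A'}(-X')$ yields $f\invrs X=X'+\psi^*R_X$ as Cartier divisors, while $f\invrs Z=\psi^*\pi\invrs Z$. Since $\psi$ blows up the smooth codimension-$c$ subvariety $\bar X$, we have $K_{A'/\bar A}=(c-1)X'$, hence $K_{A'/A}=(c-1)X'+\psi^*K_{\bar A/A}$. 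Substituting into the formula of \autoref{def:adjointIdeal} with $Z$ replaced by $Z+cX$, the coefficient of $X'$ is $(c-1)-c+1=0$, so
\[
  K_{A'/A}-f\invrs(Z+cX)+X'\;=\;\psi^*\bigl(K_{\bar A/A}-\pi\invrs Z-cR_X\bigr).
\]
Rounding up and pushing forward, using that $\psi^*\ceil{E}-\ceil{\psi^*E}$ is effective and $\psi$-exceptional so that $\psi_*\ssheaf_{A'}(\ceil{\psi^*E})=\ssheaf_{\bar A}(\ceil{E})$, and then applying $\pi_*$, we get
\[
  \multideal_X(A,Z+cX)\;=\;\pi_*\ssheaf_{\bar A}\bigl(\ceil*{K_{\bar A/A}-\pi\invrs Z-cR_X}\bigr).
\]

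Finally I would identify the right-hand side with Takagi's $\adj_X(A,Z)$: on a factorizing resolution the exceptional divisorial part $R_X$ of the total transform of $X$ enters Takagi's adjoint formula with coefficient equal to the codimension $c$, which is exactly the $-cR_X$ obtained above; in codimension one, where $\psi$ is an isomorphism, this recovers the familiar identity $\adj_X(A,Z)=\adj(A,X+Z)$. The main obstacle is precisely this coefficient bookkeeping around $X'$: one must arrange that the $(c-1)X'$ coming from $K_{A'/\bar A}$, the $-c\,f\invrs X$ coming from the added $cX$, and the $+X'$ appearing in \autoref{def:adjointIdeal} cancel, and then recognize what survives on $\bar A$ as Takagi's formula verbatim. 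A secondary technical point is verifying that blowing up $\bar X$ keeps us inside the class of log resolutions that compute Takagi's ideal; this is where the hypotheses that $A$ is smooth at the generic points of $X$ and that no $Z_i$ vanishes there are used, together with the defining properties of a factorizing resolution.
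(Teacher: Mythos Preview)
The paper does not prove this lemma; it is stated with a citation to \cite[Proof of Proposition 3.5]{EisensteinRestrictionThm} and no proof is given. Your proposal supplies exactly the computation one finds there: work on the model $A'\xrightarrow{\psi}\bar A\xrightarrow{\pi}A$, use $K_{A'/\bar A}=(c-1)X'$ and $f\invrs X=X'+\psi^*R_X$ to see that the coefficient of $X'$ in $K_{A'/A}-f\invrs(Z+cX)+X'$ vanishes, and then push the remaining $\psi^*(K_{\bar A/A}-\pi\invrs Z-cR_X)$ down to $\bar A$ and recognize Takagi's formula. This is the intended argument and is correct.

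Two minor points worth tightening. First, the step $\psi_*\ssheaf_{A'}(\ceil{\psi^*E})=\ssheaf_{\bar A}(\ceil{E})$ deserves a one-line justification: since $\bar X$ meets $\supp E$ transversally (this is part of the simple normal crossings data coming from the factorizing resolution), the multiplicity of each component of $E$ along $\bar X$ is $0$ or $1$, so $\psi^*\ceil{E}-\ceil{\psi^*E}$ is indeed effective and $\psi$-exceptional. Second, your final identification with Takagi's $\adj_X(A,Z)$ is stated somewhat loosely (``enters Takagi's adjoint formula with coefficient equal to the codimension $c$''); to make the proof self-contained you should write out Takagi's definition on a factorizing resolution and match it literally with $\pi_*\ssheaf_{\bar A}(\ceil{K_{\bar A/A}-\pi\invrs Z-cR_X})$, rather than asserting the match. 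Neither point is a gap in the mathematics, only in the exposition.
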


\section{Comparison with Eisenstein's Subadditivity theorem}
Our main technical result in this section is that the adjoint ideal equals the test ideal along a subscheme mod $p \gg 0$, even when $A$ is singular.  This extends earlier results by Takagi in the setting where $X$ is a divisor \cite{TakagiCharPadjoint} and in the setting where $A$ is regular \cite{TakagiLCvsFP}. We will mainly work in the following setting. 
 \begin{setting}
   \label{setting:set2}
   $R$ is a $\bQ$-Gorenstein ring essentially of finite type over a perfect field $k$. % {\color{red}(is $F$-finite domain enough?)}.
   $I\subseteq R$ is a prime ideal of height $c$ and $R_I$ is regular. $\mathfrak a_i\subseteq R$ is a collection of ideals, $1 \leq i \leq N$,  and $t_i \geq 0$ a collection of rational numbers. We further assume that $\mathfrak a_i R_I = I^{n_i}R_I$ for each $i$ and also $\sum_i n_it_i = c$. Set $\cartalg C =\cartalg C^{R, \prod_i \mathfrak a_i^{t_i}}$. 

   Further, set $A = \spec R$, $X = \spec(R/I) \subseteq A$, and $Z_i = \spec(R/\mathfrak a_i) \subseteq A$. Let $Z$ denote the formal sum $Z = \sum_i t_i Z_i$.  If $R$ has characteristic 0, let $S\subseteq k$ be a descent datum\footnote{See \autoref{remark:modp}} and let $s \in \operatorname{MaxSpec} S$. Set $\kappa \coloneqq \kappa(s)$ and $p = \charp \kappa$. Then we write $R_\kappa$ to denote the mod-$p$ reduction of $R$ at $s$, and similarly for $I, \mathfrak a_i, A, X,$ and $Z_i$. We further denote $\sum_i t_i (Z_i)_\kappa$ by $Z_\kappa$ and we denote $\prod_i (\mathfrak a_i)_\kappa^{t_i}$ by $\idealsProd_\kappa$. 
 \end{setting}
 Note that this setting is agnostic to the characteristic of $R$. 

\begin{theorem}
  Work in \autoref{setting:set2} and assume that $\charp R = 0$. Then $\left( \multideal_{X}(A, Z) \right)_\kappa = \tau_{I_\kappa}(R_\kappa, \idealsProd_\kappa)$ for all $s \in \mspec S$ sufficiently general. 
  \label{thm:tauEqualsJ}
\end{theorem}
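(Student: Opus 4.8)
### Proof proposal for \autoref{thm:tauEqualsJ}

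The plan is to follow the standard template for reduction-mod-$p$ statements that compare a multiplier-type ideal to an $F$-singularity-type ideal, adapting the arguments of Takagi \cite{TakagiCharPadjoint} and the restriction-theorem philosophy of \cite{EisensteinRestrictionThm} to the singular ambient ring $R$. The key structural input is that both $\multideal_X(A,Z)$ and $\tau_I(R,\cartalg C)$ satisfy a restriction theorem onto $X = \spec(R/I)$: on the test-ideal side this is \autoref{prop:restriction}, which gives $\tau_{I_\kappa}(R_\kappa,\idealsProd_\kappa)\cdot R_\kappa/I_\kappa = \tau(R_\kappa/I_\kappa, \cartalg C_\kappa|_{R_\kappa/I_\kappa})$, and on the multiplier-ideal side the analogous adjunction comes from the factorizing resolution used to define $\multideal_X(A,Z)$ (the exceptional divisor $X'$ appearing in \autoref{def:adjointIdeal} is exactly what makes adjunction work). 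Since $\cartalg C = \cartalg C^{R,\prod_i\mathfrak a_i^{t_i}}$ and the $\mathfrak a_i R_I = I^{n_i}R_I$ with $\sum n_i t_i = c$, the restricted Cartier algebra $\cartalg C_\kappa|_{R_\kappa/I_\kappa}$ should match the Cartier algebra computing Takagi's adjoint test ideal $\tau(R_\kappa/I_\kappa, (\prod_i \overline{\mathfrak a}_i(R/I))^{t_i})$, so that the problem reduces — via the two restriction theorems and Nakayama/faithful flatness along $I$ — to the already-known equality of multiplier and test ideals on the (possibly singular) scheme $X$ itself, i.e. to \cite{TakagiCharPadjoint,HaraMultIdealIsTestIdeal,HaraYoshidaSubadd}.

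Concretely, I would proceed in the following steps. \textbf{Step 1:} Set up the descent datum $S$ and reduce both sides simultaneously; choose $S$ large enough that the factorizing resolution $f\colon A'\to A$ of \autoref{def:adjointIdeal}, the strict transform $\bar X$, the blow-up $\psi$, and all the divisors $K_{A'/A}$, $f^{-1}Z$, $X'$ descend and reduce well, and that $R_\kappa$ remains $\bQ$-Gorenstein with $(R_\kappa)_{I_\kappa}$ regular of height $c$, $\mathfrak a_i$ reductions behaving as $I^{n_i}$ at $I_\kappa$. This is the routine ``spreading out'' bookkeeping. \textbf{Step 2:} Prove that $\tau_{I_\kappa}(R_\kappa,\idealsProd_\kappa)$ is the unique smallest ideal not in $I_\kappa$ whose image in $R_\kappa/I_\kappa$ equals $\tau(R_\kappa/I_\kappa, \cartalg C_\kappa|_{R_\kappa/I_\kappa})$ and which is $\cartalg C_\kappa$-compatible — this is essentially the content of \autoref{prop:restriction} together with the $\pi^{-1}$ description in its proof. \textbf{Step 3:} Do the same on the characteristic-$0$ side: show $\multideal_X(A,Z)$ is the preimage in $R$ of $\adj_X$-type data on $X$, using \autoref{lem:EisensteinAdj} to identify the restriction of $\multideal_X(A,Z)$ with $\adj_X(A,Z)$ pushed to $X$, and more precisely with the multiplier ideal $\multideal(X, Z|_X)$ computed on $X$ via the induced log resolution — here the key point is that blowing up $\bar X$ and adding $X'$ implements adjunction even though $A$ is singular, because $A$ is smooth at the generic points of $X$ and the resolution is factorizing. \textbf{Step 4:} Invoke the known reduction theorem on $X$: $\multideal(X, Z|_X)$ reduces mod $p\gg 0$ to $\tau(X_\kappa, (Z|_X)_\kappa)$, i.e. to $\tau(R_\kappa/I_\kappa, \cartalg C_\kappa|_{R_\kappa/I_\kappa})$ after matching Cartier algebras. \textbf{Step 5:} Combine: both $(\multideal_X(A,Z))_\kappa$ and $\tau_{I_\kappa}(R_\kappa,\idealsProd_\kappa)$ are characterized as the smallest $\cartalg C_\kappa$-compatible ideal restricting to the same ideal on $X_\kappa$ and not contained in $I_\kappa$; since the characterization is the same, the ideals agree.

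The main obstacle I expect is \textbf{Step 3 together with the compatibility half of Step 2}: namely showing that $\multideal_X(A,Z)$ is genuinely determined by its restriction to $X$ plus an $I$-compatibility condition mirroring \autoref{prop:restriction}, and that this restriction is the ``right'' multiplier ideal on the singular scheme $X$. One has to be careful that the ambient singularities of $A$ away from $X$ do not contribute — the $\bQ$-Gorenstein hypothesis and the precise form of the factorizing resolution (so that $\ceil{K_{A'/A}-f^{-1}Z}+X'$ behaves like a relative canonical plus boundary for the morphism $X'\to X$) are what should rescue this. A secondary technical point is matching the Cartier algebra $\cartalg C_\kappa|_{R_\kappa/I_\kappa}$ with the Cartier algebra of the pair $(X_\kappa, (Z|_X)_\kappa)$: because $\mathfrak a_i R_I = I^{n_i}R_I$ and $\sum n_i t_i = c$, one expects the $\ceil{t_i(p^e-1)}$ powers of the $\mathfrak a_i$ to cut down exactly to the boundary $Z|_X$ on $X$ after restriction, but verifying the ceilings line up (using the inequalities recalled in the remark before \autoref{setting:tauI}, and the assumption on denominators not divisible by $p$ from \autoref{ex:mainExampleForTau}) will require a careful local computation at $I$.
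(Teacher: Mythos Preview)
Your proposed Step 5 contains a genuine logical gap. You claim that both $(\multideal_X(A,Z))_\kappa$ and $\tau_{I_\kappa}(R_\kappa,\idealsProd_\kappa)$ are ``characterized as the smallest $\cartalg C_\kappa$-compatible ideal restricting to the same ideal on $X_\kappa$ and not contained in $I_\kappa$''. But this is not a characterization that determines an ideal uniquely: many $\cartalg C_\kappa$-compatible ideals can have the same image in $R_\kappa/I_\kappa$. What the restriction theorem \autoref{prop:restriction} actually gives you is $\tau_{I_\kappa} \subseteq \pi^{-1}(\tau(R_\kappa/I_\kappa,\cartalg C_\kappa|_{R_\kappa/I_\kappa}))$, and knowing that the multiplier ideal is compatible, not contained in $I_\kappa$, and restricts to the same thing only yields $\tau_{I_\kappa} \subseteq (\multideal_X(A,Z))_\kappa$ by minimality of $\tau_{I_\kappa}$. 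That is exactly the easy containment; the hard containment $\multideal \subseteq \tau_I$ is simply not accessible by restriction-and-minimality arguments, because the multiplier ideal is not defined by any minimality property.

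The paper's proof of the hard containment uses an entirely different mechanism: Hara's surjectivity theorem applied to a carefully constructed resolution with a relatively ample divisor, together with Kawamata--Viehweg vanishing and the five lemma, to exhibit every element of $\multideal_{X_\kappa}(A_\kappa,Z_\kappa)$ as lying in the image of a specific trace map $(F^e)^\vee$; a direct computation (relegated to the appendix) then shows this image lands inside $\tau_{I_\kappa}(R_\kappa,\idealsProd_\kappa)$ via the test-element description of \autoref{cor:tauIsSum}. This is a positivity/vanishing argument on the resolution, not an adjunction-to-$X$ argument. Your Steps 3 and 4 are also problematic on their own terms: the restriction of $\multideal_X(A,Z)$ to $X$ is not simply a multiplier ideal on $X$ without Jacobian-type corrections (this subtlety is the whole point of Eisenstein's work), and $X = \spec(R/I)$ carries no hypotheses in \autoref{setting:set2} --- it need not be normal or $\bQ$-Gorenstein --- so the ``known reduction theorem on $X$'' you invoke in Step 4 is not available.
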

\begin{remark}[Reduction mod $p$] \label{remark:modp} Here we briefly review the process of reducing rings and schemes modulo $p$. See \cite[Chapter 2]{HHequalCharZero} for a detailed reference and \cite[Section 2]{TakagiLCvsFP} for a succinct reference.

  Let $R$ be an algebra essentially of finite type over a field $k$ of characteristic 0. We can find a finitely generated $\bZ$-subalgebra $B$ of $k$ and a $B$-subalgebra $R_B$ of $R$ such that the inclusion $R_B \subseteq R$ induces an isomorphism $R = R_B \otimes_B k$. This algebra $B$ is called \emph{descent datum} for $R$, and the algebra $R_B$ is called a \emph{model} for $R$ over $B$. For instance, if 
  \[
    R = \frac{\bC[x_1, \ldots, x_n]}{(f_1, \ldots, f_m)},
  \]
  we can choose $B$ to be the $\bZ$-subalgebra of $\bC$ generated by all of the coefficients of all of the polynomials $f_i$, and we can set
  \[
    R_B = \frac{B[x_1, \ldots, x_n]}{(f_1, \ldots, f_m)}.
  \] 
  For any maximal ideal $\mathfrak m \subseteq B$, the residue field $B/\mathfrak m$ will have positive characteristic $p$. Then $R_{\kappa(\mathfrak m)} \coloneqq R_B \otimes_B B/\mathfrak m$ is a \emph{mod $p$ reduction} of $R$. More specifically, we call $R_{\kappa(\mathfrak m)}$ the \emph{mod $p$ reduction of $R$ at $\mathfrak m$}. For any finitely generated  $R$-module $M$, we can find a model $M_B$ for $M$ over $B$ in the same manner. The choice of descent datum $B$ is not unique. However, given two descent data $B$ and $B'$, we can find a third descent datum $C$ containing $B$ and $B'$, such that $R_B \otimes_B C = R_{B'} \otimes_{B'} C$. Given any descent datum $B$, many properties of $R$ will be preserved by $R_{\kappa(\mathfrak m)}$ for $\mathfrak m$ sufficiently general, that is, for all $\mathfrak m$ in a dense open subset of $\operatorname{MaxSpec} B$. For instance, if $R$ is regular, then so is $R_{\kappa(\mathfrak m)}$ for all $\mathfrak m$ sufficiently general. 

  Note that, for all $N$, the subset of $\mathfrak m$ in $\operatorname{MaxSpec} B$  such that $\charp \kappa(\mathfrak m) > N$ forms a dense open set. Thus we have $\charp R_{\kappa(\mathfrak m)} \gg 0$ for $\mathfrak m$ sufficiently general.

  It will be useful for us later to note that reduction modulo $p$ commutes with tensor products, in the following sense. If $B$ is a descent datum for $R$, then $B$ is also a descent datum for $R\otimes_k R$, and we have $R_B \otimes_B R_B = (R \otimes_k R)_B$. Then we compute:
  \begin{align*}
    R_{\kappa(\mathfrak m)} \otimes_{\kappa(\mathfrak m)} R_{\kappa(\mathfrak  m)} &= R_B \otimes_B \kappa(\mathfrak m) \otimes_{\kappa(\mathfrak m)} R_B \otimes_B {\kappa(\mathfrak  m)}\\ & = R_B \otimes_B R_B \otimes_B {\kappa(\mathfrak  m)} = (R\otimes_k R)_{\kappa(\mathfrak m)}
  \end{align*}

  An important tool in the study of reduction modulo $p$ is the \emph{generic freeness lemma} \cite[(2.1.4)]{HHequalCharZero}. By this lemma, we can always enlarge our choice of descent datum $B$ to ensure that, for any finite collection $\left\{ M_i \right\}$ of finitely generated $R$-modules, the models $(M_i)_B$ will be free. 

  Given any map $\phi\colon M \to M'$ of finitely generated $R$-modules, we can find a suitable descent datum $B$ so that $\phi(M_B) \subseteq M_B'$. Then we say $\phi_B \coloneqq \phi\big |_{M_B}$ is a model for $\phi$, and we have $\phi_B \otimes_B k = \phi$. Given a bounded exact sequence of finitely generated $R$-modules, we can choose our descent datum $B$ so that the models of these maps over $B$ form an exact sequence of $B$-modules. 

  Similarly, given any scheme $X$ of finite type over $k$, we can find a descent datum $B \subseteq k$ and a $B$-scheme $X_B$ of finite type such that $X = X_B \times_{\spec B} \spec k$. We can perform a similar construction for coherent sheaves on $X$, morphisms $f\colon Y \to X$, and divisors on $X$. Given any closed point $\mu \in \spec B$, the residue field $\kappa(\mu)$ will have positive characteristic $p$. We call the fiber $X_{\kappa(\mu)} \coloneqq (X_B)_\mu$ a \emph{mod $p$ reduction} of $X$. Given a morphism of $k$-schemes $f\colon Y\to X$, we get an induced morphism $f_{\kappa(\mu)}\colon Y_{\kappa(\mu)} \to X_{\kappa(\mu)}$. If $f$ is projective, then so is $f_{\kappa(\mu)}$ for all $\mu$ sufficiently general. If $K_X$ is the canonical divisor of $X$, then $(K_X)_{\kappa(\mu)}$ is the canonical divisor of $X_{\kappa(\mu)}$ for all $\mu$ sufficiently general.
  \end{remark}

  \begin{remark} 
  Working in the setting of \autoref{def:adjointIdeal}, we can find a descent datum $B \subseteq k$ and reduce the maps $f\colon A' \xrightarrow{\psi} \overline A \xrightarrow{\pi} A$ modulo $p$. For $\mu \in \mspec B$ sufficiently general, $\pi_\mu \colon \overline A_\mu \to A_\mu$ will still be a factorizing resolution of $X_\mu \subseteq A_\mu$. In this way, we can define the multiplier ideal $\multideal_{X_\mu}(A_\mu, Z_\mu)$ for $\mu$ sufficiently general. By generic freeness, we can choose our descent datum so that $\ssheaf_{A'_B}(\ceil*{K_{A'_B/A_B} - f_B\invrs Z_B} + X_B')$, as well as all of its cohomology sheaves, are flat over $B$. It follows from \cite[Lemma 4.1]{HaraInjectivity} that the mod $p$ reduction of  $\multideal_{X}(A, Z)$ at $\mu$ equals $\multideal_{X_{\kappa(\mu)}}\left( A_{\kappa(\mu)}, Z_{\kappa(\mu)} \right)$ for $\mu$ sufficiently general. Thus, to prove \autoref{thm:tauEqualsJ},  it suffices to show that $\multideal_{X_{\kappa}}(A_{\kappa}, Z_{\kappa} ) = \tau_{I_{\kappa}}\left(R_{\kappa}, \idealsProd_\kappa \right)$ for all $s \in \mspec S$  sufficiently general.
\end{remark}

Before proving \autoref{thm:tauEqualsJ}, we show how this theorem allows us to compare our subadditivity formula with the one obtained in  \cite{EisensteinRestrictionThm}. Consider the following setting:
\begin{setting}
  \label{setting:eisenStuff}
  $R$ is a $\bQ$-Gorenstein ring essentially of finite type over a field $k$ of characteristic $0$ and $\mathfrak a, \mathfrak b \subseteq R$ are ideals. Set $X = \spec R$, $Z_1 = V(\mathfrak a)$, and $Z_2 = V(\mathfrak b)$.  Let $s, t \geq 0 $ be rational numbers. Let $S\subseteq k$ be a descent datum and $s \in \mspec S$. Set $\kappa = \kappa(s)$. 
\end{setting}

In \cite{EisensteinRestrictionThm}, Eisenstein derives a new formula for restriction multiplier ideals to closed subschemes. By carefully studying the case $\Delta \subseteq X \times_k X$, where $\Delta = X$ is the diagonal, he arrives at the containment:
  \begin{equation}
    \adj_\Delta\left( X \times_k X, s p_1^*Z_1 + t p_2^* Z_2 \right) \cdot \ssheaf_{\Delta} \supseteq \overline{\jac_X} \multideal\left( X, sZ_1 + tZ_2 \right)
    \label{eq:eisensteinDiag}
  \end{equation}
  where $p_i \colon X\times_k X \to X$ are the projection maps \cite[Proof of Theorem 6.5]{EisensteinRestrictionThm}. The left-hand side of \autoref{eq:eisensteinDiag} is easily seen to be contained in the product of multiplier ideals, $\multideal(X, Z_1) \multideal(X, Z_2)$ \cite[Lemma 6.2]{EisensteinRestrictionThm}. Now, 
  \[
    \adj_\Delta\left( X \times_k X, s p_1^*Z_1 + t p_2^* Z_2 \right) = \multideal_\Delta\left( X \times_k X, s p_1^*Z_1 + t p_2^* Z_2 + d \Delta \right)
  \]
  where $d = \dim X$. By \cite{HaraYoshidaSubadd}, we have
  \[
    \left( \overline{\jac_X} \multideal\left( X, sZ_1 + tZ_2 \right) \right)_\kappa = \overline{\jac(R_\kappa)} \tau\left( R_\kappa, (\mathfrak a_\kappa)^s (\mathfrak b_\kappa)^t  \right)
  \]
  for $s$ sufficiently general. So we see, combining \autoref{thm:tauEqualsJ} with \autoref{eq:eisensteinDiag}, that
  \[
    \tau_{(I_\Delta)_\kappa}\left( (R\otimes_k R)_\kappa, \left( \mathfrak a \otimes_k R \right)_\kappa^s \left( R \otimes_k \mathfrak b \right)_\kappa^t (I_\Delta)_\kappa^d \right) \cdot  (R\otimes_k R)_\kappa/(I_\Delta)_\kappa \supseteq \overline{\jac(R_\kappa)} \tau\left( R_\kappa, (\mathfrak a_\kappa)^s (\mathfrak b_\kappa)^t  \right)
  \]
  for all $s$ sufficiently general. As $\cartalg C^{ (R\otimes_k R)_\kappa, (I_\Delta)_\kappa^d}$ is compatible with $(I_\Delta)_\kappa$ (\cf \autoref{ex:mainExampleForTau}), it follows from \autoref{lemma:TauIsMonotonic} that 
  \begin{align*}
    &\tau_{ (I_\Delta)_\kappa }\left( (R\otimes_k R)_\kappa, \left( \mathfrak a \otimes_k R \right)_\kappa^s \cdot \left( R \otimes_k \mathfrak b \right)_\kappa^t \cdot (I_\Delta)_\kappa^d \right) \\
    \subseteq &   \tau_{ (I_\Delta)_\kappa }\left( (R\otimes_k R)_\kappa, \fullCA{R_\kappa \otimes_\kappa R_\kappa,\, (I_\Delta)\kappa \compSubAlg }, \left( \mathfrak a \otimes_k R \right)_\kappa^s \cdot \left( R \otimes_k \mathfrak b \right)_\kappa^t \right) 
 \end{align*}
  Here we're using the fact that $R_\kappa \otimes_\kappa  R_\kappa = (R \otimes_k R)_\kappa$. By \autoref{prop:restriction}, we have 
  \begin{align*}
     &\tau_{(I_\Delta)_\kappa}\left( (R\otimes_k R)_\kappa, \fullCA{R_\kappa\otimes_\kappa R_\kappa,\, (I_\Delta)\kappa \compSubAlg}, \left( \mathfrak a \otimes_k R \right)_\kappa^s \cdot \left( R \otimes_k \mathfrak b \right)_\kappa^t \right) \cdot (R \otimes_k R)_\kappa/(I_\Delta)_\kappa \\
      = &\tau(R_\kappa, \plainConR, \mathfrak a_\kappa^s \mathfrak b_\kappa^t).
  \end{align*}
%
%  \begin{align*}
%     \tau_{(I_\Delta)_p}\left( (R\otimes_k R)_p, \plainConR, \left( \mathfrak a \otimes_k R \right)_p^s \cdot \left( R \otimes_k \mathfrak b \right)_p^t \right) \cdot (R \otimes_k R)_p/(I_\Delta)_p &\subseteq \tau(R_p, \ConProd|_{R_p}, \mathfrak a_p^s \mathfrak b_p^t)\\
%     & = \tau(R_p, \plainConR, \mathfrak a_p^s \mathfrak b_p^t).
%  \end{align*}
   Thus we have shown, assuming \autoref{thm:tauEqualsJ}:
\begin{corollary}
  Work in \autoref{setting:eisenStuff}.  Then
\[
  \overline{\jac(R_\kappa)} \tau(R_\kappa, (\mathfrak a_\kappa)^s (\mathfrak b_\kappa)^t) \subseteq \tau(R_\kappa, \plainConR, (\mathfrak a_\kappa)^s (\mathfrak b_\kappa)^t)
\]
for all  $s$ sufficiently general. 
\label{cor:mainresult}
\end{corollary}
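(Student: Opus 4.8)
The plan is to formalize the chain of containments already outlined before the statement; granting \autoref{thm:tauEqualsJ}, the remaining work is pure bookkeeping. First I would take Eisenstein's diagonal containment \autoref{eq:eisensteinDiag}, namely $\adj_\Delta(X\times_k X,\, s p_1^*Z_1 + t p_2^*Z_2)\cdot\ssheaf_\Delta \supseteq \overline{\jac_X}\,\multideal(X, sZ_1 + tZ_2)$, and rewrite the left-hand adjoint ideal, via \autoref{lem:EisensteinAdj}, as the multiplier ideal $\multideal_\Delta(X\times_k X,\, sp_1^*Z_1 + tp_2^*Z_2 + d\Delta)$ along the diagonal, where $d = \dim X$. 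For this I need the diagonal triple to satisfy the hypotheses of \autoref{def:adjointIdeal} and \autoref{setting:set2}: on the scheme side, $\Delta$ has pure codimension $d$ in $X\times_k X$, the ambient space $X\times_k X$ is smooth at the generic point of $\Delta$ (because $X$ is generically smooth, $k$ has characteristic zero, and a product over a perfect field of smooth $k$-algebras is smooth), and the formal sum $sp_1^*Z_1 + tp_2^*Z_2 + d\Delta$ equals $d\Delta$ at the generic point of $\Delta$ since $Z_1$ and $Z_2$ are proper closed subschemes whose pullbacks do not dominate $\Delta$. On the ring side this is the instance of \autoref{setting:set2} with $R\otimes_k R$ in place of $R$, $I_\Delta$ the prime of height $d$ with $(R\otimes_k R)_{I_\Delta}$ regular, $\mathfrak a_1 = \mathfrak a\otimes_k R$ and $\mathfrak a_2 = R\otimes_k\mathfrak b$ (both generating the unit ideal at $I_\Delta$, so $n_1 = n_2 = 0$), together with the extra factor $I_\Delta^{\,d}$, which meets the numerical condition $\sum_i n_i t_i = d$.

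Next I would reduce modulo $p$. On the right-hand side, \cite{HaraYoshidaSubadd} gives $\bigl(\overline{\jac_X}\,\multideal(X, sZ_1 + tZ_2)\bigr)_\kappa = \overline{\jac(R_\kappa)}\,\tau(R_\kappa, (\mathfrak a_\kappa)^s(\mathfrak b_\kappa)^t)$ for $s$ sufficiently general. On the left-hand side I would invoke \autoref{thm:tauEqualsJ} for the triple $\bigl(X\times_k X,\, \Delta,\, sp_1^*Z_1 + tp_2^*Z_2 + d\Delta\bigr)$ — this is exactly where the hypothesis check above is used — to obtain $\bigl(\multideal_\Delta(X\times_k X, sp_1^*Z_1 + tp_2^*Z_2 + d\Delta)\bigr)_\kappa = \tau_{(I_\Delta)_\kappa}\bigl((R\otimes_k R)_\kappa,\, (\mathfrak a\otimes_k R)_\kappa^s (R\otimes_k\mathfrak b)_\kappa^t (I_\Delta)_\kappa^d\bigr)$, using also that reduction mod $p$ commutes with $-\otimes_k-$ and with restriction to the diagonal (\autoref{remark:modp}). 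Combining these two identities with the mod-$p$ reduction of \autoref{eq:eisensteinDiag} yields
\[
  \overline{\jac(R_\kappa)}\,\tau(R_\kappa, (\mathfrak a_\kappa)^s(\mathfrak b_\kappa)^t) \;\subseteq\; \tau_{(I_\Delta)_\kappa}\bigl((R\otimes_k R)_\kappa,\, (\mathfrak a\otimes_k R)_\kappa^s (R\otimes_k\mathfrak b)_\kappa^t (I_\Delta)_\kappa^d\bigr)\cdot (R\otimes_k R)_\kappa/(I_\Delta)_\kappa
\]
for all $s$ sufficiently general.

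Then I would discard the spurious factor $(I_\Delta)_\kappa^d$ and pass to the full Cartier algebra of maps compatible with the diagonal. Since $\cartalg C^{(R\otimes_k R)_\kappa,\, (I_\Delta)_\kappa^d}$ is a subalgebra of $\fullCA{R_\kappa\otimes_\kappa R_\kappa,\, (I_\Delta)_\kappa \compSubAlg}$ and is compatible with $(I_\Delta)_\kappa$ (\cf\ \autoref{ex:mainExampleForTau}), \autoref{lemma:TauIsMonotonic} gives $\tau_{(I_\Delta)_\kappa}\bigl((R\otimes_k R)_\kappa,\, (\mathfrak a\otimes_k R)_\kappa^s (R\otimes_k\mathfrak b)_\kappa^t (I_\Delta)_\kappa^d\bigr) \subseteq \tau_{(I_\Delta)_\kappa}\bigl((R\otimes_k R)_\kappa,\, \fullCA{R_\kappa\otimes_\kappa R_\kappa,\, (I_\Delta)_\kappa \compSubAlg},\, (\mathfrak a\otimes_k R)_\kappa^s (R\otimes_k\mathfrak b)_\kappa^t\bigr)$. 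Finally the restriction theorem \autoref{prop:restriction} identifies the image of this last ideal under $(R\otimes_k R)_\kappa \twoheadrightarrow (R\otimes_k R)_\kappa/(I_\Delta)_\kappa \cong R_\kappa$ with $\tau(R_\kappa, \plainConR, \mathfrak a_\kappa^s\mathfrak b_\kappa^t)$, which is precisely the right-hand side of the corollary; chaining the containments closes the argument. The genuine difficulty here is entirely absorbed into \autoref{thm:tauEqualsJ}, which is proved later; within the corollary itself the only delicate points are the hypothesis verification for the diagonal triple and the genericity bookkeeping — each mod-$p$ compatibility invoked (for $-\otimes_k-$, for the adjoint/multiplier ideals, for $\overline{\jac}$, for the reduction of $\multideal_X(A,Z)$) holds for $s$ in a dense open subset of $\mspec S$, and intersecting the finitely many such subsets keeps "sufficiently general" meaningful.
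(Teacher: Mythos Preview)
Your proposal is correct and follows essentially the same chain of containments as the paper's own argument preceding the corollary: Eisenstein's diagonal containment, the identification $\adj_\Delta = \multideal_\Delta(\cdots + d\Delta)$ via \autoref{lem:EisensteinAdj}, reduction mod $p$ on each side via \cite{HaraYoshidaSubadd} and \autoref{thm:tauEqualsJ}, then \autoref{lemma:TauIsMonotonic} to pass from the $(I_\Delta)_\kappa^d$-twisted algebra to $\fullCA{R_\kappa\otimes_\kappa R_\kappa,\,(I_\Delta)_\kappa\compSubAlg}$, and finally \autoref{prop:restriction}. If anything, your write-up is more careful than the paper's in explicitly verifying that the diagonal triple fits \autoref{setting:set2} and in tracking the finitely many genericity conditions on $s\in\mspec S$.
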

\noindent
Namely, our subadditivity formula, 
\[
  \tau(R_\kappa, \plainConR, (\mathfrak a_\kappa)^s (\mathfrak b_\kappa)^t) \subseteq \tau(R_\kappa, (\mathfrak a_\kappa)^s) \tau(R_\kappa,  (\mathfrak b_\kappa)^t)
\]
is a sharper containment than the previously known formula, 
\[
  \overline{\jac(R_\kappa)} \tau(R_\kappa, (\mathfrak a_\kappa)^s (\mathfrak b_\kappa)^t) \subseteq \tau(R_\kappa, (\mathfrak a_\kappa)^s) \tau(R_\kappa,  (\mathfrak b_\kappa)^t).
\]

\subsection{Proof that \texorpdfstring{$\left( \multideal_{X}(A, Z) \right)_\kappa \supseteq \tau_{I_\kappa}(R_\kappa, \idealsProd_\kappa)$}{Multiplier ideal contains test ideal}}
  We prove \autoref{thm:tauEqualsJ} in two parts. The first part is easier and just uses the minimality of $\tau_I$. The second part follows an argument similar to \cite[Theorem 5.3]{TakagiCharPadjoint} and requires a variant of Hara's surjectivity theorem \cite{HaraInjectivity}.

  Recall that for any normal $F$-finite scheme\footnote{Let $X$ be a scheme of positive characteristic. Then the Frobenius map on each affine chart of $X$ induces a morphism of schemes $F\colon X \to X$ called the \emph{absolute Frobenius morphism}. We say $X$ is $F$-finite if $F^e_* \ssheaf_X$ is a coherent $\ssheaf_X$-module for some (equivalently, all) $e > 0$.} $X$ of characteristic $p$ and any map $\vp\colon F^e_* \ssheaf_X \to \ssheaf_X$ we can associate an effective $\bQ$-divisor $\Delta_\vp$ on $X$ such that $K_X + \Delta_{\vp}$ is $\bQ$-Cartier with Cartier index not divisible by $p$ (\cf \cite{BlickleSchwedeSurvey}, \cite[Section 3]{SchwedeFadj}). If $h$ is a global section of $\ssheaf_X$ and we set $\vp_h = \vp(F^e_* h \cdot -)$, then $\Delta_{\vp_h} = \Delta_\vp + 1/(p^e-1) \divisor h$. 

  Now suppose that $X$ is an $F$-finite integral scheme with fraction field sheaf $\fracsheaf X$. As localization commutes with the $F^e_*$ functor, any map $\vp\colon F^e_* \ssheaf_X \to \ssheaf_X$ induces a map $\hat \vp\colon F^e_* \fracsheaf X \to \fracsheaf X$. 

  \begin{lemma}[\cf \cite{SchwedeCenters, HaraWatanabeFregvsLogT} ]
  Let $R$ be a normal $F$-finite domain and set $X = \spec R$. Suppose $\pi\colon Y \to X$ is a log-resolution of the ideals $\mathfrak a_i \subseteq R$ and set $\mathfrak a_i \ssheaf_Y = \ssheaf_Y(-G_i)$. Let $t_i > 0$ be a collection of rational numbers.  Then for any map $\vp\colon F^e_* \ssheaf_X  \to \ssheaf_X$ we have 
  \begin{align*}
    &\hat\vp\left( F^e_* \tauOfa{e} \ssheaf_Y\left( \ceil*{K_Y - \pi^* (K_X+\Delta) - \sum_i t_i G_i} + E \right) \right)\\
    &\subseteq \ssheaf_Y\left( \ceil*{K_Y - \pi^* (K_X+\Delta) - \sum_i t_i G_i} + E \right),
  \end{align*}
  where $\hat \vp$ is the induced map $F_*^e \fracsheaf X \to \fracsheaf X$, $E$ is any effective divisor on $Y$, and $\Delta$ is any divisor on $X$ such that $\Delta \leq \Delta_\vp$ and  $K_X + \Delta$ is $\bQ$-Cartier.
    \label{lemma:FcentersCompatibility}
  \end{lemma}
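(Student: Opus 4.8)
The plan is to reduce the asserted inclusion of sheaves to a numerical inequality at each codimension-one point of $Y$, and there to apply the standard transformation formula for the Cartier operator under the birational morphism $\pi$. First I would observe that both sheaves appearing in the statement are reflexive $\ssheaf_Y$-submodules of the constant sheaf $\fracsheaf X = \fracsheaf Y$: the sheaf on the right is $\ssheaf_Y$ of a Weil divisor, and the one on the left is such a sheaf twisted by the invertible ideal $\tauOfa{e}\ssheaf_Y = \ssheaf_Y(-\sum_i m_i G_i)$, where I write $m_i := \ceil{t_i(p^e-1)}$ and where invertibility uses that $\pi$ log-resolves the $\mathfrak a_i$. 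Since a reflexive sheaf on a normal variety is the intersection of its localizations at the height-one primes, and since $\hat\vp$ respects these localizations, it is enough to check, for each prime divisor $P$ on $Y$, that $\hat\vp$ carries $F^e_*$ of the stalk of the left-hand sheaf at $\xi_P$ into that of the right-hand sheaf. Letting $v = v_P$, and writing $\kappa_P$, $g_i$, $e_P$ for the coefficients of $P$ in $K_Y - \pi^*(K_X+\Delta)$, in $G_i$, and in $E$ respectively (so $g_i, e_P \in \bZ_{\ge 0}$), this amounts to the implication: if $v(z) \ge \sum_i m_i g_i - e_P - \ceil{\kappa_P - \sum_i t_i g_i}$, then $v\bigl(\hat\vp(F^e_* z)\bigr) \ge -e_P - \ceil{\kappa_P - \sum_i t_i g_i}$.

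The key input, and the step I expect to take the most care, is the local transformation estimate at $P$:
\[
  v\bigl(\hat\vp(F^e_* z)\bigr) \;\ge\; \ceil*{\frac{v(z) + 1 - (p^e-1)\,\kappa^\vp_P}{p^e}} - 1 ,
\]
where $\kappa^\vp_P$ denotes the coefficient of $P$ in $K_Y - \pi^*(K_X+\Delta_\vp)$. To obtain it, I would use the isomorphism $\homgp_{\ssheaf_Y}(F^e_*\ssheaf_Y,\ssheaf_Y) \cong F^e_*\ssheaf_Y\bigl((1-p^e)K_Y\bigr)$ together with the Grothendieck--Cartier trace $\mathrm{Tr}^e_Y \colon F^e_*\omega_Y \to \omega_Y$ on the regular variety $Y$: the map $\hat\vp$ corresponds over $Y$ to a rational section of $\ssheaf_Y((1-p^e)K_Y)$ whose divisor is $(p^e-1)\bigl(\pi^*(K_X+\Delta_\vp) - K_Y\bigr)$, where one uses that $(p^e-1)(K_X+\Delta_\vp)\sim 0$ is Cartier and hence pulls back to $Y$ as the divisor of the same rational function. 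Expressing $\hat\vp$ near $\xi_P$ as $\mathrm{Tr}^e_Y$ precomposed with multiplication by this section, the estimate is the elementary computation of the order of vanishing of $\mathrm{Tr}^e_Y$ over the discrete valuation ring $\ssheaf_{Y,P}$ (the ``$+1$'' and ``$-1$'' record that $\mathrm{Tr}^e_Y$ sends a section of order $a$ to one of order at least $\ceil{(a+1)/p^e} - 1$). This is essentially the content of $F$-adjunction along a resolution; \cf~\cite{SchwedeCenters, HaraWatanabeFregvsLogT}.

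Finally I would combine the two steps. Because $\Delta \le \Delta_\vp$ and both $K_X+\Delta$ and $K_X+\Delta_\vp$ are $\bQ$-Cartier, their difference $\Delta_\vp - \Delta$ is an effective $\bQ$-Cartier divisor, so the coefficient $\lambda_P$ of $P$ in $\pi^*(\Delta_\vp - \Delta)$ satisfies $\lambda_P \ge 0$ and $\kappa^\vp_P = \kappa_P - \lambda_P$. Plugging the smallest admissible value of $v(z)$ from the first step into the estimate and clearing the factor $p^e$, the required lower bound on $v(\hat\vp(F^e_* z))$ reduces to
\[
  (p^e-1)\ceil*{\kappa_P - \sum_i t_i g_i} + (p^e-1)e_P + (p^e-1)\lambda_P + \sum_i m_i g_i + 1 \;>\; (p^e-1)\kappa_P ,
\]
which holds since $e_P, \lambda_P \ge 0$, since $\ceil{\kappa_P - \sum_i t_i g_i} \ge \kappa_P - \sum_i t_i g_i$, and since $\sum_i m_i g_i \ge (p^e-1)\sum_i t_i g_i$ because $m_i = \ceil{t_i(p^e-1)} \ge t_i(p^e-1)$ and the $g_i$ are non-negative integers --- this last point is exactly where the hypothesis that $\pi$ is a log resolution, so that the $G_i$ are genuine integral divisors, enters. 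In fact the case $\Delta = \Delta_\vp$, $E = 0$, with no ideals $\mathfrak a_i$ is already the crux, the extra terms $(p^e-1)e_P$, $(p^e-1)\lambda_P$, $\sum_i m_i g_i$ only making the inequality slacker.
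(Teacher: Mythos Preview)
Your argument is correct and gives a genuinely different proof from the paper's. The paper proceeds at the level of divisorial sheaves: it invokes \cite[Theorem~6.7]{SchwedeCenters} as a black box to obtain compatibility of $\hat\vp_h$ (for $h\in\tauOfa{e}$) with the sheaf $\ssheaf_Y(\ceil{K_Y-\pi^*(K_X+\Delta_{\vp_h})}+F)$, then bridges to the desired sheaf by choosing $F$ to be the explicit integral effective divisor $\ceil{K_Y-\pi^*(K_X+\Delta_\vp)-\sum t_iG_i}-\ceil{K_Y-\pi^*(K_X+\Delta_{\vp_h})}$, and finally absorbs $E$ via the projection formula and replaces $\Delta_\vp$ by $\Delta$ via another effective-difference trick. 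By contrast, you unpack everything down to a single valuation inequality at each height-one prime of $Y$, using the local trace estimate $v(\hat\vp(F^e_*z))\ge\ceil{(v(z)+1-(p^e-1)\kappa^\vp_P)/p^e}-1$ directly. Your route is more elementary and self-contained---it essentially reproves the cited result from \cite{SchwedeCenters} in the special case needed---and has the virtue of making the role of each hypothesis (effectivity of $E$, of $\pi^*(\Delta_\vp-\Delta)$, and integrality of the $G_i$) visible as a non-negative term in the final numerical inequality. The paper's route is quicker to write down and keeps the argument at the level of divisor manipulations, but at the cost of an external citation whose proof is, at bottom, the same valuation computation you carry out.
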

  \begin{proof}
    We work locally, so that $\pi_* K_Y = K_X$. %Fix a map $\vp: F^e_* \ssheaf_X \to \ssheaf_X$. Then $\vp$ induces a map $\hat \vp: F^e_* \fracsheaf X \to \fracsheaf X$.
  Let $h \in \tauOfa{e}$. Then $\Delta_{\vp_h} = \Delta_\vp  + 1/(p^e-1)\divisor h$. By the proof of \cite[Theorem 6.7]{SchwedeCenters}, we  have
    \begin{align*}
      & \hat \vp_h\left( F^e_* \ssheaf_Y\left( \ceil*{K_Y - \pi^*\left(K_X + \Delta_\vp+ \frac{1}{p^e-1} \divisor h \right)} + F \right) \right) \\
      & \subseteq  \ssheaf_Y\left( \ceil*{K_Y - \pi^* \left(K_X + \Delta_\vp+ \frac{1}{p^e-1} \divisor h \right)} + F \right) 
    \end{align*}
    for any integral effective divisor $F$. Set 
    \[
      F = \ceil*{K_Y - \pi^*(K_X + \Delta_\vp) - \sum t_i G_i} - \ceil*{K_Y - \pi^* \left(K_X + \Delta_\vp + \frac{1}{p^e-1} \divisor h \right)}
    \]
    Then $F$ is integral. Also $F$ is effective, as $\divisor h \geq \sum \ceil{t_i(p^e-1)} G_i$, which means $1/(p^e-1) \divisor h \geq \sum t_i G_i$. Thus we have
    \[
      \hat \vp_h\left( F^e_* \ssheaf_Y\left( \ceil*{K_Y - \pi^*(K_X + \Delta_\vp) - \sum t_i G_i }  \right) \right) \subseteq  \ssheaf_Y\left( \ceil*{K_Y - \pi^*(K_X + \Delta_\vp) - \sum t_i G_i }  \right) 
    \]
    Then for any effective divisor $E$ we have 
  \begin{align*}
     & \hat \vp_h\left( F^e_* \ssheaf_Y\left( \ceil*{K_Y - \pi^*(K_X + \Delta_\vp) - \sum t_i G_i }  +E\right) \right) \\
     &\subseteq  \ssheaf_Y\left( \ceil*{K_Y - \pi^*(K_X + \Delta_\vp) - \sum t_i G_i } +E \right) 
  \end{align*}
    using the projection formula, the fact that $(F^e)^*(\ssheaf_Y(E)) = \ssheaf_Y(p^eE)$, and the fact that $E \leq p^e E$. 

    Similarly, for any $\Delta \leq \Delta_\vp$ with $K_X + \Delta$ $\bQ$-Cartier, we  have
    \[
      \ceil*{K_Y - \pi^*(K_X + \Delta) - \sum t_i G_i } - \ceil*{K_Y - \pi^*(K_X + \Delta_\vp) - \sum t_i G_i } \geq 0, 
    \]
    and so 
    \begin{align*} 
      &\hat \vp_h\left( F^e_* \ssheaf_Y\left( \ceil*{K_Y - \pi^*(K_X + \Delta) - \sum t_i G_i }  +E'\right) \right) \\
      & \subseteq  \ssheaf_Y\left( \ceil*{K_Y - \pi^*(K_X + \Delta) - \sum t_i G_i } +E' \right),
    \end{align*}
    for any effective divisor $E'$, as desired. 
  \end{proof}

\begin{theorem}
  Work in \autoref{setting:set2} and assume that $\charp R = 0$. Then $\multideal_{X_\kappa}(A_\kappa, Z_\kappa) \supseteq \tau_{I_\kappa} (R_\kappa, \idealsProd_\kappa)$ for $s$ sufficiently general. 
  \label{thm:easyContainment}
\end{theorem}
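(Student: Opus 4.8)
The plan is to invoke the minimality built into \autoref{def:tauAlongI}. By definition $\tau_{I_\kappa}(R_\kappa, \idealsProd_\kappa)$ is the \emph{smallest} ideal of $R_\kappa$ that is not contained in $I_\kappa$ and is compatible with $\cartalg C^{R_\kappa,\, \prod_i (\mathfrak a_i)_\kappa^{t_i}}$, so it is enough to prove that $J_\kappa \coloneqq \multideal_{X_\kappa}(A_\kappa, Z_\kappa)$ --- which is an honest ideal of $R_\kappa$, since the coefficient of $\ceil{K_{A'_\kappa/A_\kappa} - f_\kappa^{-1}Z_\kappa} + X'_\kappa$ along every prime divisor dominating a divisor of $A_\kappa$ is $\leq 0$ --- has the two properties: \textbf{(a)} $J_\kappa \not\subseteq I_\kappa$, and \textbf{(b)} $J_\kappa$ is compatible with $\cartalg C^{R_\kappa,\, \prod_i (\mathfrak a_i)_\kappa^{t_i}}$. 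First I would collect the genericity hypotheses that make both sides meaningful: for $p = \charp \kappa \gg 0$ the denominators of the $t_i$ are prime to $p$, so \autoref{ex:mainExampleForTau} applies and the left side is a well-defined test ideal along $I_\kappa$; and for $s$ general $R_\kappa$ is a normal $F$-finite $\bQ$-Gorenstein domain, $(R_\kappa)_{I_\kappa}$ is regular local of dimension $c$ with maximal ideal $I_\kappa (R_\kappa)_{I_\kappa}$, and (by the remark after \autoref{thm:tauEqualsJ}) $J_\kappa$ is computed by the mod-$p$ reduction $f_\kappa\colon A'_\kappa \xrightarrow{\psi_\kappa} \overline{A}_\kappa \xrightarrow{\pi_\kappa} A_\kappa$ of the maps in \autoref{def:adjointIdeal}, which for $s$ general is again a factorizing resolution followed by the blow-up of the strict transform of $X_\kappa$.

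For \textbf{(a)} I would localize at $I_\kappa$, which does not change whether $J_\kappa \subseteq I_\kappa$. Over $\spec (R_\kappa)_{I_\kappa}$ the subscheme $X_\kappa$ is cut out by a full regular system of parameters, hence is a smooth point of a regular scheme, so $\pi_\kappa$ is an isomorphism there and $\psi_\kappa$ restricts to the blow-up of that point, with exceptional divisor $X'_\kappa$. Then $K_{A'_\kappa/A_\kappa} = (c-1)X'_\kappa$, and since $(\mathfrak a_i)_\kappa (R_\kappa)_{I_\kappa} = I_\kappa^{n_i}(R_\kappa)_{I_\kappa}$ we also get $f_\kappa^{-1}Z_\kappa = \bigl(\sum_i t_i n_i\bigr) X'_\kappa = c\,X'_\kappa$ over this locus, using $\sum_i t_i n_i = c$ from \autoref{setting:set2}. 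Hence $\ceil{K_{A'_\kappa/A_\kappa} - f_\kappa^{-1}Z_\kappa} + X'_\kappa = \ceil{-X'_\kappa} + X'_\kappa = 0$ over $\spec (R_\kappa)_{I_\kappa}$, so the localization of $J_\kappa$ at $I_\kappa$ is $(f_\kappa)_* \ssheaf_{A'_\kappa} = (R_\kappa)_{I_\kappa}$; in particular $J_\kappa \not\subseteq I_\kappa$.

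For \textbf{(b)} the engine is \autoref{lemma:FcentersCompatibility}, which I would apply to the normal $F$-finite domain $R_\kappa$ with $Y = A'_\kappa$ a log resolution of the $(\mathfrak a_i)_\kappa$ (as $f_\kappa$ is), with $\Delta = 0$ --- admissible because $R_\kappa$ is $\bQ$-Gorenstein, so $K_{A_\kappa}$ is $\bQ$-Cartier, and $0 \leq \Delta_\vp$ for every $\vp$ --- and with the effective divisor $E = X'_\kappa$. Writing $(\mathfrak a_i)_\kappa \ssheaf_{A'_\kappa} = \ssheaf_{A'_\kappa}(-G_i)$, one has $\sum_i t_i G_i = f_\kappa^{-1}Z_\kappa$ and $K_Y - f_\kappa^* K_{A_\kappa} = K_{A'_\kappa/A_\kappa}$, so the sheaf appearing in the lemma is precisely $\ssheaf_{A'_\kappa}\bigl(\ceil{K_{A'_\kappa/A_\kappa} - f_\kappa^{-1}Z_\kappa} + X'_\kappa\bigr)$, whose pushforward along $f_\kappa$ is $J_\kappa$. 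The lemma then states that for every $e$ and every $\vp \in \homgp_{R_\kappa}(F^e_* R_\kappa, R_\kappa)$ the induced fraction-field map $\hat\vp$ carries $F^e_*$ of $\prod_i (\mathfrak a_i)_\kappa^{\ceil{t_i(p^e-1)}}$ times that sheaf back into the sheaf; taking global sections over $A'_\kappa$, noting $\prod_i (\mathfrak a_i)_\kappa^{\ceil{t_i(p^e-1)}} \cdot J_\kappa \subseteq \Gamma\bigl(A'_\kappa, \ssheaf_{A'_\kappa}(\ceil{K_{A'_\kappa/A_\kappa} - f_\kappa^{-1}Z_\kappa} + X'_\kappa - \sum_i \ceil{t_i(p^e-1)} G_i)\bigr)$, and that $\hat\vp$ restricts to $\vp$ on $R_\kappa$, this becomes $\vp\bigl(F^e_*\bigl(\prod_i (\mathfrak a_i)_\kappa^{\ceil{t_i(p^e-1)}} \cdot J_\kappa\bigr)\bigr) \subseteq J_\kappa$, i.e. $J_\kappa$ is compatible with $\cartalg C^{R_\kappa,\, \prod_i (\mathfrak a_i)_\kappa^{t_i}}$. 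Combining \textbf{(a)}, \textbf{(b)} and minimality yields $J_\kappa \supseteq \tau_{I_\kappa}(R_\kappa, \idealsProd_\kappa)$.

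The step I expect to need the most care is \textbf{(b)}: one must get the relative-canonical-plus-pullback bookkeeping exactly right so that the divisor defining $\multideal_{X_\kappa}(A_\kappa, Z_\kappa)$ really is of the form to which \autoref{lemma:FcentersCompatibility} applies with $\Delta = 0$ and with the \emph{effective} divisor $E = X'_\kappa$, and so that $f_\kappa^{-1}Z_\kappa$ and $\sum_i t_i G_i$ coincide on the nose. Part \textbf{(a)} is routine once one passes to $(R_\kappa)_{I_\kappa}$, and the mod-$p$ preliminaries are all harmless genericity statements.
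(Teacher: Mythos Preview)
Your proposal is correct and follows essentially the same approach as the paper: establish that $\multideal_{X_\kappa}(A_\kappa, Z_\kappa)$ is compatible with $\cartalg C^{R_\kappa,\prod_i(\mathfrak a_i)_\kappa^{t_i}}$ via \autoref{lemma:FcentersCompatibility} with $\Delta=0$ and $E=X'_\kappa$, and show it is not contained in $I_\kappa$ by localizing at $I_\kappa$ and using $\sum_i t_i n_i=c$ so that the relevant divisor has no $X'_\kappa$-component. The only minor point is that in part \textbf{(a)} you assert $\pi_\kappa$ is an isomorphism over $\spec (R_\kappa)_{I_\kappa}$, whereas the paper only uses the weaker (and sufficient) fact that $\psi_\kappa^*K_{\overline A_\kappa}-f_\kappa^*K_{A_\kappa}$ has no support along $X'_\kappa$; since $\pi_\kappa$ is a global factorizing resolution it need not be an isomorphism over the whole local scheme, but its exceptional locus still misses the generic point of $X_\kappa$, so your conclusion stands.
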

\begin{proof}
   For  $s$ sufficiently general, we have that $p = \charp \kappa$ does not divide the denominator of any $t_i$ and thus $\tau_{I_\kappa}(R_\kappa, \idealsProd_\kappa )$ is well-defined. Fix such an $s$. We just need to prove two things:
  \begin{itemize}
    \item $\vp\left( F^e_* \tauOfa{e}_\kappa \multideal_{X_\kappa}(A_\kappa, Z_\kappa) \right) \subseteq \multideal_{X_\kappa}(A_\kappa, Z_\kappa)$,  for all $e>0$ and $\vp \in \fullCA{R}_e$, and
    \item $\multideal_{X_\kappa}(A_\kappa, Z_\kappa)\not \subseteq I_\kappa$. 
  \end{itemize}
  Set $\mathfrak a_i \ssheaf_{A'} = \ssheaf_{A'}(-F_i)$. Then, by definition, 
  \[
    \multideal_{X_\kappa}(A_\kappa, Z_\kappa) = (f_\kappa)_* \ssheaf_{A'_\kappa}\left( \ceil*{K_{A'_\kappa} - f_\kappa^* K_{A_\kappa}  - \sum_i t_i (F_i)_\kappa  } + X'_\kappa \right)
  \]
  We see that the first assertion follows from \autoref{lemma:FcentersCompatibility}, using $\Delta = 0$. 
  
  The second assertion is something we can check locally at $I_\kappa$, so now we assume that $R_\kappa$ is a local ring with maximal ideal $I_\kappa$. But then, by assumption, $\ssheaf_{A_\kappa'}\left( -\sum_i t_i (F_i)_\kappa \right) = \ssheaf_{A'_\kappa}\left( -cX'_\kappa \right)$. So we see
  \[
    \multideal_{X_\kappa}(A_\kappa, Z_\kappa) = (f_\kappa)_* \ssheaf_{A'_\kappa}\left( \ceil*{\psi^*K_{\bar A_\kappa} - f_\kappa^* K_{A_\kappa}}  \right)% = h_* \ssheaf_{\overline A}\left( K_{\overline A/A} \right)
  \]
  and $\psi^*K_{\bar A_\kappa} - f_\kappa^* K_{A_\kappa}$ has no support along $X_\kappa'$. 
\end{proof}

\subsection{Proof that \texorpdfstring{$\left( \multideal_{X}(A, Z) \right)_\kappa \subseteq \tau_{I_\kappa}(R_\kappa, \idealsProd_\kappa)$}{Multiplier ideal is contained in test ideal}}
 For the other containment, we use a similar argument to the one in \cite{TakagiCharPadjoint}. % with a couple of differences: we first take a factorizing-resolution of $X\subseteq A$, and we work in the dual setting (relative to Takagi's original proof).
 First, we recall Hara's surjectivity theorem. The following statement is slightly stronger than the one found in \cite{HaraInjectivity}: in \cite{HaraInjectivity}, the author assumes that $X$ is the blow up of an ideal sheaf on $Y$. However, the same proof actually shows the following statement, where $X$ is just assumed to be projective over $Y$ and smooth. 

\begin{theorem}[{\cite[Section 4.3]{HaraInjectivity}}]
  Let $Y = \spec R$, where $R$ is finitely generated over a field $k$ of characteristic $0$, and let $X$ be a smooth Noetherian scheme projective over $Y$. Suppose $E$ is a reduced simple normal crossings divisor on $X$ and $D$ an ample divisor with $\supp (D - \floor{D}) \subseteq \supp E$. Choose some finitely generated $\bZ$-subalgebra $B$ of $k$, over which we do our reduction mod $p$. For any closed point $s\in S = \spec B$ with residue field $\kappa = \kappa(s)$, let $Y_\kappa$, $X_\kappa$, $E_\kappa$, and $D_\kappa$ be the fibers of the corresponding objects over $s$. Then, for sufficiently general closed points $s$, 
  \[
    \begin{array}[]{lll}
      (a) & H^j ( X_\kappa, \Omega_{X_\kappa/\kappa}^i (\log E_\kappa) (- E_\kappa - \floor{-p^eD_\kappa}))=0,& i+j >d, e \geq 0\\
      (b) & H^j ( X_\kappa, \Omega_{X_\kappa/\kappa}^i (\log E_\kappa) (- E_\kappa - \floor{-p^{e+1}D_\kappa}))=0,& i+j >0, e \geq 0
    \end{array}
  \]
  where $d = \dim X$ and  $p = \charp \kappa(s)$.
  \label{thm:haraVanishing}
\end{theorem}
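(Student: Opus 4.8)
The plan is to follow, essentially verbatim, Hara's argument in \cite[Section~4.3]{HaraInjectivity}, and at each step to verify that the hypothesis appearing there --- that $X$ is the blow-up of an ideal sheaf on $Y$ --- is never actually invoked: one only uses that $X$ is smooth and projective over $Y = \spec R$, that $E$ is a reduced simple normal crossings divisor, and that $D$ is ample with $\supp(D - \floor{D}) \subseteq \supp E$. So the real work is to recall the structure of Hara's proof and to isolate every place the blow-up could enter; I expect it enters nowhere.

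First I would pass to a relative statement over $Y_\kappa$. Since $Y_\kappa$ is affine, $H^j(X_\kappa, \mathcal{F}) = H^0\!\left(Y_\kappa, R^j(f_\kappa)_*\mathcal{F}\right)$ for every coherent $\mathcal{F}$, so (a) and (b) are equivalent to the vanishing of the higher direct images $R^j(f_\kappa)_*\,\Omega^i_{X_\kappa/\kappa}(\log E_\kappa)\!\left(-E_\kappa - \floor{-p^e D_\kappa}\right)$ in the stated ranges. Two features of the reduction-mod-$p$ machinery are essential and both hold for $s$ sufficiently general: the pair $(X_\kappa, E_\kappa)$ carries a flat lift over $\kappa[t]/(t^2)$, indeed over the Witt vectors $W_2(\kappa)$, because it is base-changed from a model over a finitely generated $\bZ$-algebra of characteristic $0$; and $p = \charp\kappa > \dim X$. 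These are precisely the hypotheses under which the logarithmic Deligne--Illusie decomposition (used in \cite{HaraInjectivity}) applies, giving, for every line bundle $\mathcal{L}$ on $X_\kappa$, a quasi-isomorphism
\[
  F_*\bigl(\Omega^\bullet_{X_\kappa/\kappa}(\log E_\kappa) \otimes F^*\mathcal{L}\bigr) \;\cong\; \bigoplus_{i} \Omega^i_{X_\kappa/\kappa}(\log E_\kappa)[-i] \otimes \mathcal{L}
\]
in the derived category.

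The argument is then a descending induction on $e$. For $e \gg 0$ both (a) and (b) follow from relative Serre vanishing applied to the twisted de Rham complex: since $-\floor{-p^e D_\kappa} = \ceil{p^e D_\kappa} \geq p^e D_\kappa$ and, for general $s$, the fractional part of $p^e D_\kappa$ is supported on $E_\kappa$, the divisor $-E_\kappa - \floor{-p^e D_\kappa}$ is $f_\kappa$-ample up to an effective divisor supported on $E_\kappa$, and this suffices to kill the relevant higher direct images once $e$ is large, the $j = 0$ part of (b) coming along with the rest of the complex through the degeneration. To descend in $e$ I would use $F^*\ssheaf_{X_\kappa}(D_\kappa) = \ssheaf_{X_\kappa}(pD_\kappa)$ together with the elementary estimate $p\floor{-p^e D_\kappa} \leq \floor{-p^{e+1}D_\kappa}$, whose difference is effective and supported on $E_\kappa$: feeding $\mathcal{L} = \ssheaf_{X_\kappa}\!\left(-E_\kappa - \floor{-p^e D_\kappa}\right)$, corrected by this error divisor, into the decomposition above and taking cohomology exhibits each group in (b) at level $e+1$ as a subquotient of the corresponding groups at level $e$, with the error divisor --- being supported on $E_\kappa$ --- only perturbing matters in the direction that preserves the vanishing ranges. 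Descending induction from the Serre-vanishing base case then yields (a) and (b) for all $e \geq 0$; the single extra Frobenius twist in (b) is exactly what upgrades the range from $i + j > d$ to $i + j > 0$, again just as in Hara's argument.

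The hard part is bookkeeping rather than conceptual: one must check that all the floor-and-ceiling estimates line up so that the correction divisors remain effective and supported on $E_\kappa$ at every stage, and --- the sole reason for restating the theorem --- that Hara's proof genuinely survives replacing ``blow-up of an ideal sheaf on $Y$'' by ``smooth and projective over $Y$''. Inspecting \cite[Section~4.3]{HaraInjectivity}, the blow-up hypothesis there is used only to manufacture the ample divisor $D$ and the simple normal crossings divisor $E$; once these are handed to us, as they are in our hypotheses, the argument concerns only the pair $(X_\kappa, E_\kappa)$ and the line bundle $\ssheaf_{X_\kappa}(D_\kappa)$, so nothing is lost.
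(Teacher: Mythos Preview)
Your approach matches the paper's exactly: the paper offers no proof of its own but simply observes that Hara's argument in \cite[Section~4.3]{HaraInjectivity} goes through unchanged, since the blow-up hypothesis there serves only to manufacture the ample $D$ and the simple normal crossings divisor $E$, both of which are handed to us as hypotheses here. One small slip in your sketch: the Deligne--Illusie step exhibits the cohomology groups at level $e$ as subquotients of those at level $e+1$ (since $F^*\mathcal{L}$ carries the larger twist $p^{e+1}D$), not the reverse --- this is precisely what makes the induction \emph{descending} from the Serre-vanishing base case at $e \gg 0$.
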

  Combined with \cite[Proposition 3.6]{HaraInjectivity} (as stated), we obtain the following result: 
  \begin{corollary}[{\cf \cite[Section 4.4]{HaraInjectivity}}]
  Using notation as in \autoref{thm:haraVanishing}, the map
  \[
    (F^e)^\vee\colon H^0(X_{\kappa}, F^e_* \omega_{X_\kappa}(\ceil{p^e D_\kappa})) \to H^0(X_{\kappa}, \omega_{X_\kappa}( \ceil{D_\kappa} ))
  \]
  is surjective for $e > 0$ and for sufficiently general $s\in S$, where $\kappa = \kappa(s)$. 
  \label{cor:harasurj}
\end{corollary}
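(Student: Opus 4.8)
The plan is to obtain the corollary by combining the vanishing statements of \autoref{thm:haraVanishing} with \cite[Proposition 3.6]{HaraInjectivity}: the latter is precisely the assertion that surjectivity of a twisted Frobenius trace map on global sections is implied by the vanishing of a prescribed list of cohomology groups of log-differential sheaves. So, once the reduction-mod-$p$ setup is in place, the argument is essentially bookkeeping --- one must check that the divisors, degrees, and Frobenius exponents occurring in \cite[Proposition 3.6]{HaraInjectivity} match those for which \autoref{thm:haraVanishing} asserts vanishing.

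First I would record the standard reduction-mod-$p$ preliminaries (\cf~\autoref{remark:modp}): after enlarging the finitely generated $\bZ$-subalgebra $B$ of $k$ if necessary, there is a dense open subset of $S = \spec B$ over whose closed points $s$ the fiber $X_\kappa$ is smooth and projective over $Y_\kappa$, the divisor $E_\kappa$ is reduced with simple normal crossings, $D_\kappa$ is ample with $\supp(D_\kappa - \floor{D_\kappa}) \subseteq \supp E_\kappa$, and --- by generic flatness --- the formation of each of the finitely many coherent cohomology groups that will intervene commutes with the base change to $\spec \kappa$. After this it is enough to establish the surjectivity on one fixed fiber $X_\kappa$ lying in that open set.

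On such an $X_\kappa$, the map $(F^e)^\vee$ is the one induced on $H^0$ by the twisted Cartier operator (Frobenius trace) $F^e_* \omega_{X_\kappa}(\ceil{p^e D_\kappa}) \to \omega_{X_\kappa}(\ceil{D_\kappa})$, which is a surjection of coherent sheaves; let $\mathscr B_e$ denote its kernel, so that there is a short exact sequence
\[
0 \longrightarrow \mathscr B_e \longrightarrow F^e_* \omega_{X_\kappa}(\ceil{p^e D_\kappa}) \longrightarrow \omega_{X_\kappa}(\ceil{D_\kappa}) \longrightarrow 0,
\]
and, taking cohomology, the cokernel of $(F^e)^\vee$ embeds into $H^1(X_\kappa, \mathscr B_e)$; thus it suffices that $H^1(X_\kappa, \mathscr B_e) = 0$. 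Now $\mathscr B_e$ is a twisted form of the sheaf of locally exact $e$-fold top-degree differentials of the log pair $(X_\kappa, E_\kappa)$, and \cite[Proposition 3.6]{HaraInjectivity} provides --- via the iterated conjugate filtration on the de Rham complex and the Cartier isomorphism --- a finite filtration of $\mathscr B_e$ whose associated graded sheaves, up to iterated Frobenius push-forwards that do not alter cohomology, are of the form $\Omega^i_{X_\kappa/\kappa}(\log E_\kappa) \otimes \ssheaf_{X_\kappa}\!\left(-E_\kappa - \floor{-p^{e'} D_\kappa}\right)$ for $1 \le e' \le e$ and $i$ in the appropriate range. Every cohomology group of every such graded piece vanishes by \autoref{thm:haraVanishing}, parts (a) and (b) together covering all the pieces, and a short induction up the filtration then yields $H^j(X_\kappa, \mathscr B_e) = 0$ for all $j$; in particular $H^1 = 0$ and $(F^e)^\vee$ is surjective. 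Finally, since \autoref{thm:haraVanishing} furnishes all of these vanishings simultaneously for every exponent and on one and the same dense open subset of $\mspec S$, the surjectivity holds there for all $e > 0$ at once.

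The one genuinely delicate point --- and the reason for invoking \cite[Proposition 3.6]{HaraInjectivity} rather than reproving it --- is the construction and indexing of this filtration: carrying the fractional part of $D_\kappa$ correctly through the conjugate filtration and the Cartier isomorphism so that the twists $-E_\kappa - \floor{-p^{e'} D_\kappa}$ come out exactly as in \autoref{thm:haraVanishing}, and confirming that the ranges of $i$, $j$, and $e'$ occurring in the filtration are precisely those appearing in the hypotheses of that theorem. Everything else is a routine chase through the long exact sequences attached to the filtration.
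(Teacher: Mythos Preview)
Your proposal is correct and follows exactly the route the paper takes: the paper's entire argument is the single sentence ``Combined with \cite[Proposition 3.6]{HaraInjectivity} (as stated), we obtain the following result,'' and you have simply unpacked what that combination means. Your exposition of the kernel sheaf $\mathscr B_e$, its filtration via the conjugate filtration and Cartier isomorphism, and the matching of the resulting graded pieces against the vanishing hypotheses in \autoref{thm:haraVanishing} is precisely the mechanism behind Hara's Proposition 3.6, so there is nothing to add.
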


We will also need the following lemmas: 
\begin{lemma}
   Work in \autoref{setting:set2} and assume that $R$ has characteristic 0. There exists $d\in R\setminus I$ such that, for all sufficiently general $s$, a power of $d_\kappa$ (depending on $s$) is an $\idealsProd_\kappa$-test element along $I_\kappa$ in $R_\kappa$.
   \label{lemma:testElementModp}
 \end{lemma}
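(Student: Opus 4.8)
The plan is to construct a single element $d \in R \setminus I$ in characteristic $0$ whose reductions mod $p$ work for all sufficiently general $s$, by producing it from a test element for the full Cartier algebra together with a ``uniformity in $p$'' argument. First I would recall that, since $R$ is essentially of finite type over the perfect field $k$ and $R_I$ is regular, standard test-element existence results (e.g.\ \cite{HHbasechange}, or the arguments underlying \autoref{thm:regTauIsSum}) provide an element $c \in R$, not in any minimal prime and in particular a nonzerodivisor, whose reductions $c_\kappa$ (or powers thereof) are test elements for $R_\kappa$ in the usual sense, for all sufficiently general $s$. The point is to upgrade such a $c$ to an element usable for $\tau_{I_\kappa}(R_\kappa, \idealsProd_\kappa)$, i.e.\ an element lying in $\tau_{I_\kappa}(R_\kappa, \idealsProd_\kappa) \setminus I_\kappa$.

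The key observation is that $\tau_{I_\kappa}(R_\kappa, \cartalg C_\kappa)$ is, by \autoref{prop:restriction}, the preimage under $\pi_\kappa \colon R_\kappa \to R_\kappa/I_\kappa$ of the honest test ideal $\tau\bigl(R_\kappa/I_\kappa, \cartalg C_\kappa|_{R_\kappa/I_\kappa}\bigr)$, and the latter contains a nonzero element because $R_\kappa/I_\kappa$ is reduced and the induced Cartier algebra is nondegenerate. So I need to find one fixed $d \in R \setminus I$ such that $d_\kappa$ maps into $\tau\bigl(R_\kappa/I_\kappa, \cartalg C_\kappa|_{R_\kappa/I_\kappa}\bigr)$ for generic $s$. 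Work in characteristic $0$: localizing at $I$, $R_I$ is regular, hence so is the completion, and one can choose an element $\gamma \in R \setminus I$ such that $R_\gamma$ is regular (the singular locus of $\spec R$ is closed and does not meet $I$ since $R_I$ is regular). After inverting $\gamma$, the ideal $I_\gamma R_\gamma$ is generated by a regular sequence; moreover the hypotheses $\mathfrak a_i R_I = I^{n_i} R_I$ and $\sum_i n_i t_i = c$ persist after a further localization, so after shrinking (i.e.\ replacing $\gamma$ by a multiple) we may assume $\mathfrak a_i R_\gamma = I^{n_i} R_\gamma \cdot (\text{unit ideal in the }\mathfrak a_i\not\subseteq I\text{ factors})$, and that the relevant quotient is $F$-regular or at least has full test ideal along $I$. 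Then by the argument in \autoref{ex:mainExampleForTau}, $\cartalg C_\gamma = \cartalg C^{R_\gamma, \idealsProd}$ has $\tau_{I_\gamma}(R_\gamma, \cartalg C_\gamma) = R_\gamma$, so any lift $d$ of a suitable unit — concretely, take $d$ to be any element of $R\setminus I$ witnessing $\gamma$, e.g.\ $d = \gamma$ itself, together with a test element $c$ for $R$ — has the property that $d \notin I$ and $d \in \tau_{I}(R_{(0)}, \cartalg C)$ after localizing.

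Having chosen $d \in R \setminus I$ this way in characteristic $0$, I would then descend: pick a descent datum over which $d$, $\gamma$, $c$, the $\mathfrak a_i$, $I$, and the relevant finitely many modules and maps are all defined and generically free (using the generic freeness lemma as in \autoref{remark:modp}). For sufficiently general $s$: (i) $d_\kappa \notin I_\kappa$ since $d \notin I$ and $R/I$ is a domain with $d$ a nonzerodivisor there — this is an open condition; (ii) $c_\kappa$ is a genuine test element for $R_\kappa$ by the standard reduction-mod-$p$ behavior of test elements; (iii) $(R_\gamma)_\kappa = (R_\kappa)_{\gamma_\kappa}$ is $F$-finite and regular, so $F^e_*$ of it is free, and the computation of \autoref{ex:mainExampleForTau} shows $\tau_{(I_\gamma)_\kappa}\bigl((R_\gamma)_\kappa, (\idealsProd)_\kappa\bigr) = (R_\gamma)_\kappa$; hence $\tau_{I_\kappa}(R_\kappa, \idealsProd_\kappa)$ is not contained in $I_\kappa$ and, upon inverting $\gamma_\kappa$, equals the whole localized ring. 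Combining with \autoref{cor:tauIsSum} and \autoref{prop:testElement}, some power $d_\kappa^{m}$ (with $m$ depending on $s$, absorbing the power of $\gamma$ needed to clear denominators coming from the localization and the integral closure / Noether normalization steps) lies in $\tau_{I_\kappa}(R_\kappa, \idealsProd_\kappa)$ and is not in $I_\kappa$, i.e.\ is an $\idealsProd_\kappa$-test element along $I_\kappa$.

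The main obstacle I anticipate is \emph{decoupling the choice of $d$ from $p$}: the power $m$ is allowed to depend on $s$, but the element $d$ itself must be fixed in characteristic $0$, so I must be careful that every localization and Noether-normalization maneuver used to force ``full test ideal along $I$'' is carried out once, in characteristic $0$, before reducing — rather than prime-by-prime. Concretely, the delicate point is ensuring that after inverting a single $\gamma \in R\setminus I$ the conditions $\mathfrak a_i R_\gamma = I^{n_i}R_\gamma$ (for the indices with $\mathfrak a_i \subseteq I$) and the $F$-regularity/freeness used in \autoref{ex:mainExampleForTau} hold, and that these are preserved under generic reduction; this is where I would lean on the constructibility of $F$-regular loci and on generic freeness, exactly as assembled in \autoref{remark:modp}.
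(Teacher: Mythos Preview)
Your overall strategy is the same as the paper's: choose $d\in R\setminus I$ in characteristic~$0$ so that $R_d$ is regular and the $\mathfrak a_i$ behave like powers of $I$ on $D(d)$, show that after localizing at $d_\kappa$ the test ideal along $I_\kappa$ is the whole ring, and then conclude that some power of $d_\kappa$ lies in $\tau_{I_\kappa}(R_\kappa,\idealsProd_\kappa)\setminus I_\kappa$.

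Two points where your write-up is looser than the paper's argument. First, the claim that ``by the argument in \autoref{ex:mainExampleForTau}'' one gets $\tau_{I_\gamma}(R_\gamma,\cartalg C_\gamma)=R_\gamma$ is not justified: that example only verifies that the Cartier algebra $\cartalg C^{R,\idealsProd}$ satisfies \autoref{setting:tauI}; it does not compute the test ideal. The paper handles this by (i)~picking a regular sequence $J=(x_1,\dots,x_c)$ with $JR_I=IR_I$ and arranging $J^{n_i}R_d\subseteq\mathfrak a_iR_d$ (note: containment, not your claimed equality $\mathfrak a_iR_\gamma=I^{n_i}R_\gamma$), (ii)~also arranging $R_d/IR_d$ regular, which you omit, (iii)~using monotonicity (\autoref{lemma:TauIsMonotonic}) to get $\tau_{I_\kappa R_{d,\kappa}}(R_{d,\kappa},\cartalg C_{d,\kappa})\supseteq\tau_{J_\kappa R_{d,\kappa}}(R_{d,\kappa},(J_\kappa)^cR_{d,\kappa})$, and (iv)~invoking the separate fact (\autoref{lemma:testIdealOfRegSeq}) that $\tau_I(R,I^c)=R$ when $R$ and $R/I$ are regular and $I$ is generated by a regular sequence. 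Second, your detours through the restriction theorem \autoref{prop:restriction} and an ordinary test element $c$ for $R_\kappa$ are unnecessary and play no role in the paper's proof.
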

 \begin{proof}
   If $s$ is sufficiently general, then $p = \charp \kappa(s)$ will not divide the denominator of any $t_i$ and $\cartalg C_\kappa \coloneqq \cartalg C^{R_\kappa, \idealsProd_\kappa}$ will satisfy the conditions of \autoref{setting:tauI}. As $R_I$ is regular, we can find a regular sequence $(x_1, \ldots, x_c)$ in $R$ such that $(x_1, \ldots, x_c)R_I = IR_I$. Set $J = (x_1, \ldots x_c)$.  Then there exists an element $d \in R\setminus I$ such that $R_d$ is regular, $R_d/IR_d$ is regular, and $J^{n_i}R_d \subseteq \mathfrak a_iR_d$ for all $i$. Note that $(R_d)_\kappa$ is the same as $R_{\kappa}$ localized at $d_{\kappa}$. We set $R_{d, \kappa} \coloneqq (R_\kappa)_{d_\kappa}$ and $\cartalg C_{d, \kappa} \coloneqq (\cartalg C_{\kappa})_{d_\kappa}$. Then we have
   \[
     \tau_{I_\kappa}(R_\kappa, \cartalg C_\kappa) R_{d, \kappa} = \tau_{I_\kappa R_{d, \kappa}}(R_{d,\kappa}, \cartalg C_{d, \kappa})  \supseteq \tau_{J_{\kappa}R_{d, \kappa}}(R_{d, \kappa}, J_{\kappa}^c R_{d, \kappa}), 
   \]
   where the containment follows quickly from the minimality of $\tau_{J_{\kappa}R_{d, \kappa}}(R_{d,\kappa}, J_{\kappa}^c R_{d,\kappa})$. By \autoref{lemma:testIdealOfRegSeq}, we see that $\tau_{I_{\kappa}}(R_{\kappa}, \cartalg C_{\kappa})R_{d,\kappa}  = R_{d, \kappa}$. Thus, there exists some $N$ such that $d_{\kappa}^N \in \tau_{I_{\kappa}}(R_{\kappa}, \cartalg C_{\kappa})$. As $d_{\kappa}^N \in R_{\kappa}\setminus I_{\kappa}$, we're done. 
 \end{proof}

 \begin{lemma}
   Let $k$ be a perfect field of characteristic $p$. Let $R$ be a regular $k$-algebra essentially of finite type and $I$ a prime ideal generated by a regular sequence of length $c$. Suppose also that $R/I$ is regular. Then $\tau_I(R, I^c) = R$. 
   \label{lemma:testIdealOfRegSeq}
 \end{lemma}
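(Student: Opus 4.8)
The plan is to reduce to the case where $R$ is a regular \emph{local} ring and then, for a suitably chosen test element, to exhibit an explicit map in the Cartier algebra $\cartalg C^{R,I^c}$ sending that element to a unit.

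First some bookkeeping. Since $R$ is regular it is a finite product of normal domains; as $I$ is prime it contains all but one of the factors, and $\cartalg C^{R,I^c}$, $I$, $\tau_I$ only see that factor, so we may assume $R$ is a domain. Then $(R,I,\cartalg C^{R,I^c})$ lies in \autoref{setting:tauI} by \autoref{ex:mainExampleForTau}, so with $\cartalg C\coloneqq\cartalg C^{R,I^c}$ the ideal $\tau_I(R,\cartalg C)$ is defined and not contained in $I$; fix a $\cartalg C$-test element along $I$, say $c_0\in\tau_I(R,\cartalg C)\setminus I$. Since $c$ is an integer, $\cartalg C_e=F^e_*\bigl(I^{c(p^e-1)}\bigr)\homgp_R(F^e_* R,R)$; in particular $\cartalg C_e$ contains every map $F^e_*(z)\cdot\psi$ with $z\in I^{c(p^e-1)}$ and $\psi\in\homgp_R(F^e_* R,R)$. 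By \autoref{cor:tauIsSum}, $\tau_I(R,\cartalg C)=\sum_{e\ge0}\sum_{\varphi\in\cartalg C_e}\varphi(F^e_* c_0)$, so it suffices to show that for each maximal ideal $\mathfrak m$ there are $e$ and $\varphi\in\cartalg C_e$ with $\varphi(F^e_* c_0)\notin\mathfrak m$. We check this after localizing at $\mathfrak m$ (localization commutes with $\homgp$, with $F^e_*$, and with forming these sums), using that $(\cartalg C_e)_{\mathfrak m}=\cartalg C^{R_{\mathfrak m},(IR_{\mathfrak m})^c}_e$.

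If $I\not\subseteq\mathfrak m$, then $IR_{\mathfrak m}=R_{\mathfrak m}$, so $(\cartalg C_e)_{\mathfrak m}=\homgp_{R_{\mathfrak m}}(F^e_* R_{\mathfrak m},R_{\mathfrak m})$; since $c_0\neq0$ and $R_{\mathfrak m}$ is regular local, Krull's intersection theorem gives $c_0\notin\mathfrak m^{[p^e]}R_{\mathfrak m}$ for $e\gg0$, and then \autoref{thm:kunz} shows $F^e_* c_0$ is part of a basis of the free module $F^e_* R_{\mathfrak m}$, so a dual functional carries it to a unit. Now suppose $I\subseteq\mathfrak m$. Then $R_{\mathfrak m}$ is regular local and, because $R_{\mathfrak m}/IR_{\mathfrak m}$ is regular, $IR_{\mathfrak m}$ is generated by elements $x_1,\dots,x_c$ extending to a regular system of parameters $x_1,\dots,x_d$ of $R_{\mathfrak m}$; moreover $c_0\notin IR_{\mathfrak m}$. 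The crux is the colon identity for the regular sequence $x_1,\dots,x_d$,
\[
  \bigl(x_1^{p^e},\dots,x_d^{p^e}\bigr):\bigl(x_1\cdots x_c\bigr)^{p^e-1}
  =(x_1,\dots,x_c)+\bigl(x_{c+1}^{p^e},\dots,x_d^{p^e}\bigr),
\]
which one verifies after passing to the completion $\widehat{R_{\mathfrak m}}\cong\kappa(\mathfrak m)[[x_1,\dots,x_d]]$---where it is an elementary statement about monomial ideals---and descends by faithful flatness. Since $\overline{c_0}\neq0$ in the Noetherian local domain $R_{\mathfrak m}/IR_{\mathfrak m}$, Krull's intersection theorem gives $\overline{c_0}\notin(\overline{x}_{c+1}^{\,p^e},\dots,\overline{x}_d^{\,p^e})$ for $e\gg0$; hence $c_0$ lies outside the right-hand side of the display, so $(x_1\cdots x_c)^{p^e-1}c_0\notin(x_1^{p^e},\dots,x_d^{p^e})=\mathfrak m^{[p^e]}R_{\mathfrak m}$. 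Because $(x_1\cdots x_c)^{p^e-1}\in(IR_{\mathfrak m})^{c(p^e-1)}$, \autoref{thm:kunz} again produces $\psi\in\homgp_{R_{\mathfrak m}}(F^e_* R_{\mathfrak m},R_{\mathfrak m})$ with $\psi\bigl(F^e_*((x_1\cdots x_c)^{p^e-1}c_0)\bigr)$ a unit, and then $\varphi\coloneqq F^e_*\bigl((x_1\cdots x_c)^{p^e-1}\bigr)\cdot\psi$ lies in $(\cartalg C_e)_{\mathfrak m}$ and sends $F^e_* c_0$ to a unit. Thus $(\tau_I(R,\cartalg C))_{\mathfrak m}=R_{\mathfrak m}$ in every case, and $\tau_I(R,\cartalg C)=R$.

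I expect the colon identity---together with the choice of the ``spread-out'' multiplier $z=(x_1\cdots x_c)^{p^e-1}$ rather than a concentrated one such as $x_1^{c(p^e-1)}$, which already lies in $\mathfrak m^{[p^e]}$ once $c\ge2$ and is therefore useless---to be the only substantive point; the rest is a routine assembly of \autoref{cor:tauIsSum}, localization, \autoref{thm:kunz}, and Krull's intersection theorem.
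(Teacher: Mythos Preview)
Your argument is correct. The paper does not actually prove this lemma; it simply records that the statement is well known to experts and refers to Takagi's \emph{Formulas for multiplier ideals on singular varieties}, Theorem~3.2, where essentially the same computation appears. Your proof---localize, extend the regular sequence generating $I$ to a regular system of parameters, and use the colon identity $(x_1^{q},\dots,x_d^{q}):(x_1\cdots x_c)^{q-1}=(x_1,\dots,x_c)+(x_{c+1}^{q},\dots,x_d^{q})$ to produce a $p^{-e}$-linear map in $\cartalg C^{R,I^c}_e$ carrying a test element to a unit---is exactly the standard route, and your remark that the ``spread out'' multiplier $(x_1\cdots x_c)^{q-1}$ is essential (as opposed to something like $x_1^{c(q-1)}$) identifies the one genuine idea in the computation.
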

 \begin{proof}
   This fact is well-known to experts, and is essentially shown in \cite[Theorem 3.2]{TakagiLCvsFP}. 
 \end{proof}

 \begin{lemma}
   Let $I \subseteq R$ be a prime ideal such that $R_I$ is regular. Let $\mathfrak a \subseteq R$ be an ideal such that $\mathfrak a R_I = I^m R_I$ for some $m\geq 0$.  Then there exists some $\xi \in R \setminus I$ such that $\xi \overline{\mathfrak a^{n}} \subseteq \mathfrak a^{n}$ for all integers $n$. 
   \label{lemma:multClosureIntoIdeal} 
 \end{lemma}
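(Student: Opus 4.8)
The plan is to package all the containments $\overline{\mathfrak a^n}\subseteq\mathfrak a^n$ into a single finitely generated module over the Rees algebra of $\mathfrak a$, show this module is supported away from $I$ because of the regularity hypothesis, and then take for $\xi$ a common annihilator lying outside $I$. Concretely, I would first set $\mathcal R := R[\mathfrak a t]=\bigoplus_{n\ge 0}\mathfrak a^n t^n\subseteq R[t]$ and let $\overline{\mathcal R}$ be its integral closure in $R[t]$; a standard computation identifies this with $\overline{\mathcal R}=\bigoplus_{n\ge 0}\overline{\mathfrak a^n}\,t^n$. Since $R$ is $F$-finite it is excellent, and $R$ is reduced in all our applications, hence analytically unramified; by a theorem of Rees this forces the $\mathcal R$-module $\overline{\mathcal R}$ to be \emph{finitely generated}. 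Consequently the defect module $M:=\overline{\mathcal R}/\mathcal R=\bigoplus_{n\ge 0}(\overline{\mathfrak a^n}/\mathfrak a^n)\,t^n$ is a finitely generated $\mathcal R$-module; fix finitely many homogeneous generators $u_1,\dots,u_s$ of $M$.

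Next I would localize at the multiplicatively closed set $W:=R\setminus I$ (legitimate since $I$ is prime). One has $W^{-1}\mathcal R=R_I[\mathfrak aR_I\,t]$, and, since formation of the integral closure of an ideal commutes with localization, $W^{-1}\overline{\mathcal R}=\bigoplus_{n\ge 0}\overline{(\mathfrak aR_I)^n}\,t^n$. This is where the hypothesis enters: writing $\mathfrak n:=IR_I$ for the maximal ideal of the regular local ring $R_I$, we have $\mathfrak aR_I=\mathfrak n^{m}$, and powers of the maximal ideal of a regular local ring are integrally closed (since $\operatorname{gr}_{\mathfrak n}R_I$ is a polynomial ring, $\operatorname{ord}_{\mathfrak n}$ is a valuation, which forces $\overline{\mathfrak n^{j}}=\mathfrak n^{j}$ for all $j$). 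Hence $\overline{(\mathfrak aR_I)^n}=\overline{\mathfrak n^{mn}}=\mathfrak n^{mn}=(\mathfrak aR_I)^n$ for every $n$, so $W^{-1}\overline{\mathcal R}=W^{-1}\mathcal R$, i.e.\ $W^{-1}M=0$.

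Finally, since $M$ is finitely generated over $\mathcal R$ with $W^{-1}M=0$, for each $j$ there is $w_j\in W$ with $w_ju_j=0$; then $\xi:=w_1\cdots w_s\in W=R\setminus I$ (using that $I$ is prime) annihilates $M$, which unwinds to $\xi\,\overline{\mathfrak a^n}\subseteq\mathfrak a^n$ for every $n\ge 0$ — and the statement is vacuous for $n\le 0$. The only real content is the interplay of two standard inputs: the global module-finiteness of $\overline{\mathcal R}$ over $\mathcal R$ (which is what upgrades ``a separate $\xi$ for each $n$'' to one uniform $\xi$) and the local equality $\overline{\mathfrak a^n}R_I=\mathfrak a^nR_I$ coming from the regularity of $R_I$ (which is what keeps $\xi$ out of $I$). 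I do not anticipate a serious obstacle beyond marshalling these two facts correctly; the remainder is bookkeeping with the grading.
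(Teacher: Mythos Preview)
Your proof is correct and rests on the same two ingredients as the paper's: the local equality $\overline{\mathfrak a^n}R_I=\mathfrak a^nR_I$ (powers of the maximal ideal in the regular local ring $R_I$ are integrally closed) and a uniform finiteness statement for integral closures of powers. The only difference is packaging: you invoke module-finiteness of $\overline{\mathcal R}$ over the Rees algebra $\mathcal R$ and kill the finitely generated quotient $\overline{\mathcal R}/\mathcal R$ by a single element of $R\setminus I$, whereas the paper cites the equivalent stabilization formula $\overline{\mathfrak a^n}=\mathfrak a^{n-k}\overline{\mathfrak a^k}$ for $n\ge k$ (Huneke--Swanson, Proposition~5.3.4), chooses $\xi_1,\dots,\xi_k\in R\setminus I$ with $\xi_n\overline{\mathfrak a^n}\subseteq\mathfrak a^n$ for $n\le k$, and sets $\xi=\xi_1\cdots\xi_k$.
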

 \begin{proof}
   By \cite[Proposition 5.3.4]{HunekeSwansonIntegral}, there is some integer $k$, such that $\overline{\mathfrak a^n} = \mathfrak a^{n-k} \overline{\mathfrak a^{k}}$ for all $n \geq k$. As $R_I$ is regular, we have $IR_I$ is generated by a regular sequence and is therefore a \emph{normal} ideal, meaning $I^nR_I$ is integrally closed for all $n$ (see, for instance, \cite[Exercise 5.7]{HunekeSwansonIntegral}). As integral closure of ideals commutes with localization, we have
   \[
     \overline{\mathfrak a^n}R_I = \overline{\mathfrak a^nR_I} = \overline{I^{nm}R_I} = I^{nm}R_I = \mathfrak a^n R_I,
   \]
   for all $n$. Thus, for $n=1, \ldots k$ there exist elements $\xi_n \in R\setminus I$ satisfying $x_n \overline{\mathfrak a^n} \subseteq \mathfrak a^n$. Then we can set $\xi = \xi_1 \cdots \xi_k$. 
 \end{proof}
\begin{lemma}
    Work in \autoref{setting:set2} and assume that $\charp R  = 0$. There exists some $\xi \in R\setminus I$ such that $\xi \overline{\tauOfaq} \subseteq \tauOfaq$ for all $p \gg 0$ and all $e > 0$ sufficiently divisible, where $q = p^e$. 
    \label{lemma:integralClosureLemmaForJInTau}
  \end{lemma}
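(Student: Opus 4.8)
The plan is to prove the following stronger, characteristic‑free statement: there is an element $\xi \in R \setminus I$ with
\[
  \xi \cdot \overline{\textstyle\prod_i \mathfrak a_i^{d_i}} \;\subseteq\; \textstyle\prod_i \mathfrak a_i^{d_i}
\]
for \emph{every} tuple $(d_1, \ldots, d_N) \in \bZ_{\geq 0}^N$. The lemma is then immediate upon setting $d_i = \ceil{t_i(q-1)}$ with $q = p^e$, for any $p$ and any $e>0$; in particular the restriction to $p \gg 0$ and $e$ sufficiently divisible is not actually needed. To produce $\xi$ uniformly in all the exponents, I would pass to the multi‑Rees algebra $\mathcal{R} := R[\mathfrak a_1 T_1, \ldots, \mathfrak a_N T_N] = \bigoplus_{d \in \bZ_{\geq 0}^N}\left(\prod_i \mathfrak a_i^{d_i}\right) T^d \subseteq R[T_1, \ldots, T_N]$ — a finitely generated $R$‑algebra — and let $\mathcal{R}'$ be its integral closure inside $R[T_1, \ldots, T_N]$. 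The desired multipliers $\xi$ are exactly the degree‑$0$ elements of the conductor $\mathfrak c := (\mathcal{R} :_{\mathcal{R}} \mathcal{R}')$, so the whole problem becomes: show $\mathfrak c \cap R \not\subseteq I$.

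First I would collect the structural facts. Each $\mathfrak a_i$ is nonzero, since $\mathfrak a_i R_I = I^{n_i}R_I \neq 0$, so $R[T_1, \ldots, T_N] \subseteq \operatorname{Frac}(\mathcal{R})$ (write $T_i = (a_i T_i)/a_i$ for $0 \neq a_i \in \mathfrak a_i$); as $R$ is essentially of finite type over a field it is an excellent domain, hence $\mathcal{R}$ is an excellent domain, its normalization is a finite $\mathcal{R}$‑module, and therefore so is the intermediate $\mathcal{R}$‑submodule $\mathcal{R}'$. Both $\mathcal{R}$ and $\mathcal{R}'$ are $\bZ_{\geq 0}^N$‑graded, and the standard homogeneity argument for equations of integral dependence identifies the graded pieces: $\mathcal{R}'_{(d)} = \overline{\prod_i \mathfrak a_i^{d_i}}\, T^d$ for all $d$. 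Consequently $\mathfrak c$ is a graded ideal, and an element $\xi \in R = \mathcal{R}_{(0)}$ satisfies $\xi\,\overline{\prod_i \mathfrak a_i^{d_i}} \subseteq \prod_i \mathfrak a_i^{d_i}$ for all $d$ precisely when $\xi \in \mathfrak c_{(0)} = \mathfrak c \cap R$.

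It remains to check $\mathfrak c_{(0)} \not\subseteq I$, which I would do by localizing at $I$. Because $\mathcal{R}'$ is $\mathcal{R}$‑finite, the conductor commutes with localization: $\mathfrak c_I = (\mathcal{R}_I :_{\mathcal{R}_I} \mathcal{R}'_I)$, and $\mathcal{R}'_I$ is the integral closure of $\mathcal{R}_I = R_I[I^{n_1}R_I \cdot T_1, \ldots, I^{n_N}R_I \cdot T_N]$ in $R_I[T_1, \ldots, T_N]$. But $R_I$ is regular local, so $IR_I$ is generated by a regular sequence and is a normal ideal (\cf~\cite[Exercise 5.7]{HunekeSwansonIntegral}); hence each graded piece $(\mathcal{R}_I)_{(d)} = I^{\sum_i n_i d_i}R_I$ is already integrally closed, so $\mathcal{R}_I = \mathcal{R}'_I$ and thus $\mathfrak c_I = \mathcal{R}_I$. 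Comparing degree‑$0$ components gives $(\mathfrak c_{(0)})_I = (\mathcal{R}_{(0)})_I = R_I$, so $\mathfrak c_{(0)}$ is an ideal of $R$ not contained in $I$; any $\xi \in \mathfrak c_{(0)} \setminus I$ then works. The only non‑formal input here is the module‑finiteness of $\mathcal{R}'$ over $\mathcal{R}$ together with the computation of its graded pieces — that is, the fact that the integral closure of a multi‑Rees algebra is multigraded with the expected components; granted those, the remainder is routine bookkeeping with the conductor and the single localization at $I$, which is the only place the regularity of $R_I$ is used.
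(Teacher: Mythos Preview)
Your argument is correct and takes a genuinely different route from the paper's. The paper clears denominators: writing $t_i = a_i'/m$ with a common $m$, it observes that for $p \nmid m$ and $m \mid (p^e - 1)$ one has $\prod_i \mathfrak a_i^{\lceil t_i(p^e-1)\rceil} = \bigl(\prod_i \mathfrak a_i^{a_i'}\bigr)^{(p^e-1)/m}$, so all the ideals in question are powers of the single ideal $\mathfrak a = \prod_i \mathfrak a_i^{a_i'}$, and then it invokes \autoref{lemma:multClosureIntoIdeal} (which in turn rests on the Rees--Ratliff-type bound $\overline{\mathfrak a^n} = \mathfrak a^{n-k}\overline{\mathfrak a^k}$ for $n \ge k$). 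This is why the paper genuinely needs $p \gg 0$ and $e$ sufficiently divisible. Your multi-Rees/conductor argument bypasses that reduction entirely and yields the stronger uniform statement for all exponent tuples, with no constraint on $p$ or $e$; the price is invoking module-finiteness of the integral closure (excellence) and the identification of the graded pieces of $\mathcal{R}'$ with the integral closures $\overline{\prod_i \mathfrak a_i^{d_i}}$, both standard. Both proofs ultimately hinge on the same local fact---normality of $IR_I$ in the regular local ring $R_I$---but yours packages the uniformity more cleanly.
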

  \begin{proof}
    For each $i$, write $t_i = a_i/b_i$. Set $m$ to be the least common multiple of the $b_i$, so that for each $i$ there exists an integer $a'_i$ such that $t_i = a'_i/m$. For $p$ sufficiently large, we have $p$ does not divide $b_i$ for each $i$. Then for $e$ sufficiently divisible, we have $m \mid (p^e-1)$. Thus
    \[
      \prod_i \mathfrak a_i^{\ceil{t_i(p^e-1)}} = \prod_i \mathfrak a_i^{a'_i(p^e-1)/m} = \left( \prod_i \mathfrak a_i^{a_i'} \right)^{\frac{p^e-1}{m}}.
    \]
    Then we can find the desired element $\xi$ by applying \autoref{lemma:multClosureIntoIdeal} to the ideal
   $\displaystyle 
      \mathfrak a = \prod_i \mathfrak a_i^{a_i'}.
   $ 
    %This completes the proof. 
  \end{proof}
  %
% \begin{lemma}
%   Non-ambiguity of $\tau_I(R, \prod_i \mathfrak a_i^{t_i})$.
%   \label{lemma:nonAmbig}
% \end{lemma}
%

  \begin{theorem}
    Work in \autoref{setting:set2} and assume that $\charp R = 0$. Then $\multideal_{X_\kappa}(A_\kappa, Z_\kappa) \subseteq \tau_{I_\kappa}(R_\kappa, \underline{\mathfrak a}_\kappa^{t})$ for all $s$ sufficiently general. 
    \label{thm:hardContainment}
  \end{theorem}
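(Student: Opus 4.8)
The plan is to prove the containment reverse to \autoref{thm:easyContainment}; together they give \autoref{thm:tauEqualsJ}. By the remark preceding this subsection it suffices to show $\multideal_{X_\kappa}(A_\kappa, Z_\kappa) \subseteq \tau_{I_\kappa}(R_\kappa, \idealsProd_\kappa)$ for $s \in \mspec S$ sufficiently general, and throughout we freely shrink the dense open set of admissible $s$. First I would fix a test element: by \autoref{lemma:testElementModp} there is $d \in R \setminus I$ such that, for $s$ general, a power $d_\kappa^N$ (depending on $s$) is an $\idealsProd_\kappa$-test element along $I_\kappa$. Writing $\cartalg C_\kappa = \cartalg C^{R_\kappa,\, \idealsProd_\kappa}$, \autoref{cor:tauIsSum} and \autoref{lemma:tauIsAsymptotic} give $\tau_{I_\kappa}(R_\kappa, \idealsProd_\kappa) = \sum_{e}\sum_{\vp \in (\cartalg C_\kappa)_e} \vp(F^e_* c)$ for any $\idealsProd_\kappa$-test element $c$ along $I_\kappa$, and (arguing as in \autoref{lemma:tauIsAsymptotic}) the sum may be restricted to any set of $e$ cofinal under divisibility. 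Since for each fixed $e$ the set $\{\vp(F^e_* c) \suchthat \vp \in (\cartalg C_\kappa)_e\}$ is an ideal contained in $\tau_{I_\kappa}(R_\kappa, \idealsProd_\kappa)$, it is enough to exhibit one sufficiently large and divisible $e$, together with an $\idealsProd_\kappa$-test element $c'$ along $I_\kappa$, such that $\multideal_{X_\kappa}(A_\kappa, Z_\kappa) \subseteq \sum_{\vp \in (\cartalg C_\kappa)_e} \vp(F^e_* c')$.

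\textbf{Geometric input.} Keeping the notation of \autoref{def:adjointIdeal}, $\multideal_X(A, Z) = f_* \ssheaf_{A'}(\Gamma)$ with $\Gamma = \ceil*{K_{A'/A} - f^{-1}Z} + X'$ and $f = \pi\circ\psi\colon A' \to A$; since $A'$ is smooth, $\ssheaf_{A'}(\Gamma) = \omega_{A'}(\ceil*{D})$ for $D = -f^*K_A - f^{-1}Z + X'$. After a standard auxiliary perturbation --- enlarging $\Gamma$ by an effective $f$-exceptional divisor so that the associated $\bQ$-divisor $D$ becomes ample (equivalently $f$-ample, as $A$ is affine) with fractional part supported on a fixed reduced simple normal crossings divisor $E$ on $A'$, available by the choice of factorizing resolution; this only enlarges $f_*\ssheaf_{A'}(\ceil*{D})$ and only away from the generic points of $X$ --- we reduce modulo $p$ and apply \autoref{cor:harasurj} with $Y = A_\kappa$ and $X = A'_\kappa$. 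This produces, for every $e>0$ and all $s$ general, a surjective Frobenius trace
\[
  H^0\!\left( A'_\kappa,\, F^e_* \omega_{A'_\kappa}(\ceil*{p^e D_\kappa}) \right) \longrightarrow H^0\!\left( A'_\kappa,\, \omega_{A'_\kappa}(\ceil*{D_\kappa}) \right) = \multideal_{X_\kappa}(A_\kappa, Z_\kappa).
\]

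\textbf{Transfer to $A_\kappa$ and correction.} Pushing forward along $f_\kappa$ turns the trace above into a Frobenius-linear surjection from $F^e_*$ of the coherent sheaf $\mathscr{M}_e \coloneqq f_{\kappa*}\omega_{A'_\kappa}(\ceil*{p^e D_\kappa})$ onto $\multideal_{X_\kappa}(A_\kappa, Z_\kappa)$. Using the dictionary between maps $F^e_* \ssheaf \to \ssheaf$ and effective boundary divisors recalled before \autoref{lemma:FcentersCompatibility} --- valid because, for $s$ general, the $\bQ$-Gorenstein index of $R$ is prime to $p$ --- together with \autoref{lemma:FcentersCompatibility} applied with $\Delta = 0$ (allowed since $K_A$ is $\bQ$-Cartier), this surjection is realized by finitely many $\vp_j \in \homgp_{R_\kappa}(F^e_* R_\kappa, R_\kappa)$: namely $\multideal_{X_\kappa}(A_\kappa, Z_\kappa) = \sum_j \vp_j\!\left( F^e_* (a_j\, d_\kappa^N) \right)$, where each $a_j$ lies in the \emph{integral closure} $\overline{(\tauOfaq)_\kappa}$ (the integral closure arising because $\mathscr M_e$ is built from a pushforward of the pullback of the $\mathfrak a_i$), and the factor $d_\kappa^N$ comes from the defining properties of $d$ in \autoref{lemma:testElementModp} --- that $R_d$ and $R_d/IR_d$ are regular and $J^{n_i}R_d \subseteq \mathfrak a_i R_d$ --- which force the discrepancy between $\mathscr M_e$ and the naive candidate module to be supported on $V(d_\kappa)$. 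Now invoke \autoref{lemma:integralClosureLemmaForJInTau}: there is $\xi \in R\setminus I$ with $\xi_\kappa\overline{(\tauOfaq)_\kappa} \subseteq (\tauOfaq)_\kappa$ for $p\gg 0$ and $e$ sufficiently divisible. Fixing such an $e$, each $F^e_*(\xi_\kappa)\cdot\vp_j$ has coefficient in $(\tauOfaq)_\kappa$, hence lies in $(\cartalg C_\kappa)_e = F^e_*\!\left((\tauOfaq)_\kappa\right)\homgp_{R_\kappa}(F^e_* R_\kappa, R_\kappa)$; and $c' \coloneqq \xi_\kappa d_\kappa^N$ lies in $\tau_{I_\kappa}(R_\kappa, \idealsProd_\kappa)\setminus I_\kappa$ (as $d_\kappa^N$ does and $\xi_\kappa\notin I_\kappa$), so it is an $\idealsProd_\kappa$-test element along $I_\kappa$. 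Chasing $\xi_\kappa$ through the $F^e_*$-module structures puts each generator $\vp_j(F^e_*(a_j d_\kappa^N))$ inside $\sum_{\vp\in(\cartalg C_\kappa)_e}\vp(F^e_* c') \subseteq \tau_{I_\kappa}(R_\kappa, \idealsProd_\kappa)$, giving the desired containment.

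\textbf{Main obstacle.} The crux is the identification in the third step: converting the cohomological surjectivity of \autoref{cor:harasurj} on $A'_\kappa$ into the statement that $\multideal_{X_\kappa}(A_\kappa, Z_\kappa)$ is generated by values of maps in $(\cartalg C_\kappa)_e$ on $F^e_*$ of a test element, while handling three interlocking points --- the reason for the preparatory lemmas. First, $A$ is only $\bQ$-Gorenstein, so $\omega_{R_\kappa}$ cannot be trivialized and one must work through the $\Delta_\vp$-formalism, which needs the index prime to $p$. Second, pulling the $\mathfrak a_i$ back to $A'_\kappa$ and pushing forward introduces integral closures, reabsorbed using $\xi$ from \autoref{lemma:integralClosureLemmaForJInTau}. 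Third, the gap between $\mathscr M_e$ and the naive candidate module must be cut out by a power of $d$, using the good behavior of $d$ from \autoref{lemma:testElementModp}. The remainder is bookkeeping with floors, ceilings and $F^e_*$-module structures, plus the routine ampleness perturbation needed to enter the hypotheses of \autoref{cor:harasurj}.
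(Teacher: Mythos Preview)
Your overall strategy is the right one and matches the paper's: produce a surjection from a Frobenius-trace onto $\multideal_{X_\kappa}(A_\kappa,Z_\kappa)$ via \autoref{cor:harasurj}, then bound the source by something of the form $\vp(F^e_*(\text{test element}\cdot \tauOfaq_\kappa))$, absorbing integral closures with $\xi$ from \autoref{lemma:integralClosureLemmaForJInTau}. However, your ``standard auxiliary perturbation'' step hides a genuine obstruction, and this is where the paper's argument diverges from yours.

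The difficulty is the extra $+X'$ in $\Gamma$. Your $D=-f^*K_A-f^{-1}Z+X'$ has $X'$-coefficient $1-c$, since the $X'$-coefficient of $f^{-1}Z$ is $c$. When $c=1$ this coefficient is $0$, and since $X'$ is the $\psi$-exceptional divisor, $D$ cannot be made $\psi$-ample by adding effective $f$-exceptional divisors; any perturbation that inserts a negative multiple of $X'$ changes $\ceil{D}$ and hence the target sheaf, so you would no longer be surjecting onto $\multideal_{X_\kappa}(A_\kappa,Z_\kappa)$. Even for $c\geq 2$, the divisor $-f^{-1}Z$ is only relatively big and semi-ample over $A$, not ample, so an honest ampleness construction is still required. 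The paper handles both issues simultaneously by passing to a further resolution $\gamma\colon B\to A'$ on which an auxiliary divisor $V$ (not dominating $X$) makes $-\varepsilon V - F$ genuinely $f$-ample, and then, crucially, does \emph{not} try to apply Hara's surjectivity directly to $\omega_B(\tilde X-\floor{qD})$. Instead it uses the short exact sequence
\[
0\to \omega_{B_\kappa}(-\floor{qD_\kappa})\to \omega_{B_\kappa}(\tilde X_\kappa-\floor{qD_\kappa})\to \mathcal A_q\to 0,
\]
identifies $\mathcal A_q$ with $\omega_{\tilde X_\kappa}(-\floor{q\tilde D_\kappa})$ for an ample $\bQ$-divisor $\tilde D$ on $\tilde X$, applies \autoref{cor:harasurj} to \emph{both} $B_\kappa$ and $\tilde X_\kappa$ (this is why the paper records the strengthened hypothesis ``$X$ projective over $Y$'' in \autoref{thm:haraVanishing}, since $\tilde X\to X$ need not be a blow-up), and concludes by the five lemma. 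This two-level application of Hara is the missing idea in your sketch.

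Two further points. First, you do not introduce the element $\eta\in R\setminus I$; the paper needs it to control the discrepancy $K_{B/A_2}+\gamma^*\pi_2^*K_{A_1}-\floor{qf^*K_A}$ relative to $f^*((1-q)K_A)$, and it enters $D$ alongside $d$ and $\xi$. Without it, your claim that ``the discrepancy between $\mathscr M_e$ and the naive candidate module is supported on $V(d_\kappa)$'' is not justified. Second, your bound on the source $H^0(F^e_*\omega_{A'_\kappa}(\ceil{p^eD_\kappa}))$ is asserted rather than computed; in the paper this is the content of \autoref{appendix:multIdeal}, and it requires tracking the coefficient of $\tilde X$ through the formula $F=c\tilde X+G$ and using the specific shape of $D$ (including the $\varepsilon\,\divisor_B(d\xi\eta)$ term with $q\varepsilon>N$) to extract the factor $d_\kappa^N\xi_\kappa^N$.
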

  \begin{proof}
    We start by finding a suitable resolultion of $A$. In particular, we need a resolution $f\colon B \to A$ that can be used to compute $\multideal_X(A, Z)$ such that there exists a  divisor $V$ on $B$ satisfying the following:  
    \begin{itemize}
      \item $f\invrs Z \cup \supp( \exclocus(f)) \cup \supp(f^* K_A)\cup \supp V$ is a simple normal crossings scheme, \footnote{Recall from \autoref{notation:exc} that by $\exclocus(f)$ we mean the locus of points at which $f$ is not an isomorphism. }
      \item $V$ does not dominate $X$, and
      \item $-\varepsilon V - F$ is relatively ample over $A$ for any $\varepsilon > 0$ sufficiently small, where $\mathfrak a_i \ssheaf_{B} = \ssheaf_{B}(-F_i)$ and $F = \sum_i t_i F_i$. 
    \end{itemize}

    Let $  \pi_1 \colon   A_1 \to A$ be a factorizing resolution of $X \subseteq A$ such that $  \pi_1 \invrs Z \cup \supp( \exclocus(  \pi_1)) \cup \supp(  \pi_1^* K_A)$ is a simple normal crossings scheme. Let $  X_1 \subseteq   A_1$ be the strict transform of $X$ in $  A_1$ and let $  \pi_2 \colon   A_2 \to   A_1$ be the blow up along $  X_1$. Let $X_2$ be the reduced $\pi_2$-exceptional divisor dominating $X_1$. We have the following diagram: 
     \[
      \xymatrix@C-2pc{
          X_2  \ar[d] &  \subseteq&   A_2 \ar[d]^{  \pi_2}\\
          X_1 \ar[d] & \subseteq &    A_1 \ar[d]^{  \pi_1}\\
      X & \subseteq& A\ .
      }
    \]
    Let $\mathfrak a_i \ssheaf_{  A_2} = \ssheaf_{  A_2}(-  F'_i)$ and set $  F' = \sum t_i   F'_i$. Setting $\pi = \pi_1 \circ \pi_2$, we get
    \[
    \multideal_X(A, Z) =   \pi_* \ssheaf_{  A_2}\left( K_{  A_2} - \floor{  \pi^* K_A +   F'}  +   X_2 \right),
    \] 
    by definition. However, we are not finished constructing the resolution we need, as we don't know whether a divisor $V$ satisfying the properties above exists on $  A_2$. 
    
    Note that $-  F'$ is relatively big and relatively semi-ample over $A$. The latter implies that there exist natural numbers $a, N > 0$ and a map $\gamma' \colon   X_2 \to \bP_A^N$ such that $-a   F' = (\gamma')^* \ssheaf_{\bP_A^N}(1)$. As $-   F'$ is relatively big, $\gamma'$ is birational onto its image, which we call $B'$. Note that $\gamma'$ is an embedding at the generic point of $  X_2$ and therefor an isomorphism over an open neighborhood of $X$ in $A$. Thus the exceptional locus of $\gamma'$ does not contain $  X_2$, and we can perform a sequence of blowups of schemes not containing $  X_2$, call it $\gamma \colon B \to   A_2$, to get that $\exclocus(\gamma'\circ \gamma) \cup \gamma\invrs   \pi \invrs Z \cup \operatorname{supp}(\gamma^*   \pi^* K_A)$ has simple normal crossings support. Let $\gamma'' = \gamma' \circ \gamma''$ and $f =   \pi \circ \gamma$. These definitions are illustrated by the following diagram:  
\[
\begin{tikzcd}
 & B \arrow[ld, "\gamma"'] \arrow[rd, "\gamma''"] \arrow[ddd, "f", bend left=20] &  \\
  A_2 \arrow[d, "  \pi_2"'] \arrow[rdd, "  \pi"] \arrow[rr, "\gamma'"] &  & B' \arrow[ldd] \\
  A_1 \arrow[rd, " \pi_1"'] &  &  \\
 & A & 
\end{tikzcd}
\]
Note that the exceptional locus of $\gamma''$ does not dominate $\gamma'(  X_2)$. Thus we can write $\gamma'$ as the blow up of some ideal sheaf $\sheaf I \subseteq \ssheaf_{B'}$ that does not vanish along $\gamma'(  X_2)$. Let $W$ be the effective divisor on $B$ such that $\sheaf I \ssheaf_B = \ssheaf_B(-W)$. Then we see that  $\ssheaf_{B}(-W) \otimes (\gamma'')^*\ssheaf_{B'}(b)$ is very ample over $A$ for all $b$ sufficiently large. 
Set $V = \frac{1}{ab}W$. Then $-V - F$ is ample over $A$. Because $W$ is effective, we may assume that
    \[
      \floor{f^* K_A + F + V} = \floor{f^* K_A + F},
    \]
    by choosing $b$ sufficiently large. Fix canonical divisors $K_{  A_1}, K_{A_2}$, and $K_B$ so that  $K_{A_1/A}$ is $\pi_1$-exceptional, $K_{A_2/A_1} = (c-1) X_2$, and $K_{B/A_2}$ is $\gamma$-exceptional. Let $\tilde X = \gamma\invrs_*  X_2$. Then
    \[
      \gamma_*\ssheaf_{B}\left( K_{B}+\tilde X - \floor{f^* K_A + F} \right) = \ssheaf_{  A_2}\left( K_{  A_2} +   X_2  - \floor{  \pi^* K_A +   F'} \right),
    \]
    just because adjoint ideals are  well-defined. Indeed, it suffices to check the above equality after twisting each side by $\ssheaf_{A_2}(-\pi^*_2 K_{A_1})$. As $K_{A_2} = (c-1)X_2 + \pi_2^* K_{A_1}$, we get 

    \begin{align*}
      & \gamma_* \ssheaf_{B}\left( K_{B}+\tilde X - \floor{f^* K_A + F} \right)\otimes \ssheaf_{A_2}\left( -\pi_2^* K_{A_1} \right)\\
      =&  \gamma_* \ssheaf_{B}\left( K_{B/A_2} + \gamma^* K_{A_2}- \gamma^* \pi_2^* K_{A_1}+\tilde X - \floor{f^* K_A + F} \right)\\
      =& \gamma_* \ssheaf_{B}\left( K_{B/A_2} + (c-1)\gamma^* X_2 + \tilde X - \floor{f^* K_A + F} \right).
    \end{align*}
    By our assumptions in \autoref{setting:set2}, we have $F' = cX_2 + G$ for some effective SNC $\bQ$-divisor $G$ not supported along $X_2$. Thus: 
    \begin{align*}
      & \gamma_* \ssheaf_{B}\left( K_{B/A_2} + (c-1)\gamma^* X_2 + \tilde X - \floor{f^* K_A + F} \right)\\
      =& \gamma_* \ssheaf_{B}\left( K_{B/A_2} + (c-1)\gamma^* X_2 + \tilde X - \floor{f^* K_A + \gamma^*(cX_2 + G)} \right)\\
      =& \gamma_* \ssheaf_{B}\left( K_{B/A_2} + \tilde X - \floor{f^* K_A + \gamma^*(X_2 + G)} \right).
    \end{align*}
    On the other hand, we have
    \begin{align*}
      & \ssheaf_{  A_2}\left( K_{  A_2} +   X_2  - \floor{  \pi^* K_A +   F'} \right) \otimes \ssheaf_{A_2}\left( -\pi_2^* K_{A_1} \right)\\
      =& \ssheaf_{  A_2}\left( (c-1)X_2 + X_2  - \floor{  \pi^* K_A +  c X_2 + G} \right)\\
      =& \ssheaf_{  A_2}\left( X_2  - \floor{  \pi^* K_A +  X_2 + G} \right).
    \end{align*}
    By definition, we have
    \begin{align*}
      \adj_X(A, \pi_2^* K_A + G) &= \gamma_* \ssheaf_{B}\left( K_{B/A_2} + \tilde X - \floor{f^* K_A + \gamma^*(X_2 + G)} \right)\\
      &= \ssheaf_{  A_2}\left( X_2  - \floor{  \pi^* K_A +  X_2 + G} \right),
    \end{align*}
    because  both $\gamma$ and the identity map are log-resolutions of $\pi_2^* K_A + G$. 
    In particular, we get $\multideal_X(A, Z) = f_* \ssheaf_{B}\left( K_{B}+\tilde X - \floor{f^* K_A + F} \right)$.

    Employing \autoref{lemma:testElementModp},  choose some $d\in R\setminus I$ such that, for $s$ sufficiently general, there is some $N$ such that $d_\kappa^N$ is an $\idealsProd_\kappa$-test element along $I_\kappa$ in $R_\kappa$. Choose also an element $\xi \in R\setminus I$ as in \autoref{lemma:integralClosureLemmaForJInTau}.

      Next, we show there exists $\eta \in R\setminus I$ satisfying
    \[
      K_{B/A_2} + \gamma^*\pi_2^*K_{  A_1} - \floor{m   f^* K_A} - \divisor_{B} \eta \leq f^*\left( (1-m) K_A\right)  + \gamma^* X_2 - \tilde X
    \]
    for all $m$ such that $(1-m)K_A$ is Cartier. As $f^*\left( (1-m)K_A \right)$ is integral in this case, it suffices to find $\eta\in R\setminus I$ satisfying
    \[
      K_{B/A_2} + \gamma^*\pi_2^*K_{  A_1} -  \divisor_{B} \eta \leq \floor{f^* K_A}  + \gamma^* X_2 - \tilde X.
    \]
    We compute:
    \begin{equation*}
      \floor{f^* K_A}  + \gamma^* X_2 - \tilde X - K_{B/A_2} - \gamma^*\pi_2^* K_{  A_1} =  \gamma^* X_2 - \tilde X - K_{B/A_2} - \ceil*{ \gamma^* \pi_2^* K_{  A_1}- f^* K_A}.
    \end{equation*}
Recall that the exceptional locus of $\gamma$ does not contain $\tilde X$, so $ \gamma^* X_2 - \tilde X - K_{B/A_2}$ is not supported on $\tilde X$.
    Since the support of  $K_{  A_1} - \pi_1^* K_A$ does not contain $  X_1$, the support of 
    \[
        \gamma^*\pi_2^* K_{  A_1} - f^* K_A = \gamma^*\pi_2^*\left( K_{  A_1} - \pi_1^* K_A \right) 
    \]
    does not contain $\tilde X$. Further, this divisor is  $f$-exceptional. Thus we have 
    \[
      H^0\left( A, f_* \ssheaf_{B}\big(\floor{f^* K_A} + \gamma^* X_2 - \tilde X - K_{B/A_2} -  \gamma^*\pi_2^* K_{A_1} \big) \right) \subseteq R
    \]
    and also
    \[
      H^0\left( A, f_* \ssheaf_{B}\big(\floor{f^* K_A} + \gamma^* X_2 - \tilde X - K_{B/A_2} -  \gamma^*\pi_2^* K_{A_1} \big) \right)  \not \subseteq I,
    \]
    so we can find the desired element $\eta$ by taking
    \[
      \eta \in H^0\left( A, f_* \ssheaf_{B}\big(\floor{f^* K_A} + \gamma^* X_2 - \tilde X - K_{B/A_2} -  \gamma^*\pi_2^* K_{A_1} \big) \right)  \setminus I.
    \]
    The utility of this $\eta$ will be made apparent in \autoref{appendix:multIdeal}. 
    
    Next, we define 
    \[
      D = f^* K_{A} + F + V + \varepsilon \divisor_{B}(d\xi\eta),
    \]
    where $\varepsilon>0$ is chosen to be small enough such that
    \[
      \floor{D} = \floor{f^* K_A + F}.
    \]
    Note that $D$ is $f$-anti-ample, as $F + V$ is $f$-anti-ample, $f^*K_A$ is $f$-numerically trivial, and $\divisor_{B}(d\xi \eta)$ is $f$-anti-nef. For all $m\in \bN_{>0}$,  we have a short exact sequence of sheaves on $A'$, 
    \[
      0 \to \ssheaf_{B}\left(K_{B} - \floor{mD} \right) \to \ssheaf_{B}(K_{B} - \floor{mD} + \tilde X) \to \sheaf A_m \to 0.
    \]
    for some $\sheaf A_m$ supported on $\tilde X$. 

    Then we get an exact sequence
    \begin{align*}
      0 &\to H^0(B, \ssheaf_{B}(K_{B} - \floor{mD})) \to H^0(B, \ssheaf_{B}(K_{B} - \floor{mD} + \tilde X)) \\\
      &\to H^0(\tilde X, \sheaf A_m) \to H^1(B, \ssheaf_{B}(K_{B} - \floor{mD} )) ,
    \end{align*}
    Now, $-mD$ is an $f$-big and $f$-nef divisor whose fractional part has simple normal crossings support. By Kawamata-Viehweg vanishing \cite[Corollary 9.1.20]{PositivityII}, we have 
  \[
    H^1(B, \ssheaf_{B}(K_{B} - \floor{mD})) = H^1(B, \ssheaf_{B}(K_{B} + \ceil{-mD})) = 0.
  \]
  Thus, we have the short exact sequence, 
  \begin{align*}
    0 &\to H^0(B, \ssheaf_{B}(K_{B} - \floor{mD}))   \to H^0(B, \ssheaf_{B}(K_{B} - \floor{mD} + \tilde X))\\
    & \to H^0( \tilde X, \sheaf A_m) \to 0.
  \end{align*}
  Next, we compute $\sheaf A_m$. By our assumptions in \autoref{setting:set2}, we can express  $F$ as $F = c \tilde X + G$, where $G$ is a $\bQ$-divisor whose support does not contain $\tilde X$, and $c$ is the codimension of $X$ in $A$.  Thus, we get 
  \begin{align*}
    & \ssheaf_B\left( K_B - \floor{mf^* K_A + mF + mV + m\varepsilon \divisor_B(d \xi \eta)} + \tilde X \right) \\
    = & \ssheaf_B\left( K_B + \tilde X - \floor{mf^* K_A + mG + mV + m \varepsilon \divisor_B(d \xi \eta)} \right) \otimes \ssheaf_B(- mc \tilde X)
  \end{align*}
  Now, $\ssheaf_B(c \tilde X) \otimes \ssheaf_{\tilde X}$ is a line bundle on $\tilde X$, so we can fix some integral divisor $\Sigma$ on $\tilde X$ so that $\ssheaf_{\tilde X}(\Sigma) \cong \ssheaf_B( c \tilde X ) \otimes \ssheaf_{\tilde X}$. As     $(K_B + \tilde X)|_{\tilde X} \sim K_{\tilde X}$, it follows that
  \begin{equation*}
    \sheaf A_m \cong \ssheaf_{\tilde X}\left( K_{\tilde X} - \floor{m \tilde D} \right)
  \end{equation*}
where we set $\tilde D \coloneq \left( f^* K_A + G + V + \varepsilon \divisor_B(d \xi \eta) \right)|_{\tilde X} + \Sigma$. 
Note that restricting to $\tilde X$ here commutes with rounding down because $D$ has simple normal crossings support and $D$ is supported on $\tilde X$. Also, note that $\tilde D$ is anti-ample over $A$. As $\Sigma$ is integral, the fractional part of $\tilde D$ has simple normal crossings support. 

    Set $p = \charp \kappa(s)$. For $s$ sufficiently general, we have:
    \begin{itemize}
      \item The map
        \[
        (F^e_{B_\kappa})^\vee \colon H^0\left(B_{\kappa}, F^e_* \omega_{B_\kappa}\left(- \floor{p^e D_\kappa }\right)\right) \to H^0\left(B_\kappa, \omega_{B_\kappa}\left( -\floor{ D_\kappa } \right) \right)
        \]
        is a surjection for all $e > 0$.  This is possible by \autoref{cor:harasurj}. 
      \item The map 
        \[
          (F^e_{\tilde X_\kappa})^\vee \colon H^0\left(\tilde X_{\kappa}, F^e_* \omega_{\tilde X_\kappa}\left(- \floor{p^e  \tilde D_{\kappa}}\right)\right) \to H^0\left(\tilde X_\kappa, \omega_{\tilde X_\kappa}\left( - \floor{  \tilde D_{\kappa}}\right)\right)
        \]
        is a surjection for all $e$. This is also possible by \autoref{cor:harasurj}. Note that $\tilde X$ is projective over $X$, so the original statement of Hara's surjectivity theorem would not apply to $\tilde X \to X$. 
      \item $p$ does not divide the Cartier index of $K_A$.
      \item $p$ does not divide the denominator of any $t_i$.
      \item Some power of $d$ is an $\idealsProd_\kappa$-test element along $I_\kappa$ in $R_\kappa$.
      \item $H^0(B, \omega_{B}(-\floor{mD}))_\kappa = H^0(B_\kappa, \omega_{B_\kappa}(-\floor{mD_\kappa}))$ for all $m \in \bN_{> 0}$. This is possible because $D$ is a $\bQ$-divisor. Indeed, let $v$ be the least common multiple of the denominators of the coefficients appearing in $D$. Then
        \[
          \left\{\floor{mD} \suchthat m \geq 0 \right\} = \left\{ uv D + \floor{mD} \suchthat 0 \leq m < v, u \geq 0 \right\}.
        \]
        By generic freeness, we can choose our descent datum $S$ to ensure that the coherent sheaves
          $\omega_{B}(-\floor{mD})$ and $\ssheaf_{B}(-vD)$, as well as their cohomologies,
        are flat for $0 \leq m < v$. Then the result follows from \cite[Lemma 4.1]{HaraInjectivity}. 
      \item Similarly, we can ensure that $H^0(B, \omega_{B}(\tilde X - \floor{ mD }))_\kappa =  H^0(B_\kappa, \omega_{B_\kappa}(\tilde X_\kappa - \floor{ mD_\kappa }))$ for all $m \in \bN_{> 0}$,  and 
      \item $H^0(\tilde X, \omega_{\tilde X}(-\floor{ m \tilde D }))_\kappa = H^0(\tilde X_\kappa, \omega_{\tilde X_\kappa}(-\floor{ m  \tilde D _{\kappa} })) $ for all $m \in \bN_{> 0}$.
    \end{itemize}
    Fix such an $s$. Fix also a number $N$ so that $d_\kappa^N$ is an $\idealsProd_\kappa$-test element along $I_\kappa$ in $R_\kappa$. Then for all $e \in \bN$ sufficiently divisible we have:
    \begin{itemize}
      \item $\xi \overline{\tauOfa{e}} \subseteq \tauOfa{e}$,
      \item $p^e \varepsilon > N$, 
      \item $(1-p^e)K_A$ is Cartier, and
      \item $t_i(p^e-1) \in \bZ$ for all $i$. 
    \end{itemize}
    Fix such an $e$. With that taken care of, reduce the whole setup modulo $p$ at $\kappa$ and set $q = p^e$. We get the following diagram:
    \[
      \begin{tikzpicture}
        \matrix(m)[matrix of math nodes, row sep=2em, column sep=0.8em]{
          0 & H^0(B_\kappa, F_*^e \omega_{B_\kappa}(-\floor{ qD_\kappa })) & H^0(B_\kappa, F_*^e \omega_{B_\kappa}(\tilde X_\kappa - \floor{ qD_\kappa })) & H^0(\tilde X_\kappa, F_*^e \omega_{\tilde X_\kappa}(-\floor{ q\tilde D_\kappa })) & 0 \\
          0 & H^0(B_\kappa, \omega_{B_\kappa}(-\floor{ D_\kappa })) & H^0(B_\kappa, \omega_{B_\kappa}(\tilde X_\kappa - \floor{ D_\kappa })) & H^0(\tilde X_\kappa, \omega_{\tilde X_\kappa}(-\floor{ \tilde D_\kappa })) & 0 \\
        };
        \path[-stealth]
        (m-1-1) edge node{} (m-1-2)
        (m-1-2) edge node{} (m-1-3)
        edge node[right] {$(F^e_{B_\kappa})^{\vee}$} (m-2-2)
        (m-1-3) edge node{} (m-1-4)
        edge node[right]{$(F^e_{B_\kappa})^{\vee}$} (m-2-3)
        (m-1-4) edge node{} (m-1-5)
        edge node[right]{$(F^e_{\tilde X_\kappa})^{\vee}$} (m-2-4)
        (m-2-1) edge node{} (m-2-2)
        (m-2-2) edge node{} (m-2-3)
        (m-2-3) edge node{} (m-2-4)
        (m-2-4) edge node{} (m-2-5);
      \end{tikzpicture}
    \]
    By the five lemma, as well as our assumptions on $p$, we see that 
    \[
      (F^e_{B_\kappa})^{\vee}\colon H^0(B_\kappa, F_*^e  \omega_{B_\kappa}(\tilde X_\kappa - \floor{qD_\kappa})) \to H^0(B_\kappa, \omega_{B_\kappa}(\tilde X_\kappa - \floor{D_\kappa}))
    \]
    is a surjection. But the right-hand side is exactly (the global sections of) $\multideal_{X_\kappa}(A_\kappa, Z_\kappa)$ by \autoref{lem:EisensteinAdj}. Thus it's enough to show that the image of this map is contained in $\tau_{I_\kappa}(R_\kappa, \idealsProd_\kappa)$. It follows from a straightforward computation that
    \[
      H^0(B_\kappa, F_*^e  \omega_{B_\kappa}(\tilde X_\kappa - \floor{qD_\kappa})) \subseteq  R_\kappa\left( (1-q) K_{A_\kappa} \right)\overline{\prod_i\mathfrak (a_i)_\kappa^{ t_i(q-1) }}  d_\kappa^N \xi_\kappa^N.
    \]
    See \autoref{appendix:multIdeal} for details. By the construction of $\xi$, we have $\xi_\kappa^N \overline{\prod_i\mathfrak (a_i)_\kappa^{ t_i(q-1) }} \subseteq \prod_i\mathfrak (a_i)_\kappa^{ t_i(q-1) }$. Thus we're done, by \autoref{cor:tauIsSum}. Indeed, the map $(F^e_{A'_\kappa})^{\vee}$ may be identified with the ``evaluate at 1'' map $\homgp_{R_\kappa}(F^e_* R_\kappa, R_\kappa) \to R_\kappa$ via the isomorphism
    \[
      \homgp_{R_\kappa}(F^e_* R_\kappa, R_\kappa) \cong F^e_* R_\kappa \left( (1-q)K_{A_\kappa} \right),
    \]
    see \cite{SchwedeFadj}.  So we have shown that
    \[
      \multideal_{X_\kappa}(A_\kappa, Z_\kappa) \subseteq \vp\left( F^e_* \tauOfa{e}_\kappa d_\kappa^N \right)
    \]
    for some $\vp \in \fullCA{R_\kappa}_e$. 
  \end{proof}

It's worth noting the  following analog to \autoref{lemma:wellKnown}(b). This proposition follows from \autoref{lemma:multClosureIntoIdeal}, and it provides further evidence that $\tau_I(R, \cartalg C)$ is well-behaved in \autoref{setting:set2}. 
 \begin{proposition}
   Work in \autoref{setting:set2} and assume that $\charp R = p$. For each $i$, let $\mathfrak b_i = \overline{\mathfrak a_i}$ be the integral closure of $\mathfrak a_i$.  Then
   \[
     \tau_I\left(R, \prod_i \mathfrak a_i^{t_i} \right) = \tau_I \left(R, \prod_i \mathfrak b_i^{t_i} \right).
   \]
 \end{proposition}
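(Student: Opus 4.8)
The plan is to prove the two inclusions separately, in the same spirit as \autoref{lemma:wellKnown}(b). First one checks that the right-hand side is even well-defined: since $R_I$ is regular, $IR_I$ is generated by a regular sequence and hence is a normal ideal, so $\mathfrak b_i R_I = \overline{\mathfrak a_i R_I} = \overline{I^{n_i} R_I} = I^{n_i} R_I$; thus the $\mathfrak b_i$ satisfy the same hypotheses as the $\mathfrak a_i$, and \autoref{ex:mainExampleForTau} shows $\cartalg C^{R, \prod_i \mathfrak b_i^{t_i}}$ falls under \autoref{setting:tauI}. The inclusion $\tau_I\left(R, \prod_i \mathfrak a_i^{t_i}\right) \subseteq \tau_I\left(R, \prod_i \mathfrak b_i^{t_i}\right)$ is then immediate from \autoref{lemma:TauIsMonotonic}, because $\mathfrak a_i \subseteq \mathfrak b_i$ forces $\cartalg C^{R, \prod_i \mathfrak a_i^{t_i}} \subseteq \cartalg C^{R, \prod_i \mathfrak b_i^{t_i}}$.

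For the reverse inclusion I would use the sum formula \autoref{cor:tauIsSum}. Apply \autoref{lemma:multClosureIntoIdeal} to each $\mathfrak a_i$ to produce $\xi_i \in R \setminus I$ with $\xi_i\, \overline{\mathfrak a_i^{\,n}} \subseteq \mathfrak a_i^{\,n}$ for every $n$, and set $\xi = \prod_i \xi_i$, which lies in $R \setminus I$ because $I$ is prime. Choose $c \in \tau_I\left(R, \prod_i \mathfrak a_i^{t_i}\right) \setminus I$; then $\xi c \notin I$, and $\xi c \in \tau_I\left(R, \prod_i \mathfrak b_i^{t_i}\right)$ by the inclusion just proved, so $\xi c$ is a $\prod_i \mathfrak b_i^{t_i}$-test element along $I$. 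By \autoref{cor:tauIsSum},
\[
  \tau_I\left(R, \prod_i \mathfrak b_i^{t_i}\right) = \sum_{e \geq 0}\ \sum_{\vp \in \left(\cartalg C^{R, \prod_i \mathfrak b_i^{t_i}}\right)_e} \vp\!\left(F^e_* \xi c\right).
\]
By additivity it suffices to treat a single $\vp$ of the form $\vp = \psi\!\left(F^e_*(b)\cdot -\right)$ with $\psi \in \homgp_R(F^e_* R, R)$ and $b \in \prod_i \mathfrak b_i^{\ceil{t_i(p^e-1)}}$. Using the standard containment $\overline{\mathfrak a_i}^{\,m} \subseteq \overline{\mathfrak a_i^{\,m}}$, we get $b \in \prod_i \overline{\mathfrak a_i^{\ceil{t_i(p^e-1)}}}$, and distributing the factors of $\xi = \prod_i \xi_i$ one at a time gives $\xi b \in \prod_i \mathfrak a_i^{\ceil{t_i(p^e-1)}}$. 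Hence $\psi\!\left(F^e_*(\xi b)\cdot -\right) \in \left(\cartalg C^{R, \prod_i \mathfrak a_i^{t_i}}\right)_e$, and since $c \in \tau_I\left(R, \prod_i \mathfrak a_i^{t_i}\right)$ and this ideal is $\cartalg C^{R, \prod_i \mathfrak a_i^{t_i}}$-compatible,
\[
  \vp\!\left(F^e_* \xi c\right) = \psi\!\left(F^e_*(\xi b\, c)\right) = \left[\psi\!\left(F^e_*(\xi b)\cdot -\right)\right]\!\left(F^e_* c\right) \in \tau_I\left(R, \prod_i \mathfrak a_i^{t_i}\right).
\]
Summing over all $e$ and $\vp$ yields $\tau_I\left(R, \prod_i \mathfrak b_i^{t_i}\right) \subseteq \tau_I\left(R, \prod_i \mathfrak a_i^{t_i}\right)$, and together with the first inclusion this gives the claimed equality.

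The argument is essentially routine given \autoref{lemma:multClosureIntoIdeal} and \autoref{cor:tauIsSum}; the one point that needs a little care is the bookkeeping with the product of ideals, namely choosing the single correcting element to be $\xi = \prod_i \xi_i$ so that multiplying a product $\prod_i b_i$ of generators (with $b_i \in \overline{\mathfrak a_i^{\ceil{t_i(p^e-1)}}}$) by $\xi$ drags each factor $b_i$ into the corresponding $\mathfrak a_i^{\ceil{t_i(p^e-1)}}$ simultaneously.
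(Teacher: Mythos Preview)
Your argument is correct and follows essentially the same route as the paper's: one inclusion by monotonicity (\autoref{lemma:TauIsMonotonic}), and the other by combining \autoref{lemma:multClosureIntoIdeal} with the sum description \autoref{cor:tauIsSum}, using an element $\xi c$ outside $I$ as the test element for the $\mathfrak b$-side. The only cosmetic difference is that you apply \autoref{lemma:multClosureIntoIdeal} to each $\mathfrak a_i$ separately and take $\xi=\prod_i\xi_i$, whereas the paper phrases it as a single $\xi$; your version is in fact a bit more careful about the ceilings $\ceil{t_i(p^e-1)}$ and about checking that the $\mathfrak b_i$ still satisfy \autoref{setting:tauI}.
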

 \begin{proof}
   The $\subseteq$ inclusion follows from \autoref{lemma:TauIsMonotonic}, so we prove the reverse inclusion. 
   As $\mathfrak b_i^n \subseteq \overline{\mathfrak a_i^n}$ for all $i$ and $n$, it follows from \autoref{lemma:multClosureIntoIdeal} that there exists some $\xi \in R \setminus I$ such that
   \[
     \xi \prod_i \mathfrak b_i^{t_i(p^e-1)} \subseteq \prod_i \mathfrak b_i^{t_i(p^e-1)} .
   \]
   Let $d\in \tau_I(R, \prod_i \mathfrak a_i) \setminus I$ be arbitrary.  Then \autoref{cor:tauIsSum} tells us:
   \begin{align*}
     \tau_I\left(R, \prod_i \mathfrak b_i\right) &= \sum_{e \geq 0} \sum_{\vp \in \fullCA{R}} \vp\left( F^e_* d\xi \prod_i \mathfrak b_i^{t_i(p^e-1)} \right) \\
     &\subseteq  \sum_{e \geq 0} \sum_{\vp \in \fullCA{R}} \vp\left( F^e_* d \prod_i \mathfrak a_i^{t_i(p^e-1)} \right) = \tau_I\left(R, \prod_i \mathfrak b_i \right).
   \end{align*}
 \end{proof}

 \section{Computing \texorpdfstring{$\ConR$}{the diagonal cartier algebra} for Affine Toric Varieties}
Our next goal is to find a nice description of $\plainConR$ that allows us to compute test ideals $\tau(R, \plainConR, \prod_i \mathfrak a_i^{t_i})$. The case where $R$ is a normal affine semigroup ring over a field $k$ (equivalently, $\spec R$ is an affine toric variety over $k$) turns out to be quite tractable.
\begin{setting}
  We let $\Sigma$ be a fan in $\bR^n$ and $X = X(\Sigma)$ the associated toric variety over a perfect field $k$ of characteristic $p > 0$. By $\Sigma(1)$ we mean the set of rays (i.e. 1-dimensional cones) in $\Sigma$. We assume, for simplicity, that $\Sigma$ has a unique $n$-dimensional cone $\sigma$.   We let $R$ be the coordinate ring of $X$.  In particular, $R$ is a \emph{toric ring}, that is, a normal affine semigroup ring. For all rays $\rho \in \Sigma(1)$, we let  $v_\rho$ denote the primitive generator of $\rho$. That is, $v_{\rho}$ is the shortest nonzero vector in $\bZ^n \cap \rho$.
 \label{setting:toric}
\end{setting}
Choose $e > 0$ and set $q = p^e$. In this section, it will be more convenient to use the notation $R^{1/q}$ rather than $F^e_* R$. Working in \autoref{setting:toric}, we have a nice $k$-basis for $\homgp_R(R^{1/q}, R)$. 
In particular, we let 
\[
  k[T] = k\left[ x_1^{\pm 1}, \ldots, x_n^{\pm 1} \right]
\]
be the coordinate ring of the $n$-dimensional torus $T$. Similarly, we let
\[
  k[T\times T] = k[T]\otimes_k k[T]
\]
be the coordinate ring of the $2n$-dimensional torus, $T\times T$. We let $q = p^e$, and we adopt the notation, 
\[
  \frac{1}{q} \bZ^n \coloneqq \left\{ \left( \frac{a_1}{q}, \cdots, \frac{a_n}{q} \right) \suchthat a_1, \cdots, a_n \in \bZ \right\}.
\]
For any vector $u \in \frac{1}{q} \bZ^n$, we adopt the shorthand notation
\[
  x^u \coloneqq \prod_{i=1}^n x_i^{u_i}.
\]
Then for all $a\in \frac{1}{q}\bZ^n$ we define a map $\pi_a$ in terms of its action on monomials:  for all $u\in \frac{1}{q} \bZ^n$, set
\[
  \pi_a(x^u) = \begin{cases}
    x^{a+u}, & a+u\in \bZ^n\\
    0, & \textrm{otherwise}
  \end{cases}
\]
It's not hard to see that these maps $\pi_a$ generate all the maps $k[T]^{1/q} \to k[T]$: 
\begin{lemma}[\cite{PayneToric}]
  In the notation above, the set $\{\pi_a \suchthat a \in \frac{1}{q} \bZ^n \}$ is a  $k$-vector space basis of $\homgp_{k[T]}\left( k[T]^{1/q}, k[T] \right)$.
\end{lemma}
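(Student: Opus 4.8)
The plan is to translate the statement into combinatorics of Laurent monomials and then reduce it to the linear independence of distinct monomials in $k[T]$. Since $k$ is perfect, $k[T]^{1/q}$ is canonically the Laurent ring $k\left[ x_1^{\pm 1/q}, \dots, x_n^{\pm 1/q} \right]$, and as a $k[T]$-module it is free with basis $\{ x^u \suchthat u \in \Lambda \}$, where $\Lambda \subseteq \frac{1}{q}\bZ^n$ is the set of coset representatives of $\frac{1}{q}\bZ^n / \bZ^n$ whose coordinates all lie in $[0,1)$; in particular $|\Lambda| = q^n$, so $\homgp_{k[T]}\left( k[T]^{1/q}, k[T] \right)$ is free of rank $q^n$ with dual basis $\{\varepsilon_u \suchthat u \in \Lambda\}$, $\varepsilon_u(x^{u'}) = \delta_{uu'}$. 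A routine check from the definitions shows that each $\pi_a$ is well defined (it is $k$-linear on the monomial $k$-basis of $k[T]^{1/q}$, and it lands in $k[T]$ because $\pi_a(x^u) \neq 0$ forces $a + u \in \bZ^n$) and $k[T]$-linear, using that for $w \in \bZ^n$ the condition $a + u \in \bZ^n$ is unchanged upon replacing $u$ by $u + w$. It is convenient to record two facts: the identity $\pi_{a+w} = x^w \cdot \pi_a$ for $w \in \bZ^n$, and the bijection $\frac{1}{q}\bZ^n \leftrightarrow \Lambda \times \bZ^n$ that writes each $b \in \frac{1}{q}\bZ^n$ uniquely as $b = w - u$ with $u \in \Lambda$ the representative of $-b$ and $w = b + u \in \bZ^n$.

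\textbf{Spanning.} Take $\varphi \in \homgp_{k[T]}\left( k[T]^{1/q}, k[T] \right)$; it is determined by the finitely many Laurent polynomials $\varphi(x^u) = \sum_{w \in \bZ^n} c_{u,w}\, x^w$ for $u \in \Lambda$. Define $\lambda_b \coloneqq c_{u,w}$ where $(u,w)$ is the pair associated to $b$ under the bijection above. The sum $\sum_b \lambda_b \pi_b$ is finite, and evaluating both sides on each $x^u$ (so that $\pi_b(x^u) \neq 0$ exactly when $b \equiv -u$, and then $\pi_b(x^u) = x^{b+u}$ with $b + u$ ranging over all of $\bZ^n$) and comparing coefficients of monomials gives $\varphi = \sum_b \lambda_b \pi_b$.

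\textbf{Linear independence.} Suppose $\sum_b \lambda_b \pi_b = 0$ is a finite relation. Evaluating on $x^u$ for a fixed $u \in \Lambda$ gives $\sum_{b \equiv -u} \lambda_b\, x^{b+u} = 0$ in $k[T]$; the exponents $b + u$ are pairwise distinct elements of $\bZ^n$ as $b$ runs over the coset $-u + \bZ^n$, so linear independence of distinct monomials in $k[T]$ forces all such $\lambda_b$ to vanish. Since every $b \in \frac{1}{q}\bZ^n$ lies in exactly one coset $-u + \bZ^n$ with $u \in \Lambda$, all $\lambda_b = 0$. (Equivalently, one can package both halves structurally: with $u^* \in \Lambda$ the representative of $-u$, the map $\pi_{u^*}$ sends $x^u$ to the unit $x^{u^*+u}$ — a product of a subset of the $x_i$ — and kills $x^{u'}$ for $u' \in \Lambda \setminus \{u\}$, hence $\pi_{u^*}$ is a unit times $\varepsilon_u$; thus $\{\pi_{u^*} \suchthat u \in \Lambda\}$ is a $k[T]$-basis, and expanding $k[T]\cdot \pi_{u^*} = \bigoplus_{w \in \bZ^n} k\cdot \pi_{u^*+w}$ and reindexing via the bijection yields the $k$-basis $\{\pi_b \suchthat b \in \frac{1}{q}\bZ^n\}$.)

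\textbf{Main obstacle.} There is no deep obstacle here; the only point requiring care is the bookkeeping with the bijection $\frac{1}{q}\bZ^n \leftrightarrow \Lambda \times \bZ^n$ — that is, the unique decomposition of each lattice point of $\frac{1}{q}\bZ^n$ as an integer vector minus a ``fractional'' representative in $\Lambda$ — which is exactly what makes the coefficient-matching in the spanning step and the monomial-separation in the independence step both go through.
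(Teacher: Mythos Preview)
Your proof is correct. Note that the paper does not give its own proof of this lemma; it simply cites \cite{PayneToric} and moves on. Your argument is the standard one: exhibit the free $k[T]$-module structure on $k[T]^{1/q}$ via coset representatives $\Lambda$, observe that $\pi_{a+w} = x^w \cdot \pi_a$ for $w \in \bZ^n$, and then match coefficients against the dual basis. The parenthetical repackaging at the end---that the $\pi_{u^*}$ for $u \in \Lambda$ form a $k[T]$-basis up to units, and one then expands each $k[T] \cdot \pi_{u^*}$ into its monomial $k$-basis---is a clean way to see both halves at once and is essentially how Payne organizes it.
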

Further, every map $R^{1/q} \to R$ extends to a unique map $k[T]^{1/q} \to k[T]$. Thus, each map in $\homgp(R^{1/q}, R)$ is just a map $k[T]^{1/q} \to k[T]$ that happens to send $R^{1/q}$ into $R$. Payne has characterized such maps:  we define the \emph{anticanonical polytope} of $R$, 
\[
  \acptope = \left\{ u \in \bR^n \suchthat \braket{u, v_\rho}  \geq -1 \textrm{ for all rays } \rho \in \Sigma(1) \right\}
\]
 Then we have: 

\begin{proposition}[\cite{PayneToric}]
  Work in \autoref{setting:toric}. Then the set of maps $\pi_a$ where $a$ is in $\operatorname{int}(P_{-K_R})\cap \frac{1}{q} \bZ^n$ forms a $k$-vector space basis for $\homgp_R(R^{1/q}, R)$. 
  \label{prop:PayneExtensionCriterion}
\end{proposition}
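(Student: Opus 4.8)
The plan is to combine the two facts recalled just above --- Payne's basis $\{\pi_a\}$ of $\homgp_{k[T]}(k[T]^{1/q},k[T])$ and the unique extension of any map $R^{1/q}\to R$ to a map $k[T]^{1/q}\to k[T]$ --- with a ``no-cancellation'' argument on monomials. Via the extension, $\homgp_R(R^{1/q},R)$ is a $k$-subspace of $\homgp_{k[T]}(k[T]^{1/q},k[T])=\bigoplus_{a\in\frac1q\bZ^n}k\,\pi_a$, so the task is to decide which finite combinations $\vp=\sum_a c_a\pi_a$ (all $c_a\neq0$) carry $R^{1/q}$ into $R$. Write $\sigma^\vee=\{u\in\bR^n\suchthat\braket{u,v_\rho}\geq0\text{ for all }\rho\in\Sigma(1)\}$, so that $R$ has monomial $k$-basis $\{x^u\suchthat u\in\sigma^\vee\cap\bZ^n\}$ and, since $\sigma^\vee$ is a cone, $R^{1/q}$ has $k$-basis $\{x^u\suchthat u\in\tfrac1q\bZ^n\cap\sigma^\vee\}$. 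For such a $u$ one has $\vp(x^u)=\sum_{a:\,a+u\in\bZ^n}c_a\,x^{a+u}$, and the exponents $a+u$ appearing here are pairwise distinct, so no cancellation occurs; hence $\vp(x^u)\in R$ exactly when $a+u\in\sigma^\vee$ for every $a$ with $c_a\neq0$ and $a+u\in\bZ^n$. It follows that $\vp(R^{1/q})\subseteq R$ if and only if $\pi_a(R^{1/q})\subseteq R$ for each $a$ in the support of $\vp$, so the whole statement reduces to the claim: $\pi_a(R^{1/q})\subseteq R$ if and only if $a\in\operatorname{int}(\acptope)\cap\tfrac1q\bZ^n$.

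The implication ``$a\in\operatorname{int}(\acptope)\Rightarrow\pi_a$ preserves $R$'' is the easy half. If $\braket{a,v_\rho}>-1$ for every ray $\rho$, then for $u\in\sigma^\vee\cap\tfrac1q\bZ^n$ with $a+u\in\bZ^n$ we get $\braket{a+u,v_\rho}=\braket{a,v_\rho}+\braket{u,v_\rho}>-1$; this quantity is an integer, hence $\geq0$, so $a+u\in\sigma^\vee\cap\bZ^n$ and $\pi_a(x^u)=x^{a+u}\in R$.

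For the converse I would argue by contraposition. Suppose $a\notin\operatorname{int}(\acptope)$; since $\braket{a,v_\rho}\in\tfrac1q\bZ$, there is a ray $\rho_0$ with $\braket{a,v_{\rho_0}}\leq-1$. Because $\sigma$ is $n$-dimensional and strongly convex, $\sigma^\vee$ is full-dimensional and $\tau:=\sigma^\vee\cap v_{\rho_0}^\perp$ is a facet of $\sigma^\vee$. I would then pick $m_0\in\bZ^n$ with $\braket{m_0,v_{\rho_0}}=-1$ (possible since $v_{\rho_0}$ is primitive) and $m_1\in\bZ^n$ in the relative interior of $\tau$, so that $\braket{m_1,v_{\rho_0}}=0$ while $\braket{m_1,v_\rho}>0$ for every other ray $\rho$ --- if some other ray vanished identically on $\tau$, then $\tau$ would lie in $v_{\rho_0}^\perp\cap v_\rho^\perp$, of dimension $n-2$, contradicting $\dim\tau=n-1$. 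For $N\gg0$ the lattice point $m:=m_0+Nm_1$ satisfies $\braket{m,v_{\rho_0}}=-1$ and $\braket{m,v_\rho}\geq\braket{a,v_\rho}$ for every ray $\rho$. Setting $u:=m-a\in\tfrac1q\bZ^n$ gives $\braket{u,v_\rho}\geq0$ for all $\rho$, so $x^u\in R^{1/q}$, whereas $\pi_a(x^u)=x^{a+u}=x^m\notin R$ because $\braket{m,v_{\rho_0}}=-1<0$. Hence $\pi_a(R^{1/q})\not\subseteq R$. Combining the two implications with the reduction above yields $\homgp_R(R^{1/q},R)=\bigoplus_{a\in\operatorname{int}(\acptope)\cap\frac1q\bZ^n}k\,\pi_a$, which is the assertion.

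The one genuinely delicate step is the construction of the witness $u$ in the converse: one must locate a lattice point in the relative interior of the facet $\tau$ that pairs strictly positively with all the remaining rays, which is where the extremality of $\rho_0$ and the facet-dimension count enter; everything else is bookkeeping with the two monomial bases. (Alternatively, one could bypass this combinatorics by invoking the graded isomorphism $\homgp_R(F^e_*R,R)\cong F^e_*R\bigl((1-q)K_R\bigr)$ of \cite{SchwedeFadj} together with the toric description of $R\bigl((1-q)K_R\bigr)$, whose monomial basis $\{x^m\suchthat\braket{m,v_\rho}\geq1-q\text{ for all }\rho\}$ corresponds, under $x^m\mapsto F^e_*x^m$, to precisely the maps $\pi_{m/q}$ with $m/q\in\operatorname{int}(\acptope)$.)
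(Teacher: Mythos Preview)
The paper does not prove this proposition; it is quoted from \cite{PayneToric} without argument, so there is no in-paper proof to compare against. Your direct combinatorial proof is correct and self-contained: the reduction to a single $\pi_a$ via the no-cancellation observation is clean, the forward implication is immediate, and your construction of the witness $u=m-a$ for the converse works as written (the key facts being that $v_{\rho_0}$ primitive gives $m_0$, that $\sigma$ full-dimensional makes $\tau=\sigma^\vee\cap v_{\rho_0}^\perp$ an $(n-1)$-dimensional rational cone containing interior lattice points, and that distinct rays give distinct facets so $m_1$ in the relative interior of $\tau$ pairs strictly positively with every other $v_\rho$). The alternative you sketch at the end---identifying $\homgp_R(F^e_*R,R)$ with $F^e_*R((1-q)K_R)$ and reading off the monomial basis---is closer in spirit to how such statements are usually packaged in the toric literature, and has the advantage of avoiding the facet/witness construction entirely; your first argument, on the other hand, is more elementary and makes the role of $\operatorname{int}(\acptope)$ transparent.
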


%Payne used this to prove the main theorem in  his paper: 
%\begin{theorem}[\cite{PayneToric}]
%  Let $X = X(\Sigma)$ be a toric variety. Then $X$ is diagonally split if and only if the interior of $P_{-K}\cap (-P_{-K})$ contains a representative of each equivalence class in $\frac{1}{q}M/M$. 
%  \label{payneDiagSplit}
%\end{theorem}

Our characterization of $\plainConR$ is as follows:

\begin{theorem}
  Work in \autoref{setting:toric}. Then $\plainConR_e(R)$ is generated as a $k$-vector space by the maps $\pi_a$ where $a\in \frac{1}{p^e}\bZ^n \cap \operatorname{int}(\acptope)$ and the interior of $\acptope \cap\left( a - \acptope \right)$ contains a representative of each equivalence class in $\frac{1}{p^e}\bZ^n / \bZ^n$. 
  \label{prop:resCriterion}
\end{theorem}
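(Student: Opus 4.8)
The strategy is to transport the lifting criterion defining $\plainConR_e(R)$ into the combinatorics of the torus $T\times T$ and then solve a residue-class counting problem. First I would record the toric preliminaries: $X \times_k X$ is the toric variety of the fan $\Sigma \times \Sigma$ in $\bR^{2n}$, with coordinate ring $R\otimes_k R$ (here $k$ perfect is used, as in \autoref{setting:toric}) and anticanonical polytope $\acptope \times \acptope$, so $\operatorname{int}(P_{-K_{R\otimes_k R}}) = \operatorname{int}(\acptope)\times\operatorname{int}(\acptope)$. By \autoref{prop:PayneExtensionCriterion} applied to $R\otimes_k R$, a $k[T\times T]$-linear map $\Phi \colon k[T\times T]^{1/q}\to k[T\times T]$ carries $(R\otimes_k R)^{1/q}$ into $R\otimes_k R$ if and only if $\Phi$ is a $k$-linear combination of the maps $\pi_{(b,c)}$ with $b,c\in \operatorname{int}(\acptope)\cap\frac1q\bZ^n$; moreover, identifying $F^e_*(R\otimes_k R) = R^{1/q}\otimes_k R^{1/q}$ via \autoref{cor:homAndTensor}, one checks on monomials that $\pi_{(b,c)}$ is exactly the image of $\pi_b\otimes\pi_c$, i.e. $\pi_{(b,c)}(x^u\otimes x^v) = \pi_b(x^u)\otimes\pi_c(x^v)$.

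Next, recall that $\varphi\colon R^{1/q}\to R$ lies in $\plainConR_e(R)$ precisely when it admits an $R\otimes_k R$-linear lift $\Phi$ with $\mu\circ\Phi = \varphi\circ\mu^{1/q}$ (any such $\Phi$ is automatically compatible with $I_\Delta$, since $\mu$ kills $\ker\mu$ and $F^e_*$ is exact); by the previous paragraph I may take $\Phi = \sum_{b,c}\lambda_{b,c}\,\pi_{(b,c)}$ with $\lambda_{b,c}\in k$ and $b,c\in\operatorname{int}(\acptope)\cap\frac1q\bZ^n$, and by \autoref{prop:PayneExtensionCriterion} for $R$ itself I may write $\varphi = \sum_a \mu_a\pi_a$ with $a\in\operatorname{int}(\acptope)\cap\frac1q\bZ^n$. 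Evaluating the identity $\mu\Phi = \varphi\mu^{1/q}$ on an arbitrary monomial $x^u\otimes x^v$ ($u,v\in\frac1q\bZ^n$) and cancelling the unit $x^{u+v}$, I would show that, setting $\beta = -\bar u$ and $\gamma = -\bar v$ in $\frac1q\bZ^n/\bZ^n$, the identity is equivalent to the family of equations
\[
  \sum_{\bar b = \beta,\ \bar c = \gamma}\lambda_{b,c}\,x^{b+c} \;=\; \sum_{\bar a = \beta+\gamma}\mu_a\,x^a \qquad\bigl((\beta,\gamma)\in(\tfrac1q\bZ^n/\bZ^n)^2\bigr),
\]
the sums being over $b,c,a$ ranging in $\operatorname{int}(\acptope)\cap\frac1q\bZ^n$ with the indicated residues. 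The key structural point is that these equations are completely decoupled, since $\lambda_{b,c}$ appears only in the equation indexed by $(\bar b,\bar c)$; it suffices to check the identity on semigroup monomials ($u,v\in\frac1q(\sigma^\vee\cap\bZ^n)$), and those already realize every residue pair because $\sigma^\vee\cap\bZ^n$ generates $\bZ^n$.

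The theorem then follows by comparing coefficients in these equations. Taking $\varphi = \pi_a$ (so $\mu_a = 1$ and all other $\mu$'s vanish): the equations with $\beta+\gamma\neq\bar a$ have zero right-hand side and are solved by taking the relevant $\lambda$'s to be $0$, while for each $\beta$ the equation indexed by $(\beta,\bar a-\beta)$ is solvable if and only if some $b,c\in\operatorname{int}(\acptope)\cap\frac1q\bZ^n$ satisfy $\bar b = \beta$ and $b+c = a$ — equivalently, $\operatorname{int}(\acptope)\cap(a-\operatorname{int}(\acptope)) = \operatorname{int}(\acptope\cap(a-\acptope))$ contains a point congruent to $\beta$ modulo $\bZ^n$; and when such $b$'s are chosen, one for each $\beta$ (with $\lambda = 1$, all other $\lambda = 0$), the choices do not interfere since every nonzero $\lambda_{b,c}$ chosen has $b+c=a$. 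As $\beta$ runs over $\frac1q\bZ^n/\bZ^n$ this says $\operatorname{int}(\acptope\cap(a-\acptope))$ meets every residue class, so every such $\pi_a$ lies in $\plainConR_e(R)$. Conversely, for an arbitrary $\varphi = \sum\mu_a\pi_a\in\plainConR_e(R)$ and any $a$ with $\mu_a\neq 0$, comparing the coefficient of $x^a$ in the equation indexed by $(\beta,\bar a-\beta)$ shows that coefficient is nonzero on the right, whence there is $b\in\operatorname{int}(\acptope\cap(a-\acptope))$ with $\bar b = \beta$ — for every $\beta$; thus $a$ satisfies the stated condition, $\plainConR_e(R)$ is spanned by the asserted maps, and the proof is complete.

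The step I expect to cost the most care is the passage in the second paragraph from the single module identity $\mu\circ\Phi = \varphi\circ\mu^{1/q}$ to the decoupled residue-class equations: correctly tracking how the residue of $u$ pins down the residue of $b$ (through the condition $b+u\in\bZ^n$), keeping the distinction between maps on $k[T\times T]^{1/q}$ and on $(R\otimes_k R)^{1/q}$ under control via \autoref{prop:PayneExtensionCriterion}, and verifying the decoupling. Once that reduction is in place, the remaining coefficient comparisons and the consistency of the chosen $\lambda_{b,c}$ are routine, as is the elementary identity $\operatorname{int}(P)\cap\operatorname{int}(Q)=\operatorname{int}(P\cap Q)$ used above.
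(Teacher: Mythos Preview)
Your proposal is correct and takes essentially the same approach as the paper: both evaluate the relevant maps on monomials to reduce the problem to residue-class equations in $\frac{1}{q}\bZ^n/\bZ^n$, then solve by coefficient comparison. The only difference is organizational---the paper first isolates the condition for $\Phi$ to be compatible with $I_\Delta$ (\autoref{lemma:keylemma}) and separately computes the restriction formula (\autoref{cor:gradedCartAlg}), whereas you fold both into the single lifting identity $\mu\circ\Phi=\varphi\circ\mu^{1/q}$; the resulting residue equations $\sum_{\bar b=\beta}\lambda_{b,d-b}=\mu_d$ are exactly the paper's condition that the sums $\sum_{a\in[u]}c_{a,d-a}$ be independent of $[u]$, together with the identification of that common value as the coefficient of $\pi_d$ in $\varphi$.
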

First, we must prove a lemma. This lemma is similar to \cite[Theorem 7.3]{PayneUnimodular}. Note however that we do not assume that $\vp$ is a splitting. 

\begin{lemma}
  Let $\vp = \sum c_{a, a'} \pi_{a}\otimes \pi_{a'}$ be a map in $\homgp_{k[T \times T]}\left( k[T\times T]^{1/q} , k[T \times T] \right)$. Then $\vp$ is compatible with $I_\Delta$ if and only if for all equivalence classes $[u_1], [u_2] \in  \frac{1}{q}\bZ^n/\bZ^n$, we have, for all $d\in \frac{1}{q}\bZ^n$, 
  \[
    \sum_{a \in [u_1]} c_{a, d-a} = \sum_{b\in [u_2]} c_{b, d-b}
  \]
  \label{lemma:keylemma}
\end{lemma}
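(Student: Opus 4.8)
The plan is to unwind the compatibility condition $\varphi\bigl(F^e_*(I_\Delta)\bigr)\subseteq I_\Delta$ into the stated combinatorial identity by working one monomial at a time. Write $k[T\times T]=k[x_1^{\pm 1},\dots,x_n^{\pm1},y_1^{\pm1},\dots,y_n^{\pm1}]$, with the $x_i$ and $y_i$ the coordinates on the two factors, so that $\mu\colon k[T\times T]\to k[T]$ is the $k$-algebra map with $\mu(x_i)=\mu(y_i)=x_i$ and $I_\Delta=\ker\mu$. Since restriction of scalars along Frobenius is exact and $\mu$ is surjective, $F^e_*(I_\Delta)=\ker(F^e_*\mu)$, and under the identification $F^e_*(k[T\times T])=k[T\times T]^{1/q}$ (valid because $k$ is perfect) the map $F^e_*\mu$ becomes the map $k[T\times T]^{1/q}\to k[T]^{1/q}$ sending a monomial $x^uy^v\mapsto x^{u+v}$ for $u,v\in\frac1q\bZ^n$. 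Because $\varphi$ is $k$-linear and $I_\Delta$ is a $k$-subspace, $\varphi$ is compatible with $I_\Delta$ iff $\mu\circ\varphi$ vanishes on a $k$-spanning set of $F^e_*(I_\Delta)$; as $I_\Delta$ is spanned over $k$ by the differences $x^\alpha y^\beta-x^{\alpha'}y^{\beta'}$ with $\alpha+\beta=\alpha'+\beta'$, the monomial differences $x^uy^v-x^{u'}y^{v'}$ with $u,v,u',v'\in\frac1q\bZ^n$ and $u+v=u'+v'$ span $F^e_*(I_\Delta)$ over $k$. So compatibility is equivalent to the requirement that $\mu\bigl(\varphi(x^uy^v)\bigr)=\mu\bigl(\varphi(x^{u'}y^{v'})\bigr)$ in $k[T]$ whenever $u+v=u'+v'$.

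Next I would compute $\mu(\varphi(x^uy^v))$ explicitly. From the definition of $\pi_a$, $(\pi_a\otimes\pi_{a'})(x^uy^v)$ equals $x^{a+u}y^{a'+v}$ if $a+u,a'+v\in\bZ^n$ and is $0$ otherwise; note $a+u\in\bZ^n$ is precisely the condition $a\in[-u]$ in $\frac1q\bZ^n/\bZ^n$, and likewise $a'\in[-v]$. Applying $\mu$ gives $\mu(\varphi(x^uy^v))=\sum_{a\in[-u],\,a'\in[-v]}c_{a,a'}\,x^{a+a'+u+v}$. To read off the coefficient of a fixed monomial $x^m$ with $m\in\bZ^n$, put $d=m-(u+v)\in\frac1q\bZ^n$; the contributing pairs are those with $a\in[-u]$ and $a+a'=d$, and then $a'=d-a$ automatically lies in $[-v]$, since $a'+v=m-(a+u)\in\bZ^n$. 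Hence the coefficient of $x^m$ in $\mu(\varphi(x^uy^v))$ is the finite sum $\sum_{a\in[-u]}c_{a,\,d-a}$. So compatibility is equivalent to: for all $u,v,u',v'\in\frac1q\bZ^n$ with $u+v=u'+v'$ and all $m\in\bZ^n$, one has $\sum_{a\in[-u]}c_{a,d-a}=\sum_{b\in[-u']}c_{b,d-b}$, where $d=m-(u+v)$.

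To finish I would observe that, as the tuple $(u,v,u',v',m)$ ranges over all admissible choices, the triple $([-u],[-u'],d)$ ranges over all of $\tfrac1q\bZ^n/\bZ^n\times\tfrac1q\bZ^n/\bZ^n\times\tfrac1q\bZ^n$: given target data $([u_1],[u_2],d)$, pick representatives $u\in[-u_1]$, $u'\in[-u_2]$, set $m=0$, $v=-u-d$, $v'=-u'-d$; and conversely every admissible tuple produces such a triple. Relabelling $[u_1]=[-u]$ and $[u_2]=[-u']$ turns the displayed condition into exactly the assertion of the lemma. The only real work is the index bookkeeping in the preceding paragraph — in particular checking that once $a\in[-u]$ and the total exponent $x^m$ is fixed the side condition $a'\in[-v]$ is forced, so that the double sum over $(a,a')$ collapses to the single sum over $a$ — and this is the one place I expect to need care; the remaining steps are formal.
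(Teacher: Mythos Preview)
Your proof is correct and follows essentially the same approach as the paper's: both unwind compatibility with $I_\Delta$ by evaluating $\varphi$ on a spanning set of $I_\Delta^{1/q}$, applying $\mu$, and comparing monomial coefficients. The only organizational difference is that the paper uses the ideal generators $x^u\otimes x^{-u}-1$ of $I_\Delta$ and then premultiplies by arbitrary monomials $x^v\otimes x^{v'}$ to produce $k[T\times T]$-module generators of $I_\Delta^{1/q}$, whereas you go directly to the $k$-spanning set $\{x^uy^v-x^{u'}y^{v'}:u+v=u'+v'\}$; this makes your bookkeeping slightly more symmetric (no auxiliary $\varphi_{v,v'}$), but the substantive computation---in particular the observation that once $a\in[-u]$ and the total exponent is fixed the companion condition $a'\in[-v]$ is automatic---is identical to the paper's.
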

\begin{proof}
  Note that the ideal $I_{\Delta}^{1/q} \subseteq k[T\times T]^{1/q}$ is generated by the elements 
  \[
    \left\{ x^{u}\otimes x^{-u} - 1 \suchthat u \in \frac{1}{q} \bZ^n \right\}.
  \]
  Since $k[T \times T]$ is a smaller ring than $k[T\times T]^{1/q}$, we need more elements to generate $I_{\Delta}^{1/q}$ as a  $k[T\times T]$-module. However, elements of the form $x^v \otimes x^{v'}$, where $v$ and $v'$ are vectors in $\frac{1}{q}\bZ^n$, generate $k[T\times T]^{1/q}$ as a $k[T\times T]$-module (indeed, as a $k$-vector space). Thus, the set 
  \[
    \left\{ x^{v}\otimes x^{v'} \left( x^u \otimes x^{-u} - 1 \right) \suchthat u, v, v' \in \frac{1}{q}\bZ^n\right\}
  \]
  generates $I_{\Delta}^{1/q}$ as a module over $k[T\times T]$. 

Suppose $\varphi = \sum_{a, a'} c_{a, a'} \pi_{a}\otimes \pi_{ a'}$ is compatible with the diagonal. This is equivalent to asserting that 
  \begin{equation}
    \varphi \left( x^{v}\otimes x^{v'} \left( x^u \otimes x^{-u} - 1 \right)\right) \equiv 0 \textrm{ mod }I_\Delta
    \label{eq:keylemmaeq1}
  \end{equation}
  for all $u, v, v' \in \frac{1}{q}\bZ^n$. Set $\varphi_{v, v'}\coloneqq  \varphi(x^{v}\otimes x^{v'} \cdot \underscore )$. Then the condition in \autoref{eq:keylemmaeq1} is equivalent to saying
  \begin{equation*}
    \vp_{v, v'} \left(  x^u \otimes x^{-u} - 1 \right) \equiv 0 \textrm{ mod } I_\Delta, 
    %\label{eq:keylemmaeq2}
  \end{equation*}
  for all $u, v, v' \in \frac{1}{q}\bZ^n$, or in other words, 
  \begin{equation}
     \vp_{v, v'} \left(  x^u \otimes x^{-u} \right) \equiv \vp_{v, v'}(1) \textrm{ mod } I_\Delta.
    \label{eq:keylemmaeq2}
  \end{equation}
  Now, it's easy to see that $\pi_{a}\otimes \pi_{a'}\left( x^v \otimes x^{v'} \cdot \underscore \right) = \pi_{a + v}\otimes \pi_{a'+v'}$. Thus
  \begin{equation*}
    \vp_{v, v'} = \sum_{a, a' \in \frac{1}{q}\bZ^n} c_{a, a'} \pi_{a+v}\otimes \pi_{a'+v'} = \sum_{a, a' \in \frac{1}{q}\bZ^n} c_{a - v, a'-v'} \pi_{a}\otimes \pi_{a'}
  \end{equation*}
  This means that \autoref{eq:keylemmaeq2} is equivalent to saying
  \[
    \sum_{\substack{a \in -u + \bZ^n,\\ a' \in u + \bZ^n}} c_{a - v, a' - v'} x^{a+u}\otimes x^{a'-u} \equiv \sum_{b, b' \in \bZ^n} c_{b - v, b'-v'} x^b \otimes x^{b'} \textrm{ mod } I_{\Delta}, 
  \]
  and this  is the case if and only if
  \begin{equation}
    \sum_{\substack{a\in -u + \bZ^n,\\ a' \in u + \bZ^n}} c_{a - v, a' - v'} x^{a+a'} = \sum_{b, b'\in \bZ^n} c_{b - v, b' - v'} x^{b + b'}.
    \label{eq:something}
  \end{equation}
  Now, the above is an equality of Laurent polynomials, so it holds if and only if the corresponding coefficients for each exponent of $x$ are the same. So our initial assertion \autoref{eq:keylemmaeq1} holds if and only if, for all $d \in \bZ^n$ and all $u, v, v'\in \frac{1}{q}\bZ^n$, we have
  \[
    \sum_{a \in -u + \bZ^n} c_{a - v, d-a - v'} = \sum_{b\in \bZ^n} c_{b - v, d-b - v'}
  \]
  (In other words, we're setting $d = a + a' = b +b'$ in  \autoref{eq:something}.) By setting $U = -u - v$ and $D = d-v'-v$,  the above is equivalent to 
  \[
    \sum_{a \in U + \bZ^n} c_{a, D - a} = \sum_{b \in -v + \bZ^n} c_{b, D - b }
  \]
  where $U, v,$ and $D$ independently range over $\frac{1}{q}\bZ^n$. This completes the proof.
\end{proof}

\begin{corollary}
  Let $R$ be a toric ring. The Cartier algebra $\ConProd$ is ``graded'', in the sense that the map $\sum_{a, a'}c_{a, a'} \pi_a \otimes \pi_{a'}$ is compatible with the diagonal if and only if, for each $d \in \frac{1}{q}\bZ^n$, we have $\sum_{a + a' = d} c_{a, a'} \pi_a \otimes \pi_{a'}$ is compatible with the diagonal. It follows that $\ConR$ is generated over $k$ by the maps $\pi_d$ in $\ConR$.
  \label{cor:gradedCartAlg}
\end{corollary}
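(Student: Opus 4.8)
The plan is to read both assertions off \autoref{lemma:keylemma}, using Payne's basis of $\homgp_R(R^{1/q},R)$ (\autoref{prop:PayneExtensionCriterion}) only at the very end for bookkeeping. First I would write an arbitrary element of $\homgp_{R\otimes_k R}\bigl(F^e_*(R\otimes_k R), R\otimes_k R\bigr)$ as $\vp = \sum_{a,a'}c_{a,a'}\,\pi_a\otimes\pi_{a'}$, which is legitimate because, by \autoref{cor:homAndTensor} together with the perfectness of $k$, this Hom-module is $\homgp_R(R^{1/q},R)\otimes_k\homgp_R(R^{1/q},R)$, and the $\pi_a$ form a $k$-basis of each factor; in particular only finitely many $c_{a,a'}$ are nonzero. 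Set $\vp_d \coloneqq \sum_{a+a'=d}c_{a,a'}\,\pi_a\otimes\pi_{a'}$, so that $\vp=\sum_d\vp_d$ is a finite sum. The key point is that the compatibility criterion of \autoref{lemma:keylemma}, for each fixed $d\in\frac{1}{q}\bZ^n$, constrains only the coefficients $c_{a,a'}$ with $a+a'=d$: it says $\sum_{a\in[u_1]}c_{a,d-a}=\sum_{b\in[u_2]}c_{b,d-b}$ for all $[u_1],[u_2]\in\frac{1}{q}\bZ^n/\bZ^n$. Applying the same lemma to $\vp_d$, whose coefficients vanish off ``total degree'' $d$, shows that $\vp_d$ is diagonally compatible if and only if precisely this $d$-th condition holds. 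Hence $\vp$ is compatible with the diagonal if and only if every $\vp_d$ is, which is the ``graded'' statement.

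For the second assertion I would take $\vp\in\fullCA{R\otimes_k R, I_\Delta\compSubAlg}_e$, decompose $\vp=\sum_d\vp_d$, observe by the previous paragraph that each $\vp_d$ again lies in $\fullCA{R\otimes_k R, I_\Delta\compSubAlg}_e$, and restrict to $(R\otimes_k R)/I_\Delta\cong R$, giving $\overline\vp=\sum_d\overline{\vp_d}$ in $\plainConR_e(R)$. It then remains to identify each $\overline{\vp_d}$ with a scalar multiple of $\pi_d$. Using $x^w\otimes 1$ as a lift of $x^w\in R^{1/q}$ along $\mu$ and computing $\vp_d(x^w\otimes 1)=\sum_{a+a'=d}c_{a,a'}\,\pi_a(x^w)\otimes\pi_{a'}(1)$, one sees this is a $k$-combination of monomials $x^{a+w}\otimes x^{a'}$ with $(a+w)+a'=d+w$, so applying $\mu$ lands everything in $k\cdot x^{d+w}$; tracking which terms survive (those with $a'\in\bZ^n$ and $a+w\in\bZ^n$) gives exactly $\overline{\vp_d}=\bigl(\sum_{a\in[d]}c_{a,d-a}\bigr)\pi_d$. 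Thus $\overline{\vp_d}=\lambda_d\pi_d$ with $\lambda_d\in k$; whenever $\lambda_d\neq0$ this forces $\pi_d$ to be a genuine element of $\homgp_R(R^{1/q},R)$, and since $\plainConR_e(R)$ is a $k$-subspace we conclude $\pi_d=\lambda_d^{-1}\overline{\vp_d}\in\plainConR_e(R)$. Therefore $\overline\vp=\sum_d\lambda_d\pi_d$ exhibits $\plainConR_e(R)$ as spanned over $k$ by those $\pi_d$ that it contains.

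The step I expect to require the most care is this last identification: one must check that passing to the quotient by $I_\Delta$ is compatible with the $\bZ^n$-gradings in the way claimed --- equivalently, that $\overline{\vp_d}$ is $\bZ^n$-homogeneous of degree $d$ --- and then invoke Payne's basis to conclude that a homogeneous operator of degree $d$ on $R^{1/q}$ is necessarily a scalar multiple of $\pi_d$, the degrees $a$ of the basis maps $\pi_a$ being pairwise distinct. Everything else is formal manipulation with \autoref{lemma:keylemma} and the finite-support decomposition of the Hom-modules.
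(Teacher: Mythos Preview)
Your proposal is correct and follows essentially the same route as the paper: both read the graded decomposition directly off the $d$-by-$d$ structure of \autoref{lemma:keylemma}, then compute the restriction of each graded piece explicitly on monomials (via the lift $x^w\otimes 1$) to identify it with a scalar multiple of $\pi_d$. The only detail you gloss over, which the paper makes explicit in its final paragraph, is that \autoref{lemma:keylemma} is stated for $k[T\times T]$ rather than $R\otimes_k R$, so one should first extend the map uniquely to the torus before invoking it; compatibility with the diagonal is preserved under this extension by \cite[Lemma~1.1.7]{BKFsplitting}.
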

\begin{proof}
  First, we focus on the case that $R = k[T]$. The first part follows immediately from the above lemma: a map $\sum_{a, a'}c_{a, a'} \pi_a \otimes \pi_{a'}$ satisfies the condition in \autoref{lemma:keylemma} if and only if the maps $\sum_{a + a' = d} c_{a, a'} \pi_a \otimes \pi_{a'}$ satisfy the condition in \autoref{lemma:keylemma} for all $d$. 

  For the second assertion, let $\psi \in \ConProd$ be arbitrary. Then $\overline \psi\coloneqq \psi\big |_{R\otimes_k R/I_\Delta}$ is an arbitrary element of $\ConR$. If 
  \[
    \psi =  \sum_{a, a'} b_{a, a'} \pi_a \otimes \pi_{a'}, 
  \]
  then by the first assertion, we have $\psi'\coloneqq \sum_{a + a' = u} b_{a, a'} \pi_a \otimes \pi_{a'}$ is also in $\ConProd$ for all $u \in \frac{1}{q} \bZ^n$. Now let $v \in \frac{1}{q}\bZ^n$ be arbitrary. We compute:
  \begin{align}
    \psi'|_\Delta(x^v) &= \left.\left( \sum_{a}b_{a, u-a} \pi_a(x^v) \otimes \pi_{u-a}(1) \right)\right |_{R\otimes_k R/I_\Delta}\\
    &= \left.\left( \sum_{\substack{a\in -v + \bZ^n\\ \textrm{where } u-a \in \bZ^n}} b_{a, u-a} x^{a+v}\otimes x^{u-a}\right)\right|_{R\otimes_k R/I_\Delta}\\
    &= \begin{cases}
      \left( \sum_{a \in -v + \bZ^n} b_{a, u-a}\right) x^{u+v}, & u+ v \in \bZ^n\\
      0 & \textrm{ otherwise}
    \end{cases}\\
    &= \left( \sum_{a \in -v + \bZ^n} b_{a, u-a} \right)\pi_u(x^v) \label{eq:restrictionForumla}
  \end{align}
  So if we write $\overline \psi = \sum_{a} c_a \pi_a$, we see that $\psi'|_{R\otimes_k R/I_\Delta} = c_u \pi_u$. Thus either $c_u = 0$ or $\pi_u\in \ConR$, as desired.

  For the general case, we just note that each map $(R \otimes_k R)^{1/q} \to R\otimes_k R$ extends to a unique map $k[T \times T]^{1/q} \to k[T \times T]$, and one of these maps is compatible with the diagonal whenever the other map is (\cf \cite[Lemma 1.1.7]{BKFsplitting}).% Then we're done, as $\psi = \sum_{a, a'} c_{a, a'} \pi_{a, a'}$ is regular on $R\otimes R$ if and only if $\pi_{a, a'}$ is for all pairs $a, a'$ appearing in the summation (\cite[Proposition 4.3]{PayneToric}). 
  \end{proof}
  
Note that, a priori, it looks like the coefficient of $\pi_u(x^v)$ in \autoref{eq:restrictionForumla} depends on $v$ (or even worse, on our choice of lifting of $x^v$ to $R \otimes_k R$), but by \autoref{lemma:keylemma}, this is not the case. %Indeed, one can probably use this calculation to prove that ``only if'' part of that lemma. 

\begin{proof}[Proof of \autoref{prop:resCriterion}]
  Any map in $\psi \in \homgp_{R\otimes_k R}( (R\otimes_k R)^{1/q}, R\otimes_k R)$ extends to a map $\hat \psi \in \homgp_{k[T\times T]}( k[T\times T]^{1/q}, k[T\times T])$, and two maps 
  agree if and only if their extensions to the torus agree. By \cite[Lemma 1.1.7]{BKFsplitting}, $\psi$ is compatible with the diagonal
  if and only if $\hat \psi$ is compatible with the diagonal. 
  %Note that a map on $X \times X$ compatible with the diagonal is precisely a map on the dense torus $T\times T$ compatible with the diagonal that happens to extend to a map on $X \times X$ (c.f. \cite[Lemma 1.1.7]{BKFsplitting}). % Just need to show: restriction to torus determines the map
  This means the map $\pi_d$ is in $\ConR$ if and only if there exists some map 
  \[
    \vp = \sum_{a, a'} c_{a, a'} \pi_a \otimes \pi_{a'} \colon k[T\times T]^{1/q} \to k[T \times T]
  \]
  compatible with $I_\Delta$ that restricts to a map $ (R\otimes_k R)^{1/q} \to R \otimes_k R$, and such that the sum $\sum_{a+a'=d} c_{a, a'}$ is non-zero. By \autoref{lemma:keylemma}, this means  for all $[u_1], [u_2]\in \frac{1}{q}\bZ^n/ \bZ^n$, we have 
  \[
    \sum_{a \in [u_1]} c_{a, d - a} = \sum_{b \in [u_2]} c_{b, d-b}.
  \]
  Further, if $\sum_{a+a' = d}c_{a, a'} \neq 0$, then there exists some $[u] \in \frac{1}{q}\bZ^n/\bZ^n$ such that $\sum_{a \in [u]} c_{a, d-a}\neq 0$. This is just because
  \[
    \sum_{a + a' = d} c_{a, a'} = \sum_{[u] \in \frac{1}{q} \bZ^n/ \bZ^n} \sum_{a \in [u]} c_{a, d-a}
  \]
  Using \autoref{lemma:keylemma} again, this means that for \emph{all} $[u]\in \frac{1}{q}\bZ^n/\bZ^n$, the sum $\sum_{a\in [u]}c_{a, d-a}$ is nonzero. In particular, for all $[u] \in \frac{1}{q}\bZ^n/\bZ^n$, there is some $a \in [u]$ such that $c_{a, d-a}\neq 0$. Since $\vp$ restricts to a map $F_*^e (R\otimes_k R) \to R\otimes_k R$, this means  $a, d-a \in \operatorname{int}(\acptope)$, by \autoref{prop:PayneExtensionCriterion}. In other words, $a$ is in the interior of $\acptope \cap \left( d - \acptope \right)$. 

  Conversely, given some $d$, suppose that each equivalence class $[u]$ has a representative in the interior of $\acptope \cap \left( d - \acptope \right)$. Then we can label these representatives $a_1, \ldots, a_N$. Then $\sum_i \pi_{a_i}\otimes \pi_{d - a_i}$ is a map compatible with the diagonal, and its restriction to the diagonal is $\pi_d$. This is just an application of equation \autoref{eq:restrictionForumla}; in this case, there is only one nonzero coefficient $b_{a, d-a}$ where $a$ is in any particular equivalence class of $\frac{1}{q}\bZ^n/\bZ^n$.
\end{proof}

\begin{remark}
  \autoref{prop:resCriterion} can be seen as a generalization of  \cite[Theorem 1.2]{PayneToric}. Indeed, the following lemma shows that an affine toric variety is diagonally $F$-split if and only if $\pi_0 \in \plainConR$.  %{\color{red} (I think this is also implicit in Payne's work).} 
  Thus we recover \cite[Theorem 1.2]{PayneToric} by setting $a=0$ in the statement of \autoref{prop:resCriterion}.
\end{remark}
\begin{lemma}
  Let $R$ be a toric ring. The following are equivalent:
  \begin{enumerate}[(i)]
    \item $R$ is diagonally $F$-split \label{dfs1}
    \item $\pi_0 \in \plainConR_e(R)$ for some $e > 0$ \label{dfs2}
    \item $\pi_0 \in \plainConR_1(R)$.  \label{dfs3}
  \end{enumerate}
  \label{lemma:diagFSplit}
\end{lemma}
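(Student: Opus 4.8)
The plan is to prove the chain of implications (i) $\Rightarrow$ (ii) $\Rightarrow$ (iii) $\Rightarrow$ (ii) $\Rightarrow$ (i); the two implications that actually use anything are (ii) $\Rightarrow$ (iii) and (ii) $\Rightarrow$ (i), and even these are short appeals to \autoref{prop:resCriterion} and \autoref{cor:gradedCartAlg}. Throughout, ``$R$ is diagonally $F$-split'' means that $\Delta = V(I_\Delta)$ is a compatibly $F$-split subscheme of $\spec(R\otimes_k R)$: there is some $e>0$ and a Frobenius splitting $\sigma\colon F^e_*(R\otimes_k R)\to R\otimes_k R$ with $\sigma(F^e_* I_\Delta)\subseteq I_\Delta$, which by the standard composition argument is equivalent to the same statement for $e=1$. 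I will use two elementary observations: the map $\pi_0\colon F^e_* R\to R$ is itself a Frobenius splitting of $R$ (it fixes each monomial $x^m$ with $m\in\bZ^n$, in particular $1$); and $0$ lies in $\operatorname{int}(\acptope)$, hence in $\operatorname{int}\!\big(\acptope\cap(-\acptope)\big)$.

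The implication (iii) $\Rightarrow$ (ii) is immediate. For (ii) $\Rightarrow$ (iii), apply \autoref{prop:resCriterion} with $a=0$: the hypothesis $\pi_0\in\plainConR_e(R)$ says that $\operatorname{int}\!\big(\acptope\cap(-\acptope)\big)$ contains a representative of every class in $\frac{1}{p^e}\bZ^n/\bZ^n$. Since $\frac1p\bZ^n/\bZ^n$ is a subgroup of $\frac1{p^e}\bZ^n/\bZ^n$, and since a representative in $\frac1{p^e}\bZ^n$ of a class arising from $\frac1p\bZ^n$ automatically lies in $\frac1p\bZ^n$ (it differs by an integer vector from an element of $\frac1p\bZ^n$), the same statement holds with $p^e$ replaced by $p$; applying \autoref{prop:resCriterion} once more yields $\pi_0\in\plainConR_1(R)$.

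For (ii) $\Rightarrow$ (i): using \autoref{prop:resCriterion} with $a=0$, choose for each class $[u]\in\frac1{p^e}\bZ^n/\bZ^n$ a representative $a_{[u]}\in\operatorname{int}\!\big(\acptope\cap(-\acptope)\big)$, taking $a_{[0]}=0$. As shown in the proof of \autoref{prop:resCriterion}, $\sigma \coloneqq \sum_{[u]} \pi_{a_{[u]}}\otimes\pi_{-a_{[u]}}$ is a well-defined map $F^e_*(R\otimes_k R)\to R\otimes_k R$, compatible with $I_\Delta$, whose restriction to the diagonal is $\pi_0$. It is moreover a Frobenius splitting, since $\sigma(F^e_*(1\otimes 1))=\sum_{[u]}\pi_{a_{[u]}}(1)\otimes\pi_{-a_{[u]}}(1)$ and the $[u]$-th summand vanishes unless $a_{[u]}\in\bZ^n$, i.e.\ unless $[u]=[0]$, in which case it equals $1\otimes 1$; hence $R$ is diagonally $F$-split. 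Conversely, for (i) $\Rightarrow$ (ii), given a diagonal splitting $\sigma$ of $F^e$, the induced map $\bar\sigma\colon F^e_* R\to R$ lies in $\plainConR_e(R)$ and is a Frobenius splitting of $R$. Expanding $\bar\sigma=\sum_d c_d\pi_d$ in the basis of \autoref{prop:PayneExtensionCriterion}, the identity $\bar\sigma(F^e_* 1)=1$ forces $c_0=1$, since the monomials $x^d$ with $d\in\bZ^n$ occurring in $\bar\sigma(F^e_* 1)$ are distinct and so cannot cancel. By the graded structure of $\ConR$ from \autoref{cor:gradedCartAlg} — in the notation of its proof, $c_d\neq 0$ implies $\pi_d\in\ConR$ — we get $\pi_0\in\plainConR_e(R)$.

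The only point requiring care is the interplay between ``splitting'' and restriction to $\Delta$ in both directions: that the restriction $\bar\sigma$ of a splitting is again a splitting with $c_0\neq 0$ — which is why one must note the absence of cancellation among the integral-exponent monomials — and that the map $\sigma$ built in (ii) $\Rightarrow$ (i) genuinely splits Frobenius, which depends precisely on being able to take $0$ as the representative of the trivial class, legitimate because $0$ is interior to $\acptope\cap(-\acptope)$. Everything else reduces to \autoref{prop:resCriterion}, \autoref{cor:gradedCartAlg}, and the well-known equivalence of $F$-splitness across Frobenius powers.
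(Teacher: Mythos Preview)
Your proof is correct and follows essentially the same route as the paper: both arguments hinge on \autoref{prop:resCriterion} for the passage between (ii) and (iii), and on \autoref{cor:gradedCartAlg} for extracting $\pi_0$ from a splitting in (i) $\Rightarrow$ (ii). The paper differs only in packaging: it cites \cite[Proposition 4.5]{PayneToric} to produce the map with $c_0\neq 0$ (which you derive directly), and it dispatches (iii) $\Rightarrow$ (i) with the phrase ``by definition,'' whereas you explicitly construct the splitting $\sigma=\sum_{[u]}\pi_{a_{[u]}}\otimes\pi_{-a_{[u]}}$ with $a_{[0]}=0$ and verify $\sigma(F^e_*1)=1$. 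Your version is thus slightly more self-contained on that last point, since a bare lift of $\pi_0$ to $R\otimes_k R$ need not itself split Frobenius, and your choice of $0$ as the representative of the trivial class is exactly what makes it work.
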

\begin{proof}
  To see that \eqref{dfs1} implies \eqref{dfs2}, suppose that $R$ is diagonally $F$-split. By \cite[Proposition 4.5]{PayneToric}, there exists some $e>0$ and some map 
  \[
    \vp = \sum_{a \in \frac{1}{q}\bZ^n} c_a \pi_a \in \plainConR_e.
  \]
   with $c_0 \neq 0$. By \autoref{cor:gradedCartAlg},  we have $\pi_0 \in \plainConR_e(R)$. 
   
   To see that \eqref{dfs2} implies \eqref{dfs3}, suppose that $\pi_0 \in \plainConR_e(R)$ and set $q = p^e$. As $R^{1/p} \subseteq R^{1/q}$, the map $\pi_0$ restricts to a map $R^{1/p} \to R$. One checks that this restriction is in $\plainConR_1(R)$, for instance by using \autoref{prop:resCriterion}.  
   
   Finally, \eqref{dfs3} implies \eqref{dfs1} by definition. 
\end{proof}

\begin{example}
  Consider the case $R = k[x,y,z]/(xy-z^2)$, and assume that $\charp k > 2$. To use the techniques in this section, we use the presentation $R=  k[y, xy, xy^2]$.  Then \autoref{fig:quadric_cone_lattice} shows the polytope $\acptope$. Using \autoref{prop:resCriterion}, one can compute $\ConR$:
  \begin{equation}
    \ConR = \bigoplus_{e\geq 0} F^e_* \braket{x^{p^e+1}y^{\frac{p^e+1}{2}}, x^{p^e}y^{\frac{p^e+1}{2}}, xy^{\frac{p^e+1}{2}}, y^{\frac{p^e+1}{2}},  x^{p^e-1}y^{p^e-1} } \homgp_R(F^e_*R, R)
    \label{eq:CAlgGens}
  \end{equation}
  In terms of the more familiar presentation, $R \cong k[x^2, xy, y^2]$, this formula becomes
  \[
    \ConR = \bigoplus_{e \geq 0} F^e_* \braket{x^{p^e+1}, x^{p^e}y, x^{p^e-1} y^{p^e-1}, xy^{p^e}, y^{p^e+1}}\homgp_R\left( F^e_* R, R \right)
  \]
  \begin{figure}[h]
  \centering
  \includegraphics[width=0.5\textwidth]{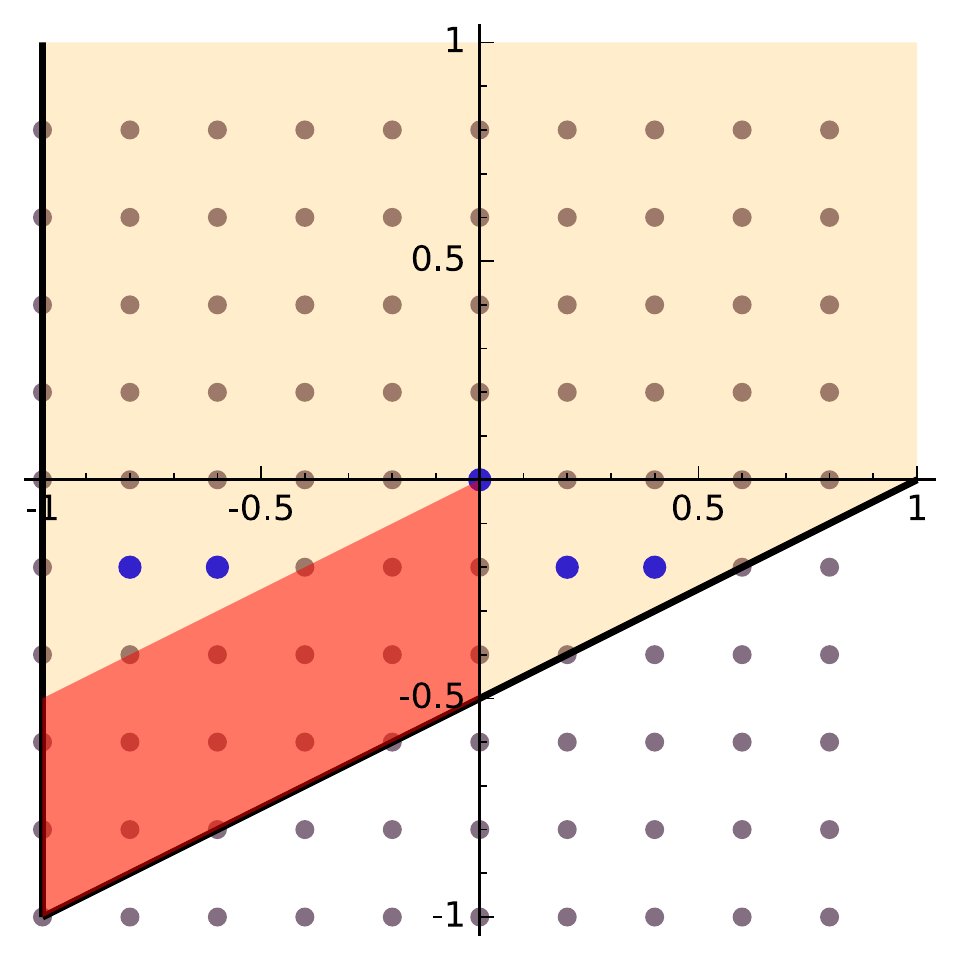}
  \caption{The polytope $\acptope$ for the quadric cone $R = k[y, xy, x^2 y]$, along with the fractional lattice $\frac{1}{5} \bZ^2$. The area in red denotes the set of maps not in $\plainConR_1(R)$. The points in blue denote the generators of $\plainConR_1(R)$ over $F^1_* R$. }
  \label{fig:quadric_cone_lattice}
\end{figure}
To see this, we will prove the following: 
\begin{enumerate}[(i)]
  \item If $a > -1$ and $b > a/2$, then $\pi_{(a,b)} \in \ConR$
  \item If $a > 0$ and $b > (a-1)/2$, then $\pi_{(a,b)} \in \ConR$
  \item $\pi_{(0, 0)} \in \ConR$
  \item The maps $\pi_{\left( 0, -1/q \right)}, \pi_{\left(-1/q, -1/q\right)}, $ and $\pi_{(-2/q, -2/q)}$ are not in $\ConR$. 
\end{enumerate}
Because $\ConR$ is a Cartier algebra and $\pi_a(x^{b} \cdot -) = \pi_{a+b}$ for all $a, b \in \frac{1}{q} \bZ^n$, it follows from (iv) that the maps described in (i)--(iii) are the only maps in $\ConR$. (Another way to see this is to notice that, for any map $\pi_v$ not among those described in (i)--(iii), the corresponding polytope $\acptope \cap(v - \acptope)$ is contained in the polytope corresponding to one of the maps described in (iv)). Consequently we see that $\plainConR_e$ is generated over $F^e_* R$ by the maps $\pi_v$, where
\[
  v \in \left\{\left(\frac{1 - q}{q}, \frac{(q+1)/2}{q}  \right), \left(\frac{2 - q}{q}, \frac{(q+1)/2}{q}  \right), \left( 0,0 \right), \left(\frac{1 }{q}, \frac{(q+1)/2}{q}  \right), \left(\frac{2}{q}, \frac{(q+1)/2}{q}  \right)  \right\}
\]
As $\homgp_R(F^e_* R, R)$ is generated as an $F^e_*R$-module by $\pi_{\left(\frac{1-q}{q}, \frac{1-q}{q}  \right)}$, we get that $\ConR$ has the description given in equation \autoref{eq:CAlgGens}.

So, let $(a, b), (\alpha, \beta) \in \bR^2$. Then 
\begin{align*}
  P_{(a,b)}&\coloneqq \operatorname{int}\left( \acptope \cap ( (a,b) - \acptope) \right)\\
  &= \left\{ (x,y) \suchthat -1 < x < a+1, -1 < 2y - x < 1 - a + 2b \right\}
\end{align*}
We wish to find an integer translation of $(\alpha, \beta)$ in $P_{(a,b)}$, where $(a,b)$ is as in (i) or (ii). Let $\overline \alpha= \alpha - \ceil{\alpha}$. If $(a,b)$ is as in (i), then we have 
\[
  \begin{cases}
    \left( \overline \alpha, \beta - \floor{\beta} \right) \in P_{(a,b)}, & 2(\beta - \floor{\beta}) - \overline \alpha \leq 1\\
    \left( \overline \alpha, \beta - \floor{\beta} - 1 \right) \in P_{(a,b)}, & 2(\beta - \floor{\beta}) - \overline \alpha > 1\\
  \end{cases}
\]
If $(a, b)$ is as in (ii), then we have
\[
  \begin{cases}
    \left( \overline \alpha +1, \beta - \floor \beta - 1 \right) \in P_{(a,b)}, & \frac{1}{2}\overline \alpha + \frac{1}{2} < \beta - \floor{\beta}\\
    \left( \overline \alpha, \beta - \floor{\beta} - 1 \right) \in P_{(a,b)}, & \frac{1}{2}\overline \alpha - \frac{1}{2} < \beta - \floor{\beta} \leq \frac{1}{2}\overline \alpha + \frac{1}{2} \\
    \left( \overline \alpha + 1, \beta - \floor{\beta}  \right) \in P_{(a,b)}, &  0 \leq \beta - \floor{\beta} \leq \frac{1}{2}\overline \alpha - \frac{1}{2}\\
  \end{cases}
\]
To check (iii), we note that 
\[
  \begin{cases}
    \left( \overline \alpha, \beta - \floor{\beta} \right) \in P_{(0,0)}, & 2(\beta - \floor{\beta}) - \overline \alpha < 1\\
    \left( \overline \alpha, \beta - \floor{\beta} - 1 \right) \in P_{(0,0)}, & 2(\beta - \floor{\beta}) - \overline \alpha > 1\\
    \left( \overline \alpha+1, \beta - \floor{\beta}  \right) \in P_{(0,0)}, & 2(\beta - \floor{\beta}) - \overline \alpha = 1\\
  \end{cases}
\]
Here we're using the fact that $\charp k \neq 2$ to see that $\overline \alpha <0$ if $2(\beta - \floor{\beta}) - \overline \alpha =1$. Indeed, the point $(0, \frac{1}{2})$ 
has no integer translation in $P_{(0,0)}$, so $\pi_0 \not \in \ConR$ if $\charp k = 2$. 

Finally, to check (iv), we note that $(0, \frac{(q-1)/2}{q})$ has no integer translations in $P_{\left( 0, -1/q \right)}$. The polytope $P_{\left( -1/q, -1/q \right)}$ 
has no integer translations of $(0, \frac{(q-1)/2}{q})$ nor, for that matter, of  $(-\frac{1}{q}, \frac{(q-1)/2}{q})$. The polytope $P_{\left( -2/q, -1/q \right)}$ has
no integer translations of $(-\frac{1}{q}, \frac{(q-1)/2}{q})$.

We can use this calculation to compute the $F$-signature of $\ConR$ in the sense of  \cite{BST_Fsig_pairs}. 
%
%
%By \cite{VKToric}, the generators of the maximal free summand of $R^{1/q}$ correspond to lattice points 
%  \[
%    \left\{x \in \frac{1}{p^e} \bZ^2  \suchthat 0 \leq \braket{x, v_i} < 1 \right\}.
%  \]
%The splittings of these generators then correspond to lattice points 
%  \[
%    \left\{ x \in \frac{1}{p^e} \bZ^2 \suchthat -1 < \braket{x, v_i} \leq 0 \right\}.
%  \]
%This is the same as the lattice points in the red shape in \autoref{fig:quadric_cone_lattice}. 
Indeed, we see that the only splitting of $R \to R^{1/q}$ contained in $\ConR$ is $\pi_{(0,0)}$. Thus $s(\ConR) = 0$.
\end{example}

\begin{example}
  Let $R = k[x,y,z,xyz\invrs] \cong k[s,t,u,v]/(st-uv)  $. The cone $\sigma$ of $R$ is given by the extremal rays: 
  \[
    \left( 1, 0, 0 \right), \quad \left( 0, 1, 0 \right),\quad \left( 1, 0, 1 \right),\quad \left( 0, 1, 1 \right)
  \]
  We will show the following:
  \begin{claim}
    Let $e > 0$. If $(a,b,c) \in P_{-K} \cap \frac{1}{q}\bZ^3$ and $a + b + c > -1$, then $\pi_{(a,b,c)}$ is in $\plainConR_e$. 
  \end{claim}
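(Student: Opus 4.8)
The plan is to verify the criterion of \autoref{prop:resCriterion}. Write $v=(a,b,c)$ and $q=p^e$; we may assume $v\in\operatorname{int}(\acptope)$, since otherwise $\pi_v$ is not even a map $R^{1/q}\to R$ (\autoref{prop:PayneExtensionCriterion}). It then suffices to show that $\operatorname{int}\big(\acptope\cap(v-\acptope)\big)$ contains a representative of every class in $\frac1q\bZ^3/\bZ^3$, and for this it is enough, given a class with representative $r=(r_1,r_2,r_3)\in[0,1)^3\cap\frac1q\bZ^3$, to produce $w\equiv r\pmod{\bZ^3}$ with $w\in\operatorname{int}(\acptope)$ and $v-w\in\operatorname{int}(\acptope)$: any such $w$ lies in $\operatorname{int}(\acptope)\cap\big(v-\operatorname{int}(\acptope)\big)$, which is an open subset of $\acptope\cap(v-\acptope)$, hence lies in $\operatorname{int}\big(\acptope\cap(v-\acptope)\big)$, and $w\in\frac1q\bZ^3$ automatically.

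The next step is to rewrite these eight strict inequalities in good coordinates. Using the rays $v_{\rho_1}=(1,0,0)$, $v_{\rho_2}=(0,1,0)$, $v_{\rho_3}=(1,0,1)$, $v_{\rho_4}=(0,1,1)$ and setting $s_i:=\langle w,v_{\rho_i}\rangle$, so $s_1=w_1$, $s_2=w_2$, $s_3=w_1+w_3$, $s_4=w_2+w_3$, the conditions $w,v-w\in\operatorname{int}(\acptope)$ become exactly $s_i\in(-1,\,m_i+1)$ for $i=1,\dots,4$, where $m_i=\langle v,v_{\rho_i}\rangle$, i.e.\ $m_1=a$, $m_2=b$, $m_3=a+c$, $m_4=b+c$. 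Two features are decisive: the $s_i$ are subject to the single linear relation $s_1+s_4=s_2+s_3$ (and conversely every such tuple comes from a unique $w$ via $w_1=s_1$, $w_2=s_2$, $w_3=s_3-s_1$), and $m_1+m_4=m_2+m_3=a+b+c$, so the hypothesis reads $m_1+m_4=m_2+m_3>-1$. Tracking residues, $w$ lies in the class of $r$ iff $s_i\equiv\rho_i\pmod1$ for the residues $\rho_1=r_1$, $\rho_2=r_2$, $\rho_3\equiv r_1+r_3$, $\rho_4\equiv r_2+r_3$ (all taken in $[0,1)$). So, writing $s_i=\rho_i+n_i$ with $n_i\in\bZ$, the problem becomes: find integers $n_1,\dots,n_4$ with $\rho_i+n_i\in(-1,m_i+1)$ for each $i$ and $n_1+n_4-n_2-n_3=K$, where $K:=(\rho_2+\rho_3)-(\rho_1+\rho_4)\in\bZ$.

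For each $i$ the admissible $n_i$ form an integer interval $[n_i^{\min},n_i^{\max}]$, and $(n_i)\mapsto n_1+n_4-n_2-n_3$ maps $\prod_i[n_i^{\min},n_i^{\max}]$ onto the integer interval from $n_1^{\min}+n_4^{\min}-n_2^{\max}-n_3^{\max}$ to $n_1^{\max}+n_4^{\max}-n_2^{\min}-n_3^{\min}$, so it is enough to check $K$ lies between these. From the definitions $n_i^{\max}\ge m_i-\rho_i$ and $-n_i^{\min}\ge\rho_i$ (both equalities when $\rho_i=0$, while $-n_i^{\min}=1>\rho_i$ when $\rho_i\in(0,1)$); hence $n_1^{\max}+n_4^{\max}-n_2^{\min}-n_3^{\min}\ge(m_1+m_4)+K>K-1$, forcing $\ge K$ since both sides are integers, and symmetrically $n_1^{\min}+n_4^{\min}-n_2^{\max}-n_3^{\max}\le K-(m_2+m_3)<K+1$, so $\le K$. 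Thus a valid $(n_i)$ exists; unwinding the substitution gives the required $w$, and \autoref{prop:resCriterion} then puts $\pi_v$ in $\plainConR_e(R)$.

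The only real work is the bookkeeping in the two middle steps. The key insight — which makes the argument nearly automatic — is the change of variables $s_i=\langle w,v_{\rho_i}\rangle$: it decouples the four inequalities into the two pairs $\{\rho_1,\rho_3\}$, $\{\rho_2,\rho_4\}$, which interact only through $w_3$, turning the question into a one-dimensional transportation problem whose feasibility is controlled by the single quantity $m_1+m_4=m_2+m_3=a+b+c$. Recognizing that this is precisely the hypothesis $a+b+c>-1$ is the crux; the residue-at-$0$ case distinctions are the only mildly delicate point, and they cost nothing.
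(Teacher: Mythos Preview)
Your proof is correct and takes a genuinely different route from the paper's. Both arguments verify the criterion of \autoref{prop:resCriterion}, but the execution diverges.

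The paper first reduces to the range $-1<a,b\le 0$ by premultiplying $\pi_{(a,b,c)}$ by monomials in $R$, translates $\acptope\cap(v-\acptope)$ by $(1,1,0)$, normalizes the test point to $0<\alpha,\beta\le 1$, $0\le\gamma<1$, and then finds an explicit integer translate in each of several cases (distinguished by which of the inequalities $\alpha+\gamma<2+a+c$, $\beta+\gamma<2+b+c$, $\beta+\gamma>1$ hold). You instead pass to the coordinates $s_i=\langle w,v_{\rho_i}\rangle$, observe that the four strict-interval conditions are coupled only by the single relation $s_1+s_4=s_2+s_3$, and that the target bounds satisfy the matching relation $m_1+m_4=m_2+m_3=a+b+c$. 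This converts the problem into the feasibility of a one-parameter integer program, which you settle by bounding the range of $n_1+n_4-n_2-n_3$ and noting that $a+b+c>-1$ forces the required value $K$ to lie in that range.

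Your argument is cleaner and case-free, and it makes transparent \emph{why} the hypothesis $a+b+c>-1$ is exactly what is needed: it is the common value $m_1+m_4=m_2+m_3$ whose positivity (after shifting by $1$) guarantees the interval of attainable sums is wide enough. The paper's approach has the compensating virtue of producing, in each case, an explicit lattice point in the polytope, which is sometimes useful for computing examples by hand. Your framework would also adapt more readily to other toric varieties whose ray generators satisfy a single linear relation.
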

   To see this claim, it's enough to consider the case $-1 < a,b \leq 0$, since $\vp(F^e_* x \cdot -)\in \plainConR$ whenever $\vp \in \plainConR$ and $x\in R$. The key point is that then 
  \begin{align}
    2 + a + c &> 1-b \geq  1, \textrm{ and }  \label{eq:first} \\
    2 + b + c &> 1-a \geq  1. %\label{eq:second}
  \end{align}
  Set $\vec d = (a,b,c)$. Let $(\alpha, \beta, \gamma) \in \bR^3$. We wish to find an integer translation of $(\alpha, \beta, \gamma)$ is in $P \cap (\vec d - P)$. We start
  by translating the polytope $P \cap (\vec d - P)$ by $(1,1,0)$: the resulting polytope is described as the set of $(x,y,z)$ satisfying the inequalities
  \begin{align*}
    0 & < x < 2+a\\
    0 & < y < 2+b\\
    0 & < x+z < 2 + a+c\\
    0 & < y+z < 2 + b+c
  \end{align*}
  We may assume, without loss of generality, that $0 < \alpha, \beta \leq 1$ and $0 \leq \gamma  < 1$. Note that we automatically have
  \[
    0 < \alpha< 2+a, \quad 0 < \beta < 2+b, \quad 0 < \alpha+\gamma, \quad 0 < \beta + \gamma
  \]
  If we happen to have $\alpha + \gamma < 2 + a + c$ and $\beta + \gamma < 2 + b + c$, then we're done. So suppose otherwise. 
  Without loss of generality, we may assume that $\alpha + \gamma \geq 2 + a + c$. If we also have $\beta + \gamma \geq 2 + b + c$, then the point $(\alpha, \beta, \gamma - 1)$ is in 
  $P \cap (\vec d - P)$: since $\alpha + \gamma < 2$, we have $ \alpha + \gamma - 1 < 1 < 2+a+c$ by equation  \autoref{eq:first}. On the other hand, $\alpha + \gamma \geq 2+b+c > 1$, so $\alpha + \gamma - 1 > 0$. 
  Similarly, we have $0 < \beta + \gamma - 1 < 2 + b + c$. 

  Now suppose that $\alpha + \gamma \geq 2 + a +c$ but $\beta + \gamma < 2 + b+c$.  If $\beta + \gamma > 1$, then the point $(\alpha, \beta, \gamma-1)$ is again in $P \cap (\vec d - P)$, as clearly we have $0 < \beta + \gamma -1  < 2 + b + c$. On the other hand, if $\beta + \gamma \leq 1$, then $(\alpha, \beta + 1, \gamma - 1)$ is in $P \cap (\vec d - P)$. Indeed, we just have to check that $\beta + 1 < 2 + b$, or in other words, that $b > \beta - 1$. We know, by assumption, that $b  > -1  - a -c $, so it suffices to check that $-a -c \geq \beta$. As $\gamma \geq 2 + a + c - \alpha$, we have

  \[
    \beta \leq 1 - \gamma \leq 1 - (2 + a + c - \alpha)  = -a - c + \alpha - 1 < -a - c
  \]
  The last inequality comes from the assumption that $0 \leq \alpha < 1$. This proves the claim. 

  By \cite{VKToric}, the splittings of $R^{1/q}$ correspond to the points in $\frac{1}{q}\bZ^3 \cap P_{\textrm{sig}}$, where $P_{\textrm{sig}}$ is the polytope given by 
  \[
    P_{\textrm{sig}} \coloneqq \left\{ x \in \bR^3 \suchthat \begin{array}[]{l}
      -1 < \braket{1, 0, 0} \cdot x \leq 0\\
      -1 < \braket{0, 1, 0} \cdot x \leq 0\\
      -1 < \braket{1, 0, 1} \cdot x \leq 0\\
      -1 < \braket{0, 1, 1} \cdot x \leq 0
  \end{array}\right\}.
  \]
  This polytope is depicted in \autoref{fig:FSigOfThreeFold}. As seen in the figure, the plane $x +y + z = -1$ cuts this polytope in half.  This shows that $s(\ConR) \geq s(R)/2 = 1/3$. Calculations in Macaulay2 \cite{M2} suggest that there are no further maps in $\ConR$ and that $s(\ConR) = 1/3$.  % graphic of plane x+y+z = -1 intersecting the von Korff polytope

  \begin{figure}[h]
    \centering
    \includegraphics[width=0.3\textwidth]{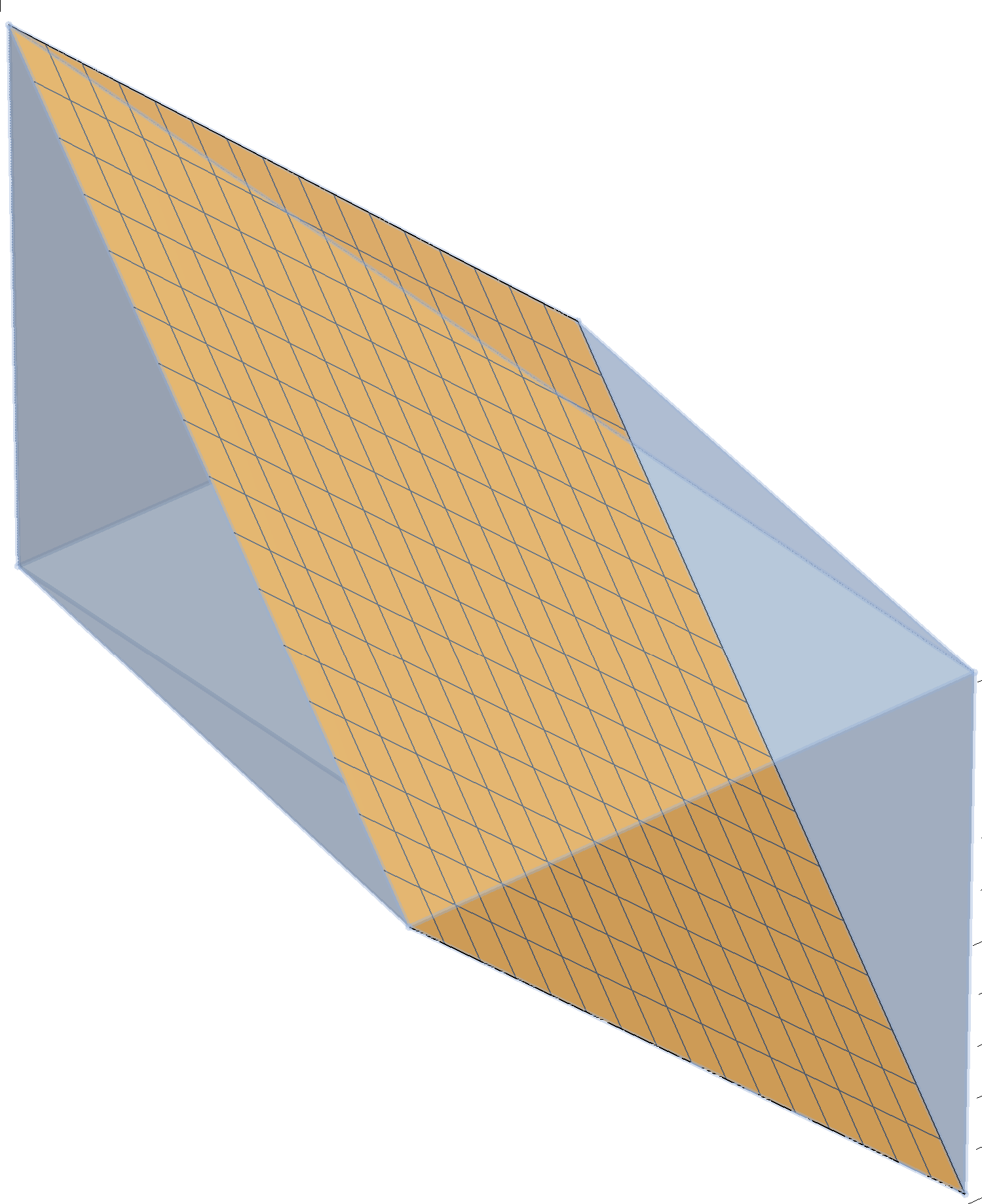}
    \caption{Comparison of maps in $\ConR$ and a polytope whose volume is $s(R)$, according to \cite{VKToric}. The plane is given by $x + y + z = -1$. All fractional lattice points
  lying above the plane correspond to maps in $\ConR$.}
    \label{fig:FSigOfThreeFold}
  \end{figure}
\end{example}

\section{Classifying singularities in terms of \texorpdfstring{$\plainConR$}{the diagonal cartier algebra}}
\autoref{thm:jacThm} suggests that rings with milder singularities have larger Cartier algebras $\plainConR$. We wonder whether the singularities of a ring can be well understood just by considering, in some sense, the size of $\plainConR$. The following conjecture would be a natural place to start in order to develop such a theory:

\begin{conjecture}
  Let $R$ be a finitely-generated algebra over a perfect field of positive characteristic. Then $\ConR = \fullCA R$ if and only if $R$ is regular. \label{conj:fullCA}
\end{conjecture}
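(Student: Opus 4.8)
## Proof proposal for Conjecture \ref{conj:fullCA}

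\textbf{Setup and the easy direction.} The plan is to prove the two implications separately. If $R$ is regular, then $k$ being perfect forces $R$ to be smooth over $k$, hence $R \otimes_k R$ is regular; thus $F^e_*(R \otimes_k R)$ is a projective (locally free) $R \otimes_k R$-module by \autoref{thm:kunz}. Given any $\vp \colon F^e_* R \to R$, I would produce a lift to $R \otimes_k R$ by the same argument already used at the end of the proof of \autoref{thm:MySubadd}: the map $1 \otimes \vp \colon R \otimes_k F^e_* R \to R \otimes_k R$ together with projectivity of $F^e_*(R\otimes_k R)$ over $R\otimes_k R$ (which splits the surjection $F^e_*(R\otimes_k R) \twoheadrightarrow R \otimes_k F^e_* R$ coming from $F^e_* \mu$ in the second variable — here one uses that $F^e_*(R\otimes_k R) = F^e_* R \otimes_k F^e_* R$ since $k$ is perfect) yields $\hat\vp$ making the defining square commute. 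Hence $\vp \in \plainConR_e(R)$, so $\plainConR = \fullCA R$. Alternatively, and more cheaply, this follows from \autoref{thm:jacThm} since $\jac(R) = R$ when $R$ is regular.

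\textbf{The hard direction: $\plainConR = \fullCA R \implies R$ regular.} Here I would argue by contrapositive: assume $R$ is not regular and exhibit a map $\vp \in \homgp_R(F^e_* R, R)$ that does not lift to $R \otimes_k R$. The natural strategy is to pass to the test ideal and use the subadditivity formula \autoref{thm:MySubadd} with $\mathfrak a = \mathfrak b = R$: if $\plainConR = \fullCA R$ then $\tau(R, \plainConR) = \tau(R)$, and subadditivity would give $\tau(R) = \tau(R, \plainConR) \subseteq \tau(R)\tau(R) = \tau(R)^2$, hence $\tau(R) = \tau(R)^2$. This alone does not yet contradict non-regularity. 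A more promising route is to localize and complete at a non-regular prime and analyze the module $F^e_* R$ directly: $R$ is regular iff $F^e_* R$ is free (locally), so non-regularity means $F^e_* R$ has a nonfree direct summand or nontrivial syzygies, and the point would be that lifting \emph{every} $\vp$ to $R\otimes_k R$ is a strong flatness-type constraint. Concretely, I would try to show that $\bigoplus_e \plainConR_e(R)$ being the full Cartier algebra implies $\mu \colon R \otimes_k R \to R$ is, in a suitable sense, "split enough" — e.g. that the diagonal $I_\Delta$ is generated up to tight-closure-like operations by a regular sequence — which by a result in the style of the Hara–Yoshida restriction theorem quoted in the introduction would force $R$ to have finite projective dimension over itself in the relevant sense, i.e. to be regular.

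\textbf{Main obstacle and a fallback.} The main difficulty is that $\plainConR = \fullCA R$ is genuinely weaker than "$R\otimes_k R$ regular along the diagonal"; a map $\vp$ can admit a lift without $F^e_*(R\otimes_k R)$ being projective, so one cannot simply invoke Kunz's theorem in reverse. I expect one needs a quantitative input: either (a) a characterization of which $\vp$ lift, analogous to \autoref{prop:resCriterion} in the toric case, combined with a dimension/$F$-signature count showing that in the non-regular case \emph{some} generator of $\homgp_R(F^e_* R, R)$ over $F^e_* R$ necessarily fails to lift; or (b) an argument via the $F$-signature $s(\plainConR)$ of \cite{BST_Fsig_pairs}: one would show $s(\plainConR) = s(R)$ forces $\plainConR = \fullCA R$-type equality only when $s(R) = 1$, i.e. $R$ regular. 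As a concrete partial result, the toric computations in Section 6 already verify the conjecture in that setting (e.g. the quadric cone has $s(\plainConR) = 0 < 1$), which suggests the general proof should run through an $F$-signature or local-cohomology dimension count rather than a direct lifting construction. I would flag the general case as open if the dimension count cannot be made to work, and record the toric case as evidence.
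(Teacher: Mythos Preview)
This statement is a \emph{conjecture}, and the paper does not prove it in full either; it proves the easy direction and two partial converses. Your treatment of the easy direction (regular $\Rightarrow$ $\plainConR = \fullCA R$) matches the paper's: regularity of $R\otimes_k R$ plus Kunz gives projectivity of $F^e_*(R\otimes_k R)$, and then every $\vp$ lifts by the universal property of projectives applied to the surjection $R\otimes_k R \to R$. Your alternative via \autoref{thm:jacThm} is also fine.

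For the hard direction, your $\tau(R) \subseteq \tau(R)^2$ observation is exactly the lever the paper pulls, but you stop one step short. Since $\tau(R)$ contains a regular element, Nakayama's lemma applied to $\tau(R) = \tau(R)^2$ forces $\tau(R) = R$, i.e.\ $R$ is strongly $F$-regular. The paper records this as a proposition; it does not contradict non-regularity, as you note, but it is a clean partial result that your write-up should state rather than abandon.

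Your fallback plan is well aligned with the paper's second partial result: the paper proves the conjecture for $\bQ$-Gorenstein toric $R$ via essentially the $F$-signature argument you sketch in (b). Concretely, in that setting $\acptope$ is a translate $\sigma^\vee + v$ of the dual cone, and $\acptope \cap (v - \acptope)$ has volume $s(R)$; if $\plainConR = \fullCA R$, then for lattice points $d$ near $v$ the polytope $\acptope \cap (d-\acptope)$ must contain a full set of $\frac{1}{q}\bZ^n/\bZ^n$ representatives for all $q$, forcing its volume to be at least $1$, whence $s(R) = 1$ and $R$ is regular. Your approach (a), producing an explicit non-lifting $\vp$ in the general non-regular case, is not carried out in the paper and remains the open content of the conjecture. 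So your proposal is an accurate assessment of the state of affairs, modulo the missed Nakayama step.
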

One direction is clear: if $R$ is regular then so is $R\otimes_k R$, meaning that $F^e_*(R\otimes_k R)$ is a projective $R\otimes_k R$-module for all $e > 0$. As $R\otimes_k R \to R$ is a surjective map of $R\otimes_k R$ modules, the universal property of projective modules tells us that any map $F_*^e R \to R$ will lift: 
\[
    \xymatrix@C=5em{
      F^e_*(R \otimes_k R)  \ar@{-->}[r]^{\widehat \vp} \ar[d]_{F^e_* \mu} &  R\otimes_k R \ar[d]^{\mu} \\
      F^e_* R \ar[r]^{\vp} & R
    }
  \]

  Here, we're thinking of $R$ and $F^e_* R$ as  $R \otimes_k R$-modules via $\mu$. We have the following partial converses. 
  \begin{proposition}
   Suppose $k$ is a perfect field of positive characteristic and $R$ is a reduced $k$-algebra essentially of finite type. If $\ConR = \fullCA R$, then $R$ is strongly $F$-regular. 
  \end{proposition}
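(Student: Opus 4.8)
The plan is to deduce strong $F$-regularity from the hypothesis $\ConR = \fullCA R$ by testing against a single well-chosen map. Recall that $R$ is strongly $F$-regular if and only if for every $c \in R$ which is a nonzerodivisor (equivalently, for one such $c$ in a suitable position), there exist $e > 0$ and $\vp \in \homgp_R(F^e_* R, R)$ with $\vp(F^e_* c) = 1$. Since $R$ is reduced and essentially of finite type over a perfect field, we may pick $c$ a nonzerodivisor with $R_c$ regular. The idea is: because $R_c \otimes_k R_c$ is smooth (here we use that $k$ is perfect, exactly as in the proof of \autoref{thm:MySubadd}), the localized map $R_c \to F^e_* R_c$ splits for all $e > 0$; hence after clearing denominators we get a map $F^e_* R \to R$ sending some power $c^N$ into a large ideal, and in particular we can arrange $\vp'(F^e_* c^N) \ni$ a unit of $R_c$. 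First I would make this precise by producing, for a suitable $e$, an element $\vp_0 \in \homgp_R(F^e_* R, R)$ with $\vp_0(F^e_* c^{M}) \not\subseteq \mf m$ for every maximal ideal $\mf m$ — in other words, $\vp_0(F^e_* c^M) = R$; this is just the statement that $c$ is a test element, which holds in our setting.

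Next I would use the hypothesis $\ConR = \fullCA R$ to improve this. The point of $\ConR$ is that every map $\vp \colon F^e_* R \to R$ lifts along $F^e_* \mu \colon F^e_*(R \otimes_k R) \to F^e_* R$ to a map $\hat\vp \colon F^e_*(R\otimes_k R) \to R \otimes_k R$ compatible with $I_\Delta$. The plan is to observe that strong $F$-regularity of $R$ is equivalent to strong $F$-regularity being inherited through the restriction $(R\otimes_k R)/I_\Delta \cong R$, combined with the fact that $R\otimes_k R$ is ``as good as $R$ times a regular ring.'' Concretely, I would argue: since $R_c$ is regular (hence smooth over the perfect field $k$), $R_c \otimes_k R_c$ is regular, so it is strongly $F$-regular, so $\tau(R_c \otimes_k R_c) = R_c \otimes_k R_c$. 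Applying \autoref{cor:homAndTensor} and the compatibility of $\ConR = \fullCA R$ with $I_\Delta$, I would show that any Cartier-algebra test element for $\ConR$ along $I_\Delta$ descends to a test element for $R$; then by \autoref{prop:restriction} (the restriction theorem for $\tau_I$) and \autoref{thm:jacThm} (which guarantees $\ConR$ is nonzero, indeed large), we obtain $\tau(R, \ConR) = \tau(R, \fullCA R) = \tau(R)$, and the real content is to see this ideal must be all of $R$.

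Here is the sharper version of the argument I would actually write. Fix $c \in R$ a nonzerodivisor with $R_c$ regular; it suffices to find $e$ and $\psi \in \homgp_R(F^e_* R, R)$ with $\psi(F^e_* c) = 1$. Since $R_c$ is smooth over $k$, the ring $R_c \otimes_k R_c$ is regular, hence $F^e_*(R_c \otimes_k R_c)$ is a projective $(R_c \otimes_k R_c)$-module for all $e$, and in particular the surjection $\mu_c \colon R_c \otimes_k R_c \to R_c$ admits a section of $(R_c \otimes_k R_c)$-modules after applying $F^e_*$; composing with the splitting $R_c \to F^e_* R_c$ coming from regularity of $R_c$, we get a diagram as in the proof of \autoref{thm:MySubadd} exhibiting $\vp_c \colon F^e_* R_c \to R_c$ with $\vp_c(F^e_* 1) = 1$ that lifts to $R_c \otimes_k R_c$. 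Now I claim that, because $\ConR = \fullCA R$, the \emph{un}localized splitting already lifts: any $\vp \in \homgp_R(F^e_* R, R)$ is in $\plainConR_e(R)$, so it lifts to $\hat\vp \in \fullCA{R\otimes_k R, I_\Delta \compSubAlg}_e$. The obstruction to running this at $c$ is then: does the localized splitting $\vp_c$ come from an honest $\vp \in \homgp_R(F^e_* R, R)$ (possibly after multiplying by a power of $c$, which only rescales the source)? Yes — $\homgp_R(F^e_* R, R)$ localizes to $\homgp_{R_c}(F^e_* R_c, R_c)$ since $F^e_* R$ is finitely presented, so some $c^m \vp_c$ extends to $\vp \in \homgp_R(F^e_* R, R)$ with $\vp(F^e_* c^m) = c^{\text{(something)}}$ a unit in $R_c$, i.e. $\vp(F^e_* c^m) \notin \mf p$ for every prime $\mf p$ not containing $c$; combined with a test element argument handling primes containing $c$, we conclude $\sum_{e, \vp} \vp(F^e_* c^m) = R$, which is precisely strong $F$-regularity.

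The main obstacle, and the step I expect to require the most care, is the last one: moving from ``$F^e_* R_c \to R_c$ splits and this map lifts to the tensor product'' to ``$F^e_* R \to R$ splits globally.'' Lifting along $\mu$ a priori only controls behavior relative to $I_\Delta$, and one must make sure that the hypothesis $\ConR = \fullCA R$ — which says lifts exist for \emph{every} map $F^e_* R \to R$ — is genuinely being used to trade projectivity of $F^e_*(R_c \otimes_k R_c)$ (a local/localized statement) for a global splitting. I would handle this by the test-element bookkeeping sketched above: reduce to showing $1 \in \tau(R)$, use that $c$ is a test element so $\tau(R) = \sum_{e,\vp} \vp(F^e_* c)$ over $\vp \in \fullCA R$, and then use the lift to $R\otimes_k R$ together with regularity of $R_c \otimes_k R_c$ and \autoref{prop:restriction} to show this sum is not contained in any maximal ideal of $R$. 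The perfectness of $k$ is essential twice — for $R_c \otimes_k R_c$ to be regular and for \autoref{cor:homAndTensor} — and I would flag both uses explicitly.
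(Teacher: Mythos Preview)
Your proposal has a genuine gap. You correctly set up the localized picture --- $R_c$ regular, $R_c \otimes_k R_c$ regular, splittings exist there --- and you observe that under the hypothesis $\ConR = \fullCA R$ every map $\vp \in \homgp_R(F^e_* R, R)$ lifts to $R \otimes_k R$. But that hypothesis produces no \emph{new} maps on $R$: it says maps that already exist on $R$ lift upstairs, not that maps existing on $R\otimes_k R$ or on a localization descend to $R$. Your final step, ``combined with a test element argument handling primes containing $c$, we conclude $\sum_{e,\vp} \vp(F^e_* c^m) = R$,'' is exactly the assertion $\tau(R) = R$ you are trying to prove, and nothing in your argument addresses maximal ideals $\mf m \ni c$. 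Invoking \autoref{prop:restriction} does not help either: applied to $I_\Delta \subseteq R\otimes_k R$ and $\ConProd$, it gives
\[
  \tau_{I_\Delta}\bigl(R\otimes_k R,\ \ConProd\bigr)\cdot R \;=\; \tau\bigl(R,\ \ConR\bigr) \;=\; \tau(R),
\]
the last equality by hypothesis, so you are back where you started. In short, every route you sketch either ignores the hypothesis or uses it only to rewrite $\tau(R,\ConR)$ as $\tau(R)$, which is not progress.

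The paper's proof is a one-liner using the subadditivity formula (\autoref{thm:MySubadd}). Taking $\mathfrak a = \mathfrak b = R$ there yields $\tau(R, \ConR) \subseteq \tau(R)\,\tau(R)$. Since $\ConR = \fullCA R$, the left side is $\tau(R)$, so $\tau(R) \subseteq \tau(R)^2$; the reverse containment is automatic, whence $\tau(R) = \tau(R)^2$. As $\tau(R)$ is finitely generated and contains a nonzerodivisor, Nakayama (equivalently: an idempotent ideal containing a regular element is the unit ideal) gives $\tau(R) = R$. The point you were missing is that the force of $\ConR = \fullCA R$ is not in manufacturing splittings but in erasing the correction term in subadditivity, which forces $\tau(R)$ to be an idempotent ideal.
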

  \begin{proof}
    Apply  \autoref{cor:mainresult} to the case where $\mathfrak a = \mathfrak b = R$. We see that  $0\neq \tau(R) \subseteq \tau(R)^2$, so $\tau(R) = \tau(R)^2$. As $\tau(R)$ contains a regular element of $R$, it follows from Nakayama's lemma that $\tau(R) = R$. 
  \end{proof}

  We also know that the converse of \autoref{conj:fullCA} holds in the $\bQ$-Gorenstein toric case, using Watanabe and Yoshida's characterisation of the $F$-signature in that setting. The point of the $\bQ$-Gorenstein
  condition is just so that we know $P_{-K_X}$ is a translation of the dual cone $\sigma^\vee$ of $R$. Note that, by
  \cite[Proposition 4.2.7]{CLSToric}, this includes the case when the cone $\sigma$ of $R$ is simplicial, and 
  in particular all toric surfaces.  

  \begin{proposition}
    Work in \autoref{setting:toric}. Suppose also that $R$ is $\bQ$-Gorenstein and $\ConR = \fullCA R$. Then $R$ is regular. 
  \end{proposition}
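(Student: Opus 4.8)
The plan is to combine the combinatorial description of $\plainConR_e(R)$ in \autoref{prop:resCriterion} with the fact that, for a toric ring, the $F$-signature is the volume of a certain polytope \cite{VKToric}. First I would translate the hypothesis. By \autoref{prop:PayneExtensionCriterion} the maps $\pi_a$ with $a\in\operatorname{int}(\acptope)\cap\frac{1}{p^e}\bZ^n$ form a $k$-basis of $\homgp_R(F^e_*R,R)=\fullCA R_e$, and by \autoref{prop:resCriterion} the subspace $\plainConR_e(R)$ is spanned by exactly those $\pi_a$ in this basis for which $\operatorname{int}(\acptope\cap(a-\acptope))$ contains a representative of every class of $\frac{1}{p^e}\bZ^n/\bZ^n$. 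Since a proper subfamily of a basis never spans the whole space, the equality $\ConR=\fullCA R$ is equivalent to the assertion that for every $e\geq 0$ and every $a\in\operatorname{int}(\acptope)\cap\frac{1}{p^e}\bZ^n$ the set $\operatorname{int}(\acptope\cap(a-\acptope))$ meets every class of $\frac{1}{p^e}\bZ^n/\bZ^n$. Now I would invoke the $\bQ$-Gorenstein hypothesis: it provides a unique $m\in\bQ^n$ with $\braket{m,v_\rho}=-1$ for all $\rho\in\Sigma(1)$, so that $\acptope=m+\sigma^\vee$ and hence $-\sigma^\vee=m-\acptope$. Set $Q\coloneqq\acptope\cap(-\sigma^\vee)=\acptope\cap(m-\acptope)$; because $\sigma$ is full-dimensional, $\sigma^\vee$ is strongly convex, so $Q$ has trivial recession cone and is a compact (full-dimensional) polytope, and by \cite{VKToric} one has $\operatorname{vol}(Q)=s(R)\leq 1$, where $s(R)$ is the $F$-signature of $R$.

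The main point is to test this criterion on points $a$ converging to $m$. Since $m$ lies in the closure of $\operatorname{int}(\acptope)=m+\operatorname{int}(\sigma^\vee)$, the cone $\operatorname{int}(\sigma^\vee)$ is open and full-dimensional, and $\bigcup_e\frac{1}{p^e}\bZ^n=\bZ[1/p]^n$ is dense in $\bR^n$, so I can choose for each $e$ a point $a^{(e)}\in\operatorname{int}(\acptope)\cap\frac{1}{p^e}\bZ^n$ with $a^{(e)}\to m$. Writing $a^{(e)}=m+b^{(e)}$ with $b^{(e)}\in\operatorname{int}(\sigma^\vee)$, the polytope occurring in the criterion is $\acptope\cap(a^{(e)}-\acptope)=\{x\in\bR^n:-1\leq\braket{x,v_\rho}\leq\braket{b^{(e)},v_\rho}\ \text{for all }\rho\in\Sigma(1)\}$, which contains $Q$ and, since $b^{(e)}\to 0$, is contained in $Q+B_{\varepsilon_e}$ for a sequence $\varepsilon_e\to 0$ (here $B_\varepsilon$ is the open $\varepsilon$-ball about the origin). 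Applying the reformulated hypothesis to $a^{(e)}$ says that every class of $\frac{1}{p^e}\bZ^n/\bZ^n$ has a representative in this polytope, i.e. $\frac{1}{p^e}\bZ^n\subseteq\bigl(\acptope\cap(a^{(e)}-\acptope)\bigr)+\bZ^n\subseteq Q+B_{\varepsilon_e}+\bZ^n$.

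To finish I would let $e\to\infty$. Given any $x\in\bR^n$, taking the nearest point of $\frac{1}{p^e}\bZ^n$ (at distance at most $\sqrt n/p^e$) gives $\operatorname{dist}(x,Q+\bZ^n)\leq\sqrt n/p^e+\varepsilon_e\to 0$, so $Q+\bZ^n$ is dense in $\bR^n$; as $Q$ is compact, $Q+\bZ^n$ is also closed, hence equals $\bR^n$. But a set of Lebesgue measure strictly less than $1$ cannot cover $\bR^n$ by its $\bZ^n$-translates, so $\operatorname{vol}(Q)\geq 1$; together with $\operatorname{vol}(Q)=s(R)\leq 1$ this forces $s(R)=1$, and therefore $R$ is regular (a ring with $F$-signature $1$ is regular; in the toric setting $\operatorname{vol}(Q)=1$ is already equivalent to $Q$ tiling $\bR^n$ under $\bZ^n$, hence to $\sigma$ being unimodular, see \cite{VKToric}). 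The step I expect to need the most care is this limiting argument — identifying the polytopes $\acptope\cap(a^{(e)}-\acptope)$ as shrinking onto the $F$-signature polytope $Q$, and extracting from the criterion the covering statement $Q+\bZ^n=\bR^n$ — while the translation of the hypothesis through \autoref{prop:resCriterion} and the density of $\bZ[1/p]^n$ are routine.
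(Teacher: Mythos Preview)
Your proposal is correct and follows essentially the same strategy as the paper's proof: use the $\bQ$-Gorenstein hypothesis to write $\acptope=m+\sigma^\vee$, identify $s(R)$ with the volume of the polytope $\acptope\cap(m-\acptope)$ via \cite{VKToric}, approximate $m$ by lattice points in $\operatorname{int}(\acptope)$, and use \autoref{prop:resCriterion} together with a limiting argument to force this volume to be at least $1$, hence $s(R)=1$.

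The only organizational difference is in how the limit is taken. The paper fixes a single $d\in\frac{1}{p^e}\bZ^n$ close to $m$, appeals to the diagonal $F$-split property (\autoref{lemma:diagFSplit}) to iterate $\pi_d$ to all higher levels $e'>e$, and then counts lattice points at scale $\frac{1}{p^{e'}}$ to obtain $\operatorname{vol}(\acptope\cap(d-\acptope))\geq 1$ (\autoref{lemma:volumeLemma}); continuity of $d\mapsto\operatorname{vol}(\acptope\cap(d-\acptope))$ then transfers this to $d=m$. You instead let both $e$ and the approximating point $a^{(e)}$ vary simultaneously, use Hausdorff convergence of the polytopes $\acptope\cap(a^{(e)}-\acptope)$ to $Q$, and extract the covering statement $Q+\bZ^n=\bR^n$ directly. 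Your route is slightly more self-contained in that it does not need to isolate the diagonal $F$-split step; the paper's route has the advantage of yielding the reusable \autoref{lemma:volumeLemma}. Either way the content is the same.
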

  \begin{proof}
    %We can assume that $R$ has no torus factors, so that the dual cone $\sigma^\vee$ of $R$ is a strongly
    %convex
    %rational cone $C$. 
    It follows from \autoref{lemma:PisShift} that $\acptope = \sigma^\vee + v$ for some $v\in \bR^n$ such that
    $\braket{v, v_\rho} = -1$ for all $\rho \in \Sigma(1)$. Let $Q = \left\{ x \suchthat \forall \rho \in \Sigma(1): 0 <  \braket{x, v_\rho}<   1  \right\}$. By \cite{VKToric}, we know that $s(R) = \volume(Q)$. The key point is to notice that 
    \[
      \acptope \cap (v - \acptope) = (\sigma^\vee+ v) \cap (-\sigma^\vee) = \left\{ x \suchthat \forall \rho \in \Sigma(1): -1 < \braket{x, v_\rho} < 0 \right\} = -Q. 
    \] 
    Thus $s(R) = \volume(\acptope \cap (v - \acptope) )$. 

    Now, the function $\bR^n \to \bR$ given by $d\mapsto \volume(\acptope \cap (d - \acptope))$ is continuous, as this intersection is always compact. By taking $e$ sufficiently large, we can find a lattice point $d \in \frac{1}{p^e} \bZ^n \cap \acptope$ arbitrarily close to $v$. Thus we can make $\volume(\acptope \cap (d - \acptope))$ arbitrarily close to $\volume(\acptope \cap (v - \acptope))$. Since $\ConR = \fullCA R$, we have that $\pi_d \in \ConR$ and also that $R$ is diagonally $F$-split. Then $\volume(\acptope \cap (v - \acptope)) \geq 1$ by \autoref{lemma:volumeLemma}. Thus $s(R) =1$ and $R$ is regular. 
  \end{proof}

  \begin{lemma}
    Work in \autoref{setting:toric} and suppose that $R$ is $\bQ$-Gorenstein. Then $\acptope = \sigma^\vee + v$ where $v$ is a vector satisfying $\braket{v, v_\rho} = -1$ for all $\rho \in \sigma(1)$.
    \label{lemma:PisShift}
  \end{lemma}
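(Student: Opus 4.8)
The plan is to translate the $\bQ$-Gorenstein hypothesis into a linear-algebraic statement about the rays of $\sigma$, and then to verify the claimed equality of polyhedra by a direct computation with the defining inequalities.

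First I would set up the toric dictionary. Since $X = \spec R$ is affine, $\Sigma$ consists of $\sigma$ together with its faces, so $\Sigma(1) = \sigma(1)$; thus the rays appearing in the definition of $\acptope$ are exactly the rays of $\sigma$. Recall that the canonical divisor of $X$ is $K_X = -\sum_{\rho \in \sigma(1)} D_\rho$, so the anticanonical divisor is $-K_X = \sum_\rho D_\rho$. By the toric description of Cartier data \cite[Theorem~4.2.8]{CLSToric} (and its $\bQ$-divisor analogue), a torus-invariant $\bQ$-divisor $\sum_\rho a_\rho D_\rho$ on the affine toric variety $U_\sigma$ is $\bQ$-Cartier if and only if there is a vector $m \in \bR^n$ with $\braket{m, v_\rho} = -a_\rho$ for every $\rho \in \sigma(1)$; this $m$ is unique because $\sigma$ is $n$-dimensional, so the $v_\rho$ span $\bR^n$. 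Applying this to $-K_X$, i.e. to the case $a_\rho = 1$, the hypothesis that $R$ is $\bQ$-Gorenstein produces a (unique) vector $v \in \bR^n$ with $\braket{v, v_\rho} = -1$ for all $\rho \in \sigma(1)$. This is the vector in the statement.

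Next I would prove $\acptope = \sigma^\vee + v$ by double inclusion, using $\sigma^\vee = \{u \in \bR^n : \braket{u, v_\rho} \geq 0 \text{ for all } \rho \in \sigma(1)\}$ (valid since $\sigma = \operatorname{Cone}(v_\rho : \rho \in \sigma(1))$). If $u = w + v$ with $w \in \sigma^\vee$, then for each ray $\braket{u, v_\rho} = \braket{w, v_\rho} + \braket{v, v_\rho} \geq 0 + (-1) = -1$, so $u \in \acptope$; conversely, given $u \in \acptope$, the vector $w \coloneqq u - v$ satisfies $\braket{w, v_\rho} = \braket{u, v_\rho} + 1 \geq 0$ for all $\rho \in \sigma(1)$, hence $w \in \sigma^\vee$ and $u = w + v \in \sigma^\vee + v$. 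This yields the desired equality.

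There is essentially no serious obstacle here; the statement is really just a reformulation of $\bQ$-Gorensteinness. The only things requiring care are bookkeeping with the sign convention in the toric--Cartier correspondence (whether the relevant vector satisfies $\braket{m, v_\rho} = -a_\rho$ or $+a_\rho$) and the reduction $\Sigma(1) = \sigma(1)$, which uses that $R$ is the coordinate ring of the entire (affine) variety $X$.
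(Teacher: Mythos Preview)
Your proof is correct and follows essentially the same approach as the paper. The only cosmetic difference is that the paper clears denominators first---taking the Cartier index $r$, applying \cite[Theorem~4.2.8]{CLSToric} to the integral Cartier divisor $-rK_X$ to obtain $w$ with $\braket{w,v_\rho}=-r$, and then setting $v=\frac{1}{r}w$---whereas you invoke the $\bQ$-Cartier version of the criterion directly; the subsequent double-inclusion argument is identical.
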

  \begin{proof}
    This can be seen in a few different ways, but here's one. Let $r$ be the Cartier index of $R$, so that $rK_{X}$ is Cartier. As $-rK_X = \sum_{\rho \in \sigma(1)} rD_{\rho}$, we have by \cite[Theorem 4.2.8]{CLSToric} that there exists $w$ such that $\braket{w,v_\rho}= -r $ for all $\rho \in \sigma(1)$. Then we certainly
    have $\frac{1}{r}w + \sigma^{\vee} \subseteq P_{-K}$. On the other hand, for any $x \in P_{-K}$ we have $\braket{x - \frac{1}{r}w, v_\rho} > 0$ for all $\rho$, meaning $\frac{1}{r}w + \sigma^{\vee} = P_{-K}$. So we set $v = \frac{1}{r}w$. 
    %\ref[Proposition 4.2.2]{CLSToric} as well as \cite[p. 192]{CLSToric} .
  \end{proof}

  \begin{lemma}
    Let $R$ be a diagonally split $n$-dimensional affine toric variety. For all $e$ and all $d\in \frac{1}{p^e} \bZ^n$, if $\pi_d\colon F^e_* R \to R$ is in $\plainConR_e(R)$ then $\volume(\acptope \cap (d - \acptope)) \geq 1$. 
    \label{lemma:volumeLemma}
  \end{lemma}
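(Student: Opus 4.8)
The plan is to use the diagonal splitting to promote the hypothesis $\pi_d \in \plainConR_e(R)$ to membership $\pi_d \in \plainConR_{ke}(R)$ for \emph{every} $k \geq 1$, and then to reinterpret \autoref{prop:resCriterion} as a covering statement for the polytope $\acptope \cap (d - \acptope)$.

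First I would record the composition rule for the toric Cartier operators: if $\pi_a$ has level $e$ (so it is a map $R^{1/p^e} \to R$) and $\pi_b$ has level $e'$, then the Cartier-algebra product $\pi_a \cdot \pi_b = \pi_a \circ F^e_* \pi_b$ equals $\pi_{a + b/p^e}$, now as a map $R^{1/p^{e+e'}} \to R$. This is a direct monomial computation; the only point requiring care is that the two divisibility conditions produced by the composition collapse to the single condition $u + a + b/p^e \in \bZ^n$, which holds because $p^e a \in \bZ^n$. In particular, composing any operator with $\pi_0$ returns the same operator, raising only its level. Since $R$ is diagonally split, \autoref{lemma:diagFSplit} gives $\pi_0 \in \plainConR_1(R)$; as $\plainConR$ is a Cartier algebra, it is closed under composition, and applying the composition rule with $a = b = 0$ repeatedly yields $\pi_0 \in \plainConR_{e'}(R)$ for all $e' \geq 1$. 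Now fix $k \geq 2$ and form the product $\pi_d \cdot \pi_0$ with $\pi_d \in \plainConR_e(R)$ and $\pi_0 \in \plainConR_{(k-1)e}(R)$: by the composition rule (with $b = 0$) this product is again $\pi_d$, viewed as a map $R^{1/p^{ke}} \to R$, and it lies in $\plainConR_{ke}(R)$. Hence $\pi_d \in \plainConR_{ke}(R)$ for all $k \geq 1$.

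Next I would apply \autoref{prop:resCriterion} at level $ke$ for each $k$. Since the $\pi_a$ with $a \in \operatorname{int}(\acptope)\cap \tfrac{1}{p^{ke}}\bZ^n$ form part of a $k$-basis of $\homgp_R(R^{1/p^{ke}}, R)$ (\autoref{prop:PayneExtensionCriterion}), membership $\pi_d \in \plainConR_{ke}(R)$ forces $d$ itself to satisfy the criterion; that is, $\operatorname{int}\!\left( \acptope \cap (d - \acptope) \right)$ contains a representative of every class in $\tfrac{1}{p^{ke}}\bZ^n / \bZ^n$. Letting $k$ vary and using that $\bigcup_{k \geq 1} \tfrac{1}{p^{ke}}\bZ^n$ is dense in $\bR^n$, the image of $\acptope \cap (d - \acptope)$ under the quotient $\bR^n \to \bR^n/\bZ^n$ contains a dense subset of the torus. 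But $\acptope \cap (d - \acptope)$ is compact, so its image is closed, hence equals all of $\bR^n/\bZ^n$; equivalently, $\left( \acptope \cap (d - \acptope) \right) + \bZ^n = \bR^n$.

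Finally, writing $S = \acptope \cap (d - \acptope)$, the covering $S + \bZ^n = \bR^n$ gives the volume bound by subadditivity against the tiling of $\bR^n$ by the cubes $[0,1)^n - m$, $m \in \bZ^n$:
\[
  \volume(S) = \sum_{m \in \bZ^n} \volume\!\bigl( (S + m) \cap [0,1)^n \bigr) \;\geq\; \volume\!\Bigl( \bigcup_{m \in \bZ^n} (S+m) \cap [0,1)^n \Bigr) = \volume\bigl( [0,1)^n \bigr) = 1.
\]
I expect the only delicate point to be the bookkeeping in the composition rule — matching up levels and Frobenius twists so that composing with $\pi_0$ genuinely returns $\pi_d$ at the higher level rather than a shifted operator; the density and compactness arguments and the closing volume estimate are routine.
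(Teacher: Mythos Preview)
Your proof is correct and follows essentially the same route as the paper: both compose $\pi_d$ with powers of $\pi_0$ (available by \autoref{lemma:diagFSplit}) to place $\pi_d$ in $\plainConR$ at arbitrarily high level, then invoke \autoref{prop:resCriterion} to see that $\acptope \cap (d-\acptope)$ meets every coset of $\tfrac{1}{p^{e'}}\bZ^n/\bZ^n$. The only difference is the final step: the paper counts these representatives---at least $p^{e'n}$ of them---and concludes via the lattice-point limit $\volume(P)=\lim_m \#(\tfrac{1}{m}\bZ^n\cap P)/m^n$, whereas you pass through the torus $\bR^n/\bZ^n$ (dense image plus compactness) and finish with a subadditivity estimate; both are standard and equally short.
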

  \begin{proof}
    For all $e'> e$, let $\pi_d^{e'} = \pi_d \cdot (\pi_0)^{e'-e} \in \fullCA R_{e'}$. This is the map $F^{e'}_*R \to R$ corresponding to the lattice point $d \in \frac{1}{p^{e'}}\bZ^n$. The map $\pi_0$ is in $\plainConR_1(R)$ by \autoref{lemma:diagFSplit}, so we have $\pi_d^{e'}\in \ConR$ since $\ConR$ is a Cartier algebra. By \autoref{prop:resCriterion} the polytope $\acptope \cap(d - \acptope)$ contains at least $p^{e'n}$ fractional lattice points in $\frac{1}{p^{e'}} \bZ^n$. Then we're done, as for any polytope $P\subseteq \bR^n$ we have
    \[
      \volume(P) = \lim_{m\to \infty} \frac{\#\left\{ \frac{1}{m} \bZ^n \cap P\right\}}{m^n}
    \]
    This is a well-known fact; see for instance \cite[Theorem 2.2]{MillerSturmfelsCA}.
 \end{proof}

 \appendix
 \section{Proof that Test Ideals Along Closed Subschemes Exist}
 \label{appendix:tauI}
 In this appendix,  we show there's a notion of test elements for test ideals along closed subschemes, working in \autoref{setting:tauI}. Consequently, these test ideal exist. We remark that these proofs are essentially the same as those in \cite[\S 6]{SchwedeFadj}. The salient difference between our setting and the one in \cite{SchwedeFadj} is that here we're not assuming that $I$ is an $F$-pure center of $R$. Instead, we're just assuming that $\cartalg C$ is compatible with $I$. 

  \begin{lemma}
    \label{lemma:localizeThing}
    Work in \autoref{setting:tauI}. There exists some $\gamma \in R \setminus I$  such that %(or $\mathfrak a \setminus I$?) i don't see why
                                                            % we need this, but i'll ask Karl
    \begin{enumerate}
      \item All proper ideals of $R_\gamma$ compatible with  $\cartalg C_\gamma$ are contained in $IR_\gamma$, and  
      \item The Cartier algebra $\cartalg C_\gamma$   is $F$-pure
    \end{enumerate}
  \end{lemma}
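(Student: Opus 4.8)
The plan is to produce $\gamma$ by three successive localizations. Note first that replacing $R$ by a localization $R_\delta$ with $\delta\notin I$ preserves all of \autoref{setting:tauI} ($R_\delta$ is still an $F$-finite Noetherian domain, $IR_\delta$ is still prime since $\delta\notin I$, $\cartalg C_\delta$ is still compatible with $IR_\delta$, and $\psi_\delta(F^{e_0}_*R_\delta)\not\subseteq IR_\delta$), and that such a localization can only shrink the collection of compatible ideals of $R$ that ``survive'' (a $\cartalg C$-compatible ideal $J$ of $R$ gives a proper $\cartalg C_\delta$-compatible ideal of $R_\delta$ exactly when $\delta\notin\sqrt J$, and $JR_\delta\subseteq IR_\delta\iff J\subseteq I$). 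So it suffices to achieve the two conclusions after finitely many such steps, and in fact it suffices to produce a single $\gamma\notin I$ contained in the radical of every proper $\cartalg C$-compatible ideal not contained in $I$.

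\emph{Step 1 ($F$-purity).} By \autoref{setting:tauI} there are $e_0$ and $\psi\in\cartalg C_{e_0}$ with $\psi(F^{e_0}_*R)\not\subseteq I$; as $R$ is a domain this is a nonzero ideal not contained in $I$, hence contains an element $c\notin I$, say $c=\psi(F^{e_0}_*a)$. In $R_c$ we compute $\psi_c\bigl(F^{e_0}_*(a/c^{p^{e_0}})\bigr)=(1/c)\,\psi_c(F^{e_0}_*a)=1$, so $\psi_c$ is surjective; surjectivity of a map $F^{e_0}_*S\to S$ is preserved under further localization, so $\cartalg C_\gamma$ is $F$-pure for every $\gamma$ that is a multiple of $c$. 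From now on we only produce such $\gamma$.

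\emph{Step 2 (regularity and restriction).} Since $F$-finite Noetherian rings are excellent, $\operatorname{Reg}(R)$ and $\operatorname{Reg}(R/I)$ are open; they are nonempty because the generic points are fields, so there is $g\notin I$ with $R_g$ and $R_g/IR_g=(R/I)_{\bar g}$ both regular, whence $IR_g$ is generated by a regular sequence of length $c=\operatorname{ht} I$. Also $\cartalg C|_{R/I}$ is a non-degenerate Cartier algebra on the domain $R/I$ (it contains the nonzero map $\bar\psi$), so $\tau(R/I,\cartalg C|_{R/I})$ exists by \cite{SchwedeNonQGor}; picking $g'\notin I$ with $\bar g'\in\tau(R/I,\cartalg C|_{R/I})$ and using that test ideals commute with localization (\autoref{lemma:wellKnown}), $\tau(R_g/IR_g,\cartalg C_g|_{R_g/IR_g})$ contains a unit and hence equals $R_g/IR_g$. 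Replacing $R$ by $R_{cgg'}$, we may now assume: $\cartalg C$ is $F$-pure via a surjective $\psi\in\cartalg C_{e_0}$; $R$ and $R/I$ are regular and $I=(f_1,\dots,f_c)$ with $f_1,\dots,f_c$ a regular sequence; and $R/I$ is $F$-regular relative to $\cartalg C|_{R/I}$, i.e.\ its only nonzero $\cartalg C|_{R/I}$-compatible ideal is $R/I$ itself.

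\emph{Step 3 (condition (1)).} Let $J\subseteq R$ be a proper $\cartalg C$-compatible ideal; I must show $J\subseteq I$. If not, the image $\bar J\subseteq R/I$ is nonzero, and it is $\cartalg C|_{R/I}$-compatible: for $\bar\varphi\in\cartalg C|_{R/I}$ with lift $\varphi\in\cartalg C$, the defining square gives $\bar\varphi(F^e_*\bar J)=\pi\bigl(\varphi(F^e_*J)\bigr)\subseteq\pi(J)=\bar J$. Hence $\bar J=R/I$, i.e.\ $J+I=R$. It remains to upgrade this to $J=R$, and here I would follow the computation in \cite[\S 6]{SchwedeFadj}: writing $\psi=F^{e_0}_*s\cdot\Phi_{e_0}$ for the Frobenius trace $\Phi_{e_0}$ of the regular ring $R$, surjectivity forces $s\notin\mathfrak m^{[q_0]}$ for all maximal $\mathfrak m$ (where $q_0=p^{e_0}$), while $I$-compatibility together with the Fedder-type colon identity $\bigl((f_1,\dots,f_c)^{[q]}:(f_1,\dots,f_c)\bigr)=(f_1,\dots,f_c)^{[q]}+(f_1\cdots f_c)^{q-1}$ forces $s\equiv u\,(f_1\cdots f_c)^{q_0-1}\pmod{I^{[q_0]}}$ with $u\notin I$; iterating $\psi$ (so that $s^{1+q_0+\dots+q_0^{n-1}}$ appears in degree $ne_0$) and reducing modulo $I$, one transports the nonvanishing in $R/I$ of a suitable multiple of an element $x\in J\setminus I$ — available because $J+I=R$ and $R/I$ is $F$-regular relative to $\cartalg C|_{R/I}$ — back up to the statement that $\psi^n(F^{ne_0}_*(xR))=R$ for $n\gg0$, giving $1\in J$. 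The main obstacle is exactly this last bookkeeping, in particular keeping track of the non-prime Frobenius powers $\mathfrak m^{[q]}$ and of the passage between $R$ and $R/I$; it is carried out in \cite[\S 6]{SchwedeFadj}, and our only change is that $I$ is assumed merely $\cartalg C$-compatible rather than an $F$-pure center.
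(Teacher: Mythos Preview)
Your Steps 1 and 2 overlap with the paper but do strictly more: the paper simply takes $\gamma=\gamma_1\gamma_2$ where $\pi(\gamma_1)$ is any nonzero element of $\tau(R/I,\cartalg C|_{R/I})$ (making $(R/I)_\gamma$ strongly $F$-regular relative to $(\cartalg C|_{R/I})_\gamma$) and $\gamma_2\in\psi(F^{e}_*R)\setminus I$ (making $\cartalg C_\gamma$ $F$-pure). The paper never regularizes $R$ or $R/I$ and never invokes a Fedder-type colon formula.

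The real divergence is your Step 3. The paper disposes of condition (1) in one line: if $J\subsetneq R_\gamma$ is $\cartalg C_\gamma$-compatible then $\pi(J)$ is $(\cartalg C|_{R/I})_\gamma$-compatible in the $F$-regular ring $(R/I)_\gamma$, so $\pi(J)=0$, i.e.\ $J\subseteq IR_\gamma$. You are right that $F$-regularity a priori only gives $\pi(J)\in\{0,(R/I)_\gamma\}$, and the paper does not explicitly exclude $J+IR_\gamma=R_\gamma$. But your Fedder-type fix is itself a genuine gap as written: you defer the entire ``transport'' computation to \cite[\S6]{SchwedeFadj}, where the standing hypothesis is that $I$ is an $F$-pure center (so the chosen surjective $\psi$ is already known to restrict to a \emph{splitting} of $R/I$, which is exactly what drives that argument), and you have not verified that it survives your weakening to mere $\cartalg C$-compatibility of $I$. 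Since the regularization in Step 2 was introduced solely to set up this unfinished computation, its cost is not recouped.

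If you want to close the gap without any regularity hypotheses, here is a shorter route once $\cartalg C_\gamma$ is $F$-pure. There are only finitely many $\cartalg C_\gamma$-compatible primes, and none strictly contains $IR_\gamma$ (any such would yield a nonzero proper compatible ideal of the $F$-regular $(R/I)_\gamma$). For each of the finitely many compatible primes $P$ with $P\not\subseteq IR_\gamma$ pick $\delta_P\in P\setminus I$, and further localize at $\prod_P\delta_P\notin I$. After this every $\cartalg C$-compatible prime is contained in $IR$, hence so is every proper compatible ideal (pass to a minimal prime over it, which remains compatible by $F$-purity).
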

  \begin{proof}
    Let $\pi\colon R \to R/I$ be the quotient map. It follows from our assumptions in \autoref{setting:tauI} that $\cartalg C|_{R/I}$ is a nondegenerate Cartier algebra on $R/I$, so $\tau\left( R/I, \cartalg C|_{R/I} \right)$ is well-defined (and, in particular, nonzero) \cite{SchwedeNonQGor}. Choose $\gamma_1\in R$ so that $\pi(\gamma_1) \in \tau\left( R/I, \cartalg C|_{R/I} \right)$ and $\pi(\gamma_1) \neq 0$. Then all proper ideals of $R$ compatible with $\cartalg C_{\gamma_1}$ are contained in $IR_{\gamma_1}$. Indeed, we have the following diagram for all $\vp\in \cartalg C_{\gamma_1}$:
    \[
      \xymatrix{
        F^e_* R_{\gamma_1}  \ar[d]_\pi  \ar[r]^{\vp} &  R_{\gamma_1} \ar[d]_\pi\\
        F^e_* (R/I)_{\gamma_1} \ar[r]^{\bar \vp} & \left(  R/I\right)_{\gamma_1}
      }
    \]
    If $\vp(J) \subseteq J$, then $\bar \vp(\pi( J)) \subseteq \pi(J)$. Note that as $\vp$ runs through all maps in $\cartalg C_{\gamma_1}$, $\bar \varphi$ will run through all maps in $\left( \cartalg C|_{R/I} \right)_{\gamma_1}$.  So if $J$ is a proper ideal of $R_{\gamma_1}$ compatible with $\cartalg C_{\gamma_1}$, then $\pi(J)$ is compatible with $(\cartalg C|_{R/I})_{\gamma_1}$. But we have that $\tau(R/I, (\cartalg C|_{R/I})_{\gamma_1}) = \tau(R/I, \cartalg C|_{R/I})(R/I)_{\gamma_1} = (R/I)_{\gamma_1}$, so $(\cartalg C|_{R/I})_{\gamma_1}$ is $F$-regular. This means that $\pi(J) = 0$, meaning $J \subseteq I R_{\gamma_1}$.

    We note that all proper ideals of $R_{\gamma_1 \gamma_2}$ compatible with $\cartalg C_{\gamma_1 \gamma_2}$ are contained in $I R_{\gamma_1 \gamma_2}$, for all $\gamma_2 \in R \setminus I$. So choose $e>0$ and $\psi \in \cartalg C_e$ to be some map whose image is not contained in $I$ and let $\gamma_2 \in \psi(F_*^e R) \setminus I$. Then the element $\gamma = \gamma_1 \gamma_2$ satisfies the conclusion of the lemma. 
    
    %If we need $\gamma \in \mathfrak a \setminus I$, then just multiply the $\gamma$ we just got by any element in $\mathfrak a\setminus I$
  \end{proof}

%  \begin{notation}
%    Let $\cartalg D$ be a Cartier algebra and let $\Psi = \sum_{i} \psi_i \in \cartalg D$, where the sum is finite, each $\psi_i$ is nonzero, and $\psi_i \in \cartalg D_{e_i}$ for each $i$. Then we adopt the notation
%    \[
%      \Psi(F^\bullet_* d) \coloneqq \sum_i \psi_i\left( F^{e_i}_* d \right).
%    \]
%    Further, we say $\Psi$ has \emph{minimal degree $e_0$} if $e_0 = \min_i\left\{ e_i \right\}$. 
%  \end{notation}

  \begin{proposition}[{\cf \cite[Lemma 6.12]{SchwedeFadj}}]
    Work in \autoref{setting:tauI}. There exists an element $\gamma \in R \setminus I$ such that, for all $d\in R\setminus I$,  there exists an integer $m$ and a map $\Psi \in \cartalg C$ of minimal degree greater than 0 such that  $\gamma^m = \Psi \cdot d$. 
    \label{prop:before614}
  \end{proposition}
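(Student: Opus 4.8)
The plan is to let $\gamma$ be the element produced by \autoref{lemma:localizeThing} and, for each $d\in R\setminus I$, to collect every element of $R$ of the form ``a positive-degree element of $\cartalg C$ applied to $d$'' into a single ideal $J_d$, localize at $\gamma$, and invoke \autoref{lemma:localizeThing} to conclude that $J_d$ becomes the unit ideal of $R_\gamma$; a power of $\gamma$ lying in $J_d$ then gives the statement.

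In more detail: fix $\gamma\in R\setminus I$ as in \autoref{lemma:localizeThing}, so that every proper $\cartalg C_\gamma$-compatible ideal of $R_\gamma$ is contained in $IR_\gamma$, and $\cartalg C_\gamma$ is $F$-pure, i.e.\ $\sum_{e>0}\sum_{\vp\in(\cartalg C_\gamma)_e}\vp\bigl(F^e_* R_\gamma\bigr)=R_\gamma$. Given $d\in R\setminus I$, set
\[
  J_d\;\coloneqq\;\sum_{e\geq 1}\ \sum_{\vp\in\cartalg C_e}\vp\bigl(F^e_*(dR)\bigr)\;\subseteq\;R.
\]
One checks directly that $J_d$ is an ideal, and a routine computation using closure of $\cartalg C$ under composition shows $J_d$ is $\cartalg C$-compatible, hence $(J_d)_\gamma$ is a $\cartalg C_\gamma$-compatible ideal of $R_\gamma$.

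The key point is that $d\in(J_d)_\gamma$. Writing $1=\sum_j\vp_j\bigl(F^{e_j}_* s_j\bigr)$ with $e_j>0$, $\vp_j\in(\cartalg C_\gamma)_{e_j}$, $s_j\in R_\gamma$, and using $R$-linearity of each $\vp_j$ together with the identity $d\cdot F^{e_j}_* s_j=F^{e_j}_*(d^{p^{e_j}}s_j)$ for the module structure on $F^{e_j}_* R_\gamma$, we obtain
\[
  d\;=\;\sum_j d\,\vp_j\bigl(F^{e_j}_* s_j\bigr)\;=\;\sum_j\vp_j\bigl(F^{e_j}_*(d^{p^{e_j}}s_j)\bigr)\;\in\;(J_d)_\gamma,
\]
since each $d^{p^{e_j}}s_j=d\cdot d^{p^{e_j}-1}s_j\in dR_\gamma$. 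As $d\notin I$ and $\gamma\notin I$, we have $d\notin IR_\gamma$, so $(J_d)_\gamma$ is a $\cartalg C_\gamma$-compatible ideal not contained in $IR_\gamma$; by \autoref{lemma:localizeThing} it cannot be proper, whence $(J_d)_\gamma=R_\gamma$ and $\gamma^m\in J_d$ for some $m\geq 1$.

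Finally, $\gamma^m\in J_d$ means $\gamma^m=\sum_j\vp_j\bigl(F^{e_j}_*(dr_j)\bigr)$ for finitely many $e_j\geq 1$, $\vp_j\in\cartalg C_{e_j}$, $r_j\in R$. Using the identity $\vp_j(F^{e_j}_*(dr_j))=(\vp_j\cdot r_j)\cdot d$ with $\vp_j\cdot r_j\in\cartalg C_{e_j}$, together with additivity of the $\cartalg C$-module structure on $R$, we get $\gamma^m=\Psi\cdot d$ where $\Psi\coloneqq\sum_j\vp_j\cdot r_j$ lies in $\bigoplus_{e\geq 1}\cartalg C_e$; thus $\Psi\in\cartalg C$ has minimal degree $>0$ (and $\Psi\neq 0$, since $R$ is a domain and $\gamma^m\neq 0$). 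Hence $\gamma$ has the required property. The one step requiring genuine care is exactly this last passage from a finite \emph{sum} of positive-degree maps applied to $d$ to a \emph{single} $\Psi$ of positive minimal degree; everything else is bookkeeping with localization — which commutes with $F^e_*$ and, since $R$ is $F$-finite and Noetherian, with $\homgp_R(F^e_* R,-)$ — and with the Cartier-algebra structure.
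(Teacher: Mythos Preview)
Your proof is correct and follows essentially the same architecture as the paper's: choose $\gamma$ via \autoref{lemma:localizeThing}, form the ideal $J_d=\sum_{e\geq 1}\sum_{\vp\in\cartalg C_e}\vp(F^e_*(dR))$, and show $(J_d)_\gamma=R_\gamma$ by checking compatibility and that $(J_d)_\gamma\not\subseteq IR_\gamma$. The one point of divergence is how you establish this last non-containment. You use part~(2) of \autoref{lemma:localizeThing} ($F$-purity of $\cartalg C_\gamma$) to write $1=\sum_j\vp_j(F^{e_j}_*s_j)$ and pull $d$ inside, yielding $d\in(J_d)_\gamma$ directly. The paper instead reaches back to a fact established inside the \emph{proof} of \autoref{lemma:localizeThing}, namely that $(\cartalg C|_{R/I})_\gamma$ is $F$-regular on $(R/I)_\gamma$, and uses this to produce $\overline\vp$ with $\overline\vp(F^e_*\pi(d))=1$, hence $\vp(F^e_*d)=1+x$ with $x\in I$. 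Your route has the mild advantage of appealing only to the \emph{statement} of \autoref{lemma:localizeThing}; the paper's route is a bit more direct once one grants that extra fact. Either way the remaining bookkeeping (localization commuting with $F^e_*$ and with $\homgp$, and the passage from a finite sum to a single $\Psi$ of positive minimal degree) is handled the same.
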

  % (do we need all $n$ sufficiently large?)  no
  \begin{proof}
    Choose $\gamma$ as in \autoref{lemma:localizeThing}. It suffices to prove that
    \[
      J\coloneqq \sum_{e > 0} \sum_{\vp \in \cartalg C_e} \vp(F^e_* d) R_{\gamma} = R_\gamma
    \]
    for all $d \in R\setminus I$. By definition of $\gamma$, it suffices to show that $J$ is compatible with $\cartalg C_\gamma$ and not contained in $IR_\gamma$. It's clear that $J$ is compatible with $\cartalg C_\gamma$, so we'll just show $J$ is not contained in $IR_\gamma$. Let $\pi: R \to R/I$ be the natural surjection. As $(\cartalg C|_{R/I})_\gamma$ is $F$-regular and $\pi(d) \neq 0$, there exist $e$ and  $\overline \vp \in (\cartalg C|_{R/I})_\gamma$ such that $\overline\vp(F^e_* \pi (d)) = 1$. This means that $\vp(F^e_* d) = 1+x$ for some $x\in I$, where $\vp$ is any map  in $\cartalg C_\gamma$ that induces $\overline \vp$. But then $1 + x \in J$, so $J$ is not contained in $IR_\gamma$. 
  \end{proof}

  \begin{lemma}[{\cf \cite[Lemma 6.13]{SchwedeFadj}}]
    Suppose $\vp \in \fullCA{R}$, $c\in R$, and $b \in \vp\cdot (cR)$. Then $b^2 \in \vp^n\cdot (cR)$ for all $n > 0$. 
    \label{lemma:bSquared}
  \end{lemma}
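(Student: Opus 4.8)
The plan is to run the argument for a homogeneous map, say $\vp\in\fullCA{R}_e$ with $e\geq 1$ (this is the only case used in the paper, and the statement is in fact false for $e=0$, where $\vp$ is just a ring element). Fix such a $\vp$ and write $q=p^e$. Throughout I would use, without further comment, the two identities $r\cdot(\vp\cdot z)=\vp\cdot(r^{q}z)$ for $r,z\in R$ (this is just $R$-linearity of $\vp$, since $r\cdot F^e_*z=F^e_*(r^q z)$) and $\vp^{a}\cdot(\vp^{b}\cdot z)=\vp^{a+b}\cdot z$ (the definition of composition in the Cartier algebra). I would also record that for each $m\geq 1$ the set $\vp^{m}\cdot(cR)=\vp^{m}(F^{me}_*cR)$ is an \emph{ideal} of $R$, being the image of the $R$-submodule $F^{me}_*(cR)\subseteq F^{me}_*R$ under the $R$-linear map $\vp^{m}$; by convention we set $\vp^{0}\cdot(cR):=cR$, which is likewise an ideal.

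Write $b=\vp\cdot x$ with $x\in cR$. The basic computation is $b^2=b\cdot(\vp\cdot x)=\vp\cdot(b^{q}x)$, and since $b^{q}x\in cR$ this already settles the case $n=1$. Next I would record the containment $b^{q}\cdot(cR)\subseteq\vp\cdot(cR)$: for any $w\in cR$ we have $b\cdot w=w\cdot(\vp\cdot x)=\vp\cdot(w^{q}x)$, hence
\[
  b^{q}\cdot w = b^{q-1}\cdot(b\cdot w)=b^{q-1}\cdot\bigl(\vp\cdot(w^{q}x)\bigr)=\vp\cdot\bigl(b^{(q-1)q}\,w^{q}x\bigr)\in\vp\cdot(cR),
\]
the last step because $w^{q}x\in cR$.

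The heart of the argument is the claim that $b^{q}\cdot(cR)\subseteq\vp^{m}\cdot(cR)$ for every $m\geq 0$, which I would prove by induction on $m$. The case $m=0$ is just that $cR$ is an ideal. For the inductive step, the displayed identity gives $b^{q}\cdot w=\vp\cdot\bigl(b^{(q-1)q}w^{q}x\bigr)$ for $w\in cR$, and since $e\geq 1$ we have $q\geq 2$, so we may factor
\[
  b^{(q-1)q}w^{q}x = b^{(q-2)q}\cdot\bigl(b^{q}\cdot(w^{q}x)\bigr)
\]
with $w^{q}x\in cR$. By the induction hypothesis $b^{q}\cdot(w^{q}x)\in\vp^{m}\cdot(cR)$, and multiplying by the ring element $b^{(q-2)q}$ keeps us inside this ideal; hence the argument of $\vp$ lies in $\vp^{m}\cdot(cR)$, so $b^{q}\cdot w\in\vp\cdot\bigl(\vp^{m}\cdot(cR)\bigr)=\vp^{m+1}\cdot(cR)$. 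This proves the claim, and then for any $n\geq 1$,
\[
  b^{2}=\vp\cdot(b^{q}x)\in\vp\cdot\bigl(b^{q}\cdot(cR)\bigr)\subseteq\vp\cdot\bigl(\vp^{\,n-1}\cdot(cR)\bigr)=\vp^{n}\cdot(cR),
\]
as desired.

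I expect the main obstacle to be seeing why the argument works for \emph{all} $n$ rather than only for $n$ a power of $2$: naively iterating the $n=2$ identity only yields membership in $\vp^{2}\cdot(cR)$, $\vp^{4}\cdot(cR)$, and so on. The resolution is the observation above that when one expands $b^{q}\cdot w$ the element fed into $\vp$ again contains $b^{q}$ as a factor (times an element of $cR$), so that membership in the ideal $\vp^{m}\cdot(cR)$ propagates through the induction on $m$. The remaining care is purely bookkeeping: one must check $(q-2)q\geq 0$, which is precisely where the hypothesis $e\geq 1$ (equivalently $q\geq 2$) enters, and one must keep track of the fact that $cR$ absorbs all the extra factors of $c$ that accumulate.
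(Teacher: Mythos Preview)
Your proof is correct, but the paper's argument is shorter and more direct. The paper inducts on $n$ in the statement $b^2\in\vp^n\cdot(cR)$ itself, rather than passing through your intermediate claim $b^q\cdot(cR)\subseteq\vp^m\cdot(cR)$. Writing $\vp=\sum_i\vp_i$ with $\vp_i\in\fullCA{R}_{e_i}$, the paper computes
\[
  b^2 \in b\,\vp\cdot(cR) = \sum_i \vp_i\bigl(F^{e_i}_* b^{p^{e_i}}cR\bigr) \subseteq \sum_i \vp_i\bigl(F^{e_i}_* b^{2}cR\bigr) = \vp\cdot(b^2 cR) \subseteq \vp\cdot\bigl((\vp^n\cdot(cR))\,c\bigr) \subseteq \vp^{n+1}\cdot(cR),
\]
where the key step is simply $b^{p^{e_i}}cR\subseteq b^2 cR$ because $p^{e_i}\geq 2$, and the last two containments use the inductive hypothesis and the fact that multiplying by $c$ can be absorbed back into $cR$ after pushing through $\vp^n$. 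Your concern about ``only getting powers of $2$'' does not arise here: one multiplies by $b$ only once, reduces the exponent from $p^{e_i}$ down to $2$, and invokes the hypothesis directly.

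Two further remarks. First, the paper's proof also silently needs each $e_i\geq 1$ for the step $b^{p^{e_i}}\in b^2R$, so your caveat about the degree-$0$ case applies equally there; the lemma is only ever invoked in the paper for $\vp$ of strictly positive minimal degree. Second, the paper's version handles inhomogeneous $\vp$ (a finite sum of graded pieces) in the same breath, whereas you restrict to homogeneous $\vp$; this costs nothing extra in the paper's approach and is the generality actually used later.
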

  \begin{proof}
    This proof is essentially the same as that of Lemma 6.13 of \cite{SchwedeFadj}. We include it here for completeness. 

    We proceed by induction. The base case is given by the hypothesis. Suppose that $\vp = \sum_i \vp_i$, where $\vp_i \in \fullCA{R}_{e_i}$. Then we compute: 
    \begin{align*}
      b^2 &\in b\, \vp\cdot (c R)  = b \sum_i \vp_i\left( F^{e_i}_*cR \right) = \sum_i \vp_i\left( F^{e_i}_* b^{p^{e_i}} c R \right) \subseteq \sum_i \vp_i\left( F^{e_i}_* b^2 c R \right)\\
      &= \vp\cdot ( b^2 c R) \subseteq \vp\cdot \left( \left( \vp^n\cdot (c R)\right) c \right) \subseteq \vp^{n+1}\cdot(cR) 
    \end{align*} %
    %By induction, we're done, as $\vp\left(F^\bullet_* \vp^n(F^\bullet_* R) \right)= \vp^{n+1}(F^\bullet_* R)$.
  \end{proof}

  \begin{proposition}[{\cf \cite[Proposition 6.14]{SchwedeFadj}}]
    There is an element $b\in R\setminus I$ such that for all $d \in R \setminus I$, there exists $\Psi \in \cartalg C$ such that $b  = \Psi \cdot d$. 
    \label{prop:testElement}
  \end{proposition}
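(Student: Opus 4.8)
The plan is to take $b$ to be a single, explicit power of the element $\gamma$ produced by \autoref{prop:before614}, and to verify the required identity by an ideal-membership argument. First I would record the following absorption principle: it suffices to exhibit $b \in R \setminus I$ such that $b$ lies in the ideal $\sum_{e > 0} \sum_{\vp \in \cartalg C_e} \vp(F^e_*(dR))$ for every $d \in R \setminus I$. Indeed, if $b = \sum_\ell \vp_\ell(F^{e_\ell}_*(d u_\ell))$ with $\vp_\ell \in \cartalg C_{e_\ell}$, $e_\ell > 0$ and $u_\ell \in R$, then $\vp_\ell(F^{e_\ell}_*(d u_\ell)) = (\vp_\ell \cdot u_\ell)(F^{e_\ell}_* d)$, so $b = \Psi \cdot d$ with $\Psi = \sum_\ell \vp_\ell \cdot u_\ell \in \cartalg C$. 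Since $I$ is prime and $\gamma \notin I$, every power of $\gamma$ lies in $R \setminus I$, so only the membership statement needs proof.

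To construct $b$, I would apply \autoref{prop:before614} to $d = \gamma$ itself, obtaining an integer $m_0$ and a map $\Psi_0 \in \cartalg C$ of minimal degree at least $1$ with $\gamma^{m_0} = \Psi_0 \cdot \gamma$, i.e. $\gamma^{m_0} \in \Psi_0 \cdot (\gamma R)$. Then \autoref{lemma:bSquared}, applied with $\vp = \Psi_0$, $c = \gamma$ and $b = \gamma^{m_0}$, gives $\gamma^{2 m_0} \in \Psi_0^n \cdot (\gamma R)$ for every $n > 0$; and because $\Psi_0$ has minimal degree at least $1$, the Cartier power $\Psi_0^n \in \cartalg C$ has minimal degree at least $n$. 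I would then set $b := \gamma^{2 m_0 + 1}$.

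Now let $d \in R \setminus I$ be arbitrary and apply \autoref{prop:before614} again to get an integer $m_d$ and $\Psi_d \in \cartalg C$ of minimal degree at least $1$ with $\gamma^{m_d} = \Psi_d \cdot d$. Choose $n$ with $p^n \geq m_d$ and expand $\gamma^{2 m_0} = \sum_j \chi_j(F^{e_j}_*(\gamma r_j))$ with $\chi_j \in \cartalg C_{e_j}$, $e_j \geq n$, $r_j \in R$ (any trailing scalars being absorbed into $\chi_j$). Multiplying by $\gamma$ and using $\gamma \cdot \chi_j = \chi_j \cdot \gamma^{p^{e_j}}$ rewrites this as $\gamma^{2 m_0 + 1} = \sum_j \chi_j(F^{e_j}_*(\gamma^{p^{e_j} + 1} r_j))$. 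Since $p^{e_j} + 1 > m_d$, we may write $\gamma^{p^{e_j} + 1} = \gamma^{m_d} \cdot \gamma^{p^{e_j} + 1 - m_d}$ and then substitute $\gamma^{m_d} = \Psi_d \cdot d$; multiplying $\Psi_d \cdot d$ by an element of $R$ stays inside $\sum_{e > 0} \sum_{\vp \in \cartalg C_e} \vp(F^e_*(dR))$, and that ideal is carried into itself by each $\chi_j$ because $\cartalg C$ is closed under composition and under left and right multiplication by $R$. Hence $\gamma^{2 m_0 + 1} \in \sum_{e > 0} \sum_{\vp \in \cartalg C_e} \vp(F^e_*(dR))$, which by the absorption principle finishes the proof.

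The step I expect to be the crux is making the exponent of $\gamma$ independent of $d$: \autoref{prop:before614} only yields $\gamma^{m_d}$ with $m_d$ varying with $d$, and the mechanism that fixes this is the use of \autoref{lemma:bSquared} to place $\gamma^{2 m_0}$ in $\Psi_0^n \cdot (\gamma R)$ for \emph{every} $n$, so that the one remaining factor of $\gamma$ — whose exponent is fixed once and for all — can be pushed through maps of arbitrarily large Frobenius degree into a power of $\gamma$ that dominates $m_d$. Beyond that, the argument is routine Cartier-algebra bookkeeping (closure under composition and multiplication, together with the identity $r \cdot \vp = \vp \cdot r^{p^e}$), which I would expect to present without difficulty.
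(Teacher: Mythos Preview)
Your proof is correct and follows essentially the same strategy as the paper's: use \autoref{lemma:bSquared} to land a fixed power of $\gamma$ inside $\Psi_0^n \cdot (cR)$ for \emph{all} $n$, then exploit the Frobenius scaling $r \cdot \vp = \vp \cdot r^{p^e}$ to amplify a single extra factor of $\gamma$ into a power dominating the $d$-dependent exponent $m_d$. The only cosmetic difference is that the paper applies \autoref{prop:before614} with $d = 1$ (so $c = 1$ in \autoref{lemma:bSquared}) and takes $b = \gamma^{3m}$, whereas you apply it with $d = \gamma$ (so $c = \gamma$) and take $b = \gamma^{2m_0+1}$; your explicit ``absorption principle'' and the observation that $J = \sum_{e>0}\sum_{\vp \in \cartalg C_e} \vp(F^e_*(dR))$ is a $\cartalg C$-compatible ideal make the bookkeeping slightly cleaner, but the argument is the same.
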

  \begin{proof}
    This proof is essentially the same as that of Proposition 6.14 of \cite{SchwedeFadj}. We include it here for completeness. 

    Choose $\gamma$ as in \autoref{prop:before614}. Then there exists $m$  and $\Psi$, of minimal degree $e_0 > 0$, such that $\gamma^m = \Psi \cdot 1$. By \autoref{lemma:bSquared}, $\gamma^{2m} \in \Psi^n \cdot R$ for all $n > 0$. We will show that $b = \gamma^{3m}$ works. 

    Let $d\in R\setminus I$ be arbitrary. Then there exists $\Psi_1$ and $m_1$ such that $\gamma^{m_1} = \Psi_1 \cdot d$. If $m_1 < 3m$, then we're done. Otherwise, choose $n$ such that $m_1 < mp^{n e_0}$ and write $\Psi^n = \sum_i \psi_i$ with $\psi_i \in \cartalg C_{e_i}$ for all $i$. Note that $e_i \geq ne_0$ for all $i$.  Then we have:
    \begin{align*}
      \gamma^{3m} &= \gamma^m \gamma^{2m} \in \gamma^m \Psi^n\cdot ( R) = \gamma^m \sum_i \psi_i(F^{e_i}_* R) = \sum_i \psi_i(F^{e_i}_* \gamma^{m p^{e_i}}R) \subseteq \sum_i \psi_i(F^{e_i}_* \gamma^{m p^{n e_0}}R)\\
      & \subseteq \sum_i \psi_i(F^{e_i}_* \gamma^{m_1}R) = \Psi^n\cdot (\gamma^{m_1}R) \subseteq \Psi^n \Psi_1\cdot (dR),
    \end{align*}
    as desired. 
  \end{proof}

  \begin{theorem}[{\cf \cite[Lemma 6.17 and Theorem 6.18]{SchwedeFadj}}]
    Let $b$ be as in \autoref{prop:testElement}. Then 
    \[
      \tau_I(R, \cartalg C) = \sum_{e \geq 0} \sum_{\vp \in \cartalg C_e} \vp(F^e_* b)
    \]
    \label{thm:TauIsSum}
  \end{theorem}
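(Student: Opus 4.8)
The plan is to set $J \coloneqq \sum_{e \geq 0} \sum_{\vp \in \cartalg C_e} \vp(F^e_* b)$ and to show directly that $J$ is the smallest ideal of $R$ not contained in $I$ that is compatible with $\cartalg C$; this establishes at once that $\tau_I(R, \cartalg C)$ exists and that it is given by the asserted formula.

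The first step is to check that $J$ is genuinely an ideal and that it is $\cartalg C$-compatible. Since each $\cartalg C_e$ is closed under left multiplication by $R$ and $(r \cdot \vp)(F^e_* b) = r\,\vp(F^e_* b)$, the additive subgroup $\sum_{\vp \in \cartalg C_e} \vp(F^e_* b)$ is already an ideal, hence so is $J$; in particular every element of $J$ can be written as a finite sum $\sum_i \vp_i(F^{e_i}_* b)$ with $\vp_i \in \cartalg C_{e_i}$. For compatibility, take $\psi \in \cartalg C_d$ and such an element $x = \sum_i \vp_i(F^{e_i}_* b) \in J$; using additivity of $\psi$ together with $\psi \circ (F^d_* \vp_i) = \psi \cdot \vp_i \in \cartalg C_{d + e_i}$ and $(\psi \cdot \vp_i)(F^{d + e_i}_* b) = \psi\bigl(F^d_* \vp_i(F^{e_i}_* b)\bigr)$, one gets $\psi(F^d_* x) = \sum_i (\psi \cdot \vp_i)(F^{d + e_i}_* b) \in J$, so $J$ is compatible with every map of $\cartalg C$. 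Finally, the $e = 0$ summand is $\sum_{\vp \in \cartalg C_0} \vp(F^0_* b) = Rb$ (recall $\cartalg C_0 = R$), so $b \in J$ and therefore $J \not\subseteq I$.

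The second step is minimality. Let $J' \subseteq R$ be any ideal with $J' \not\subseteq I$ that is $\cartalg C$-compatible, and pick $d \in J' \setminus I$. By \autoref{prop:testElement} there is $\Psi = \sum_i \psi_i \in \cartalg C$, with $\psi_i \in \cartalg C_{e_i}$, such that $b = \Psi \cdot d = \sum_i \psi_i(F^{e_i}_* d)$; since $d \in J'$ and $J'$ is compatible with each $\psi_i$, every summand lies in $J'$, whence $b \in J'$. Applying $\cartalg C$-compatibility of $J'$ once more gives $\vp(F^e_* b) \in J'$ for all $e$ and all $\vp \in \cartalg C_e$, so $J \subseteq J'$. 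Thus $J$ lies in the collection of ideals not contained in $I$ that are compatible with $\cartalg C$ — so that collection is nonempty — and $J$ is contained in every member of it; therefore the collection has a unique least element, namely $J$, and $\tau_I(R, \cartalg C) = J$.

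I expect no serious obstacle here: all the real work was front-loaded into the construction of the test element $b$ (\autoref{prop:testElement}, which rests on \autoref{prop:before614}, \autoref{lemma:bSquared}, \autoref{lemma:localizeThing}, and ultimately on the nondegeneracy of $\cartalg C|_{R/I}$). The only points needing care are bookkeeping ones: confirming that the displayed sum is an ideal rather than merely an additive subgroup, and that compatibility of $J$ with each individual map of $\cartalg C$ across all degrees — which is what the definition of $\tau_I$ asks for — is exactly what closure of $\cartalg C$ under composition provides.
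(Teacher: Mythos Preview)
Your proof is correct and follows essentially the same approach as the paper's own proof: show the sum $J$ is $\cartalg C$-compatible, not contained in $I$, and contained in any other such ideal via \autoref{prop:testElement}. Your argument that $b \in J$ via the $e=0$ summand $\cartalg C_0 = R$ is in fact slightly cleaner than the paper's, which invokes \autoref{prop:testElement} with $d = b$; otherwise the two arguments are the same, with yours being more explicit about the ideal and compatibility verifications.
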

  \begin{proof}
    This proof is essentially the same as Lemma 6.17 and Theorem 6.18 of \cite{SchwedeFadj}. We include it here for completeness. 

    Let $\tau_I(R, \cartalg C; b)$ denote the ideal
    \[
      \sum_{e \geq 0} \sum_{\vp \in \cartalg C_e} \vp(F^e_* b),
    \]
    and note that we have 
    \[
      \tau_I(R, \cartalg C; b) = \sum_{\vp \in \cartalg C} \vp \cdot b.
    \]
    Then we need to show:
    \begin{enumerate}
      \item $\tau_I(R, \cartalg C; b) \not \subseteq I$,  \label{item:a}
      \item $\tau_I(R, \cartalg C; b)$ is compatible with $\cartalg C$, and \label{item:b}
      \item $\tau_I(R, \cartalg C; b)$ is contained in any other ideal satisfying (a) and (b). 
    \end{enumerate}
    For (a), it's enough to show that $b \in \tau_I(R, \cartalg C; b)$. This follows from \autoref{prop:testElement}, using $d = b$. Assertion (b) is clear from the construction of $\tau_I(R, \cartalg C; b)$. 
    
    For the final assertion, let $J$ be some ideal satisfying (a) and (b). Choose some $d \in J \setminus I$. Then
    \[
      \sum_{\vp \in \cartalg C} \vp \cdot  d \subseteq \sum_{\vp \in \cartalg C} \vp \cdot J \subseteq J
    \]
    By \autoref{prop:testElement}, we have $b\in J$. But then 
    \[
      \tau_I(R, \idealsProd; b) = \sum_{\vp \in \cartalg C} \vp \cdot  b \subseteq \sum_{\vp \in \cartalg C} \vp\cdot J \subseteq J
    \] 
 \end{proof}

 \section{Multiplier Ideal Computations}
 \label{appendix:multIdeal}
 This appendix is devoted to proving the following containment, in the context of the proof of \autoref{thm:hardContainment}:
 \[
   H^0(B_\kappa, F_*^e  \omega_{B_\kappa}(\tilde X_\kappa - \floor{qD_\kappa})) \subseteq  R\left( (1-q) K_{A_\kappa} \right)\overline{\prod_i\mathfrak (a_i)_\kappa^{ t_i(q-1) }}  d_\kappa^N \xi_\kappa^N.
 \]
 Recall the following  notation: first, we work in \autoref{setting:set2} with $\charp R = 0$. Further, $\pi_1\colon A_1 \to A$ is a factorizing resolution of $X \subseteq A$. We denote by $X_1$ the strict transform of $X$ in $A_1$ and we let $\pi_2\colon A_2 \to A_1$ be the blow up along $X_1$. We set $X_2$ to be the $\pi_2$-exceptional divisor dominating $X_1$. Finally, $\gamma\colon B \to A_2$ is a blow up of a subscheme not containing $X_2$ and set $\tilde X = \gamma_*\invrs X_2$. For each $i$, we set $\mathfrak a_i \ssheaf_{B} = \ssheaf_{B}(-F_i)$ and we set $F = \sum_i t_i F_i$. We have that $Z = cX$ at the generic point of $X$, where $c$ is the codimension of $X$ in $A$. Further, $d$ and  $\xi$ and elements of $R$, and  $\varepsilon > 0$. We also have that $\eta \in R\setminus I$ is an element satisfying 
    \[
      K_{B/A_2} + \gamma^* \pi_2^*K_{A_1} - \floor{q f^* K_A} - \divisor \eta \leq f^*\left( (1-q) K_A\right) + \gamma^* X_2 - \tilde X,
    \]
and $V$ is an effective divisor on $B$. We define
 \[
    D = f^* K_A + F + V+ \varepsilon \divisor_{A'}(d\xi \eta).
 \]
 We also choose $e>0$ so that $t_i(q-1)\in \bZ$ and $(1-q)K_A$ is Cartier, where $q = p^e$. We also have an integer $N> 0$ and we assume that $q\varepsilon > N$. From now on, we work exclusively modulo $p$ at $\kappa$ and we abuse notation by omitting the subscripts $\kappa$. 
 We compute:
    \[
      \begin{array}[]{ll}
                  & H^0(B, F_*^e  \omega_{B}(\tilde X - \floor{qD}))\\
                = & H^0\left( A, f_* F_*^e \ssheaf_{B}\left(K_{B} + \tilde X - \floor{qf^*K_A + qF + qV + q\varepsilon \divisor_{B}(d\xi\eta)}  \right) \right)\\
                \subseteq & H^0\left( A, f_* F_*^e \ssheaf_{B}\left(K_{B} + \tilde X - \floor{qf^*K_A} - \floor{qF} - \floor{qV} - \floor{q\varepsilon} \divisor_{B}(d\xi\eta) \right) \right)\\
        \subseteq & H^0\left( A, f_* F_*^e \ssheaf_{B}\left(K_{B} + \tilde X - \floor{qf^*K_A} - \floor{qF} - N\divisor_{B}(d\xi\eta)\right) \right)
      \end{array}
    \]
    Here, we're using the fact that $H$ is anti-effective. Note that $K_{A_2} = \pi_2^* K_{A_1} + (c-1)X_2$ by \cite[Exercise II.8.5]{HartshorneAG}, and so $K_B = K_{B/A_2} + \gamma^*( \pi_2^* K_{A_1} + (c-1)X_2 )$. Then  it follows from our choice of $q$ and the construction of $\eta$ that
    \begin{align*}
        & H^0\left( A, f_* F_*^e \ssheaf_{B}\left(K_{B} + \tilde X - \floor{qf^*K_A} - \floor{qF} - N\divisor_{B}(d\xi\eta)\right) \right)\\
        = & H^0\left( A, f_* F_*^e \ssheaf_{B}\left(K_{B/A_2} + \gamma^* \pi_2^* K_{A_1} + (c-1) \gamma^* X_2 + \tilde X - \floor{qf^*K_A} - \floor{qF} - N\divisor_{B}(d\xi\eta)\right) \right)\\
        \subseteq & H^0\left( A, f_* F_*^e \ssheaf_{B}\left( f^*\left( (1-q)K_A \right) + c\gamma^* X_2 - \floor{qf^*K_A} - \floor{qF} - N\divisor_{B}(d\xi)\right) \right)
    \end{align*}

    Next, we examine the term $-\floor{qF}$. For each $i$, we can write $F_i = \tilde F_i + a_i \gamma^* X$ for some $a_i \in \bN$, where $\tilde F_i$ is not supported along $\tilde X$. Since we assumed $Z = cX$ at the generic point of $X$, we have $\sum_i t_i a_i = c$. Then we see
    \[
      -\floor{qF} = - \floor{\sum_i qt_i \tilde F_i  + qt_i a_i \gamma^* X} = - \floor{\sum_i qt_i \tilde F_i}  - qc \gamma^* X.
    \]
     Thus we have:
    \begin{align*}
        & H^0\left( A, f_* F_*^e \ssheaf_{B}\left( f^*\left( (1-q)K_A \right) + c\gamma^* X_2 - \floor{qf^*K_A} - \floor{qF} - N\divisor_{B}(d\xi)\right) \right)\\
      = & H^0\left( A, f_* F_*^e \ssheaf_{B}\left( f^*\left( (1-q)K_A \right) - \floor{qf^*K_A} - \floor{\sum_i q t_i \tilde F_i} + (c- qc)\gamma^* X_2 - N\divisor_{B}(d\xi)\right) \right)\\
      \subseteq & H^0\left( A, f_* F_*^e \ssheaf_{B}\left( f^*\left( (1-q)K_A \right) - \floor{qf^*K_A} - \sum_i \floor{q t_i} \tilde F_i + (c- qc)\gamma^* X_2 - N\divisor_{B}(d\xi)\right) \right)\\
    \end{align*}
  Note that for all $i$, $\floor{qt_i} \geq \floor{(q-1)t_i} = (q-1)t_i$. As $\tilde F_i$ is effective for each $i$, we get
  \begin{align*}
     - \sum_i \floor{q t_i} \tilde F_i + (c- qc) \gamma^* X_2 & \leq - \sum_i (q-1) t_i \tilde F_i - (q-1)c \gamma^* X_2 \\
     & \leq - \sum_i \left((q-1) t_i \tilde F_i + (q-1)t_i a_i \gamma^*X_2 \right) \\
     & \leq - \sum_i  (q-1) t_i F_i
  \end{align*}
  By the construction of $\eta$, it follows that
    \begin{align*}
                & H^0\left( A, f_* F_*^e \ssheaf_{B}\left( f^*\left( (1-q)K_A \right) - \floor{qf^*K_A} - \sum_i \floor{q t_i} \tilde F_i + (c- qc) \gamma^* X_2 - N\divisor_{B}(d\xi)\right) \right)\\
      \subseteq & H^0\left( A, f_* F_*^e \ssheaf_{B}\left(f^*((1-q) K_A) - \sum_i  (q-1) t_i F_i  - N\divisor_{B}(d\xi)\right) \right)\\
      \subseteq & R\left( (1-q) K_A \right)\overline{\prod_i\mathfrak a_i^{ t_i(q-1) }}  d^N \xi^N,
    \end{align*}
    as desired. 

  \bibliographystyle{skalpha}
  \bibliography{/home/zdorovo/Math/MainBib}
\end{document}